\newenvironment{renumerate}{\begin{enumerate}[label={\textup{(\roman*)}}]}{\end{enumerate}}
\newenvironment{aenumerate}{\begin{enumerate}[label={\textup{(\alph*)}}]}{\end{enumerate}}
\newcommand*{\da@rightarrow}{\mathchar"0\hexnumber@\symAMSa 4B }
\newcommand*{\xdashrightarrow}[2][]{%
\mathrel{%
\mathpalette{\da@xarrow{#1}{#2}{}\da@rightarrow{\,}{}}{}%
}%
}
\newcommand*{\da@xarrow}[7]{%
% #1: below
% #2: above
% #3: arrow left
% #4: arrow right
% #5: space left 
% #6: space right
% #7: math style 
\sbox0{$\ifx#7\scriptstyle\scriptscriptstyle\else\scriptstyle\fi#5#1#6\m@th$}%
\sbox2{$\ifx#7\scriptstyle\scriptscriptstyle\else\scriptstyle\fi#5#2#6\m@th$}%
\sbox4{$#7\dabar@\m@th$}%
\dimen@=\wd0 %
\ifdim\wd2 >\dimen@
\dimen@=\wd2 % 
\fi
\count@=2 %
\def\da@bars{\dabar@\dabar@}%
\@whiledim\count@\wd4<\dimen@\do{%
\advance\count@\@ne
\expandafter\def\expandafter\da@bars\expandafter{%
\da@bars
\dabar@ 
}%
}% 
\mathrel{#3}%
\mathrel{% 
\mathop{\da@bars}\limits
\ifx\\#1\\%
\else
_{\copy0}%
\fi
\ifx\\#2\\%
\else
^{\copy2}%
\fi
}% 
\mathrel{#4}%
}
\newcommand{\nc}{\newcommand}
\nc{\rv}{\upupsilon}
\nc{\CC}{{\mathbb{C}}}
\nc{\LL}{{\mathbb{L}}}
\nc{\RR}{{\mathbb{R}}}
\renewcommand{\P}{{\mathbb{P}}}
\nc{\OO}{{\mathbb{O}}}
\nc{\QQ}{{\mathbb{Q}}}
\nc{\ZZ}{{\mathbb{Z}}}
\nc{\cA}{{\mathcal{A}}}
\nc{\cB}{{\mathcal{B}}}
\nc{\cC}{{\mathcal{C}}}
\nc{\cD}{{\mathcal{D}}}
\nc{\cE}{{\mathcal{E}}}
\nc{\tcE}{{\tilde{\mathcal{E}}}}
\nc{\cF}{{\mathcal{F}}}
\nc{\tcF}{{\tilde{\mathcal{F}}}}
\nc{\cG}{{\mathcal{G}}}
\nc{\cH}{{\mathcal{H}}}
\nc{\cI}{{\mathcal{I}}}
\nc{\cJ}{{\mathcal{J}}}
\nc{\cK}{{\mathcal{K}}}
\nc{\cL}{{\mathcal{L}}}
\nc{\cM}{{\mathcal{M}}}
\nc{\cN}{{\mathcal{N}}}
\nc{\cO}{{\mathcal{O}}}
\nc{\cP}{{\mathcal{P}}}
\nc{\cQ}{{\mathcal{Q}}}
\nc{\cR}{{\mathcal{R}}}
\nc{\cS}{{\mathcal{S}}}
\nc{\cT}{{\mathcal{T}}}
\nc{\cU}{{\mathcal{U}}}
\nc{\tcU}{{\tilde{\mathcal{U}}}}
\nc{\cV}{{\mathcal{V}}}
\nc{\cW}{{\mathcal{U}}}
\nc{\cWW}{{\mathcal{W}}}
\nc{\cX}{{\mathcal{X}}}
\nc{\cY}{{\mathcal{Y}}}
\nc{\cZ}{{\mathcal{Z}}}
\nc{\rc}{{\mathrm{c}}}
\nc{\rd}{{\mathrm{d}}}
\nc{\rf}{{\mathsf{f}}}
\nc{\rch}{{\mathrm{ch}}}
\nc{\rtd}{{\mathrm{td}}}
\nc{\rB}{{\mathrm{B}}}
\nc{\rC}{{\mathrm{C}}}
\nc{\rF}{{\mathrm{F}}}
\nc{\rG}{{\mathrm{G}}}
\nc{\rH}{{\mathrm{H}}}
\nc{\rK}{{\mathrm{K}}}
\nc{\rL}{{\mathrm{L}}}
\nc{\rM}{{\mathrm{M}}}
\nc{\rP}{{\mathrm{P}}}
\nc{\rR}{{\mathrm{R}}}
\nc{\rS}{{\mathrm{S}}}
\nc{\rT}{{\mathrm{T}}}
\nc{\rW}{{\mathrm{W}}}
\nc{\rX}{{\mathrm{X}}}
\nc{\rZ}{{\mathrm{Z}}}
\nc{\rQ}{{\mathrm{Q}}}
\nc{\bA}{{\mathbf{A}}}
\nc{\bB}{{\mathbf{B}}}
\nc{\bC}{{\mathbf{C}}}
\nc{\bD}{{\mathbf{D}}}
\nc{\bE}{{\mathbf{E}}}
\nc{\bF}{{\mathbf{F}}}
\nc{\bG}{{\mathbf{G}}}
\nc{\bH}{{\mathbf{H}}}
\nc{\bI}{{\mathbf{I}}}
\nc{\bJ}{{\mathbf{J}}}
\nc{\bK}{{\mathbf{K}}}
\nc{\bL}{{\mathbf{L}}}
\nc{\bM}{{\mathbf{M}}}
\nc{\bN}{{\mathbf{N}}}
\nc{\bO}{{\mathbf{O}}}
\nc{\bP}{{\mathbf{P}}}
\nc{\bQ}{{\mathbf{Q}}}
\nc{\bR}{{\mathbf{R}}}
\nc{\bS}{{\mathbf{S}}}
\nc{\bT}{{\mathbf{T}}}
\nc{\bU}{{\mathbf{U}}}
\nc{\bV}{{\mathbf{V}}}
\nc{\bW}{{\mathbf{W}}}
\nc{\bX}{{\mathbf{X}}}
\nc{\bY}{{\mathbf{Y}}}
\nc{\bZ}{{\mathbf{Z}}}
\nc{\ba}{{\mathbf{a}}}
\nc{\bb}{{\mathbf{b}}}
\nc{\bc}{{\mathbf{c}}}
\nc{\bd}{{\mathbf{d}}}
\nc{\be}{{\mathbf{e}}}
\nc{\bg}{{\mathbf{g}}}
\nc{\bh}{{\mathbf{h}}}
\nc{\bi}{{\mathbf{i}}}
\nc{\bj}{{\mathbf{j}}}
\nc{\bk}{{\mathbf{k}}}
\nc{\bl}{{\mathbf{l}}}
\nc{\bm}{{\mathbf{m}}}
\nc{\bn}{{\mathbf{n}}}
\nc{\bo}{{\mathbf{o}}}
\nc{\bp}{{\mathbf{p}}}
\nc{\bq}{{\mathbf{q}}}
\nc{\br}{{\mathbf{r}}}
\nc{\bs}{{\mathbf{s}}}
\nc{\bt}{{\mathbf{t}}}
\nc{\bu}{{\mathbf{u}}}
\nc{\bv}{{\mathbf{v}}}
\nc{\bw}{{\mathbf{w}}}
\nc{\bx}{{\mathbf{x}}}
\nc{\by}{{\mathbf{y}}}
\nc{\bz}{{\mathbf{z}}}
\nc{\fA}{{\mathfrak{A}}}
\nc{\fB}{{\mathfrak{B}}}
\nc{\fC}{{\mathfrak{C}}}
\nc{\fD}{{\mathfrak{D}}}
\nc{\fE}{{\mathfrak{E}}}
\nc{\fF}{{\mathfrak{F}}}
\nc{\fG}{{\mathfrak{G}}}
\nc{\fH}{{\mathfrak{H}}}
\nc{\fI}{{\mathfrak{I}}}
\nc{\fJ}{{\mathfrak{J}}}
\nc{\fK}{{\mathfrak{K}}}
\nc{\fL}{{\mathfrak{L}}}
\nc{\fM}{{\mathfrak{M}}}
\nc{\fN}{{\mathfrak{N}}}
\nc{\fO}{{\mathfrak{O}}}
\nc{\fP}{{\mathfrak{P}}}
\nc{\fQ}{{\mathfrak{Q}}}
\nc{\fR}{{\mathfrak{R}}}
\nc{\fS}{{\mathfrak{S}}}
\nc{\fT}{{\mathfrak{T}}}
\nc{\fU}{{\mathfrak{U}}}
\nc{\fV}{{\mathfrak{V}}}
\nc{\fW}{{\mathfrak{W}}}
\nc{\fX}{{\mathfrak{X}}}
\nc{\fY}{{\mathfrak{Y}}}
\nc{\fZ}{{\mathfrak{Z}}}
\nc{\fa}{{\mathfrak{a}}}
\nc{\fb}{{\mathfrak{b}}}
\nc{\fc}{{\mathfrak{c}}}
\nc{\fd}{{\mathfrak{d}}}
\nc{\fe}{{\mathfrak{e}}}
\nc{\ff}{{\mathfrak{f}}}
\nc{\fg}{{\mathfrak{g}}}
\nc{\fh}{{\mathfrak{h}}}
\nc{\fj}{{\mathfrak{j}}}
\nc{\fk}{{\mathfrak{k}}}
\nc{\fl}{{\mathfrak{l}}}
\nc{\fm}{{\mathfrak{m}}}
\nc{\fn}{{\mathfrak{n}}}
\nc{\fo}{{\mathfrak{o}}}
\nc{\fp}{{\mathfrak{p}}}
\nc{\fq}{{\mathfrak{q}}}
\nc{\fr}{{\mathfrak{r}}}
\nc{\fs}{{\mathfrak{s}}}
\nc{\ft}{{\mathfrak{t}}}
\nc{\fu}{{\mathfrak{u}}}
\nc{\fv}{{\mathfrak{v}}}
\nc{\fw}{{\mathfrak{w}}}
\nc{\fx}{{\mathfrak{x}}}
\nc{\fy}{{\mathfrak{y}}}
\nc{\fz}{{\mathfrak{z}}}
\nc{\sA}{{\mathsf{A}}}
\nc{\sB}{{\mathsf{B}}}
\nc{\sC}{{\mathsf{C}}}
\nc{\sD}{{\mathsf{D}}}
\nc{\sE}{{\mathsf{E}}}
\nc{\sF}{{\mathsf{F}}}
\nc{\sG}{{\mathsf{G}}}
\nc{\sH}{{\mathsf{H}}}
\nc{\sI}{{\mathsf{I}}}
\nc{\sJ}{{\mathsf{J}}}
\nc{\sK}{{\mathsf{K}}}
\nc{\sL}{{\mathsf{L}}}
\nc{\sM}{{\mathsf{M}}}
\nc{\sN}{{\mathsf{N}}}
\nc{\sO}{{\mathsf{O}}}
\nc{\sP}{{\mathsf{P}}}
\nc{\sQ}{{\mathsf{Q}}}
\nc{\sR}{{\mathsf{R}}}
\nc{\sS}{{\mathsf{S}}}
\nc{\sT}{{\mathsf{T}}}
\nc{\sU}{{\mathsf{U}}}
\nc{\sV}{{\mathsf{V}}}
\nc{\sW}{{\mathsf{W}}}
\nc{\sX}{{\mathsf{X}}}
\nc{\sY}{{\mathsf{Y}}}
\nc{\sZ}{{\mathsf{Z}}}
\nc{\sa}{{\mathsf{a}}}
\nc{\sd}{{\mathsf{d}}}
\nc{\se}{{\mathsf{e}}}
\nc{\sg}{{\mathsf{g}}}
\nc{\sh}{{\mathsf{h}}}
\nc{\si}{{\mathsf{i}}}
\nc{\sj}{{\mathsf{j}}}
\nc{\sk}{{\mathsf{k}}}
\nc{\sn}{{\mathsf{n}}}
\nc{\so}{{\mathsf{o}}}
\nc{\sq}{{\mathsf{q}}}
\nc{\sr}{{\mathsf{r}}}
\nc{\st}{{\mathsf{t}}}
\nc{\su}{{\mathsf{u}}}
\nc{\sv}{{\mathsf{v}}}
\nc{\sw}{{\mathsf{w}}}
\nc{\sx}{{\mathsf{x}}}
\nc{\sy}{{\mathsf{y}}}
\nc{\sz}{{\mathsf{z}}}
\nc{\oA}{{\overline{A}}}
\nc{\oB}{{\overline{B}}}
\nc{\oC}{{\overline{C}}}
\nc{\oD}{{\overline{D}}}
\nc{\oE}{{\overline{E}}}
\nc{\oF}{{\overline{F}}}
\nc{\oG}{{\overline{G}}}
\nc{\oH}{{\overline{H}}}
\nc{\oI}{{\overline{I}}}
\nc{\oJ}{{\overline{J}}}
\nc{\oK}{{\overline{K}}}
\nc{\oL}{{\overline{L}}}
\nc{\oM}{{\overline{M}}}
\nc{\oN}{{\overline{N}}}
\nc{\oO}{{\overline{O}}}
\nc{\oP}{{\overline{P}}}
\nc{\oQ}{{\overline{Q}}}
\nc{\oR}{{\overline{R}}}
\nc{\oS}{{\overline{S}}}
\nc{\oT}{{\overline{T}}}
\nc{\oU}{{\overline{U}}}
\nc{\oV}{{\overline{V}}}
\nc{\oW}{{\overline{W}}}
\nc{\oX}{{\overline{X}}}
\nc{\oY}{{\overline{Y}}}
\nc{\oZ}{{\overline{Z}}}
\nc{\oa}{{\overline{a}}}
\nc{\ob}{{\overline{b}}}
\nc{\oc}{{\overline{c}}}
\nc{\od}{{\overline{d}}}
\nc{\of}{{\overline{f}}}
\nc{\og}{{\overline{g}}}
\nc{\oh}{{\overline{h}}}
\nc{\oi}{{\overline{i}}}
\nc{\oj}{{\overline{j}}}
\nc{\ok}{{\overline{k}}}
\nc{\ol}{{\overline{l}}}
\nc{\om}{{\overline{m}}}
\nc{\on}{{\overline{n}}}
\nc{\oo}{{\overline{o}}}
\nc{\op}{{\overline{p}}}
\nc{\oq}{{\overline{q}}}
\nc{\os}{{\overline{s}}}
\nc{\ot}{{\overline{t}}}
\nc{\ou}{{\overline{u}}}
\nc{\ov}{{\overline{v}}}
\nc{\ow}{{\overline{w}}}
\nc{\ox}{{\overline{x}}}
\nc{\oy}{{\overline{y}}}
\nc{\oz}{{\overline{z}}}
\nc{\tA}{{\tilde{A}}}
\nc{\tB}{{\tilde{B}}}
\nc{\tC}{{\tilde{C}}}
\nc{\tD}{{\tilde{D}}}
\nc{\tE}{{\tilde{E}}}
\nc{\tF}{{\tilde{F}}}
\nc{\tG}{{\tilde{G}}}
\nc{\tH}{{\tilde{H}}}
\nc{\tI}{{\tilde{I}}}
\nc{\tJ}{{\tilde{J}}}
\nc{\tK}{{\tilde{K}}}
\nc{\tL}{{\tilde{L}}}
\nc{\tM}{{\tilde{M}}}
\nc{\tN}{{\tilde{N}}}
\nc{\tO}{{\tilde{O}}}
\nc{\tP}{{\tilde{P}}}
\nc{\tQ}{{\tilde{Q}}}
\nc{\tR}{{\tilde{R}}}
\nc{\tS}{{\tilde{S}}}
\nc{\tT}{{\tilde{T}}}
\nc{\tU}{{\tilde{U}}}
\nc{\tV}{{\tilde{V}}}
\nc{\tW}{{\widetilde{W}}}
\nc{\tX}{{\tilde{X}}}
\nc{\tY}{{\tilde{Y}}}
\nc{\tZ}{{\tilde{Z}}}
\nc{\ta}{{\tilde{a}}}
\nc{\tb}{{\tilde{b}}}
\nc{\tc}{{\tilde{c}}}
\nc{\td}{{\tilde{d}}}
\nc{\te}{{\tilde{e}}}
\nc{\tf}{{\tilde{f}}}
\nc{\tg}{{\tilde{g}}}
\nc{\ti}{{\tilde{i}}}
\nc{\tj}{{\tilde{j}}}
\nc{\tk}{{\tilde{k}}}
\nc{\tl}{{\tilde{l}}}
\nc{\tm}{{\tilde{m}}}
\nc{\tn}{{\tilde{n}}}
\nc{\tp}{{\tilde{p}}}
\nc{\tq}{{\tilde{q}}}
\nc{\tr}{{\tilde{r}}}
\nc{\ts}{{\tilde{s}}}
\nc{\tu}{{\tilde{u}}}
\nc{\tv}{{\tilde{v}}}
\nc{\tw}{{\tilde{w}}}
\nc{\tx}{{\tilde{x}}}
\nc{\ty}{{\tilde{y}}}
\nc{\tz}{{\tilde{z}}}
\nc{\hA}{{\hat{A}}}
\nc{\hB}{{\hat{B}}}
\nc{\hC}{{\hat{C}}}
\nc{\hD}{{\hat{D}}}
\nc{\hE}{{\hat{E}}}
\nc{\hF}{{\hat{F}}}
\nc{\hG}{{\hat{G}}}
\nc{\hH}{{\hat{H}}}
\nc{\hI}{{\hat{I}}}
\nc{\hJ}{{\hat{J}}}
\nc{\hK}{{\hat{K}}}
\nc{\hL}{{\hat{L}}}
\nc{\hM}{{\hat{M}}}
\nc{\hN}{{\hat{N}}}
\nc{\hO}{{\hat{O}}}
\nc{\hP}{{\hat{P}}}
\nc{\hQ}{{\hat{Q}}}
\nc{\hR}{{\hat{R}}}
\nc{\hS}{{\widehat{S}}}
\nc{\hT}{{\hat{T}}}
\nc{\hU}{{\widehat{U}}}
\nc{\hV}{{\hat{V}}}
\nc{\hW}{{\hat{W}}}
\nc{\hX}{{\hat{X}}}
\nc{\hY}{{\hat{Y}}}
\nc{\hZ}{{\hat{Z}}}
\nc{\ha}{{\hat{a}}}
\nc{\hb}{{\hat{b}}}
\nc{\hc}{{\hat{c}}}
\nc{\hd}{{\hat{d}}}
\nc{\he}{{\hat{e}}}
\nc{\hf}{{\widehat{f}}}
\nc{\hg}{{\hat{g}}}
\nc{\hh}{{\hat{h}}}
\nc{\hi}{{\hat{i}}}
\nc{\hj}{{\hat{j}}}
\nc{\hk}{{\hat{k}}}
\nc{\hl}{{\hat{l}}}
\nc{\hm}{{\hat{m}}}
\nc{\hn}{{\hat{n}}}
\nc{\ho}{{\hat{o}}}
\nc{\hp}{{\hat{p}}}
\nc{\hq}{{\hat{q}}}
\nc{\hr}{{\hat{r}}}
\nc{\hs}{{\hat{s}}}
\nc{\hu}{{\hat{u}}}
\nc{\hv}{{\hat{v}}}
\nc{\hw}{{\hat{w}}}
\nc{\hx}{{\hat{x}}}
\nc{\hy}{{\hat{y}}}
\nc{\hz}{{\hat{z}}}
\nc{\eps}{\varepsilon}
\nc{\lan}{\big\langle}
\nc{\ran}{\big\rangle}
\nc{\kk}{{\Bbbk}}
\newcommand{\sm}{{\mathrm{sm}}}
\nc{\uZ}{\underline{\ZZ}}
\nc{\EE}{\mathbb{E}}
\nc{\et}{{\mathrm{\acute{e}t}}}
\nc{\num}{{\mathrm{num}}}
\nc{\xrightiso}{ \xrightarrow{\ \raisebox{-0.5ex}[0ex][0ex]{$\sim$}\ }}
\def\bw#1#2{\textstyle{\bigwedge\hskip-0.9mm^{#1}}\hskip0.2mm{#2}}
\DeclareMathOperator{\Aut}{\mathrm{Aut}}
\DeclareMathOperator{\Hom}{\mathrm{Hom}}
\DeclareMathOperator{\Ext}{\mathrm{Ext}}
\DeclareMathOperator{\Bl}{\mathrm{Bl}}
\DeclareMathOperator{\Sing}{\mathrm{Sing}}
\DeclareMathOperator{\Pic}{\mathrm{Pic}}
\DeclareMathOperator{\Br}{\mathrm{Br}}
\DeclareMathOperator{\Sym}{\mathrm{Sym}}
\DeclareMathOperator{\Ker}{\mathrm{Ker}}
\DeclareMathOperator{\Coker}{\mathrm{Coker}}
\DeclareMathOperator{\Ima}{\mathrm{Im}}
\DeclareMathOperator{\Cone}{\mathrm{Cone}}
\DeclareMathOperator{\pr}{\mathrm{pr}}
\DeclareMathOperator{\Gr}{\mathrm{Gr}}
\DeclareMathOperator{\OGr}{\mathrm{OGr}}
\DeclareMathOperator{\LGr}{\mathrm{LGr}}
\DeclareMathOperator{\GL}{\mathrm{GL}}
\DeclareMathOperator{\SL}{\mathrm{SL}}
\DeclareMathOperator{\SP}{\mathrm{SP}}
\DeclareMathOperator{\SO}{\mathrm{SO}}
\DeclareMathOperator{\Spin}{\mathrm{Spin}}
\DeclareMathOperator{\id}{\mathrm{id}}
\DeclareMathOperator{\ev}{\mathrm{ev}}
\DeclareMathOperator{\coev}{\mathrm{coev}}
\DeclareMathOperator{\rank}{\mathrm{rk}}
\DeclareMathOperator{\codim}{\mathrm{codim}}
\nc{\bkk}{{\overline{\kk}}}
\newcommand{\g}{{\mathrm{g}}}
\theoremstyle{plain}
\newtheorem{theorem}{Theorem}[section]
\newtheorem{lemma}[theorem]{Lemma}
\newtheorem{proposition}[theorem]{Proposition}
\newtheorem{corollary}[theorem]{Corollary}
\theoremstyle{definition}
\newtheorem{definition}[theorem]{Definition}
\theoremstyle{remark}
\newtheorem{remark}[theorem]{Remark}
\def\@tocline#1#2#3#4#5#6#7{\relax
  \ifnum #1>\c@tocdepth % then omit
  \else
    \par \addpenalty\@secpenalty\addvspace{#2}%
    \begingroup \hyphenpenalty\@M
    \@ifempty{#4}{%
      \@tempdima\csname r@tocindent\number#1\endcsname\relax
    }{%
      \@tempdima#4\relax
    }%
    \parindent\z@ \leftskip#3\relax \advance\leftskip\@tempdima\relax
    \rightskip\@pnumwidth plus4em \parfillskip-\@pnumwidth
    #5\leavevmode\hskip-\@tempdima
      \ifcase #1
       \or\or \hskip 1em \or \hskip 2em \else \hskip 3em \fi%
      #6\nobreak\relax
    \dotfill\hbox to\@pnumwidth{\@tocpagenum{#7}}\par
    \nobreak
    \endgroup
  \fi}
\title{Mukai models of Fano varieties}
\begin{document}

\author{Arend Bayer}
\address{{\sloppy
\parbox{0.99\textwidth}{
School of Mathematics and Maxwell Institute for Mathematical Sciences, \\
University of Edinburgh, JCMB, Peter Guthrie Tait Road, Edinburgh EH9 3FD, UK
}\vspace{1.2mm}}}
\email{arend.bayer@ed.ac.uk}

\author{Alexander Kuznetsov}
\address{{\sloppy
\parbox{0.99\textwidth}{
Algebraic Geometry Section, Steklov Mathematical Institute of Russian Academy of Sciences\\
8 Gubkin str., Moscow 119991 Russia
\\[3pt]
Laboratory of Algebraic Geometry, NRU Higher School of Economics, Russian Federation
}\vspace{1.2mm}}}
\email{akuznet@mi-ras.ru}

\author{Emanuele Macr\`i}
\address{{\sloppy
\parbox{0.99\textwidth}{
Universit\'e Paris-Saclay, CNRS, Laboratoire de Math\'ematiques d'Orsay\\
Rue Michel Magat, B\^at. 307, 91405 Orsay, France
}\vspace{1.2mm}}}
\email{emanuele.macri@universite-paris-saclay.fr}

\subjclass[2020]{14J28, 14J30, 14J45, 14J60}
\keywords{Fano varieties, Mukai varieties, K3 surfaces, Mukai vector bundles}
\thanks{A.B. was partially supported by the EPSRC grant EP/R034826/1 and the ERC grant ERC-2018-CoG819864-WallCrossAG. 
A.K. was partially supported by the HSE University Basic Research Program. 
E.M. was partially supported by the ERC grant ERC-2020-SyG-854361-HyperK}

\begin{abstract}
We give a self-contained and simplified proof of Mukai's classification 
of prime Fano threefolds of index~1 and genus~$g \ge 6$ 
with at most factorial terminal singularities, and of its extension to higher-dimension. 
\end{abstract}

\maketitle

\tableofcontents
\setcounter{tocdepth}{2}

%%%%%%%%%%%%%%%%%%%%%%%%%

\section{Introduction}\label{sec:intro}

In this paper we show that a Fano variety of coindex~$3$ and genus~$g \in \{6,7,8,9,10,12\}$ 
with mild singularities over an algebraically closed field~$\kk$ of characteristic~$0$
is a complete intersection in an iterated cone over one of the {\sf Mukai varieties}~$\rM_g$, thus proving the theorem announced by Mukai in~\cite[Theorem~2]{Mukai:PNAS} for smooth manifolds
and (partially) its extension to singular threefolds suggested in~\cite[Theorem~6.5]{Muk:New}. 

More precisely, for~$g \in \{6,7,8,9,10\}$, we consider the following homogeneous varieties:
\begin{itemize}
\item 
$\rM_6 = \Gr(2,5) = \SL_5/\rP_2 \subset \P^{9}$, 
the Grassmannian of 2-dimensional subspaces in~$\kk^5$;
\item 
$\rM_7 = \OGr_+(5,10) = \SO_{10}/\rP_5 \subset \P^{15}$, 
the connected component of the Grassmannian of isotropic 5-dimensional subspaces in~$\kk^{10}$ 
with respect to a non-degenerate quadratic form;
\item 
$\rM_8 = \Gr(2,6) = \SL_6/\rP_2 \subset \P^{14}$, 
the Grassmannian of 2-dimensional subspaces in~$\kk^6$;
\item 
$\rM_9 = \LGr(3,6) = \SP_6/\rP_3 \subset \P^{13}$, 
the Grassmannian of isotropic 3-dimensional subspaces in~$\kk^{6}$ with respect to a symplectic form;
\item 
$\rM_{10} = \rG_2 / \rP_2 \subset \P^{13}$, 
the adjoint Grassmannian of the simple algebraic group of type~$\rG_2$.
\end{itemize}
Here~$\rP_k$ stands for a maximal parabolic subgroup corresponding to the $k$-th vertex of the Dynkin diagram
of the respective simple group
and the ambient projective space is the projectivization of the corresponding fundamental representation.
Note that all the above varieties are \emph{rigid}.

When~$g = 12$, instead of a single rigid variety, there is a family of Mukai threefolds defined as
\begin{itemize}
\item 
$\rM_{12} \subset \Gr(3,7)$, 
the subvariety parameterizing all 3-dimensional subspaces in~$\kk^7$ isotropic for a triple of $2$-forms~$(\sigma_1,\sigma_2,\sigma_3)$ 
such that any linear combination~$\sum a_i \sigma_i$ has rank~$6$.
\end{itemize}

The following table lists the dimensions~$n_g \coloneqq \dim(\rM_g)$ of these varieties and~$N_g$ of their ambient projective spaces:
\begin{equation}
\label{eq:ng}
\begin{array}{|c|c|c|c|c|c|c|}
\hline
g & 6 & 7 & 8 & 9 & 10 & 12
\\
\hline
\rM_g & \Gr(2,5) & \OGr_+(5,10) & \Gr(2,6) & \LGr(3,6) & \rG_2/\rP_2 & \rM_{12}
\\
\hline
n_g & 6 & 10 & 8 & 6 & 5 & 3
\\
\hline
N_g & 9 & 15 & 14 & 13 & 13 & 13
\\
\hline
\end{array}
\end{equation}
Note that~$\rM_g \subset \P^{N_g}$ has codimension~$g - 2$ for~$g \ge 7$ and~$g - 3 = 3$ for~$g = 6$.

If~$K \subset W$ is a linear subspace and~$Y \subset \P(W/K)$ is a projective subvariety,
we denote by~$\Cone_{\P(K)}(Y) \subset \P(W)$
the cone over~$Y$ with vertex~$\P(K)$.
If~$\dim(K) > 1$ we sometimes call~$\Cone_{\P(K)}(Y)$ an {\sf iterated cone}.
If~$\dim(K) = 1$, so that~$\P(K)$ is a point, we abbreviate this to just~$\Cone(Y)$.
If~$K = 0$, we have~$\Cone_{\P^{-1}}(Y) = Y$.
Our main theorem is the following:

\begin{theorem}
\label{thm:main}
Let~$X$ be a Fano variety of dimension~$n \ge 3$ with at most factorial terminal singularities 
over an algebraically closed field of characteristic zero such that
\begin{equation}
\label{eq:prime-fano}
\Pic(X) = \ZZ \cdot H,
\qquad 
-K_X = (n - 2)H,
\qquad\text{and}\qquad 
H^n = 2g - 2
\end{equation}
for~$g \in \{6,7,8,9,10,12\}$.

\begin{enumerate}[label={\textup{(\alph*)}}, wide]
\item
If~$g \ge 7$, there is a linear subspace~$\P^{n_0 + g - 2} \subset \P^{n_g + g - 2} = \P^{N_g}$ such that
\begin{equation*}
X_0 = \rM_g \cap \P^{n_0 + g - 2}
\end{equation*}
is a dimensionally transverse intersection, 
a Fano variety with at most factorial terminal singularities
of dimension~$n_0$ with~\mbox{$3 \le n_0 \le n_g$}, and
\begin{equation*}
X = \Cone_{\P^{n-n_0-1}}(X_0)
\end{equation*}
is an iterated cone over~$X_0$.
Moreover, if~$n_0 < n$ then~$n_0 \ge 4$.
Finally, $X$ is a local complete intersection if and only if~$n = n_0$, i.e., $X = \rM_g \cap \P^{n + g - 2}$; 
in this case we have~$n \le n_g$.

\item
If~$g = 6$, there is a linear subspace~$\P^{n_0 + 3} \subset \P^9$ such that
\begin{equation*}
Y_0 = \Gr(2,5) \cap \P^{n_0 + 3}
\end{equation*}
is a smooth quintic del Pezzo variety of dimension~$n_0$ with~\mbox{$3 \le n_0 \le 6$}, and
\begin{equation*}
X = \Cone_{\P^{n-n_0}}(Y_0) \cap Q
\end{equation*}
is a dimensionally transverse intersection of an iterated cone over~$Y_0$ with a quadric.
Moreover, if~$Q$ contains the vertex of the cone then~$n_0 \ge 4$, 
and if~$Q$ contains the vertex with multiplicity~$2$ then~$n_0 \ge 5$.
Finally, $X$ is a local complete intersection if and only if~$Q$ does not intersect the vertex of the cone,
hence~$X = \Cone(\Gr(2,5)) \cap \P^{n + 4} \cap Q$; 
in this case we have~$n \le 6$.
\end{enumerate}

The above representation of~$X$ is canonical and unique up to automorphisms of~$\rM_g$. 
\end{theorem}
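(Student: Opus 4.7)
The plan is to follow Mukai's original strategy, adapted to the factorial terminal setting. The essential input, which I expect the earlier sections of the paper to establish, is the existence (and rigidity) of a \emph{Mukai bundle} $\cE$ on $X$ of rank $r_g$ depending only on $g$, with a canonical identification of $\Ho^0(X, \cE)^\vee$ (or a suitable Schur functor thereof) with the fundamental representation $V_g$ defining the embedding $\rM_g \subset \P(\wedge^{r_g} V_g) = \P^{N_g}$, together with the additional structure (an orthogonal, symplectic, trilinear, or $\rG_2$-invariant form) that forces the associated map into the appropriate Mukai variety. The Mukai bundle then defines a rational map $\varphi_{\cE} \colon X \dashrightarrow \Gr(r_g, V_g)$ whose image is constrained to lie in $\rM_g$, and composing with the Plücker-type embedding gives a rational map $X \dashrightarrow \P^{N_g}$ whose pullback of the hyperplane class equals $H$ (since $\Pic(X) = \ZZ \cdot H$).

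The first main step is to analyse the base locus $Z \coloneqq \Bs(\cE)$ and show that $\varphi_{\cE}$ extends to $X \setminus Z$ with image a dimensionally transverse linear section of $\rM_g$. Using the factorial terminal hypothesis and the fact that $\cO_X(H)$ generates $\Pic(X)$, one shows that $Z$ is a linear subspace $\P^{n-n_0-1} \subset \P^{n+g-2}$ (where $X$ is viewed via $|H|$), and that $X_0 \coloneqq \overline{\varphi_{\cE}(X \setminus Z)}$ equals $\rM_g \cap \P^{n_0+g-2}$ of dimension $n_0$. The degree computation $\deg \rM_g = H^{n_0} = 2g-2$ (valid for $g \ge 7$) forces dimensional transversality. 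One then deduces that $X = \Cone_{\P^{n-n_0-1}}(X_0)$ as embedded varieties, because both are reduced, irreducible, of the same dimension, and contain a common dense open subset.

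The $g = 6$ case requires a twist, since $\deg \rM_6 = \deg \Gr(2,5) = 5$ while $H^n = 10$. The Mukai map realises $X$ as a double cover of a cone over a quintic del Pezzo variety, and the degree comparison forces this double cover to be cut out by a single quadric $Q \subset \P^{N_6}$, which arises as the unique generator, up to scalar and Plücker relations, of the degree-two part of the ideal of $X$ modulo Plücker quadrics. The precise intersection type and the position of $Q$ relative to the vertex of the cone are then dictated by the singularities of $X$, giving the refined lower bounds $n_0 \ge 4$ (when $Q$ contains the vertex) and $n_0 \ge 5$ (when $Q$ contains it with multiplicity two).

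The main obstacle will be controlling singularities under the cone / linear-section dictionary: proving that if $X$ is factorial and terminal, then so is the linear section $X_0$, and conversely that iterating a cone over $X_0$ preserves these properties only when the dimension of the vertex is sufficiently small relative to~$n_0$. This is the source of the bounds $n_0 \ge 3$ in general, $n_0 \ge 4$ when $n_0 < n$, and the $g=6$ bounds above; it also explains the local-complete-intersection characterisation, since adding a positive-dimensional cone vertex destroys the l.c.i.\ property as soon as $\rM_g$ itself is not a complete intersection in $\P^{N_g}$. Finally, uniqueness of the construction up to $\Aut(\rM_g)$ is a formal consequence of the rigidity of $\cE$: any two Mukai bundles on $X$ are isomorphic (by the $\Ext^1(\cE,\cE) = 0$ statement from earlier in the paper), so the two resulting embeddings differ by an automorphism of $\rM_g$.
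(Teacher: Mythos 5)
Your overall shape (Mukai bundle on $X$ $\Rightarrow$ map to $\rM_g$ $\Rightarrow$ $X$ is an iterated cone over a linear section, with terminality controlling the vertex) matches the paper's conclusion, but there is a genuine gap at the very start: you take as input ``the existence (and rigidity) of a Mukai bundle $\cE$ on $X$'' with globally generated dual, the correct identification of $\rH^0(X,\cE)^\vee$, and the extra tensorial structure forcing the image into $\rM_g$. Nothing available before this theorem provides that on $X$: the earlier sections construct the Mukai bundle only on prime K3 surfaces (Proposition~\ref{prop:muk-bundles}, Theorem~\ref{thm:g7-emb}), and producing it on a possibly singular Fano $X$ of arbitrary dimension is the main content of the proof you are asked to give. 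The paper does it by restricting to a very general K3 section $S = X\cap\P^g$, extending $\cU_S$ to threefold sections via~\cite{BKM} (a priori only a maximal Cohen--Macaulay sheaf, so the Gushel map is only rational, see Lemma~\ref{lem:s-x}), and then, for $n\ge 4$, gluing these bundles over the family of threefold sections through $S$ by a relative moduli-space/Brauer-class argument (Lemma~\ref{lem:cu-cx-u}, Corollary~\ref{cor:gushel}). Your sketch never mentions the K3 section at all, which is the anchor of the whole argument; a direct Fujita-type extension on $X$ is known to fail at the threefold step, so this input cannot simply be outsourced.

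Even granting the rational map $\gamma_X\colon X\dashrightarrow\rM_g$, the step where you identify $\overline{\gamma_X(X\setminus Z)}$ with a \emph{dimensionally transverse} linear section $\rM_g\cap\P^{n_0+g-2}$, and $X$ with the cone over it, is not carried by the degree computation you invoke: $\deg\rM_g = 2g-2 = H^n$ does not by itself exclude excess-dimensional components of the linear section, nor that the image is a proper subvariety of it, nor does it show that the indeterminacy locus is a linear vertex. The paper's mechanism is Proposition~\ref{prop:s-x}: $\rM_g$ is ACM (Lemma~\ref{lem:mg-acm}; for $g=12$ this requires the Borel--Bott--Weil computations of the appendix), and the chain $\P^g\hookrightarrow\P^{n+g-2}\dashrightarrow\P^{n_g+g-2}$ together with the equality $S=\rM_g\cap\P^g$ from Theorem~\ref{thm:prime-k3} yields a regular-sequence argument forcing transversality, after which equality of dimension and degree of two Cohen--Macaulay schemes gives $X=\Cone_{\P(K)}(X_0)$. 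Some substitute for this extension principle, anchored at the K3 section, is needed in your route. The remaining ingredients you describe do correspond to the paper: terminality bounds on the vertex come from Lemma~\ref{lem:cone-singularity}, the $g=6$ quadric comes from the degree-$10$ divisor in the (factorial, class group $\ZZ$) cone over the quintic del Pezzo, and uniqueness follows from uniqueness of the Mukai bundle and of the Mukai subvariety through $\gamma_S(S)$.
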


\begin{remark}
The dimension of~$X$ can be arbitrarily high (because of the cones), except for the case~$g = 12$, where it is bounded by~3.
On the other hand, for local complete intersection varieties (in particular for smooth varieties), 
the dimension is bounded by~$n_g$.
\end{remark}

The canonical divisor class of a factorial variety 
(as any other Weil divisor class) is Cartier,
and terminal singularities are Cohen--Macaulay,
hence any factorial terminal variety is Gorenstein.
Moreover, any Fano threefold with terminal Gorenstein singularities is smoothable by~\cite[Theorem~11]{Namikawa},
so if it satisfies~\eqref{eq:prime-fano} and~$g \ge 6$ 
then the Iskovskih genus bound~$g \in \{6,7,8,9,10,12\}$ holds by~\cite[Theorem~4.6.7]{Fano-book}.
Finally, for Fano varieties of dimension~$n \ge 4$ the same genus bound follows by induction, see Corollary~\ref{cor:genus-bound}.
Thus, Theorem~\ref{thm:main} classifies all Fano varieties satisfying~\eqref{eq:prime-fano} with~$g \ge 6$.

Theorem~\ref{thm:main} is complemented by Iskovskikh's classification of Fano varieties~$X$ 
satisfying~\eqref{eq:prime-fano} with~$g \le 5$.
Such a variety is a complete intersection in a weighted projective space:
\begin{equation*}
X \subset \P(1^{n+1},3),
\qquad 
X \subset \P(1^{n+2},2),
\qquad 
X \subset \P^{n+2},
\qquad
X \subset \P^{n+3}
\end{equation*}
of multidegree~$(6)$, $(2,4)$, $(2,3)$, and~$(2,2,2)$, respectively.
(Here we do not need cones, because the cone over a (weighted) projective space is again a (weighted) projective space.)

The significance of the Mukai--Iskovskikh classification, 
in particular of prime Fano threefolds, is hard to overstate: 
it is an essential ingredient in the study of their moduli spaces, 
their relations to curves and K3 surfaces, 
or, more recently, their K-stability. 
However, despite its influence, a complete proof of Mukai's classification 
even in the smooth case for dimension three has not appeared in the literature before, 
see Section~\ref{sec:History} for an extended discussion of the history.

\subsection{Outline of the proof}

If~$X$ is a Fano variety as in Theorem~\ref{thm:main}, 
we check in Lemma~\ref{lem:hva} that the ample generator~$H$ of~$\Pic(X)$ 
is very ample and gives an embedding~\mbox{$X \subset \P^{n + g - 2}$}.
Moreover, we show that if~$\P^g \subset \P^{n + g - 2}$ is a very general linear subspace then
\begin{equation*}
S \coloneqq X \cap \P^g
\end{equation*}
is a smooth K3 surface 
with $\Pic(S) = \ZZ \cdot H\vert_S$ and $(H\vert_S)^2 = 2g - 2$.
In analogy with the terminology for Fano threefolds, we say that~$S$ is a {\sf prime K3 surface} of genus~$g$.
Following the ideas of Mukai, we deduce Theorem~\ref{thm:main} from the following result.

\begin{theorem}
\label{thm:prime-k3}
Let~$S$ be a smooth prime $K3$ surface of genus~$g \in \{6,7,8,9,10,12\}$
over an algebraically closed field of characteristic zero.
\begin{enumerate}[label={\textup{(\alph*)}}, wide]
\item 
If~$g \ge 7$ there is an embedding~$S \hookrightarrow \rM_g \subset \P^{N_g} = \P^{n_g + g - 2}$ such that
\begin{equation*}
S = \rM_g \cap \P^{g}
\end{equation*}
is a transverse intersection, where~$\P^{g} \subset \P^{n_g + g - 2}$ is the linear span of~$S$.
\item 
If~$g = 6$ there is an embedding~$S \hookrightarrow \rM_6 = \Gr(2,5) \subset \P^9$ such that
\begin{equation*}
S = \Gr(2,5) \cap \P^6 \cap Q
\end{equation*}
is a transverse intersection, where~$\P^6 \subset \P^9$ is the linear span of~$S$ and~$Q$ is a quadric.
\end{enumerate}

In both cases the embedding~$S \hookrightarrow \rM_g$ is unique up to the natural action of~$\Aut(\rM_g)$.
\end{theorem}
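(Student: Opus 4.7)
The plan is to follow Mukai's original strategy: on each prime K3 surface $S$ of genus $g$, construct a distinguished rigid stable vector bundle $\cE$ (the \emph{Mukai bundle}) whose evaluation map embeds $S$ into a Grassmannian, with image contained in the Mukai variety $\rM_g$. The bundle $\cE$ will have rank $r_g\in\{2,5,2,3,3,3\}$ for $g\in\{6,7,8,9,10,12\}$ respectively, fixed determinant a multiple of $H$, and prescribed Euler characteristic; in each case the Mukai vector will satisfy $\rv_g\cdot\rv_g = -2$.

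The first step is existence and uniqueness of $\cE$. Since $\rv_g\cdot\rv_g = -2$ and $\rv_g$ is primitive, Mukai's theorem on moduli of simple sheaves on K3 surfaces implies that the moduli space of $H$-semistable sheaves with Mukai vector $\rv_g$ is a single reduced point. Stability together with $\Pic(S)=\ZZ\cdot H$ forces this sheaf to be locally free. Riemann--Roch then yields $h^0(\cE) = N_g+1$, and stability (plus Serre duality) implies $h^i(\cE) = 0$ for $i>0$. Global generation is established by excluding base loci: any proper base subscheme would produce a destabilizing subsheaf or contradict the rigidity of~$\cE$.

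The evaluation map $H^0(\cE)\otimes\cO_S\twoheadrightarrow\cE$ then induces a morphism $\varphi\colon S\to\Gr(r_g, H^0(\cE)^\vee)$, and since $\bw{r_g}{\cE}\cong\cO_S(H)$ has a $(g+1)$-dimensional space of global sections, the composition of $\varphi$ with the Pl\"ucker embedding spans a linear $\P^g$ in the ambient projective space. The crux is to show $\varphi(S)\subset\rM_g$. For $g=8$ this is tautological ($\rM_8 = \Gr(2,6)$ Pl\"ucker-embedded); for $g=6$, the vanishing of a section of $\bw{2}{H^0(\cE)^\vee}$ on $\varphi(S)$, obtained via a cohomology identification, forces the image into $\Gr(2,5)$, and an additional quadric arises from a section of $\Sym^2 H^0(\cE)^\vee$. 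For $g\in\{7,9,10,12\}$, the defining tensor of $\rM_g$---a non-degenerate quadratic form for $g=7$, a symplectic form for $g=9$, a $\rG_2$-invariant alternating trilinear form for $g=10$, and a net of $2$-forms of constant rank $6$ for $g=12$---must be constructed from the cohomology of a suitable tensorial bundle built from $\cE$ (such as $\Sym^2\cE^\vee$, $\bw{2}{\cE^\vee}$, or related Ext groups), and the corresponding defining equations of $\rM_g$ then verified to hold on $\varphi(S)$.

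Once $\varphi(S)\subset\rM_g$ is established, transversality of the intersection $\rM_g\cap\langle\varphi(S)\rangle$ (together with a quadric for $g=6$) follows by comparing degrees: both sides have degree $2g-2$ in $\P^g$, so the obvious inclusion $\varphi(S)\subseteq\rM_g\cap\langle\varphi(S)\rangle$ is forced to be an equality. Uniqueness up to $\Aut(\rM_g)$ then follows from the uniqueness of $\cE$: any alternative embedding $S\hookrightarrow\rM_g$ pulls back the tautological subbundle of $\rM_g$ to a sheaf of Mukai vector $\rv_g$, which by uniqueness must coincide with $\cE$, and two choices of basis in $H^0(\cE)$ differ by an automorphism of the ambient Grassmannian preserving $\rM_g$. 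The main obstacle will be the case-by-case construction of the invariant tensor of $\rM_g$ for $g\in\{7,9,10,12\}$: for $g=10$ this requires reconstructing the $\rG_2$-structure from a cohomology group built from $\cE$, and for $g=12$ one must simultaneously build the triple of $2$-forms and verify the constant-rank condition, which has no analogue in the other cases.
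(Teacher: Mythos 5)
Your overall strategy (Mukai bundle, evaluation map into a Grassmannian, containment in~$\rM_g$, then equality with the linear section, with uniqueness coming from uniqueness of the bundle) is exactly the paper's, but the decisive step is missing. The claim that transversality ``follows by comparing degrees: both sides have degree $2g-2$ in $\P^g$'' is not valid as stated: the inclusion $S\subseteq Y\coloneqq\rM_g\cap\langle S\rangle$ forces nothing unless you already know that the linear section~$Y$ is dimensionally transverse, i.e.\ a surface. A priori~$Y$ could be a threefold (the linear section need not be proper), and then it has no well-defined surface degree to compare with~$\deg S=2g-2$. Ruling this out is the hardest part of the actual proof: one first shows that~$S$ (hence~$Y$) is not contained in any Schubert divisor of~$\Gr(r,r+s)$ (this uses stability of the Mukai bundle and $\Pic(S)=\ZZ H$), then that any point of a linear section $Y\subset\Gr(r,r+s)\cap\P^g$ with tangent space of dimension~$\ge 4$ forces containment in a Schubert divisor; combined with a connectedness statement for zero loci (proved via a Koszul resolution and Borel--Bott--Weil vanishing), this shows~$Y$ is either a Cohen--Macaulay surface --- only then does the degree comparison give $S=Y$ --- or a \emph{smooth connected threefold}. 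The residual threefold case is excluded by a separate argument: one shows $S=Y\cap Q$ for a quadric~$Q$, and that a K3 surface arising as a quadric section of a threefold has hyperplane sections lying on a del Pezzo surface or elliptic scroll, hence not Brill--Noether general, contradicting Lazarsfeld's theorem for prime K3s. None of this machinery appears in your proposal, and without it the concluding step does not go through (a parallel analysis, with modified Schubert lemmas, is needed for $g=7$).

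A second, related gap: producing a section of the relevant tensor bundle vanishing on $\varphi(S)$ (by a cohomology count) is not enough; you must also prove that this section is \emph{non-degenerate} (rank-$6$ symplectic form for $g=9$, generic $3$-form for $g=10$, constant-rank-$6$ net for $g=12$, non-degenerate quadric for $g=7$), since otherwise its zero locus is not the Mukai variety. You flag this as ``the main obstacle'' but give no mechanism; in the paper the non-degeneracy again comes out of the no-Schubert-divisor lemma, so the same idea resolves both issues. Finally, two numerical slips: $h^0$ of the (dual) Mukai bundle is $r+s$, not $N_g+1$, and for $g=10$ the bundle has rank~$2$ (Mukai vector $(2,-H,5)$); a rank-$3$ bundle with $\rv^2=-2$ and $c_1$ proportional to~$H$ does not exist for $g=10$. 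For $g=7$ you should also note that the rank-$5$ bundle corresponds to the \emph{double} polarization $2H$, which is why the paper builds it as an extension of two Lazarsfeld bundles rather than by the standard construction; the pure moduli-theoretic existence argument would still leave you to prove global generation and the symmetry of the quadratic form.
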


More precisely, to prove Theorem~\ref{thm:main} we consider
for each~$g \in \{6,7,8,9,10,12\}$ a pair of integers~$(r,s)$ defined in the following table
\begin{equation}
\label{eq:rs}
\begin{array}{|c|c|c|c|c|c|c|}
\hline
g & 6 & 7 & 8 & 9 & 10 & 12
\\ \hline
(r,s) & (2,3) & (5,5) & (2,4) & (3,3) & (2,5) & (3,4)
\\ \hline
\end{array}
\end{equation}
For~$g \ne 7$ they provide a factorization~$g = r \cdot s$.
In terms of this pair our argument for~$g \ge 8$ splits into the following three steps
(the modifications necessary for~$g \in \{6,7\}$ will be explained later):
\begin{enumerate}[label={\textup{{\bf\arabic*.}}}, ref={\bf\arabic*}]
\item 
\label{step:prime-k3}
For a prime K3 surface~$(S,H)$ of genus~$g \in \{8,9,10,12\}$ 
we consider the unique stable vector bundle~$\cU_S$ on~$S$ of rank~$r$ with~$\rc_1(\cU_S) = -H$ and~$\upchi(\cU_S) = r + s$
(we call it {\sf the Mukai bundle}),
verify that it gives a closed embedding~\mbox{$\gamma_S \colon S \hookrightarrow \rM_g \subset \Gr(r,r+s)$},
and prove that this embedding induces an equality~$S = \rM_g \cap \P^g$ (thus, proving Theorem~\ref{thm:prime-k3}). 
\item 
\label{step:extend-gamma}
We prove that the Mukai bundle~$\cU_S$ on a very general K3 linear section~$S = X \cap \P^g$
extends to a reflexive sheaf~$\cU_X$ on~$X$, 
and that the morphism~$S \to \rM_g$ extends to a rational map~$X \dashrightarrow \rM_g$.
\item 
\label{step:extend-description}
We apply a general extension result, Proposition~\ref{prop:s-x},
that allows us to obtain a description of~$X$ from the description of~$S$.
\end{enumerate}
We explain below the ideas used in these steps.

\subsubsection*{Step~\ref{step:prime-k3}}

The construction of the Mukai bundle~$\cU_S$ is standard and quite general;
in particular, it works for prime K3 surfaces (and even for some Brill--Noether general K3 surfaces, see~\cite[Theorem~3.4]{BKM})
of any genus~$g \ge 2$ and all factorizations~$g = r \cdot s$;
we recall this construction in Proposition~\ref{prop:muk-bundles}
and check that~$\cU_S$ induces a closed embedding
\begin{equation*}
\gamma_S \colon S \to \Gr(r, r + s)
\end{equation*}
(we call it {\sf the Gushel morphism}) in Lemma~\ref{lem:gamma-x-s}.

Further, for~$g \in \{8,9,10,12\}$ we note that
the Mukai variety~$\rM_g$ has an embedding
\begin{equation*}
\rM_g \subset \Gr(r, r + s)
\end{equation*}
(where~$(r,s)$ are defined in~\eqref{eq:rs})
and can be identified inside~$\Gr(r,r+s)$ as the zero locus of a global section 
of a certain vector bundle~$\cE_0$ (see Section~\ref{subsec:MukaiVarieties8-12} for its definition)
that satisfies an appropriate \emph{non-degeneracy condition}
(see Definition~\ref{def:nondegenerate} and Lemma~\ref{lem:rm-9-10}).
To show that the Gushel morphism~$\gamma_S$ factors through~$\rM_g$, 
we check that a section of~$\cE_0$ vanishes on~$\gamma_S(S)$,
(this uses a cohomology vanishing of independent interest, see Lemma~\ref{lem:g10H1vanishing}),
and that this section satisfies the required non-degeneracy condition (Proposition~\ref{prop:sigma}).
Our key technical insight here is Lemma~\ref{lem:no-schubert}, 
which says that prime K3 surfaces are never contained in a Schubert divisor inside the Grassmannian. 
This reduces the verification of the non-degeneracy to a short case-by-case analysis.

Next, we identify~$S \subset \rM_g$ with a transverse linear section of~$\rM_g$.
The fact that~$S \subset \rM_g \cap \P^g$ is quite easy (see Corollary~\ref{cor:s-mukai}),
while the proof that this inclusion is an equality is more tricky; this is done in Proposition~\ref{prop:m-cap-p}.
Our proof is based on Lemma~\ref{lem:yes-schubert}, 
which in a combination with Lemma~\ref{lem:no-schubert} bounds the singularities of linear sections of~$\rM_g$ containing~$S$.
Combining this with Proposition~\ref{lem:zl-deg} 
we deduce that the linear section~$Y \coloneqq \rM_g \cap \P^g$ containing~$S$ 
is either a Cohen--Macaulay surface or a smooth threefold.
In the first case, by computing the degree we conclude that~$S = Y$, as required.
In the second case,
we prove that~$S = Y \cap Q$, where~$Q$ is a quadric, 
and show that a general hyperplane section of~$S$ cannot be a Brill--Noether general curve (see Corollary~\ref{cor:y-cap-q}), 
in contradiction to the celebrated result of Lazarsfeld~\cite[Corollary~1.4]{Laz}.

\subsubsection*{Step~\ref{step:extend-gamma}}

Let~$X$ be a Fano variety as in Theorem~\ref{thm:main}, still assuming~$g \in \{8,9,10,12\}$,
and let~\mbox{$S \coloneqq X \cap \P^g$} be a prime K3 surface linear section
(as we mentioned above, the existence of~$S$ follows from factoriality of~$X$, 
see Lemma~\ref{lem:hva}).
To extend the Gushel morphism~\mbox{$\gamma_S \colon S \to \rM_g$} to a rational map from~$X$,
we extend the Mukai bundle~$\cU_S$ to a maximal Cohen--Macaulay sheaf~$\cU_{X}$ on~$X$
(so that~$\cU_X$ is locally free on the smooth locus~$X_\sm \subset X$)
such that~$\cU_{X}^\vee$ is globally generated with~$\rH^0(X, \cU_{X}^\vee) = \rH^0(S, \cU_S^\vee)$.
For this we use two different approaches for~$n = 3$ and~$n \ge 4$.

In the case~$n = 3$ we use~\cite[Theorem~5.3]{BKM}, 
and as a result we obtain an extension of~$\gamma_S$ to a morphism~$\gamma_{X_\sm} \colon X_\sm \to \Gr(r,r+s)$
such that~$\gamma_{X_\sm}\vert_S = \gamma_S$.
To check that the image is contained in~$\rM_g$ 
we verify in Lemma~\ref{lem:s-x} the injectivity of the restriction morphism
\begin{equation*}
\rH^0(X_\sm, \gamma_{X_\sm}^*\cE_0) \to \rH^0(S, \gamma_S^*\cE_0),
\end{equation*}
which implies that any section of~$\cE_0$ on~$\Gr(r,r+s)$ vanishing on~$\gamma_S(S)$ also vanishes on~$\gamma_{X_\sm}(X_\sm)$.
This defines the required rational map~$\gamma_X \colon X \dashrightarrow \rM_g$.

Before going to higher dimensions, we prove that the rational map~$\gamma_X$ is in fact regular,
hence the the sheaf~$\cU_X$ is locally free on the entire~$X$.
We will explain this part of the argument at the end of Step~\ref{step:extend-description}.
We also note that~\cite[Corollary~5.9]{BKM}
proves that~$\cU_X$ is \emph{exceptional}.

To construct the rational Gushel map~$X \dashrightarrow \Gr(r, r+s)$, 
where~$X$ is a Fano variety of dimension~$n \ge 4$ we use a deformation argument.
Namely, we chose a very general K3 surface linear section~$S \subset X$ 
and consider the family~$\{\cX_u\}_{u \in U}$ (where~$U$ is an open subset of~$\P^{n-3}$) 
of all mildly singular Fano threefold linear sections~$\cX_u = X \cap \P^{g+1}$
of~$X \subset \P^{n + g - 2}$ containing~$S = X \cap \P^g$,
so that
\begin{equation*}
S \subset \cX_u \subset X
\end{equation*}
and each~$\cX_u$ satisfies the assumptions of Theorem~\ref{thm:main}.
For each of these Fano threefolds~$\cX_u$ we consider the corresponding Mukai bundle~$\cU_{\cX_u}$
and, using the exceptionality of~$\cU_{\cX_u}$ and isomorphisms~\mbox{$\cU_{\cX_u}\vert_S \cong \cU_S$},
we show in Lemma~\ref{lem:cu-cx-u} that there is a single bundle on the open subset of~$X$ containing all the~$\cX_u$
whose restriction to each~$\cX_u$ is isomorphic to~$\cU_{\cX_u}$.
Using this vector bundle,
we construct a rational map~$\gamma_X \colon X \dashrightarrow \Gr(r,r+s)$
that coincides with the Gushel morphism on each~$\cX_u$,
and conclude that~$\gamma_X$ factors through~$\rM_g$, see Corollary~\ref{cor:gushel}.

\subsubsection*{Step~\ref{step:extend-description}}

For this step we use a general result.
Its simplified version says that if~$Y \subset \P^N$ is a Cohen--Macaulay variety
and~$\P^{N_2} \hookrightarrow \P^{N_1} \hookrightarrow \P^N$ is a chain of linear projective subspaces 
such that
\begin{itemize}
\item 
$X \subset Y \cap \P^{N_1}$ is a Cohen--Macaulay variety 
with~$X \cap \P^{N_2} = Y \cap \P^{N_2}$
\item 
$\dim(Y \cap \P^{N_2}) = \dim(Y) - N + N_2$ and~$\dim(X) = \dim(Y) - N + N_1$,
\end{itemize}
then
\begin{equation*}
X = Y \cap \P^{N_1}
\end{equation*}
is a dimensionally transverse linear section of~$Y$.
The more general version, Proposition~\ref{prop:s-x}, 
requires~$Y$ to be arithmetically Cohen--Macaulay (see Section~\ref{ss:extension} for a reminder about this property), 
allows the map~$\P^{N_1} \dashrightarrow \P^N$ to be rational,
and proves that~$X$ is an iterated cone over a dimensionally transverse linear section of~$Y$.

To finish the proof of Theorem~\ref{thm:main}, 
we consider~$Y \coloneqq \rM_g \subset \P^{n_g + g - 2}$, $X \subset \P^{n + g - 2}$, and~\mbox{$S \subset \P^g$},
and note that the construction of the Gushel maps implies that the ambient projective spaces 
form a chain~\mbox{$\P^g \hookrightarrow \P^{n + g - 2} \dashrightarrow \P^{n_g + g - 2}$}
such that the assumptions of Proposition~\ref{prop:s-x} are satisfied.
Eventually, this gives us an identification of~$X$ with an iterated cone 
over a dimensionally transverse linear section of~$\rM_g$, as required.

To show that cones are not necessary for~$n \le 4$ we use the terminality assumption for~$X$,
see Lemma~\ref{lem:cone-singularity}.
As a consequence of this, we conclude that in these cases 
the map~$\P^{n + g - 2} \dashrightarrow \P^{n_g + g - 2}$ is injective, 
hence~$X = \rM_g \cap \P^{n + g - 2}$, 
so that the Gushel map~$\gamma_X \colon X \to \rM_g$ is regular and the sheaf~$\cU_X$ is locally free,
which is what was used for the extension of our results to higher dimensions.

\medskip 

The arguments outlined above work uniformly for~$g \in \{8,9,10,12\}$,
but for~$g = 7$ and~$g = 6$ they require a modification, for two different reasons.

If~$g = 7$, the problem appears already at Step~\ref{step:prime-k3}:
the Mukai variety~$\rM_7 = \OGr_+(5,10)$ lives in~$\Gr(5,10)$, 
the corresponding pair of integers~$(r,s) = (5,5)$ 
gives a factorization of the \emph{double polarization} of a prime K3 surface~$(S,H)$ of genus~7,
and it turns out that the construction of Mukai bundles 
used in Proposition~\ref{prop:muk-bundles} and~\cite{BKM} does not work for the double polarization.
Therefore, we act differently.
We use the technique of~\cite{BKM} to construct a pair of intermediate stable vector bundles of rank~2 and~3 on~$S$, 
and then define~$\cU_S$ as their extension, see Theorem~\ref{thm:g7-emb}.
This gives us an embedding~$S \hookrightarrow \rM_7 \cap \P^7$.

Furthermore, in contrast to Lemma~\ref{lem:no-schubert}, 
it is no longer true that~$S$ is not contained in Schubert divisors of the homogeneous variety~$\rM_7$;
in fact, the opposite is always true.
However, we show that~$S$ cannot be contained in Schubert cycles of codimension~2, see Lemma~\ref{lem:no-schubert-g7}.
We also prove Lemma~\ref{lem:yes-schubert-g7}, a variant of Lemma~\ref{lem:yes-schubert}, 
that allows us to bound the singularities of~$\rM_7 \cap \P^7$.
After that, the proof of the equality~$S = \rM_7 \cap \P^7$ goes in the same way as before, see Proposition~\ref{prop:s-g7}.

On the other hand, the case~$g = 7$ has the following major advantage.
Using the equality~\mbox{$S = \rM_7 \cap \P^7$}, one can identify the Mukai bundle~$\cU_S$
with a twisted normal bundle of~$S$ in~$\P^7$, see Corollary~\ref{cor:cw-cn}.
This simplifies Step~\ref{step:extend-gamma} considerably: 
we can define the extension~$\cU_{X}$ of~$\cU_S$ 
to a Fano variety~$X$ as the twisted normal sheaf of~$X \subset \P^{n + 5}$.
This immediately gives us an extension of the embedding~$S \hookrightarrow \rM_7$ 
to a rational map~$\gamma_X \colon X \dashrightarrow \rM_7$.

The last step, Step~\ref{step:extend-description}, for~$g = 7$, is the same as for other genera.

\medskip 

If~$g = 6$ and~$(r,s) = (2,3)$, the construction of the Mukai bundle on~$S$ in Proposition~\ref{prop:muk-bundles} works,
but the embedding~$S \hookrightarrow \rM_6 \cap \P^6 = \Gr(2,5) \cap \P^6$ is not an equality anymore.
We prove that it is a divisorial embedding, 
and that~$S$ is a complete intersection of type~$(1,1,1,2)$ in~$\Gr(2,5)$, see Proposition~\ref{prop:s-g6}.
Yet another special feature of this case is the appearance of cones much earlier---they show up already for smooth threefolds.

\medskip 

Finally, for~$g = 12$, there are some complications with the Mukai varieties~$\rM_{12} \subset \Gr(3,7)$.
They are also not unique, nor homogeneous, and it requires some effort to prove that they have the desired properties.
Some of the arguments for this case are in Appendix~\ref{sec:g-12}.

\subsection{History}\label{sec:History}

The whole story owes its existence to Mukai,
and his ideas and arguments are crucial in our proof.

The idea of realizing K3 surfaces and prime Fano threefolds 
as linear sections of special homogeneous manifolds appeared first (to our knowledge) in~\cite{Muk:CurvesK3}.
Theorems~\ref{thm:main} or~\ref{thm:prime-k3} are not stated there explicitly,
but it is evident that at that point the formulation was clear to Mukai, at least for~$g \in \{6,7,8,9,10\}$.
Instead, the emphasis of this paper is on the uniqueness of the realization.

In the next two papers, \cite{Mukai:PNAS} and~\cite{Mukai:Fano3folds}, 
the results about K3 surfaces and (smooth) Fano manifolds of arbitrary dimension 
appear in full generality (including the case of~$g = 12$), 
see~\cite[Theorems~3 and~4]{Mukai:PNAS} for K3 surfaces 
and~\cite[Theorem~2]{Mukai:PNAS} and~\cite[Section~2]{Mukai:Fano3folds} for Fano manifolds.
Also, it is made clear that the key idea is to construct an appropriate vector bundle (which we now call the Mukai bundle).
In particular, it is explained that to pass from K3 surfaces to Fano manifolds 
one needs to extend this bundle from a K3 surface to the ambient variety.
For this purpose Mukai suggests to use inductively Fujita's extension theorem~\cite{Fujita:extendampledivisor};
however, this argument does not work for the very first step, the extension to threefolds,
for the reasons explained in~\cite{BKM}.

These ideas were further developed in~\cite{Muk:New}.
Here Mukai introduced the Brill--Noether property for K3 surfaces and emphasized its importance.
In particular, here he announced:
\begin{itemize}[wide]
\item 
an extension of Theorem~\ref{thm:prime-k3} 
from prime K3 surfaces to Brill--Noether general K3 surfaces, see~\cite[Theorems~4.7 and~5.5]{Muk:New},
\item 
an extension of Theorem~\ref{thm:main} to Fano threefolds with terminal Gorenstein singularities 
and a Brill--Noether general anticanonical divisor, see~\cite[Theorem~6.5]{Muk:New}, and 
\item 
a possible further extension of Theorem~\ref{thm:main} to Fano threefolds with canonical Gorenstein singularities 
whose anticanonical divisor class does not admit a movable decomposition, see~\cite[Proposition~7.8]{Muk:New}.
\end{itemize}

In this paper Mukai also suggested a replacement for the Fujita's extension theorem (see a discussion following~\cite[Theorem~6.5]{Muk:New}),
and explained that this argument is \emph{not independent} of Theorem~\ref{thm:prime-k3}, but relies on it.
The last, but not the least idea of Mukai, that we want to mention,
is his argument at the very end of~\cite[Section~6]{Muk:New}, 
on which our extension principle from Proposition~\ref{prop:s-x} is modeled.

An approach to the Mukai theorem via degenerations has been developed in~\cite{CLM93,CLM98}.
Their arguments prove the irreducibility of the corresponding moduli spaces
and Theorems~\ref{thm:main} and~\ref{thm:prime-k3} for general Fano varieties and general K3 surfaces,
but not for all of them.

Taking into account the significance of Mukai's results, 
we give a complete argument for Theorems~\ref{thm:main} and~\ref{thm:prime-k3} in this paper and~\cite{BKM},
combining ideas of Mukai 
with some simplifications of our own. These simplifications include
Lemma~\ref{lem:no-schubert} (and its variant Lemma~\ref{lem:no-schubert-g7} for genus~7)
that makes the proof of non-degeneracy in Proposition~\ref{prop:sigma} quite simple, 
and to base the proof of Proposition~\ref{prop:m-cap-p} in part on the general connectness criterion in Proposition~\ref{lem:zl-deg}.

Finally, we want to mention the papers \cite{Muk:CSS, Muk:CurvesGrassmannians, Muk:CSS-I, Muk:CSS-II} 
where Mukai's interest shifts to lower dimension: from Fano varieties and K3 surfaces to canonical curves.
He described explicit conditions, in terms of special linear systems, determining
when a canonical curve~$C$ of genus~$g \in \{6, 7, 8, 9\}$ can be realized as a transverse linear section of~$\rM_g$;
these conditions are implied by the Brill--Noether generality property of~$C$, but are more precise.

\subsection{Open questions}

We do not know how to prove some of the results announced in~\cite{Muk:New}. 
The first is Mukai's extension of Theorem~\ref{thm:prime-k3} to Brill--Noether general K3 surfaces. 
One can show that any K3 surface appearing in the Mukai model contains a Brill--Noether general curve, 
and thus is Brill--Noether general itself; but we do not know how to prove the converse, 
that any Brill--Noether general K3 surface has a description as in Theorem~\ref{thm:prime-k3}.

We also do not know how to replace our factoriality assumption with the assumption 
that the Fano threefold contains a Brill--Noether general K3 as a hyperplane section,
nor how to replace terminal singularities by canonical singularities.

Despite recent progress by Tanaka on the classification of Fano threefolds in characteristic~$p$, 
see in particular~\cite{Tanaka:Fano3folds-II}, the analogue of Theorem~\ref{thm:main} is open. 
One difficulty is the same as in the non-factorial case, namely that the K3 hyperplane sections all have Picard rank at least two.

In~\cite[Theorem~5.4]{Muk:New} Mukai suggested a criterion for smoothness of the Mukai threefold~$\rM_{12}$ 
associated with a non-degenerate net of skew forms, the proof of which we also did not manage to recover.

\subsection{Conventions}\label{ss:conventions}

We work over an algebraically closed field~$\kk$ of characteristic zero.

As mentioned above, we call a smooth K3 surface~$S$ {\sf prime} if~$\Pic(S) \cong \ZZ$. 
When we write~$(S, H)$ for a polarized prime K3 surface, we always assume that~$H$ is the primitive polarization. 

If~$\cU$ is a vector bundle on a scheme~$X$ such that the dual bundle~$\cU^\vee$ 
is globally generated with~$V \coloneqq \rH^0(X,\cU^\vee)^\vee$, 
we usually denote by
\begin{equation*}
V/\cU \coloneqq (V \otimes \cO_X)/\cU \cong \Coker(\cU \xrightarrow{\ \coev\ } V \otimes \cO_X)
\quad\text{and}\quad 
\cU^\perp \coloneqq \Ker(V^\vee \otimes \cO_X \xrightarrow{\ \ev\ } \cU^\vee) \cong (V/\cU)^\vee
\end{equation*}
the quotient bundle and its dual.
Here~$\ev$ and~$\coev$ stand for the evaluation and coevaluation morphism, respectively.

\subsection*{Acknowledgements}
As in our previous paper, it is difficult to overestimate the influence
of Shigeru Mukai's work in our arguments.
We would also like to thank Francesco Denisi, Daniele Faenzi, Soheyla Feyzbakhsh,  
Chunyi Li, Shengxuan Liu, Laurent Manivel, and Yuri Prokhorov for useful discussions.

%%%%%%%%%%%%%%%%%%%%%%%%%

\section{Preliminaries}\label{sec:MukaiVarieties}

In this section we establish some preliminary results about zero loci of global sections of vector bundles
and about K3 surfaces isomorphic to quadratic sections of smooth projective threefolds.
These results are crucial for the proof of Theorem~\ref{thm:prime-k3}.

\subsection{Zero loci}\label{subsec:ZeroLoci}

For a projective variety~$M \subset \P(V)$ we write~$\cO_M(1) \coloneqq \cO_{\P(V)}(1)\vert_M$.
For a global section~$\sigma \in \rH^0(M,\cE)$ of a vector bundle~$\cE$ on~$M$ we denote by~$\sigma$
the induced morphism~$\cO_M \to \cE$ and, by abuse of notation, also the dual morphism~$\cE^\vee \to \cO_M$.

\begin{lemma}
\label{lem:zl-span}
Let~$M \subset \P(V)$ be a projective variety 
such that the natural restriction morphism~\mbox{$V^\vee \to \rH^0(M, \cO_M(1))$} is an isomorphism.
Let~$\cE$ be a vector bundle on~$M$, 
let~$\sigma \in \rH^0(M,\cE)$ be a global section,
and let~$Y \subset M$ be the zero locus of~$\sigma$.
Consider the morphism~$\sigma \colon \cE^\vee \to \cO_M$, 
its twist~$\sigma(1) \colon \cE^\vee(1) \to \cO_M(1)$,
the induced morphism on global sections
\begin{equation*}
\rH^0(M, \cE^\vee(1)) \xrightarrow{\ \rH^0(\sigma(1))\ } \rH^0(M, \cO_M(1)) = V^\vee,
\end{equation*}
and its dual morphism~$\rH^0(\sigma(1))^\vee \colon V \to \rH^0(M, \cE^\vee(1))^\vee$.
Then
\begin{equation}
\label{eq:zl-ls}
Y \subset M \cap \P(\Ker(V \xrightarrow{\ \rH^0(\sigma(1))^\vee\ } \rH^0(M, \cE^\vee(1))^\vee)).
\end{equation}
If, moreover, $\cE^\vee(1)$ is globally generated then the inclusion~\eqref{eq:zl-ls} is an equality of schemes.
\end{lemma}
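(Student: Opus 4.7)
The plan is to identify both sides of~\eqref{eq:zl-ls} with subschemes of~$M$ cut out by explicit subsheaves of~$\cO_M(1)$, and then to compare these two subsheaves directly. On the left, $Y\subset M$ is by definition cut out by the image ideal $\cI_Y \coloneqq \Ima(\sigma \colon \cE^\vee \to \cO_M)$, so after twisting we have $\cI_Y(1) = \Ima(\sigma(1))$ inside~$\cO_M(1)$. On the right, let $K \coloneqq \Ker(V \to \rH^0(M, \cE^\vee(1))^\vee)$ as in the statement; its annihilator $K^\perp \subset V^\vee$ is exactly $\Ima(\rH^0(\sigma(1)))$ by duality. Using the identification $V^\vee = \rH^0(M, \cO_M(1))$, the linear subspace $\P(K) \subset \P(V)$ is scheme-theoretically defined by the sections in~$K^\perp$, so the subscheme $M \cap \P(K)$ is cut out in~$M$ by the subsheaf $J(1) \subset \cO_M(1)$ that is the image of the evaluation $\Ima(\rH^0(\sigma(1))) \otimes \cO_M \to \cO_M(1)$.

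For the inclusion~\eqref{eq:zl-ls}, I would observe that by construction $\rH^0(\sigma(1)) \colon \rH^0(M, \cE^\vee(1)) \to \rH^0(M, \cO_M(1))$ factors through $\rH^0(M, \cI_Y(1))$: every section of the form $\sigma(1) \circ \tau$ vanishes on~$Y$. Consequently $J(1) \subset \cI_Y(1)$ as subsheaves of~$\cO_M(1)$, which gives $Y \subset M \cap \P(K)$ scheme-theoretically.

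For the equality under the assumption that $\cE^\vee(1)$ is globally generated, I would set up the commutative square
\[
\begin{CD}
\rH^0(M, \cE^\vee(1)) \otimes \cO_M @>{\ev}>> \cE^\vee(1) \\
@V{\rH^0(\sigma(1)) \otimes \id}VV @VV{\sigma(1)}V \\
V^\vee \otimes \cO_M @>{\ev}>> \cO_M(1),
\end{CD}
\]
which commutes by the very definition of $\rH^0(\sigma(1))$. Global generation of $\cE^\vee(1)$ makes the top evaluation surjective, so the image in $\cO_M(1)$ of the upper-right composition equals $\Ima(\sigma(1)) = \cI_Y(1)$, while the image of the left-bottom composition equals $J(1)$. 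Hence $\cI_Y(1) = J(1)$, and the equality of subschemes follows. The only step that requires a bit of attention is purely notational, namely keeping straight the double duality between the linear subspace $\P(K) \subset \P(V)$ and the subspace $\Ima(\rH^0(\sigma(1))) \subset V^\vee$ of linear forms; once that bookkeeping is in place, each remaining step is a short diagram chase and the commutative square above does all the real work.
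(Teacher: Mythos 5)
Your proposal is correct and is essentially the paper's own argument: both identify the (twisted) ideal sheaf of $Y$ in $M$ with $\Ima(\sigma(1))$ and the ideal of $M\cap\P(K)$ with the image of the composition $\rH^0(M,\cE^\vee(1))\otimes\cO_M \to \cE^\vee(1) \xrightarrow{\sigma(1)} \cO_M(1)$, and then note that global generation of $\cE^\vee(1)$ makes the first map surjective so the two images coincide. The only difference is presentational (your commutative square and twist by $\cO(1)$ versus the paper's untwisted right-exact sequence restricted to $M$).
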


\begin{proof}
Let~$W \subset V$ be the kernel of the map~$\rH^0(\sigma(1))^\vee$.
Then we have a right exact sequence
\begin{equation*}
\rH^0(M, \cE^\vee(1)) \to V^\vee \to W^\vee \to 0,
\end{equation*}
which induces a right exact sequence of sheaves
\begin{equation*}
\rH^0(M, \cE^\vee(1)) \otimes \cO_{\P(V)}(-1) \xrightarrow{\ \rH^0(\sigma(1))\ } \cO_{\P(V)} \to \cO_{\P(W)} \to 0
\end{equation*}
on~$\P(V)$.
Restricting it to~$M$, we obtain a right exact sequence
\begin{equation*}
\rH^0(M, \cE^\vee(1)) \otimes \cO_M(-1) \xrightarrow{\ \rH^0(\sigma(1))\ } \cO_M \to \cO_{M \cap \P(W)} \to 0.
\end{equation*}
Note, on the other hand, that the map~$\rH^0(\sigma(1))$ in this sequence factors as
\begin{equation*}
\rH^0(M, \cE^\vee(1)) \otimes \cO_M(-1) \xrightarrow{\ \ev\ } \cE^\vee  \xrightarrow{\ \sigma\ } \cO_M,
\end{equation*}
where the first arrow is the evaluation map.
Since the image of the second arrow is the ideal~$I_Y$, and the image of the composition is~$I_{M \cap \P(W)}$, it follows that~$I_{M \cap \P(W)} \subset I_Y$, i.e., $Y \subset M \cap \P(W)$.

If, moreover, the bundle~$\cE^\vee(1)$ is globally generated, the first arrow is surjective, hence the image of the composition~$I_{M \cap \P(W)}$ is equal to the image~$I_Y$ of the second arrow, i.e., $Y = M \cap \P(W)$.
\end{proof}

\begin{proposition}
\label{lem:zl-deg}
Let~$M$ be a connected Cohen--Macaulay projective variety of dimension~$n$
and let~$\cE$ be a globally generated vector bundle on~$M$.
If
\begin{equation}
\label{eq:vanishing-connectedness}
\rH^{i}(M, \wedge^{j}\cE^\vee) = 0 \qquad\text{for~$j \ge 1$ and~$i \in \{j-1,j\}$}
\end{equation}
then the zero locus of any global section of~$\cE$ is connected.
\end{proposition}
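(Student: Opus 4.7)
The plan is to analyze the Koszul complex
\[
K^\bullet = \bigl[\wedge^r\cE^\vee \to \wedge^{r-1}\cE^\vee \to \cdots \to \cE^\vee \to \cO_M\bigr]
\]
associated to $\sigma$, with $\cO_M$ placed in degree $0$ and $\wedge^j\cE^\vee$ in degree $-j$ (here $r = \rank\cE$), and to compare the two hypercohomology spectral sequences converging to $\mathbb{H}^\bullet(M, K^\bullet)$. The goal is to show that $\rH^0(Y, \cO_Y) = \kk$ (under the standing hypothesis, assuming $Y$ is nonempty; the empty case being vacuous), which forces the connectedness of $Y$.

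First, using the naive spectral sequence $E_1^{p,q} = \rH^q(M, K^p)$: on the total-degree-$0$ antidiagonal, the hypothesis kills every term $E_1^{-j, j} = \rH^j(M, \wedge^j\cE^\vee)$ with $j \ge 1$ (the $i = j$ case), so only $E_1^{0,0} = \rH^0(M, \cO_M) = \kk$ survives (using that $M$ is connected). Similarly, the total-degree-$(-1)$ antidiagonal is annihilated by the $i = j - 1$ case. All differentials out of $E_r^{0,0}$ land in the region $p > 0$, which is zero; the only incoming differentials originate from $E_r^{-r, r-1}$, which is already zero on $E_1$ by the hypothesis at $(j, i) = (r, r - 1)$. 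Hence $\mathbb{H}^0(M, K^\bullet) = \kk$ and, by the same token, $\mathbb{H}^{-1}(M, K^\bullet) = 0$.

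Next, the second spectral sequence $E_2^{p,q} = \rH^p(M, \cH^q(K^\bullet))$ converges to the same hypercohomology. Since $K^\bullet$ is concentrated in non-positive degrees and $\cH^0(K^\bullet) = \cO_Y$, the bottom row is $E_2^{p, 0} = \rH^p(Y, \cO_Y)$, and the edge homomorphism $\mathbb{H}^0(M, K^\bullet) \to E_2^{0, 0}$ is the restriction map $\rH^0(M, \cO_M) \to \rH^0(Y, \cO_Y)$ sending $1 \mapsto 1_Y$. Combined with Step~1 this already gives the inclusion $\kk \cdot 1_Y \subseteq \rH^0(Y, \cO_Y)$.

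The crux---and the main obstacle---is to upgrade this inclusion to an equality. When $\sigma$ is a regular section (so that $Y$ has the expected codimension $r$), the Koszul complex is a resolution of $\cO_Y$, the second spectral sequence collapses, and the edge map is an isomorphism; equivalently one can chase the syzygy sequences $0 \to K_j \to \wedge^j\cE^\vee \to K_{j-1} \to 0$ (with $K_0 = I_Y$) attached to the Koszul resolution of $I_Y$ and propagate successive injections $\rH^1(M, I_Y) \hookrightarrow \cdots \hookrightarrow \rH^r(M, \wedge^r\cE^\vee) = 0$ using the $i = j$ vanishings, yielding $\rH^1(M, I_Y) = 0$ and hence $\rH^0(Y, \cO_Y) = \kk$ through the ideal sheaf sequence $0 \to I_Y \to \cO_M \to \cO_Y \to 0$. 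In the general case one must handle the higher Koszul cohomology sheaves $\cH^{-p}(K^\bullet)$ for $p \ge 1$, supported on the excess-vanishing locus of $\sigma$: here I would leverage the additional vanishing $\mathbb{H}^{-1}(M, K^\bullet) = 0$ from Step~1 together with the filtration on $\mathbb{H}^0 = \kk$ coming from the second spectral sequence to force the graded pieces $E_\infty^{p, -p}$ to vanish for $p \ge 1$, so that $E_\infty^{0, 0} = \kk$ and the outgoing differentials $d_s \colon E_s^{0, 0} \to E_s^{s, -s+1}$ cut $\rH^0(Y, \cO_Y) = E_2^{0, 0}$ down to the line spanned by $1_Y$.
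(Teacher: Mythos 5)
Your Step~1 and the regular-section case are fine, but the crux step you flag is a genuine gap, and the way you propose to close it cannot work. Granting (as one can indeed show, via the map $\cO_M[0]\to K^\bullet$ and the nonvanishing of $1\mapsto 1_Y$) that $E_\infty^{p,-p}=0$ for $p\ge 1$ and $E_\infty^{0,0}=\kk\cdot 1_Y$, this only says that the subspace of $\rH^0(Y,\cO_Y)=E_2^{0,0}$ surviving all the differentials $d_s\colon E_s^{0,0}\to E_s^{s,-s+1}$ is one-dimensional. That is compatible with $Y$ being disconnected: if $Y=Y_1\sqcup Y_2$ then $\rH^0(Y,\cO_Y)\supseteq\kk e_1\oplus\kk e_2$, and nothing prevents the complement of $\kk\cdot 1_Y$ from being killed by differentials landing in $E_s^{s,-s+1}$, whose $E_2$-ancestors $\rH^s(M,\cH^{-s+1}(K^\bullet))$ are built from the higher Koszul cohomology sheaves and are typically nonzero precisely in the excess-dimensional situations the proposition is designed to handle. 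So "the differentials cut $\rH^0(Y,\cO_Y)$ down to $\kk\cdot 1_Y$" is not a proof that $\rH^0(Y,\cO_Y)=\kk$, nor of connectedness; you would need to show those differentials vanish, and there is no mechanism for that in your setup. A telltale sign is that your argument never uses the Cohen--Macaulay hypothesis on $M$, which is exactly what is needed to make the Koszul complex a resolution -- but only when the zero locus has the expected codimension, which is false in general here (in the application, Proposition~\ref{prop:m-cap-p}, the relevant zero locus can be a threefold of expected dimension two).

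The paper circumvents the excess-intersection problem by globalizing: it works on $M\times\P(\rH^0(M,\cE))$ with the universal section of $\pr_1^*\cE\otimes\pr_2^*\cO(1)$. Its zero locus is the projective subbundle $\P_M(\cE')$, where $\cE'=\Ker(\rH^0(M,\cE)\otimes\cO\to\cE)$, hence automatically of the expected codimension; since $M$ is Cohen--Macaulay the Koszul complex there \emph{is} a resolution of $\cO_{\P_M(\cE')}$. Pushing it forward along $p=\pr_2$ and using exactly the vanishing~\eqref{eq:vanishing-connectedness} (the terms are $\rH^i(M,\wedge^j\cE^\vee)\otimes\cO(-j)$) together with connectedness of $M$, one gets $p_*\cO_{\P_M(\cE')}\cong\cO_{\P(\rH^0(M,\cE))}$, and then Zariski's connectedness theorem gives that every fiber of $p$ -- that is, the zero locus of every individual section, regular or not -- is connected. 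If you want to salvage your approach, that change of ambient space is the missing idea; a fiberwise spectral-sequence argument on $M$ alone does not suffice.
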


\begin{proof}
Consider the product~$M \times \P(\rH^0(M,\cE))$ and the universal section~$\tilde\sigma$ 
of the vector bundle~\mbox{$\pr_1^*\cE \otimes \pr_2^*\cO(1)$} on it.
Let~$m = \rank(\cE)$.
Since~$\cE$ is globally generated, the sheaf
\begin{equation*}
\cE' \coloneqq \Ker(\rH^0(M,\cE) \otimes \cO \to \cE)
\end{equation*}
is a subbundle in~$\rH^0(M,\cE) \otimes \cO_M$ of corank~$m$
and its projectivization~$\P_M(\cE') \subset M \times \P(\rH^0(M,\cE))$ coincides with the zero locus of~$\tilde\sigma$.
In particular, this zero locus has expected codimension~$m$, and since~$M$ is Cohen--Macaulay,
its structure sheaf has the Koszul resolution
\begin{equation*}
\dots \to 
\pr_1^*\wedge^2\cE^\vee \otimes \pr_2^*\cO(-2) \to 
\pr_1^*\cE^\vee \otimes \pr_2^*\cO(-1) \to 
\cO_{M \times \P(\rH^0(M,\cE))} \to \cO_{\P_M(\cE')} \to 0.
\end{equation*}
Consider the natural projection~$p \coloneqq \pr_2\vert_{\P_M(\cE')} \colon \P_M(\cE') \to \P(\rH^0(M,\cE))$.
Using the the Koszul resolution, the hypercohomology spectral sequence, connectedness of~$M$,
and the vanishing~\eqref{eq:vanishing-connectedness}, we compute
\begin{equation*}
p_*\cO_{\P_M(\cE')} \cong \cO_{\P(\rH^0(M,\cE))}.
\end{equation*}
By the Zariski's connectedness theorem every fiber of~$p$ is connected.
It remains to note that the zero loci of sections of~$\cE$ are the fibers of~$p$.
\end{proof}

\subsection{Brill--Noether property}
\label{subsec:BNproperty}

In this section, we show that the Brill--Noether property of curves and K3 surfaces
prevents them from being complete intersections of certain types.

Recall that a curve~$C$ is called {\sf Brill--Noether general} 
if the dimension of the subset of~$\Pic^d(C)$ parameterizing line bundles~$\cL$ of degree~$d$ with~$h^0(\cL) \ge r$ 
is less or equal than~\mbox{$g - r(g - 1 - d + r)$}; 
moreover, if this number is negative, this subset is empty, and otherwise it is nonempty.

For instance, for Brill--Noether general curves of genus~$g \ge 7$ 
there are neither line bundles~$\cL$ with~$\deg(\cL) = 6$ and~$h^0(\cL) \ge 3$,
nor line bundles~$\cL$ with~$\deg(\cL) = 4$ and~$h^0(\cL) \ge 2$.

\begin{lemma}\label{lem:r-c}
If~$R \subset \P^{g-1}$ is a smooth surface and~$Q \subset \P^{g-1}$ is a quadric hypersurface such that
\begin{equation*}
C = R \cap Q \subset \P^{g-1}
\end{equation*}
is a smooth canonically embedded curve of genus~$g$ then~$R$ is a del Pezzo surface or an elliptic scroll and~$\deg(R) = g - 1$.
Moreover, if~$g \ge 7$ then~$C$ is not Brill--Noether general.
\end{lemma}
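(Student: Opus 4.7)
The plan is to first pin down the class of $K_R + H$ via adjunction and the Hodge index theorem, forcing the sectional genus of $(R, H)$ to equal one; then to invoke the classical classification of smooth polarized surfaces of sectional genus one to conclude that $R$ is either an anticanonically embedded del Pezzo surface or an elliptic scroll; and finally to exhibit on $R$ a pencil whose restriction to $C$ is a special linear series with negative Brill--Noether number when $g \ge 7$.

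First, since $C = R \cap Q$ is a scheme-theoretic complete intersection and $C$ is canonically embedded of genus $g$, the intersection formula $\deg C = \deg R \cdot \deg Q$ together with $\deg C = 2g - 2$ immediately gives $\deg R = g - 1$. Writing $H$ for the hyperplane class on $R$, so that $H^2 = g - 1$ and $C \sim 2H$, I would apply adjunction $\omega_C \cong (K_R + 2H)\vert_C$ and compare with $\omega_C \cong \cO_C(1)$ to deduce $K_R \cdot H = -(g - 1)$; equivalently $(K_R + H) \cdot H = 0$. Hodge index then yields $(K_R + H)^2 \le 0$, with equality if and only if $K_R + H$ is numerically trivial.

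Next comes the main structural step: the sectional genus of $(R, H)$ equals one, and I would invoke Fujita's classification of smooth polarized surfaces of sectional genus one. This identifies $R$ as either (i) a del Pezzo surface anticanonically embedded with $-K_R \sim H$ (the equality case of Hodge index), or (ii) a $\P^1$-bundle $\pi \colon R \to E$ over a smooth elliptic curve, with $H \cdot F = 1$ on each fiber $F$ of $\pi$. Some care is needed to verify the hypotheses of the classification, in particular the linear normality of $R \subset \P^{g-1}$; this will follow from the cohomology of the short exact sequence $0 \to \cO_R(-H) \to \cO_R(H) \to \omega_C \to 0$ together with $h^0(\cO_R(-H)) = 0$ and control of $h^1(\cO_R(-H))$. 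This classification step is the chief technical obstacle.

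Finally, in either case one extracts an incompatible linear series on $C$. In case (i), the del Pezzo has degree $g - 1 \le 9$, and except for the subcase $R = \P^2$ (which forces $g = 10$ and realizes $C$ as a smooth plane sextic with a $g^2_6$), $R$ admits a pencil of conics $|E|$ with $E^2 = 0$ and $E \cdot H = 2$, restricting to a $g^1_4$ on $C$. In case (ii), $\pi\vert_C \colon C \to E$ is a double cover since $C \cdot F = 2$, and pulling back any degree-two divisor on $E$ yields a $g^1_4$ on $C$. In every instance the Brill--Noether number $\rho(g, 1, 4) = 6 - g$ (respectively $\rho(10, 2, 6) = -8$) is strictly negative for $g \ge 7$, contradicting Brill--Noether generality of $C$.
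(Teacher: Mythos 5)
Your proof is correct, and its overall skeleton (degree computation, adjunction giving $(K_R+H)\cdot H=0$, Hodge index dichotomy, identification of $R$ as an anticanonical del Pezzo or an elliptic scroll, and then a special linear series on $C$) matches the paper's; the genuine difference is in the middle step. Where you invoke Fujita's classification of smooth polarized surfaces of sectional genus one, the paper argues directly: in the non-numerically-trivial case it picks a $K$-negative extremal curve $\Gamma$ with $(K_R+H)\cdot\Gamma<0$, notes $(-K_R)\cdot\Gamma\ge 2$ so the contraction is a $\P^1$-bundle with $H\cdot\Gamma=1$, and computes that a general member of $|H|$ is elliptic, so the base is elliptic. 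This is essentially the proof of the classification you cite, so the paper's route is self-contained while yours trades that for a known result; both are fine. Note that your flagged "chief technical obstacle" is not actually one: the sectional-genus-one classification needs only ampleness of $H$ (automatic for the hyperplane class), not linear normality of $R\subset\P^{g-1}$, so no extra cohomological verification is required. Finally, your construction of the offending series differs harmlessly from the paper's: in the del Pezzo case you use a conic pencil ($E^2=0$, $E\cdot H=2$) giving a $g^1_4$, with $R=\P^2$ handled via the plane-sextic $g^2_6$, whereas the paper uses the blow-down to $\P^2$ to get a degree-$6$ net (covering $R=\P^2$ as the trivial blowup) and the ruling of $\P^1\times\P^1$ for a $g^1_4$; in the scroll case both arguments use the bielliptic double cover. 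All the numerical Brill--Noether computations you give are correct.
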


A similar result was proved in~\cite[Theorem~1.1]{Cos}.

\begin{proof}
Let~$H \in \Pic(R)$ be the hyperplane class.
By assumption~$K_C = H\vert_C$.
It follows that
\begin{equation*}
2g - 2 = \deg(K_C) = H \cdot C = H \cdot 2H = 2 \deg(R),
\end{equation*}
hence~$\deg(R) = g - 1$.
Further, the adjunction formula gives~$H\vert_C = K_C = (K_R + 2H)\vert_C$, hence
\begin{equation*}
(K_R + H)\vert_C = 0.
\end{equation*}
In particular, since~$C \sim 2H$ is ample, the Hodge index theorem implies that either
\begin{aenumerate}
\item
\label{it:num-triv}
$K_R + H$ is numerically trivial, or
\item
\label{it:neg-square}
$(K_R + H)^2 \le -1$.
\end{aenumerate}

If~\ref{it:num-triv} holds then~$-K_R$ is numerically equivalent to the ample class~$H$; hence it is ample, i.e., $R$ is a del Pezzo surface.

If~\ref{it:neg-square} holds then~$K_R + H$ is not nef, i.e., there is an effective curve class~$\Gamma$ with~\mbox{$(K_R + H) \cdot \Gamma < 0$}.
Therefore, we have
\begin{equation*}
(-K_R) \cdot \Gamma \ge H \cdot \Gamma + 1 \ge 2.
\end{equation*}
In particular, $\Gamma$ is $K$-negative, and therefore we may assume it is primitive and extremal.
Consider the corresponding extremal contraction of~$R$.
Since~$(-K_R) \cdot \Gamma \ge 2$, this is a $\P^1$-bundle;
in particular, we have~\mbox{$K_R \cdot \Gamma = -2$}.
Moreover, the above inequalities imply that~$H \cdot \Gamma = 1$, 
hence~$R \subset \P^{g-1}$ is a (minimal) scroll and~$\Gamma$ is the class of its fiber.
Furthermore, if~$C' \subset R$ is a general curve in~$|H|$
then on the one hand,
\begin{equation*}
2\g(C') - 2 = \deg(K_{C'}) = (K_R + C') \cdot C' = (K_R + H) \cdot H = 0,
\end{equation*}
hence~$\g(C') = 1$,
and on the other hand, since~$C' \cdot \Gamma = H \cdot \Gamma = 1$, 
it follows that the base of the scroll is birational to~$C'$, hence it is also an elliptic curve.
Thus, $R$ is an elliptic scroll.

To prove the second statement note that, in case~\ref{it:num-triv}, where~$R$ is a smooth del Pezzo surface, 
any numerically trivial class is trivial, hence~$H = -K_R$ and so~$C$ is a bi-anticanonical curve on~$R$.
If~$R$ can be realized as the blowup of~$\P^2$, it follows that~$C$ has a 2-dimensional linear system of degree~$6$, and otherwise (if~$R \cong \P^1 \times \P^1$) it has a pencil of degree~$4$. 
Similarly, in case~\ref{it:neg-square}, it follows that the curve~$C$ is bielliptic,
hence it also has a pencil of degree~$4$.

In either case, we conclude that~$C$ is not Brill--Noether general when~$g \ge 7$.
\end{proof}

Note that a Brill--Noether general curve of genus~$g \le 6$ 
has a 2-dimensional linear system of degree~$6$ and a pencil of degree~$4$
(and for~$g \in \{4,5,6\}$ it can be realized as an intersection of a del Pezzo surface with a quadric),
so the assumption~$g \ge 7$ in the previous lemma is sharp.

\begin{corollary}\label{cor:y-cap-q}
Let~$Y \subset \P^g$ be an irreducible variety of dimension~$3$ 
such that~$S \coloneqq Y \cap Q$ is a smooth~$K3$ surface of genus~$g \ge 7$, 
where~$Q \subset \P^g$ is a quadric hypersurface.
Then a general hyperplane section of~$S$ is not Brill--Noether general.
\end{corollary}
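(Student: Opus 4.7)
The plan is to reduce the statement to Lemma~\ref{lem:r-c}. For a general hyperplane $H \subset \P^g$, set $C := S \cap H$, $R := Y \cap H$, and $Q' := Q \cap H$. By Bertini applied to the smooth K3 surface~$S$, the curve $C$ is smooth and irreducible, and adjunction on~$S$ (with $K_S = 0$) gives $K_C = H|_C$, so that $C \subset H \cong \P^{g-1}$ is a smooth canonically embedded curve of genus~$g$. Since $S = Y \cap Q$ and for general $H$ we have $H \not\subset Q$, we obtain $C = R \cap Q'$ with $Q'$ a quadric hypersurface in $H$.

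The key step is to verify the smoothness hypothesis on $R$ required by Lemma~\ref{lem:r-c}. First I observe that $Y$ is smooth along~$S$: at every point $p \in S$ where $Q$ is smooth, the relation $T_p S = T_p Y \cap T_p Q$ in $T_p \P^g$, together with $\dim T_p S = 2$ and the fact that $T_p Q$ is a hyperplane in $T_p \P^g$, forces $\dim T_p Y \le 3$ and hence $\dim T_p Y = 3$; the vertex of $Q$ is a proper linear subspace which may be arranged to miss $S$ (or is handled as an exceptional set). Then Bertini applied to $Y$ and a general $H$ yields that $R$ is smooth in a Zariski-open neighborhood of~$C$. If $R$ has singularities outside $C$---which can only come from singularities of $Y$ lying away from~$S$---one passes to a minimal resolution $\pi\colon \tilde R \to R$, which is an isomorphism over a neighborhood of~$C$, with $\tilde H := \pi^\ast(H|_R)$ nef and big satisfying $\tilde H^2 = \deg R > 0$, and $\tilde C \sim 2\tilde H$ on $\tilde R$.

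The numerical argument of Lemma~\ref{lem:r-c} now applies verbatim to $\tilde R$: the identity $(K_{\tilde R} + \tilde H)|_{\tilde C} = 0$ combined with the Hodge index theorem yields the same dichotomy, exhibiting $\tilde R$ either as a surface with $-K_{\tilde R} \equiv \tilde H$ (a weak del Pezzo birational to $\P^2$ or to a Hirzebruch surface), or as admitting an extremal contraction to an elliptic curve (a blown-up elliptic scroll). In either case $C$, lying entirely in the smooth locus of $R$, inherits a $g^1_4$ or a $g^2_6$ from the low-degree linear systems on $\tilde R$, contradicting Brill--Noether generality for $g \ge 7$.

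The main obstacle is controlling the possible singularities of $R$ away from $C$: this is what forces the passage to the resolution $\tilde R$. The crux that makes this passage harmless is the observation that $Y$ is smooth along $S$, which ensures $C$ lies in the smooth locus of $R$, so all divisor-class identities involving $C$ transfer cleanly to $\tilde R$.
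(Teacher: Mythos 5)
Your reduction to Lemma~\ref{lem:r-c} is the same as the paper's, and so is the first step that $Y$ is smooth along~$S$ (in the paper this is immediate: $S$ is cut out of $Y$ locally by one equation, so smoothness of the surface $S$ forces regularity of the threefold $Y$ at every point of $S$; in particular no case distinction at singular points of $Q$ is needed --- automatically $\Sing(Q)\cap S=\varnothing$, and your phrase that the vertex of $Q$ ``may be arranged to miss $S$'' is off, since $Q$ is given, not chosen). Where you diverge is the treatment of singularities of~$R$, and this is where the gap lies. The paper's key observation, which you miss, is that since $Q$ is ample and $Y$ is smooth along $S=Y\cap Q$, the singular locus of $Y$ is a \emph{finite} set of points: any positive-dimensional component would meet the ample divisor $Q$, hence meet~$S$. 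Consequently a general hyperplane avoids $\Sing(Y)$, so $R=Y\cap\P^{g-1}$ is smooth everywhere by Bertini and Lemma~\ref{lem:r-c} applies as stated; no resolution is needed.

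Your fallback via the minimal resolution $\pi\colon\tilde R\to R$ is not justified as written. The proof of Lemma~\ref{lem:r-c} uses ampleness of $H$ essentially: $H\cdot\Gamma\ge 1$ for every curve gives $(-K_R)\cdot\Gamma\ge 2$ and rules out the contraction of a $(-1)$-curve, so the extremal contraction must be a $\P^1$-bundle. For $\tilde H=\pi^*(H|_R)$, which is only nef and big, a $K$-negative extremal ray could a priori be $\tilde H$-trivial, so the argument is not ``verbatim''. It can be repaired (an $\tilde H$-trivial curve is $\pi$-exceptional, hence has nonnegative $K_{\tilde R}$-degree on a \emph{minimal} resolution, so any extremal ray with $(K_{\tilde R}+\tilde H)\cdot\Gamma<0$ still has $\tilde H\cdot\Gamma\ge 1$, and the weak del Pezzo and ruled cases then go through much as you sketch), but none of this is in your text, and all of it is unnecessary once one notes that $\Sing(Y)$ is finite.
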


\begin{proof}
Since~$S$ is a smooth surface, it follows that~$Y$ is smooth along~$S$, and since~$Q$ is an ample divisor, it follows that~$Y$ is a threefold with isolated singularities.
Let~$C \subset S$ be a general curve in~$|H|$ and let~$R \coloneqq Y \cap \P^{g-1} \subset Y$ be a general hyperplane section such that
\begin{equation*}
R \cap Q = Y \cap \P^{g-1} \cap Q = S \cap \P^{g-1} = C.
\end{equation*}
Note that~$R$ is a smooth surface by Bertini's Theorem and~$C$ is a canonically embedded curve.
Therefore, Lemma~\ref{lem:r-c} implies that~$C$ is not Brill--Noether general.
\end{proof}

%%%%%%%%%%%%%%%%%%%%%%%%%

\section{Gushel embeddings for prime K3 surfaces}\label{sec:GushelK3}

Recall that the genus~$g  =\g(S,H)$ of a polarized K3 surface~$(S,H)$ is defined by the equality
\begin{equation*}
H^2 = 2g - 2.
\end{equation*}
We say that a smooth polarized K3 surface~$(S,H)$ is {\sf prime} if~$\Pic(S) = \ZZ \cdot H$.

Given a sheaf~$\cF$ on~$S$ we write
\begin{equation*}
\rv(\cF) = (\rank(\cF), \rc_1(\cF), \rch_2(\cF) + \rank(\cF)) \in \ZZ \oplus \Pic(S) \oplus \ZZ
\end{equation*}
for its Mukai vector.
If~$\rv(\cF) = (r, D, s)$, then by Riemann--Roch we have
\begin{equation}
\label{eq:chi}
\upchi(\cF) = r + s
\qquad\text{and}\qquad 
\upchi(\cF,\cF) = 2rs - D^2.
\end{equation} 

\subsection{Construction of the Gushel map}\label{subsec:ConstructionGushel}

We start by recalling some results from~\cite{BKM}.

\begin{definition}[{\cite[Definition~3.1]{BKM}}]\label{def:mb}
If~$(S,H)$ is a polarized K3 surface of genus~\mbox{$g = r \cdot s$}, 
{\sf a Mukai bundle of type~$(r,s)$ on~$(S,H)$} is an $H$-stable vector bundle~$\cU_S$ with Mukai vector
\begin{equation}
\label{eq:muk-vec}
\rv(\cU_S) = (r,-H,s),
\end{equation}
i.e., $\rank(\cU_S) = r$, $\rc_1(\cU_S) = -H$, and~$\rch_2(\cU_S) = s - r$, or equivalently, $\upchi(\cU_S) = r + s$.
\end{definition}

The uniqueness of Mukai bundle is easy to verify.
%, see~\cite[Lemma~3.2]{BKM}. 
The existence of an $H$-\emph{semistable} bundle with Mukai vector~\eqref{eq:muk-vec} 
follows from general existence results \cite{Kuleshov:spherical, Yoshioka:Abelian}, 
%Using a deformation argument, it is also easy to show the existence of $H$-semistable bundles with Mukai vector~\eqref{eq:muk-vec},
and for prime K3 surfaces this bundle is $H$-stable, hence it is a Mukai bundle.
However, to control its properties it is useful to construct it 
(as we did in~\cite{BKM} for Brill--Noether general K3 surfaces) 
as the Lazarsfeld bundle of a Brill--Noether-extremal line bundle on a curve in~$|H|$.
The following special case of~\cite[Theorem~3.4]{BKM} gives such a construction in the prime case. 

The notation~$\cU_S^\perp$ and~$V/\cU_S$ is introduced in Section~\ref{ss:conventions}.

\begin{proposition}
\label{prop:muk-bundles}
For any prime $K3$ surface~$(S,H)$ of genus~$g =r \cdot s$ there is a unique Mukai bundle~$\cU_S$ of type~$(r,s)$.
We have~$\rH^{\ge 1}(S, \cU_S^\vee) = 0$ and~$\cU_S^\vee$ is globally generated by the space
\begin{equation}
\label{eq:def-v}
V \coloneqq \rH^0(S, \cU_S^\vee)^\vee;
\end{equation}
of dimension~$r + s$, so that there are mutually dual exact sequences
\begin{equation}
\label{eq:taut}
0 \to \cU_S^\perp \xrightarrow{\quad} V^\vee \otimes \cO_S \xrightarrow{\ \ev\ } \cU_S^\vee \to 0
\qquad\text{and}\qquad 
0 \to \cU_S \xrightarrow{\quad} V \otimes \cO_S \xrightarrow{\ \ev\ } V/\cU_S \to 0,
\end{equation}
where~$\ev$ are the evaluation morphisms.
Moreover, $\cU_S^\perp$ is a Mukai bundle of type~$(s,r)$.
\end{proposition}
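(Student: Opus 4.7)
The plan is to construct~$\cU_S$ via a Lazarsfeld elementary transformation of a Brill--Noether-extremal line bundle on a smooth curve~$C \in |H|$, then use primeness of~$\Pic(S) = \ZZ H$ together with the identity
\begin{equation*}
\rv(\cU_S)^2 \;=\; H^2 - 2rs \;=\; (2g-2) - 2g \;=\; -2
\end{equation*}
to deduce stability, uniqueness, and the cohomological assertions. The whole argument is driven by this identity and the primitivity of~$v=(r,-H,s)$ on a prime K3.

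First I would pick a smooth~$C \in |H|$ and, invoking Lazarsfeld's theorem that every smooth curve in~$|H|$ on a prime polarized K3 is Brill--Noether general, choose a base-point-free line bundle~$L$ on~$C$ of degree~$g-1+r-s$ with~$h^0(C,L) = r$ (so~$L$ lies in the expected-dimension-zero locus~$W^{r-1}_{g-1+r-s}(C)$). Its Lazarsfeld transform fits into
\begin{equation*}
0 \to \cU_S \to H^0(C,L) \otimes \cO_S \to \iota_* L \to 0,
\end{equation*}
with~$\cU_S$ a rank-$r$ vector bundle of Mukai vector~$(r,-H,s)$; the long exact sequence gives~$\rH^0(S,\cU_S) = \rH^1(S,\cU_S) = 0$. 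Dualising via Grothendieck--Serre duality along~$\iota \colon C \hookrightarrow S$ (using~$\omega_S = \cO_S$ and~$\omega_{C/S} = \omega_C$) yields
\begin{equation*}
0 \to H^0(L)^\vee \otimes \cO_S \to \cU_S^\vee \to \iota_*(\omega_C \otimes L^{-1}) \to 0,
\end{equation*}
from which~$\rH^0(S,\cU_S^\vee) = r + h^0(C,\omega_C\otimes L^{-1}) = r+s$, and Serre duality on~$S$ combined with the vanishing~$\rH^0(S,\cU_S) = 0$ forces~$\rH^{\ge 1}(S,\cU_S^\vee) = 0$. Global generation of~$\cU_S^\vee$ is automatic off~$C$ from the subbundle~$H^0(L)^\vee\otimes\cO_S$, and along~$C$ by base-point-freeness of~$\omega_C \otimes L^{-1}$, which is guaranteed by the BN-extremal choice of~$L$.

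For stability, observe that any proper subsheaf~$\cF \subset \cU_S$ has~$c_1(\cF) = kH$ with~$k \in \ZZ$, so the equality~$\mu(\cF) = \mu(\cU_S) = -H/r$ would require~$\rank(\cF) = -kr$, which combined with~$0 < \rank(\cF) < r$ has no integer solution; hence semistability is automatic stability, and the construction above furnishes the required stable object. Uniqueness is now formal: for any two stable sheaves~$\cU,\cU'$ with Mukai vector~$v$, Riemann--Roch and~$v^2=-2$ give~$\chi(\cU,\cU') = 2 > 0$, so Serre duality forces~$\Hom(\cU,\cU')\ne 0$ or~$\Hom(\cU',\cU)\ne 0$, and any nonzero map between stable bundles of the same rank and slope is an isomorphism. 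The two sequences in~\eqref{eq:taut} are then the evaluation sequence for~$\cU_S^\vee$ and its dual; from the first one reads~$\rank(\cU_S^\perp) = s$, $c_1(\cU_S^\perp) = -H$, and~$\chi(\cU_S^\perp) = 2(r+s) - (r+s) = r+s$, so~$\rv(\cU_S^\perp) = (s,-H,r)$, and the same slope argument applied at~$\mu = -H/s$ shows~$\cU_S^\perp$ is stable, hence a Mukai bundle of type~$(s,r)$.

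The main obstacle is the Brill--Noether input: one must know that every smooth~$C \in |H|$ on a prime polarized K3 is BN-general, and that the chosen extremal~$L$ is base-point-free together with its residual~$\omega_C \otimes L^{-1}$. This is precisely Lazarsfeld's classical theorem, packaged for the present setting as~\cite[Theorem~3.4]{BKM}; primeness is essential, since without it~$C$ can be BN-special and the Lazarsfeld construction may fail to produce a vector bundle with the sharp cohomological vanishing needed above.
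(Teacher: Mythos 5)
Your construction and cohomological analysis follow the same route as the paper (Lazarsfeld bundle of a Brill--Noether-extremal line bundle on a curve in~$|H|$, the dual exact sequence, $h^0(\cU_S^\vee)=r+s$, global generation from base-point-freeness of the residual, uniqueness via~$\rv^2=-2$). However, your stability argument has a genuine gap. What you prove is only that no proper subsheaf~$\cF \subset \cU_S$ can have~$\upmu(\cF) = \upmu(\cU_S) = -\tfrac{1}{r}$, i.e.\ that strict semistability is impossible for the Mukai vector~$(r,-H,s)$ on a prime K3. You then assert that ``the construction above furnishes the required stable object,'' but you never show that the Lazarsfeld bundle is semistable in the first place. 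A destabilizing subsheaf only needs~$\upmu(\cF) > -\tfrac1r$, and the numerical argument does not exclude subsheaves with~$\rc_1(\cF)\cdot H \ge 0$ (most importantly~$\rc_1(\cF)=0$, e.g.\ a copy of~$\cO_S$ or a rank-$m$ sheaf with trivial determinant). The paper closes exactly this case: since~$\Pic(S)=\ZZ\cdot H$, any destabilizing subsheaf must have \emph{non-negative} slope, and because~$\cU_S$ sits inside the trivial bundle~$\rH^0(C,\xi)\otimes\cO_S$, such a subsheaf is forced to be~$\cO_S^{\oplus m}$ by~\cite[Lemma~3.7(a)]{BKM}, which contradicts~$\rH^0(S,\cU_S)=0$ from the defining sequence. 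Without this step stability of your~$\cU_S$ is not established, and then your uniqueness argument (which is fine for comparing two \emph{stable} sheaves with~$\rv^2=-2$) cannot identify the constructed bundle with the Mukai bundle, so none of the asserted properties are actually proved for~$\cU_S$.

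The same gap recurs at the end: for~$\cU_S^\perp$ you again invoke ``the same slope argument at~$\upmu=-\tfrac1s$,'' which only rules out equal-slope subsheaves. The paper's proof of stability of~$\cU_S^\perp$ uses once more that it is a subsheaf of a trivial bundle (the first sequence in~\eqref{eq:taut}) with~$\rH^0(S,\cU_S^\perp)=0$, together with~\cite[Lemma~3.7]{BKM}. To repair your write-up, insert the non-negative-slope analysis and the trivial-subbundle lemma for both~$\cU_S$ and~$\cU_S^\perp$; the rest of your argument then matches the paper's proof.
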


\begin{proof} 
As we mentioned above, the uniqueness is standard (see, e.g.,~\cite[Lemma~3.2]{BKM}).

Recall that by~\cite[Theorem and Corollary~1.4]{Laz} 
every general curve~$C \subset S$ in~$|H|$ is Brill--Noether--Petri general.
Therefore, as we checked in~\cite[Lemma~2.3]{BKM}, there is a pair of globally generated line bundles~$(\xi, \eta)$ on~$C$ 
of degree~$(r-1)(s+1)$ and~$(r+1)(s-1)$, respectively, with
\begin{equation*}
\xi \otimes \eta = \omega_C,
\qquad
h^0(\xi) = h^1(\eta) = r
\qquad\text{and}\qquad
h^0(\eta) = h^1(\xi) = s.
\end{equation*}
We define~$\cU_S$ by the exact sequence
\begin{equation} 
\label{eq:Lazbundle}
0 \to \cU_S \xrightarrow{\quad} \rH^0(C, \xi) \otimes \cO_S \xrightarrow{\ \ev\ } j_* \xi \to 0,
\end{equation}
where~$j \colon C \hookrightarrow S$ is the embedding.
The Auslander--Buchsbaum formula shows that~$\cU_S$ is a vector bundle with Mukai vector~\eqref{eq:muk-vec}.
It also follows that the dual bundle fits into the exact sequence
\begin{equation}
\label{eq:Lazbundle-dual}
0 \to \rH^0(C, \xi)^\vee \otimes \cO_S \xrightarrow{\quad} \cU_S^\vee \xrightarrow{\ \ev\ } j_* \eta \to 0.
\end{equation}
Since $\rH^2(S, \cU_S^\vee) = \rH^0(S, \cU_S)^\vee = 0$ by construction, the long exact cohomology sequence of \eqref{eq:Lazbundle-dual} shows that~$\cU_S^\vee$  is globally generated with~$\rH^1(S, \cU_S^\vee) = \rH^2(S, \cU_S^\vee) = 0$
and~$h^0(\cU_S^\vee) = r + s$.
Therefore, the space~\eqref{eq:def-v} has dimension~$r + s$,
and exact sequences~\eqref{eq:taut} also follow.

The slope of~$\cU_S$ is~$-1/r$; this is the maximal possible negative slope 
for bundles of rank~$r$ or less (because~$\Pic(S) = \ZZ \cdot H$), 
so if~$\cU_S$ is unstable, a destabilizing subsheaf~$\cF \subset \cU_S$ must have non-negative slope.
But~$\cU_S$ is a subsheaf of a trivial bundle, 
hence~\cite[Lemma~3.7(a)]{BKM} implies~$\cF \cong \cO_S^{\oplus m}$, 
hence~$\rH^0(S, \cU_S) \ne 0$ in contradiction with the defining sequence~\eqref{eq:Lazbundle}.
This proves stability of~$\cU_S$, and stability of~$\cU_S^\perp$ is proved similarly.
\end{proof}

Note that the construction of Proposition~\ref{prop:muk-bundles} depends on  choices (the curve~$C$ and the line bundle~$\xi$);
however, the resulting bundle~$\cU_S$ is unique, hence independent of them.

If a Mukai bundle~$\cU_S$ on a polarized K3 surface~$(S,H)$ exists 
and its dual~$\cU_S^\vee$ is globally generated 
(e.g., in the setup of Proposition~\ref{prop:muk-bundles} or in the more general setup of~\cite[Theorem~3.4]{BKM}), 
there is a morphism
\begin{equation}
\label{def:gushel}
\gamma_S \colon S \to \Gr(r, V) = \Gr(r, r + s)
\end{equation}
such that~$\gamma_S^*(\cU) \cong \cU_S$, where~$\cU$ is the tautological bundle on~$\Gr(r,V)$.
Taking the determinant of the above isomorphism, we see that~
\begin{equation*}
\gamma_S^*(\cO_{\Gr(r,V)}(1)) \cong \cO_S(H).
\end{equation*}
Following the terminology introduced in~\cite{DK}, we  call~$\gamma_S$ {\sf the Gushel morphism}.

The following result shows that the Gushel morphism~$\gamma_S$ is essentially unique.

\begin{lemma}
\label{lem:all-embeddings} 
Let~$(S,H)$ be a prime $K3$ surface of genus~$g \ge 4$.
Let~$g = r \cdot s$ be a factorization with~$r,s \ge 2$ 
and let~$f \colon S \to \Gr(r, r+s)$ be a morphism such that~$f^*(\cO_{\Gr(r,r+s)}(1)) \cong \cO_S(H)$
and~$f(S)$ does not lie in the sub-Grassmannians
\begin{equation*}
\Gr(r,r+s-1) \subset \Gr(r,r+s)
\qquad\text{or}\qquad
\Gr(r-1,r+s-1) \subset \Gr(r,r+s).
\end{equation*}
Then~$f^*(\cU)$ is the Mukai bundle of type~$(r,s)$ 
and there is~$\alpha \in \Aut(\Gr(r,r+s))$ such that~$f = \alpha \circ \gamma_S$.
\end{lemma}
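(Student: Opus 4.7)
The plan is to show $\cF \coloneqq f^*\cU$ is a Mukai bundle of type~$(r,s)$---hence isomorphic to~$\cU_S$ by Proposition~\ref{prop:muk-bundles}---and then recover $f = \alpha \circ \gamma_S$ by comparing classifying surjections.

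First I would translate the two hypotheses cohomologically. The morphism $f$ is classified by an exact sequence
\begin{equation*}
0 \to \cF \to V \otimes \cO_S \to \cQ \to 0
\end{equation*}
on $S$, with $V \cong \kk^{r+s}$, $\cF$ of rank~$r$ and $\det\cF = \cO_S(-H)$, and $\cQ$ of rank~$s$ and $\det\cQ = \cO_S(H)$. A containment $f(S) \subset \Gr(r, V')$ for a hyperplane $V' \subset V$ corresponds to a nonzero morphism $\cQ \to \cO_S$, while a containment $f(S) \subset \Gr(r-1, V/\ell)$ for a line $\ell \subset V$ corresponds to a line subbundle $\cO_S \hookrightarrow \cF$: any section $\cO_S \to \cF$ composes with $\cF \hookrightarrow V \otimes \cO_S$ to an element of $V$, hence is either zero or nowhere zero. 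Thus the two non-containment hypotheses become $\rH^0(S, \cQ^\vee) = 0$ and $\rH^0(S, \cF) = 0$.

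Next I would show $\cF$ and $\cQ^\vee$ are both $H$-stable. Any saturated subsheaf $\cG \subset \cF$ has $\rc_1(\cG) = kH$ since $\Pic(S) = \ZZ \cdot H$; the destabilizing inequality $\mu(\cG) \ge -H^2/r$ together with $\rank(\cG) < r$ forces $k \ge 0$. Then $\cG \hookrightarrow V \otimes \cO_S$ is a subsheaf of a trivial bundle with nonnegative $\rc_1$, so by \cite[Lemma~3.7(a)]{BKM} it is trivial, contradicting $\rH^0(S, \cF) = 0$. The identical argument applied to $\cQ^\vee \hookrightarrow V^\vee \otimes \cO_S$ (with $\rc_1 = -H$ by the dualized sequence and $\rH^0 = 0$) proves stability of $\cQ^\vee$.

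With stability established, I would pin down the Mukai vector. Writing $\rv(\cF) = (r, -H, s')$, additivity in the SES gives $\rv(\cQ^\vee) = (s, -H, r + s - s')$. The bound $\upchi(\cE, \cE) \le 2$ valid for stable sheaves on a K3 surface, combined with formula~\eqref{eq:chi} and $H^2 = 2rs - 2$, yields $s' \le s$ from $\cF$ and $s \le s'$ from $\cQ^\vee$; this pincer forces $s' = s$, so $\cF \cong \cU_S$ by the uniqueness in Proposition~\ref{prop:muk-bundles}. For the factorization, the long exact cohomology sequence of the dualized SES together with $\rH^0(S, \cQ^\vee) = 0$ and Proposition~\ref{prop:muk-bundles} show that the induced map $V^\vee \to \rH^0(S, \cU_S^\vee)$ is an injection between $(r+s)$-dimensional vector spaces, hence an isomorphism. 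Under this identification the classifying surjection $V^\vee \otimes \cO_S \twoheadrightarrow \cU_S^\vee$ of $f$ becomes the evaluation surjection defining $\gamma_S$, so $f$ and $\gamma_S$ differ by a linear automorphism of $V$, yielding the required $\alpha \in \PGL(V) \subset \Aut(\Gr(r, r+s))$. The main technical subtlety is precisely this pincer: the two non-containment hypotheses are calibrated exactly to rule out both $s' < s$ and $s' > s$ simultaneously via stability.
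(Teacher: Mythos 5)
Your proposal is correct and follows essentially the same route as the paper: prove stability of $f^*\cU$ and $f^*\cQ$ by showing a destabilizer would be trivial (via \cite[Lemma~3.7(a)]{BKM}) and hence violate the non-containment hypotheses, pin down the Mukai vector by the $\upchi(\cE,\cE)\le 2$ pincer, and recover $\alpha$ from the induced isomorphism $V^\vee \xrightarrow{\sim} \rH^0(S,\cU_S^\vee)$. The only differences are cosmetic (you run the pincer through $\cQ^\vee$ instead of $\cQ$, and deduce the final injectivity from $\rH^0(\cQ^\vee)=0$ rather than directly from the non-containment), so there is nothing to add.
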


\begin{proof}
Consider the pullback to~$S$ of the tautological sequence of~$\Gr(r,r+s)$: 
\begin{equation}
\label{eq:pb-taut}
0 \to f^*\cU \to \cO_S^{\oplus (r + s)} \to f^*\cQ \to 0.
\end{equation}
We claim that the bundles~$f^*\cU$ and~$f^*\cQ$ are stable.
Indeed, the argument of Proposition~\ref{prop:muk-bundles} shows that a destabilizing subsheaf~$\cF \subset f^*\cU$ must be trivial, i.e., $\cF \cong \cO_S^{\oplus m}$, but then the composition of an embedding~$\cO_S \hookrightarrow \cF \hookrightarrow f^*\cU_S \hookrightarrow \cO_S^{\oplus (r+s)}$ with the epimorphism~$\cO_S^{\oplus (r+s)} \twoheadrightarrow f^*\cQ$ vanishes, which means that~$f(S)$ is contained in a sub-Grassmannian~$\Gr(r-1,r+s-1) \subset \Gr(r,r+s)$, in contradiction to our assumptions.
Thus, the bundle~$f^*\cU$ is stable.
Considering analogously the dual sequence of~\eqref{eq:pb-taut}, we check that~$f^*\cQ^\vee$ is stable, hence so is the bundle~$f^*\cQ$.

By using~$f^*\cO_{{\Gr(r,r+s)}}(1) \cong \cO_S(H)$ and~\eqref{eq:pb-taut}, we write the Mukai vectors of~$f^*\cU$ and~$f^*\cQ$ as
\begin{equation*}
\rv(f^*\cU) = (r,-H,s'),
\qquad 
\rv(f^*\cQ) = (s,H,r'),
\end{equation*}
with~$r' + s' = r + s$.
Since~$f^*\cU$ is stable, we have~$\upchi(f^*\cU, f^*\cU) \le 2$, which by~\eqref{eq:chi} implies that~\mbox{$s' {}\le{} s$}.
Similarly, the stability of~$f^*\cQ$ implies that~$r' {}\le{} r$.
Combining these inequalities with the equality~\mbox{$r' + s' = r + s$}, 
we conclude that~$r' = r$ and~$s' = s$; in particular, $\rv(f^*\cU) = \rv(\cU_S)$, 
hence~$f^*\cU$ is a Mukai bundle, and by the uniqueness of the latter, we conclude that~$f^*\cU \cong \cU_S$.

Finally, the composition
\begin{equation*}
\kk^{\oplus (r + s)} \to \rH^0(S, f^*\cU^\vee) \cong \rH^0(S, \cU_S^\vee) = V^\vee,
\end{equation*}
where the first arrow is induced by the dual of the first arrow in~\eqref{eq:pb-taut}, 
is injective because~$f(S)$ is not contained in a sub-Grassmannian.
Hence, this composition is an isomorphism, 
and its dual~$\alpha \colon V \to \kk^{r + s}$ satisfies~$f = \alpha \circ \gamma_S$.
\end{proof}

\subsection{Injectivity and Schubert divisors}

In this section we show that the Gushel morphism is a closed embedding and its image is not contained in Schubert divisors.
Recall the notation~\eqref{eq:def-v}.

\begin{lemma}
\label{lem:gamma-x-s}
Let~$(S,H)$ be a prime $K3$ surface of genus~$g \ge 4$.
Let~$g = r \cdot s$ be a factorization with~$r,s \ge 2$ 
and let~$\cU_S$ be the corresponding Mukai bundle on~$S$.
The Gushel morphism~\mbox{$\gamma_S \colon S \to \Gr(r,V)$} is a closed embedding 
and~$\gamma_S(S) \subset \Gr(r,V) \cap \P^g$ in~$\P(\wedge^rV)$.
\end{lemma}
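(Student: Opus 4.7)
The plan is to prove the two claims sequentially.

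For the spanning statement $\gamma_S(S) \subset \Gr(r, V) \cap \P^g$: by construction of the Gushel morphism we have $\gamma_S^*\cU \cong \cU_S$, hence $\gamma_S^*\cO_{\P(\wedge^r V)}(1) = \det\cU_S^\vee \cong \cO_S(H)$, using $\rc_1(\cU_S) = -H$ from Proposition~\ref{prop:muk-bundles}. Riemann--Roch on the K3 surface gives $\upchi(\cO_S(H)) = 2 + H^2/2 = g+1$, and Kodaira vanishing for the ample line bundle $H$ yields $\rH^i(S, \cO_S(H)) = 0$ for $i \ge 1$, so $h^0(S, H) = g+1$. Consequently the pullback map $\wedge^r V^\vee = \rH^0(\P(\wedge^r V), \cO(1)) \to \rH^0(S, \cO_S(H))$ has image of dimension at most $g+1$, and so the linear span of $\gamma_S(S)$ in $\P(\wedge^r V)$ has dimension at most $g$.

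For the closed-embedding claim, I would reduce to the complete $|H|$-embedding. Since $(S, H)$ is prime with $g \ge 4$, any irreducible effective curve has class $nH$ for some $n \ge 1$ and hence $H$-degree at least $2g-2 \ge 6$; Saint-Donat's criterion then gives that $H$ is very ample, so that $\phi_{|H|}\colon S \hookrightarrow \P(\rH^0(S, H)^\vee) = \P^g$ is a closed embedding. It suffices to show that the pullback $\wedge^r V^\vee \to \rH^0(S, \cO_S(H))$ is surjective. Granting this, the composition $S \xrightarrow{\gamma_S} \Gr(r,V) \hookrightarrow \P(\wedge^r V)$ (defined by $\cO_S(H)$ together with the image $W$ of this pullback) coincides with $\phi_{|H|}$ followed by the linear embedding of $\P^g = \P(\rH^0(S, H)^\vee)$ into $\P(\wedge^r V)$ induced by $W \hookrightarrow \wedge^r V^\vee$. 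Being a composition of closed embeddings, it is closed; and since the Plücker embedding is also closed, $\gamma_S$ must be a closed embedding too.

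The heart of the argument is thus the surjectivity of $\wedge^r V^\vee \to \rH^0(S, \cO_S(H))$. Taking $r$-th exterior powers of the dual tautological sequence~\eqref{eq:taut} produces a filtration on $\wedge^r V^\vee \otimes \cO_S$ with top quotient $\det\cU_S^\vee = \cO_S(H)$ and remaining graded pieces $\wedge^i\cU_S^\perp \otimes \wedge^{r-i}\cU_S^\vee$ for $i = 1, \dots, r$; it suffices to verify $\rH^1(S, \wedge^i\cU_S^\perp \otimes \wedge^{r-i}\cU_S^\vee) = 0$ for each such $i$. This is the main obstacle: each vanishing is amenable to Serre duality combined with slope-stability of $\cU_S$ and $\cU_S^\perp$ (both Mukai bundles by Proposition~\ref{prop:muk-bundles}), in the spirit of Lemma~3.7 of~\cite{BKM}, but the bookkeeping for general $(r,s)$ requires some care. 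Alternatively, one may restrict to a Brill--Noether--Petri general curve $C \in |H|$ used in the Lazarsfeld construction of $\cU_S$ and deduce the vanishing from projective normality of canonical curves.
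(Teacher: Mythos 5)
Your overall skeleton is the same as the paper's: everything reduces to the surjectivity of the linear map $\phi\colon\wedge^rV^\vee\to\rH^0(S,\cO_S(H))$, after which the commutative diagram with the $|H|$-embedding (very ample by Saint-Donat, since~$S$ is prime of genus~$\ge 4$) gives both the closed-embedding claim and the containment $\gamma_S(S)\subset\Gr(r,V)\cap\P^g$; your dimension count $h^0(S,\cO_S(H))=g+1$ for the containment is fine. But at that central step your argument has a genuine gap. You propose to deduce surjectivity from the vanishings $\rH^1(S,\wedge^i\cU_S^\perp\otimes\wedge^{r-i}\cU_S^\vee)=0$ for $1\le i\le r$, and you leave these unproved ("the bookkeeping requires some care"). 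These are not routine: Serre duality plus slope-stability, which is what you invoke, only controls $\rH^0$ and $\rH^2$ of such tensor products (their slopes put the relevant Hom-spaces on the wrong or indeterminate side of the stability inequality), and it says nothing about $\rH^1$. Indeed, in the closely parallel genus-$7$ situation the paper computes (Lemma~\ref{lem:coh-tps}) that groups of exactly this shape, e.g.\ $\rH^1(S,\cR_2\otimes\cR_3)$ and $\rH^1(S,\Sym^2\cR_3)$, are nonzero; and even where the vanishing you need is true, establishing it amounts to an exact count of a Hom-space between two non-isomorphic stable bundles, which is a real computation, not bookkeeping. Note also that surjectivity of $\phi$ is only equivalent to $\rH^1$ of the kernel bundle vanishing, so your graded-piece vanishings are a strictly stronger set of claims that could in principle fail even though the lemma is true. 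Your fallback ("deduce the vanishing from projective normality of canonical curves") is too vague to evaluate.

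The paper avoids all of this with a short argument you are missing: it exploits the fact that the Mukai bundle $\cU_S$ is unique, hence independent of the curve used to construct it. For any general $C\in|H|$ the construction of Proposition~\ref{prop:muk-bundles} gives, via~\eqref{eq:Lazbundle-dual}, an embedding $\rH^0(C,\xi)^\vee\subset V^\vee$, and the image under $\phi$ of the line $\wedge^r\rH^0(C,\xi)^\vee\subset\wedge^rV^\vee$ is spanned by the equation of $C$ (it is the determinant of the degeneracy of $\rH^0(C,\xi)^\vee\otimes\cO_S\to\cU_S^\vee$, whose cokernel is supported on $C$). Letting $C$ vary over general members of $|H|$ shows $\phi$ is dominant, hence, being linear, surjective. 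If you want to salvage your route, you would need to actually prove the $\rH^1$-vanishings for each factorization $(r,s)$; the paper's moving-curve trick is both shorter and uniform in $(r,s)$.
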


\begin{proof}
Taking the $r$-th exterior power of the evaluation epimorphism~$V^\vee \otimes \cO_S \twoheadrightarrow \cU_S^\vee$ 
from~\eqref{eq:taut}, we obtain an epimorphism
\begin{equation*}
\wedge^rV^\vee \otimes \cO_S \twoheadrightarrow \cO_S(H).
\end{equation*}
We claim that the induced linear map~$\phi \colon \wedge^rV^\vee \to \rH^0(S, \cO_S(H))$ of global sections is surjective.

For this we use the construction of~$\cU_S$ in \eqref{eq:Lazbundle}, for a smooth curve~$C \in |H|$. 
The sequence~\eqref{eq:Lazbundle-dual} induces an embedding~$\rH^0(C, \xi)^\vee \subset V^\vee$ 
and shows that the image of the line
\begin{equation*}
	\wedge^r(\rH^0(C, \xi))^\vee \subset \wedge^rV^\vee
\end{equation*}
in~$\rH^0(S, \cO_S(H))$ under the map~$\phi$ is generated by an equation of the curve~$C$.
Since~$C \in |H|$ can be any general curve, 
we conclude that the map~$\phi \colon \wedge^rV^\vee \to \rH^0(S, \cO_S(H))$ is dominant, 
and since~$\phi$ is linear, it is surjective.

Now consider the natural commutative diagram
\begin{equation*}
%\label{eq:diagram-s-gr}
\vcenter{\xymatrix@C=3em{
S \ar[d]_{|H|} \ar[r]^-{\gamma_S} &
\Gr(r,V) \ar@{^{(}->}[d]^{\text{Pl\"ucker}}
\\
\P^g \ar@{^{(}->}[r]^-{\phi^\vee} &
\P(\wedge^rV)
}}
\end{equation*}
The bottom arrow is regular and injective, because~$\phi$ is surjective, and the left vertical arrow is a closed embedding as $H$ is very ample on $S$ for~$g \ge 3$, by~\cite[Theorem~3.1 and Theorem~5.2]{SD}. 
Therefore, the Gushel morphism~$\gamma_S$ has to be a closed embedding, 
and the diagram shows that~$\gamma_S(S) \subset \Gr(r,V) \cap \P^g$.
\end{proof}

In the next lemma, which is crucial for our proof of Theorem~\ref{thm:prime-k3} in the next section, 
we show that the property~$\Pic(S) = \ZZ \cdot H$ prevents~$\gamma_S(S) \subset \Gr(r,V)$ from being contained in a Schubert divisor. 
Recall that a {\sf Schubert divisor} on~$\Gr(r,V) = \Gr(r,r+s)$ is defined as
\begin{equation*}
\Sigma_1(V_s) \coloneqq \{ U_r \subset V \mid \dim(U_r \cap V_s) \ge 1\}
= \Gr(r,V) \cap \P(V_s \wedge (\wedge^{r-1} V)),
\end{equation*}
where~$V_s \subset V$ is a subspace of dimension~$s$. Alternatively,
$\Sigma_1(V_s)$ can be defined as the intersection of~$\Gr(r, V) \subset \P(\wedge^rV)$ 
with the hyperplane defined by the decomposable $r$-form 
\begin{equation*}
f_1 \wedge \dots \wedge f_r \in \wedge^rV^\vee, 
\end{equation*}
where~$f_1,\dots,f_r$ is a basis of the annihilator~$V_s^\perp \subset V^\vee$ of~$V_s$.

\begin{remark}
\label{rem:schubert-sing}
The singular locus of the Schubert divisor~$\Sigma_1(V_s) \subset \Gr(r, r+s)$ can be described as
\begin{equation}
\label{eq:schubert-sing}
\Sing(\Sigma_1(V_s)) = \{ U_r \subset V \mid \dim(U_r \cap V_s) \ge 2\}
= \Gr(r,V) \cap \P(\wedge^2V_s \wedge (\wedge^{r-2} V)).
\end{equation}
If~$r,s \ge 2$ it is nonempty and has codimension~$4$ in~$\Gr(r, r+s)$.
\end{remark}

\begin{lemma}\label{lem:no-schubert}
Let~$(S,H)$ be a prime $K3$ surface of genus~$g = r \cdot s \ge 4$ with~$r,s \ge 2$.
Then the image~$\gamma_S(S) \subset \Gr(r,V)$ of~$S$ under the Gushel embedding is not contained in any Schubert divisor.
\end{lemma}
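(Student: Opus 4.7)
The plan is to translate the containment $\gamma_S(S) \subset \Sigma_1(V_s)$ into a pointwise rank-drop of a morphism obtained by restricting the evaluation~\eqref{eq:taut}, and then derive a contradiction from stability of the Mukai bundle combined with the assumption $\Pic(S) = \ZZ \cdot H$.

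First I would reformulate the hypothesis sheaf-theoretically. Fix a basis $f_1, \dots, f_r$ of $V_s^\perp \subset V^\vee$. By the description of $\Sigma_1(V_s)$ recalled just before the lemma, the associated Pl\"ucker hyperplane is cut out by the decomposable form $f_1 \wedge \cdots \wedge f_r \in \wedge^r V^\vee$. Pulling this section of $\cO_{\Gr(r,V)}(1)$ back to $S$ via $\gamma_S$, and using $\gamma_S^*\cO_{\Gr(r,V)}(1) = \det(\cU_S^\vee) = \cO_S(H)$, one sees that the pullback is the top exterior power of the morphism $\phi \colon V_s^\perp \otimes \cO_S \hookrightarrow V^\vee \otimes \cO_S \twoheadrightarrow \cU_S^\vee$. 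Hence $\gamma_S(S) \subset \Sigma_1(V_s)$ is equivalent to $\det(\phi) = 0$, i.e., $\phi$ has rank at most $r-1$ at every point of $S$.

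Next I would exploit stability of $\cU_S^\vee$ applied to the image sheaf. Let $\cI \subset \cU_S^\vee$ be the image of $\phi$, of generic rank $r' \le r-1$, and let $\cK' \coloneqq \Ker(\phi)$, of rank $k = r - r' \ge 1$, so that $0 \to \cK' \to V_s^\perp \otimes \cO_S \to \cI \to 0$ is exact. Since $\cK'$ embeds into a trivial bundle and $\Pic(S) = \ZZ \cdot H$, we have $c_1(\cK') = -bH$ for some $b \ge 0$, hence $c_1(\cI) = bH$. Note that $\cI \ne 0$ (on global sections $\phi$ induces the inclusion $V_s^\perp \hookrightarrow V^\vee$) and $\cI$ is a proper subsheaf of $\cU_S^\vee$ since $r' < r$; by stability of $\cU_S^\vee$, which follows from that of $\cU_S$ proved in Proposition~\ref{prop:muk-bundles}, we obtain $\mu(\cI) = b(2g-2)/r' < (2g-2)/r = \mu(\cU_S^\vee)$, i.e., $br < r'$. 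Combined with $r' \le r-1 < r$, this forces $b = 0$.

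Finally I would derive the contradiction from $c_1(\cK') = 0$. Let $\bar\cK'$ be the saturation of $\cK'$ in $V_s^\perp \otimes \cO_S$; it has the same rank $k$ as $\cK'$ and $c_1(\bar\cK') = 0$ (squeezed between $c_1(\cK') = 0$ and the trivial slope). Applying \cite[Lemma~3.7(a)]{BKM} as in the proof of Proposition~\ref{prop:muk-bundles} gives $\bar\cK' \cong \cO_S^{\oplus k}$. The induced map $\bar\cK' \to \cU_S^\vee$ factors through the torsion quotient $\bar\cK'/\cK'$, hence vanishes because $\cU_S^\vee$ is torsion-free; consequently $\bar\cK' = \cK' \cong \cO_S^{\oplus k}$. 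Taking global sections of the inclusion $\cK' \hookrightarrow V_s^\perp \otimes \cO_S$ then exhibits a nonzero subspace $W \subset V_s^\perp$ of dimension $k \ge 1$; but the composition $W \subset V_s^\perp \hookrightarrow V^\vee = \rH^0(\cU_S^\vee)$ is both injective (a composition of inclusions) and zero (because $\cK' = \Ker(\phi)$), which is the desired contradiction. The only delicate point is the identification $\bar\cK' = \cK'$, where one must invoke torsion-freeness of $\cU_S^\vee$ carefully to avoid circularity; the rest is direct $c_1$-bookkeeping and stability.
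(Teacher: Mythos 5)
Your proof is correct and is essentially the paper's argument run on the dual side: the paper translates containment in a Schubert divisor into everywhere-degeneracy of the rank-$s$ morphism $V_s \otimes \cO_S \to V/\cU_S$ and concludes via stability of $V/\cU_S \cong (\cU_S^\perp)^\vee$, the triviality statement \cite[Lemma~3.7(b)]{BKM} applied to the image (which is automatically torsion-free and globally generated), and injectivity of $V_s \hookrightarrow \rH^0(S, V/\cU_S)$ --- exactly mirroring your use of $V_s^\perp \otimes \cO_S \to \cU_S^\vee$, stability of $\cU_S^\vee$, and injectivity of $V_s^\perp \hookrightarrow V^\vee$. The only structural difference is that by working with the image in the quotient bundle the paper avoids your kernel/saturation bookkeeping, which is correct but slightly longer.
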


\begin{proof}
If~$S$ is contained in a Schubert divisor~$\Sigma_1(V_s)$ 
then the morphism~$\phi \colon V_s \otimes \cO_S \to V/\cU_S$ of vector bundles of rank~$s$ is everywhere degenerate.
In other words, if~$\cF \coloneqq \Ima(\phi)$ denotes its image, then~$\rank(\cF) \le s - 1$.
If~$\rc_1(\cF) \ge H$, then we have the following inequalities for the slopes
\begin{equation*}
\upmu(\cF) \ge \frac1{s-1} > \frac1s = \upmu(V/\cU_S),
\end{equation*}
in contradiction to the stability of~$V/\cU_S \cong (\cU_S^\perp)^\vee$ proved in Proposition~\ref{prop:muk-bundles}.
Thus, $\rc_1(\cF) \le 0$, and since~$\cF$ is torsion free, it follows that~$\cF \cong \cO_S^{\oplus m}$ with~$m \le s - 1$, 
see~\cite[Lemma~3.7(b)]{BKM}. This is in contradiction to the injectivity of the morphism
\begin{equation*}
\rH^0(S, V_s \otimes \cO_S) = V_s \hookrightarrow V = \rH^0(S, V/\cU_S)
\end{equation*}
induced by~\eqref{eq:taut}.
This proves the lemma.
\end{proof}

We showed in Lemma~\ref{lem:gamma-x-s} that~$\gamma_S(S) \subset \Gr(r,V) \cap \P^g$.
Next, we prove a general observation about linear sections of Grassmannians, which is particularly useful in combination with Lemma~\ref{lem:no-schubert}.

\begin{lemma}\label{lem:yes-schubert}
Let~$Y \subset \Gr(r,V) \cap \P^g$ be a subscheme of a linear section of the Grassmannian,
where~$\dim(V) = r + s$ and~$g = r \cdot s$.
If~$r,s \ge 2$ and~$y \in Y$ is a point such that~$\dim(\rT_{y,Y}) \ge 4$ then~$Y$ is contained in a Schubert divisor, 
i.e., there are inclusions~$Y \subset \Sigma_1(V_s) \subset \Gr(r,V)$ for some~$V_s \subset V$.
\end{lemma}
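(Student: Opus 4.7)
The plan is to pass to the dual picture and find a decomposable form $\omega \in \wedge^r V^\vee$ that vanishes simultaneously on $\P^g$ and on the embedded tangent space $\wedge^{r-1} U_r \wedge V$ of $\Gr(r, V)$ at $y$; such an $\omega$ determines a hyperplane in $\P(\wedge^r V)$ containing $\P^g$ whose trace on $\Gr(r, V)$ is a Schubert divisor $\Sigma_1(V_s)$, so $Y \subset \P^g \cap \Gr(r, V) \subset \Sigma_1(V_s)$.

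Write $y = [U_r]$, $\omega_0 = \wedge^r U_r \in \wedge^r V$, and let $L \subset \wedge^r V$ be the $(g+1)$-dimensional subspace with $\P^g = \P(L)$; let $\ker c \subset \wedge^r V^\vee$ denote the annihilator of $\wedge^{r-1} U_r \wedge V$ (of codimension $g + 1$). The hypothesis $\dim \rT_{y, Y} \ge 4$, combined with the inclusion $\rT_{y, Y} \subset \rT_y \Gr(r, V) \cap \rT_y \P^g$ identified with $(L \cap (\wedge^{r-1} U_r \wedge V))/\kk\omega_0$, yields $\dim(L \cap (\wedge^{r-1} U_r \wedge V)) \ge 5$. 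Setting $E \coloneqq L^\perp \cap \ker c$, a direct computation gives $\dim E \ge \binom{r+s}{r} - 2g + 3$, so $\P(E) \subset \P(\ker c)$ has codimension at most $g - 4$.

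Next, for a decomposable $\omega = f_1 \wedge \cdots \wedge f_r \in \wedge^r V^\vee$ with associated $V_s \coloneqq \bigcap_i \ker f_i \in \Gr(s, V)$, one checks that $\omega \in \ker c$ precisely when $\dim(U_r \cap V_s) \ge 2$: each restriction $\omega(u_1 \wedge \cdots \wedge \hat u_i \wedge \cdots \wedge u_r \wedge v)$ is a determinant of the matrix of pairings $(f_j(u_k))$ with the $i$-th column replaced by $(f_j(v))_j$, and all these vanish identically in $v$ iff the original matrix $(f_j(u_k))$ has rank at most $r - 2$, i.e.\ iff $U_r \cap V_s$ has dimension at least $2$. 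A direct parametrization---choose $U_2 = U_r \cap V_s \in \Gr(2, U_r)$, then choose $V_s/U_2 \subset V/U_2$ of dimension $s - 2$ transverse to $U_r/U_2$---shows that the corresponding closed subvariety $D \subset \P(\ker c)$ is irreducible of dimension $rs - 4 = g - 4$.

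Finally, the key count reads $\dim D + \dim \P(E) \ge (g - 4) + (\dim \P(\ker c) - (g - 4)) = \dim \P(\ker c)$, so by the classical fact that a nonempty projective variety meets every linear subspace of complementary (or greater) dimension, $D \cap \P(E)$ is nonempty. Any $\omega$ in this intersection is decomposable, lies in $\ker c$, and vanishes on $L$; the corresponding $V_s$ satisfies $Y \subset \P^g \subset \{\omega = 0\}$, hence $Y \subset \Sigma_1(V_s)$ as required. The main content of the argument is the identification of $D$ with $V_s$'s satisfying the explicit intersection condition and the computation of its dimension; once these are in place, the conclusion follows from a transparent Bertini-type intersection.
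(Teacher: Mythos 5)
Your proof is correct and follows essentially the same route as the paper's: after dualizing, you intersect the $(g-4)$-dimensional locus of decomposable forms annihilating the embedded tangent space $\wedge^{r-1}U_r\wedge V$ (i.e.\ Schubert divisors singular at $y$) with the linear space of hyperplanes through $\P^g$, which the tangent-space hypothesis forces to have codimension at most $g-4$. The only difference is cosmetic: where the paper quotes the description of $\Sing(\Sigma_1(U_r))$ and its codimension~$4$ (Remark~\ref{rem:schubert-sing}), you re-derive it directly via the determinantal rank condition and an explicit parametrization.
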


\begin{proof}
Let~$U_r \subset V$ be the subspace corresponding to the point~$y \in Y$.
Consider the filtration
\begin{equation*}
\wedge^rU_r \subset \wedge^{r-1}U_r \wedge V \subset \wedge^rV
\end{equation*}
that corresponds to the point~$y$ and the embedded tangent space to~$\Gr(r,V) \subset \P(\wedge^rV)$ at~$y$.
Furthermore, let~$W \subset \wedge^rV$ be a vector subspace of dimension~$g + 1$ 
such that~$Y \subset \Gr(r,V) \cap \P(W)$ and consider the induced filtration~$W_0 \subset W_1 \subset W$, where
\begin{equation*}
W_0 = \wedge^rU_r,
\qquad\text{and}\qquad 
W_1 = W \cap (\wedge^{r-1}U_r \wedge V).
\end{equation*}
Then~$\rT_{y,Y} = W_1/W_0$; hence~$\dim(W_1) \ge 5$ by assumption, and therefore~$\dim(W/W_1) \le g - 4$.
It follows that
\begin{equation}
\label{eq:dim-sum}
\dim\Big((\wedge^{r-1}U_r \wedge V) + W\Big) \le \dim (\wedge^{r-1}U_r \wedge V) + g - 4.
\end{equation}

Consider the scheme
\begin{equation*}
Z_y \coloneqq \Sing(\Sigma_1(U_r)) \subset \Gr(s, V) = \Gr(r, V^\vee) \subset \P(\wedge^rV^\vee).
\end{equation*}
By~Remark~\ref{rem:schubert-sing}, it has codimension~4 in~$\Gr(r, V^\vee)$, hence~$\dim(Z_y) = r \cdot s - 4 = g - 4$.

On the other hand, we have~$Z_y \subset \P(\wedge^2 U_r^\perp \wedge (\wedge^{r-2}V^\vee))$ by~\eqref{eq:schubert-sing}, and since~$\wedge^2 U_r^\perp \wedge (\wedge^{r-2}V^\vee)$ is the annihilator of~$\wedge^{r-1}U_r \wedge V$, it follows from~\eqref{eq:dim-sum} that~$Z_y \cap \P(W^\perp)$ is a linear section of~$Z_y$ of codimension at most~$g - 4$.
Therefore, $Z_y \cap \P(W^\perp) \ne \varnothing$.
It remains to note that this intersection parameterizes Schubert divisors 
which are singular at~$y$ and whose linear span contains~$\P(W)$ (and which 
therefore contain~$Y$).
\end{proof}

\begin{remark}
The same argument proves that if~$Y \subset \Gr(r,V) \cap \P^{g + k}$ for some~$k \ge 0$
and~$y \in Y$ is a point such that~$\dim(\rT_{y,Y}) \ge 4 + k$ 
then~$Y$ is contained in a Schubert divisor.
\end{remark}

In Section~\ref{sec:Genus6-12} we will also need the following observation 
about the pullback under the Gushel morphism of the conormal bundle of the Grassmannian.
In contrast with the other statements of this section, this one is only true for special values of~$g$ and~$(r,s)$.

\begin{lemma}\label{lem:mukai-conormal} 
Let~$S$ be a prime $K3$ surface~$S$ of genus~$g = r \cdot s$ such that
\begin{equation}
\label{eq:rs-inequality}
\frac1r + \frac1s > \frac12.
\end{equation} 
If~$\gamma_S \colon S \hookrightarrow \Gr(r,V)$ is the Gushel embedding 
then~$\rH^0(S, \gamma_S^*(\cN^\vee_{\Gr(r,V)/\P(\wedge^rV)}(1))) = 0$.
\end{lemma}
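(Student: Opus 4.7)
The plan is to exhibit $\gamma_S^*\cN^\vee_{\Gr(r,V)/\P(\wedge^rV)}(1)$ as an iterated extension of polystable sheaves on~$S$ of strictly negative slope, whence the cohomology vanishing will follow from slope semistability.

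On~$\Gr(r,V)$, let $\cQ \coloneqq (V \otimes \cO_{\Gr(r,V)})/\cU$ denote the tautological quotient. The subbundle $\cQ^\vee \hookrightarrow V^\vee \otimes \cO$ from the dualized tautological sequence induces a decreasing filtration
\begin{equation*}
0 = \cF^{r+1} \subset \cF^r \subset \dots \subset \cF^1 \subset \cF^0 = \wedge^rV^\vee \otimes \cO_{\Gr(r,V)}
\end{equation*}
with graded pieces $\cF^p/\cF^{p+1} \cong \wedge^p\cQ^\vee \otimes \wedge^{r-p}\cU^\vee$. The top quotient $\wedge^rV^\vee \otimes \cO \twoheadrightarrow \wedge^r\cU^\vee = \cO_{\Gr(r,V)}(1)$ coincides with the restricted Euler epimorphism of the Pl\"ucker embedding, so $\cF^1 = \Omega_{\P(\wedge^rV)}(1)|_{\Gr(r,V)}$; the identification $\cU \otimes \wedge^r\cU^\vee \cong \wedge^{r-1}\cU^\vee$ then matches $\cF^1/\cF^2 \cong \cQ^\vee \otimes \wedge^{r-1}\cU^\vee$ with $\Omega_{\Gr(r,V)}(1)$. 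Comparing with the conormal sequence, I identify $\cN^\vee_{\Gr(r,V)/\P(\wedge^rV)}(1) = \cF^2$, endowed with an induced filtration whose (nonzero) graded pieces are $\wedge^p\cQ^\vee \otimes \wedge^{r-p}\cU^\vee$ for $p = 2,\dots,\min(r,s)$.

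Pulling back along~$\gamma_S$ and using $\gamma_S^*\cU \cong \cU_S$ and $\gamma_S^*\cQ^\vee \cong \cU_S^\perp$, I obtain a filtration of $\gamma_S^*\cN^\vee_{\Gr(r,V)/\P(\wedge^rV)}(1)$ whose $p$-th graded piece is $\wedge^p\cU_S^\perp \otimes \wedge^{r-p}\cU_S^\vee$. Using $\upmu(\cU_S) = -1/r$ and $\upmu(\cU_S^\perp) = -1/s$, this graded piece has slope
\begin{equation*}
\upmu\bigl(\wedge^p\cU_S^\perp \otimes \wedge^{r-p}\cU_S^\vee\bigr) = -\frac{p}{s} + \frac{r-p}{r} = 1 - p\left(\frac{1}{r} + \frac{1}{s}\right),
\end{equation*}
which is strictly negative for $p \ge 2$ under the hypothesis~\eqref{eq:rs-inequality}.

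The main obstacle is to verify that these graded pieces are semistable, so that their negative slope forces vanishing global sections. Since~$\cU_S$ and~$\cU_S^\perp$ are $H$-stable by Proposition~\ref{prop:muk-bundles}, in characteristic zero their exterior powers are $H$-polystable, and then by the theorem of Ramanan--Ramanathan the tensor product $\wedge^p\cU_S^\perp \otimes \wedge^{r-p}\cU_S^\vee$ is also $H$-polystable. Since a polystable sheaf of strictly negative slope admits no nonzero map from~$\cO_S$, each graded piece has vanishing~$\rH^0$, and the long exact sequences induced by the filtration then yield $\rH^0(S, \gamma_S^*\cN^\vee_{\Gr(r,V)/\P(\wedge^rV)}(1)) = 0$, as required.
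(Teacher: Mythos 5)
Your proof is correct and follows essentially the same route as the paper: filter the (pulled-back, twisted) conormal bundle so that its graded pieces are $\wedge^{p}\cU_S^\perp \otimes \wedge^{r-p}\cU_S^\vee$ with $p \ge 2$, compute that their slopes $1 - p(\tfrac1r+\tfrac1s)$ are negative under~\eqref{eq:rs-inequality}, and conclude vanishing of~$\rH^0$ from semistability of tensor/exterior products of the stable bundles~$\cU_S$ and~$\cU_S^\perp$. The only differences are cosmetic: you set up the filtration on~$\Gr(r,V)$ via~$\cQ^\vee \subset V^\vee\otimes\cO$ and identify~$\cF^2$ with~$\cN^\vee(1)$ before pulling back, whereas the paper filters~$\wedge^rV\otimes\cO_S$ on~$S$ and dualizes, and you make the semistability step explicit via Ramanan--Ramanathan where the paper simply asserts it.
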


\begin{proof}
The pullback~\eqref{eq:taut} of the tautological exact sequence of~$\Gr(r,V)$ to~$S$ 
induces a filtration of the trivial bundle~$\wedge^rV \otimes \cO_S$ with factors
\begin{equation*}
\wedge^r\cU_S,\qquad 
\wedge^{r-1}\cU_S \otimes V/\cU_S,\qquad 
\wedge^{r-2}\cU_S \otimes \wedge^2(V/\cU_S),\qquad 
\dots,\qquad 
\wedge^r(V/\cU_S).
\end{equation*}
The first factor corresponds to the pullback of~$\cO_{\P(\wedge^rV)}(-1)$ under the Pl\"ucker embedding,
hence the remaining factors provide a filtration of~$\gamma_S^*(\cT_{\P(\wedge^rV)}(-1))$.
Similarly, the second factor is the twisted tangent bundle~$\cT_{\Gr(r,V)}(-1)$ 
and its embedding is given by the differential of~$\gamma_S$,
hence the remaining factors provide a filtration of~$\gamma_S^*(\cN_{\Gr(r,V)/\P(\wedge^rV)}(-1))$.
Dualizing, we conclude that the bundle~$\gamma_S^*(\cN^\vee_{\Gr(r,V)/\P(\wedge^rV)}(1))$ has a filtration with factors
\begin{equation*}
\wedge^r\cU_S^\perp, \qquad \dots, \qquad \wedge^{r-2}\cU^\vee_S \otimes \wedge^2\cU_S^\perp.
\end{equation*}

Since~$\cU_S^\vee$ and~$\cU_S^\perp$ are stable of slope~$\tfrac1r$ and~$-\tfrac1s$, respectively,
all these sheaves are semistable with slopes~$(r - i) \cdot \tfrac1r - i \cdot \tfrac1s = 1 - i \cdot (\tfrac1r + \tfrac1s)$, 
where~$r \ge i \ge 2$; 
in particular, the assumption~\eqref{eq:rs-inequality} implies that all the slopes are negative.
We conclude that the bundles have no global sections, 
and hence the same is true for the bundle~$\gamma_S^*(\cN^\vee_{\Gr(r,V)/\P(\wedge^rV)}(1))$.
\end{proof}

\begin{remark}
For~$g \in \{8,9,10,12\}$ and~$(r,s)$ as in~\eqref{eq:rs} the inequality~\eqref{eq:rs-inequality} obviously holds.
It also holds 
for~$r = 2$ and any~$s$, or for~$(r, s) = (3, 5)$. 
\end{remark}

%%%%%%%%%%%%%%%%%%%%%%%%%

\section{Prime K3 surfaces of genus~6, 8, 9, 10, or 12}
\label{sec:Genus6-12}

In this section we prove Theorem~\ref{thm:prime-k3} for~$g \in \{6,8,9,10,12\}$.
In Section~\ref{subsec:MukaiVarieties8-12} we recall the realization of Mukai varieties~$\rM_g$ inside~$\Gr(r,r+s)$,
in Section~\ref{subsec:ContainmentMukai} we show that the image of~$S$ under the Gushel morphism is contained in~$\rM_g$,
and in Section~\ref{subsec:K3g=6-10} we prove that~$S = \rM_g \cap \P^g$.

Throughout the section we fix the pair of integers~$(r,s)$ as in~\eqref{eq:rs}, so that~$g = r \cdot s$.

\subsection{Mukai varieties of genus 9, 10, or 12}
\label{subsec:MukaiVarieties8-12}

Mukai varieties~$\rM_g$ for~$g \in \{9,10,12\}$ were defined in the introduction.
In this section we describe~$\rM_g \subset \Gr(r,r+s)$ 
as zero loci of global sections of vector bundles.
Note that an embedding of~$\rM_g$ into~$\Gr(r,r+s)$ is not unique,
but when it is fixed, we call its image a {\sf Mukai subvariety} of~$\Gr(r, r+s)$.

We denote by~$V = V_{r+s}$ the vector space of dimension~$r + s$, 
by~$\cU$ the tautological bundle on the Grassmannian~$\Gr(r,V) = \Gr(r, r+s)$, 
and by~$V/\cU$ the quotient bundle.
We also recall the notation~$\cU^\perp \cong (V/\cU)^\vee$
and consider the vector bundle
\begin{equation}
\label{eq:def-ce-zero}
\cE_0 \coloneqq 
\begin{cases}
\wedge^2\cU^\vee, & 
\text{on~$\Gr(3,V_6)$ for~$g = 9$},\\
\cU^\perp(1), & 
\text{on~$\Gr(2,V_7)$ for~$g = 10$},\\
\wedge^2\cU^\vee \oplus \wedge^2\cU^\vee \oplus \wedge^2\cU^\vee, & 
\text{on~$\Gr(3,V_7)$ for~$g = 12$}.
\end{cases}
\end{equation}
Note that
\begin{equation*}
\rH^0(\Gr(r,V_{r+s}), \cE_0) =
\begin{cases}
\wedge^2V_6^\vee, & \text{for~$g = 9$},\\
\wedge^3V_7^\vee, & \text{for~$g = 10$},\\
\wedge^2V_7^\vee \oplus \wedge^2V_7^\vee \oplus \wedge^2V_7^\vee, & \text{for~$g = 12$}.
\end{cases}
\end{equation*}
Recall that the~$\GL(V_7)$-action on the space~$\wedge^3V_7^\vee$ 
has an open orbit (\cite[Theorem~24(8)]{KS}).
We will usually impose the following non-degeneracy conditions on a global section of~$\cE_0$:

\begin{definition}
\label{def:nondegenerate}
A section~$\sigma_0 \in \rH^0(\Gr(r,r+s), \cE_0)$ 
of one of the bundles~\eqref{eq:def-ce-zero} is {\sf nondegenerate}, if
\begin{itemize}
\item
for~$g = 9\hphantom{0}$,
$\sigma_0 \in \wedge^2V_6^\vee$ is a $2$-form of rank~$6$;
\item 
for~$g = 10$,
$\sigma_0 \in \wedge^3V_7^\vee$ is a $3$-form in the open~$\GL(V_7)$-orbit in~$\wedge^3V_7^\vee$;
\item 
for~$g = 12$,
$\sigma_0 = (\sigma_{0,1},\sigma_{0,2},\sigma_{0,3}) \in \wedge^2V_7^\vee \oplus \wedge^2V_7^\vee \oplus \wedge^2V_7^\vee$
satisfies the property
\begin{equation}
\label{eq:rank-6}
\rank(a_1 \sigma_{0,1} + a_2 \sigma_{0,2} + a_3 \sigma_{0,3}) = 6
\qquad\text{for every~$(a_1,a_2,a_3) \in \kk^3 \setminus 0$}.
\end{equation}
\end{itemize}
\end{definition}

Recall the integers~$n_g$ listed in~\eqref{eq:ng}.

\begin{lemma}
\label{lem:rm-9-10}
For~$g \in \{9,10,12\}$ let~$\sigma_0 \in \rH^0(\Gr(r,V), \cE_0)$ be a nondegenerate global section.
Then the zero locus of~$\sigma_0$ is a Mukai subvariety~$\rM_g \subset \Gr(r,V)$.
In particular, 
\begin{itemize}[wide]
\item 
for~$g = 9$ and~$g = 10$
these are smooth homogeneous Fano varieties,
\item 
for~$g = 12$ these are local complete intersection Fano threefolds.
\end{itemize}
In all cases, $\dim(\rM_g) = n_g$, $\deg(\rM_g) = 2g - 2$, $K_{\rM_g} = -(n_g - 2)H$, $\rH^\bullet(\rM_g, \cO_{\rM_g}) = \kk$,
and the restriction morphism~$\rH^0(\Gr(r,V), \cO_{\Gr(r,V)}(1)) \to \rH^0(\rM_g, \cO_{\rM_g}(1))$ is surjective.
\end{lemma}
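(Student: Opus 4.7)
The proof would proceed case by case in $g \in \{9, 10, 12\}$: first identify the zero locus of $\sigma_0$ with $\rM_g$, then read off the invariants.

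For $g = 9$, a nondegenerate $\sigma_0 \in \wedge^2 V_6^\vee$ is a symplectic form on $V_6$, and the zero locus of the induced section of $\wedge^2 \cU^\vee$ on $\Gr(3, V_6)$ is, by definition, the Lagrangian Grassmannian $\LGr(3, V_6) = \SP_6/\rP_3 = \rM_9$. For $g = 10$, a $3$-form $\sigma_0 \in \wedge^3 V_7^\vee$ in the open $\GL(V_7)$-orbit has stabilizer of type $\rG_2$; its zero locus in $\Gr(2, V_7)$ (as a section of $\cU^\perp(1)$) is the classical realization $\rG_2/\rP_2 = \rM_{10}$ from Mukai's original construction. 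For $g = 12$, the identification with $\rM_{12}$ is the definition given in the introduction.

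For $g \in \{9, 10\}$, the remaining invariants all follow from the homogeneous structure. The dimension $n_g$ and the canonical class $K_{\rM_g} = -(n_g - 2)H$ may be read from the root data or verified by adjunction $K_{\rM_g} = (K_{\Gr(r, V)} + \det \cE_0)|_{\rM_g}$, giving $-4H$ and $-3H$ respectively (using $\det(\wedge^2\cU^\vee) = \cO(2)$ on $\Gr(3, V_6)$ and $\det \cU^\perp(1) = \cO(4)$ on $\Gr(2, V_7)$). The vanishing $\rH^{\ge 1}(\rM_g, \cO) = 0$ is Kodaira vanishing for Fano varieties, the degree $2g - 2$ is a standard Schubert calculus computation, and the surjectivity of $\rH^0(\Gr(r, V), \cO(1)) \to \rH^0(\rM_g, \cO(1))$ amounts to projective normality of these (co)minuscule embeddings, which in both cases follows from Borel--Weil--Bott.

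For $g = 12$, the substantive point is to show that every component of the zero locus has dimension exactly $3$, so that being cut out by $9 = \rank \cE_0$ sections it is automatically a local complete intersection of the expected dimension. This transversality uses \eqref{eq:rank-6} in a global way and cannot be reduced to pointwise assertions about individual forms, since a single rank-$6$ form on $V_7$ already admits $4$-dimensional isotropic subspaces; the cleanest argument proceeds via the explicit parametric description of $\rM_{12}$ developed in Appendix~\ref{sec:g-12}. Once the LCI property is established, adjunction gives $K_{\rM_{12}} = (K_{\Gr(3, V_7)} + \det \cE_0)|_{\rM_{12}} = (-7 + 6)H|_{\rM_{12}} = -H|_{\rM_{12}}$, using $\det(\wedge^2 \cU^\vee) = \cO(2)$ and hence $\det \cE_0 = \cO(6)$ on $\Gr(3, V_7)$. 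The degree $22$ is the Schubert intersection $\int_{\Gr(3, V_7)} c_9(\cE_0) \cdot H^3$, and the vanishing $\rH^{\ge 1}(\rM_{12}, \cO) = 0$ together with the surjectivity of the restriction of $\cO(1)$ both follow from the Koszul resolution
\[
0 \to \wedge^9 \cE_0^\vee \to \dots \to \cE_0^\vee \to \cO_{\Gr(3, V_7)} \to \cO_{\rM_{12}} \to 0
\]
combined with Bott-theorem vanishings of $\rH^{j}(\Gr(3, V_7), \wedge^j \cE_0^\vee)$ and $\rH^{j+1}(\Gr(3, V_7), \wedge^j \cE_0^\vee(1))$ for $j \ge 1$. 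The principal obstacle is precisely this transversality assertion for $g = 12$: it is the only place where the nondegeneracy hypothesis \eqref{eq:rank-6} does essential geometric work, and unlike in the homogeneous cases $g \in \{9, 10\}$ there is no representation-theoretic shortcut.
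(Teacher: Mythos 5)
For $g \in \{9,10\}$ your argument is essentially the paper's: the identification of the zero locus with $\LGr(3,6)$ resp.\ $\rG_2/\rP_2$ is taken as classical/Mukai's observation (the paper only sketches the $\rG_2$ case, using Lemma~\ref{lem:zl-span} plus the irreducibility of the adjoint representation, connectedness via closed orbits, and Bertini), and the invariants are obtained exactly as you say; your route to the surjectivity of the restriction map (projective normality of the minuscule embeddings via Borel--Weil--Bott) differs only cosmetically from the paper's equivariance-plus-irreducibility argument, and your adjunction computations of $K_{\rM_g}$ are correct.

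The genuine gap is in the $g=12$ case, and it is exactly the step you yourself single out: you assert that every component of the zero locus has dimension $3$, but you give no argument, deferring to ``the explicit parametric description of $\rM_{12}$ developed in Appendix~\ref{sec:g-12}'' --- which is precisely where the paper proves this lemma, so nothing is actually supplied. The paper's argument (Proposition~\ref{prop:v22}) bounds the dimension from above by showing that the zero locus $X$ is covered by a $2$-parameter family of conics: the nondegenerate net defines the Veronese embedding $\kappa\colon\P(S_3)\to\P(V_7)$, $\sigma\mapsto\sigma\wedge\sigma\wedge\sigma$ (Lemma~\ref{lem:net}); for every isotropic $U_3$ a codimension count for the composition $S_3\hookrightarrow U_3^\perp\wedge V_7^\vee\twoheadrightarrow U_3^\perp\otimes U_3^\vee$ produces a form in the net whose kernel vector lies in $U_3$ (Lemma~\ref{lem:conics-cover}); and each fiber $X_{\kappa(\sigma)}$ is a conic because $X$ contains no planes and no $\Gr(3,V_4)$ (Lemmas~\ref{lem:no-gr34} and~\ref{lem:no-plane}). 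Since $\dim X\ge 3$ is automatic from $\rank(\cE_0)=9$, this yields the LCI property, after which your adjunction, Schubert, and Koszul computations go through (note a small indexing slip: the surjectivity of $\rH^0(\cO(1))\to\rH^0(\cO_{\rM_{12}}(1))$ requires the vanishing of $\rH^{j}(\Gr(3,V_7),\wedge^{j}\cE_0^\vee(1))$ for $j\ge1$, not $\rH^{j+1}$, which is what Proposition~\ref{prop:hi-we} provides). Without the conic-covering argument (or some substitute proving the dimension bound), the $g=12$ part of the lemma is not established.
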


\begin{proof}
For~$g = 9$ the identification of the Mukai variety~$\rM_9 = \LGr(3,6) \subset \Gr(3,6)$ 
as a zero locus is classical, so we omit the proof.
The computation of the dimension, canonical class, and cohomology of the structure sheaf are immediate,
and the degree is computed in Lemma~\ref{lem:ctop}.
Finally, since~$\rH^0(\rM_9,\cO_{\rM_9}(1))$ is an irreducible representation of the group~$\SP_6$
and the restriction morphism is equivariant and non-zero, it is surjective.

For~$g = 10$ the description of~$\rM_{10} = \rG_2/\rP_2$ as a zero locus was observed by Mukai, \cite[Remark~1]{Mukai:PNAS}; 
we give here a sketch of the proof for the reader's convenience.

Assume~$V$ is a vector space of dimension~7 and let~$\sigma_0 \in \wedge^3V^\vee$ be a 3-form from the open~$\GL(V)$-orbit.
The stabilizer of~$\sigma_0$ in~$\GL(V)$ is a simple algebraic group of type~$\rG_2$, see~\cite[Example~30]{KS}, and~$V$ is its irreducible representation.
Since~$(\cU^\perp(1))^{\vee} \otimes \cO(1) \cong V/\cU$ is globally generated by the space~$\rH^0(\Gr(2,V), V/\cU) = V$, Lemma~\ref{lem:zl-span} shows that the zero locus~$\rZ(\sigma_0) \subset \Gr(2,V)$ of~$\sigma_0$ is a linear section of~$\Gr(2,V)$; more precisely,
\begin{equation*}
\rZ(\sigma_0) = \Gr(2,V) \cap \P(\Ker(\wedge^2V \xrightarrow{\ \rH^0(\sigma_0(1))^\vee\ } V^\vee)).
\end{equation*}
From the representation theory of the group~$\rG_2$ it is easy to compute 
that~$\Ker(\wedge^2V \xrightarrow{\ \rH^0(\sigma_0(1))^\vee\ } V^\vee)$ 
is the adjoint representation of~$\rG_2$, in particular, it is irreducible.
Hence it contains a unique closed $\rG_2$-orbit---the adjoint Grassmannian~$\rG_2/\rP_2$.
On the other hand, the zero locus~$\rZ(\sigma_0)$ is obviously closed and $\rG_2$-invariant, 
hence~$\rG_2/\rP_2 \subset \rZ(\sigma_0)$.
Moreover, any connected component of~$\rZ(\sigma_0)$ must contain a closed $\rG_2$-orbit, hence~$\rZ(\sigma_0)$ is connected.
Finally, we have~$\dim(\rG_2/\rP_2) = 5$,
and at the same time by Bertini's Theorem for zero loci of sections of globally generated vector bundles, 
the scheme~$\rZ(\sigma_0)$ is smooth of dimension~$\dim(\rZ(\sigma_0)) = 5$.
Therefore, $\rG_2/\rP_2 = \rZ(\sigma_0)$.

Other statements in this case are proved in the same way as for~$g = 9$.

For~$g = 12$ all results are proved in Proposition~\ref{prop:v22}.
\end{proof}

Next, we show that the~$\rM_g$ are (non-transverse) linear sections of the Grassmannians.

\begin{lemma}
\label{lem:xi0}
For~$g \in \{9,10,12\}$ let~$\rM_g \subset \Gr(r,V)$ be a Mukai subvariety of genus~$g$,
i.e., the zero locus of a nondegenerate global section~$\sigma_0$ of the bundle~$\cE_0$ from~\eqref{eq:def-ce-zero}.
Then the sequence
\begin{equation}
\label{eq:h0-sequence}
0 \to \rH^0(\Gr(r,V), \cE_0^\vee(1)) \xrightarrow{\ \rH^0(\sigma_0(1))\ } 
\rH^0(\Gr(r,V), \cO(1)) \xrightarrow{\quad\qquad}
\rH^0(\rM_g, \cO_{\rM_g}(1)) \to 0,
\end{equation}
with the first map defined in Lemma~\textup{\ref{lem:zl-span}} is exact.
Explicitly, the first map has the following form:
\begin{aenumerate}
\item 
\label{it:rh0-s0-g9}
if~$g = 9$, so that~$\sigma_0 \in \wedge^2V^\vee$, this map is
\begin{equation*}
V^\vee \to \wedge^3V^\vee,
\qquad 
f \mapsto f \wedge \sigma_0,
\end{equation*}
\item 
\label{it:rh0-s0-g10}
if~$g = 10$, so that~$\sigma_0 \in \wedge^3V^\vee$, this map is
\begin{equation*}
V \to \wedge^2V^\vee,
\qquad
v \mapsto \sigma_0(v,-,-),
\end{equation*}
\item 
\label{it:rh0-s0-g12}
if~$g = 12$, so that~$\sigma_0 = (\sigma_{0,1},\sigma_{0,2},\sigma_{0,3}) \in \wedge^2V^\vee \oplus \wedge^2V^\vee \oplus \wedge^2V^\vee$, 
this map is
\begin{equation*}
(V^\vee)^{\oplus 3} \to \wedge^3V^\vee,
\qquad 
(f_1,f_2,f_3) \mapsto f_1 \wedge \sigma_{0,1} + f_2 \wedge \sigma_{0,2} + f_3 \wedge \sigma_{0,3}.
\end{equation*}
\end{aenumerate}
Moreover, $h^0(\cO_{\rM_g}(1)) = n_g + g - 1$
and~$\rM_g = \Gr(r,V) \cap \P^{n_g + g - 2}$.
\end{lemma}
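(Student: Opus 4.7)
The plan is to apply Lemma~\ref{lem:zl-span} to the Mukai variety~$\rM_g$ realized as the zero locus of the nondegenerate section~$\sigma_0$ of~$\cE_0$. That lemma reduces the identification of~$\rM_g$ with a linear section of~$\Gr(r,V)$ to two tasks: first, verifying that~$\cE_0^\vee(1)$ is globally generated; second, computing the dimension of the kernel of the map~$V\to\rH^0(\Gr(r,V),\cE_0^\vee(1))^\vee$.

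The first task is mechanical. Using the isomorphism~$\wedge^2\cU\otimes\det\cU^\vee\cong\cU^\vee$ valid when~$\cU$ has rank~$3$, together with the identification~$\cU^\perp\cong(V/\cU)^\vee$, I find
\begin{equation*}
\cE_0^\vee(1)\;\cong\;
\begin{cases}\cU^\vee & (g=9),\\ V/\cU & (g=10),\\ (\cU^\vee)^{\oplus 3} & (g=12),\end{cases}
\end{equation*}
each of which is globally generated, with spaces of global sections of dimension~$6$, $7$, and~$21$, respectively. Unwinding the definition of~$\rH^0(\sigma_0(1))$ via these isomorphisms and the Pl\"ucker embedding yields the explicit formulas of parts~(a)--(c).

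The main step, and where most of the work lies, is to prove that this map is injective, so that~\eqref{eq:h0-sequence} is left-exact. For~$g=9$, the nondegeneracy of~$\sigma_0\in\wedge^2V_6^\vee$ (rank~$6$) implies that wedging with~$\sigma_0$ has trivial kernel on~$V_6^\vee$, by a direct computation in coordinates adapted to the symplectic form. For~$g=10$, injectivity of~$v\mapsto\sigma_0(v,-,-)$ is the absence of nonzero isotropic vectors for a generic $3$-form on~$V_7$, which follows at once from the irreducibility of~$V_7$ as a~$\rG_2$-module, since the space of isotropic vectors would otherwise be a proper nonzero $\rG_2$-subrepresentation. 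The case~$g=12$ is the main obstacle: assuming~$\sum f_i\wedge\sigma_{0,i}=0$ for~$(f_1,f_2,f_3)\in(V_7^\vee)^{\oplus 3}$, I would contract with an arbitrary~$u\in V_7$ to obtain the identity~$\sum_i f_i(u)\,\sigma_{0,i}=\sum_i f_i\wedge \iota_u\sigma_{0,i}$ in~$\wedge^2V_7^\vee$, and combine this with the rank-$6$ condition~\eqref{eq:rank-6} on every nonzero linear combination of the~$\sigma_{0,i}$ to force~$f_1=f_2=f_3=0$ by analyzing the kernel of the two sides as skew forms on~$V_7$.

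Once left-exactness is established, the surjectivity of the restriction supplied by Lemma~\ref{lem:rm-9-10} makes the sequence~\eqref{eq:h0-sequence} exact, and the dimension count
\begin{equation*}
h^0(\cO_{\rM_g}(1))\;=\;\dim\wedge^rV^\vee\,-\,\dim\rH^0(\Gr(r,V),\cE_0^\vee(1))\;=\;14\;=\;n_g+g-1
\end{equation*}
holds in all three cases (with summands~$20-6$, $21-7$, $35-21$ respectively). Finally, global generation of~$\cE_0^\vee(1)$ activates the equality statement of Lemma~\ref{lem:zl-span}, identifying~$\rM_g$ with~$\Gr(r,V)\cap\P(W)$ where~$W=\Ker\rH^0(\sigma_0(1))^\vee$ has dimension~$n_g+g-1$; hence~$\rM_g=\Gr(r,V)\cap\P^{n_g+g-2}$, as required.
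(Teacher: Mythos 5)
Your architecture matches the paper's: identify $\cE_0^\vee(1)$ as $\cU^\vee$, $V/\cU$, $(\cU^\vee)^{\oplus 3}$, note global generation, reduce everything to injectivity of~$\rH^0(\sigma_0(1))$, then invoke Lemmas~\ref{lem:zl-span} and~\ref{lem:rm-9-10}. Your $g=9$ argument (if $f\wedge\sigma_0=0$ with $f\ne 0$ then $\sigma_0=f\wedge\beta$ has rank~$2$) and your $g=10$ argument ($\rG_2$-irreducibility of $V_7$) are correct and essentially what the paper does. The genuine gap is $g=12$, which is exactly the case that cannot be settled by equivariance: the paper defers it to Proposition~\ref{prop:v22}, where injectivity is extracted from the hypercohomology spectral sequence of the Koszul complex of~$\rM_{12}$ twisted by~$\cO(1)$, using the Borel--Bott--Weil vanishings of Proposition~\ref{prop:hi-we} (this is the sequence~\eqref{eq:v22-cox1}). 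You replace this by a promissory note: contract $\sum f_i\wedge\sigma_{0,i}=0$ with $u$ and ``analyze the kernels''. As stated this does not close: the right-hand side $\sum f_i\wedge\iota_u\sigma_{0,i}$ is a sum of three skew forms of rank at most~$2$, so it can perfectly well have rank~$6$, and no contradiction with the rank-$6$ condition~\eqref{eq:rank-6} on the left-hand side follows. An elementary route does exist --- evaluate the $3$-form identity on $u,v\in W\coloneqq\ker f_1\cap\ker f_2\cap\ker f_3$ and a third arbitrary vector: if the $f_i$ are independent this makes the $4$-dimensional $W$ totally isotropic for the whole net, contradicting nondegeneracy by the argument of Lemma~\ref{lem:no-gr34}, and if they are dependent one reduces to two forms and a codimension-two restriction, whose rank is at least~$2$ --- but you have not carried any of this out, and the contraction identity you single out is not the step that does the work.

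A second, smaller issue is the assertion that left-exactness together with the surjectivity from Lemma~\ref{lem:rm-9-10} ``makes the sequence~\eqref{eq:h0-sequence} exact''. Exactness in the middle does not follow formally from injectivity on the left and surjectivity on the right; it is equivalent to saying that every Pl\"ucker hyperplane containing~$\rM_g$ comes from~$\rH^0(\sigma_0(1))$, i.e., that~$\rM_g$ linearly spans~$\P(W)$. Since you then read off $h^0(\cO_{\rM_g}(1))=14$ from the exact sequence, the argument is circular at this point. The paper handles this via Lemma~\ref{lem:zl-span} (and, for $g=12$, the full sequence is imported from Proposition~\ref{prop:v22}); for $g=9,10$ one can alternatively argue that the kernel of $\wedge^rV^\vee\to\rH^0(\rM_g,\cO_{\rM_g}(1))$ is a proper subrepresentation containing the image of~$\rH^0(\sigma_0(1))$, and that the quotient $W^\vee$ is irreducible. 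Your final step, deducing $\rM_g=\Gr(r,V)\cap\P^{n_g+g-2}$ from the equality statement of Lemma~\ref{lem:zl-span} and the computation $\dim W=14$, is fine once injectivity is actually established.
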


\begin{proof}
The bundle~$\cE_0^\vee(1)$ has the following explicit form:
\begin{align}
\label{eq:ce0-vee}
\cE_0^\vee(1) &\cong 
\begin{cases}
\cU^\vee, & \text{for~$g = 9$},\\
V/\cU, & \text{for~$g = 10$},\\
\cU^\vee \oplus \cU^\vee \oplus \cU^\vee, & \text{for~$g = 12$},
\end{cases}
\intertext{in particular, it is globally generated.
Moreover, we have}
\label{eq:h0-ce0-vee}
\rH^0(\Gr(r,V), \cE_0^\vee(1)) &\cong 
\begin{cases}
V^\vee, & \text{for~$g = 9$},\\
V, & \text{for~$g = 10$},\\
V^\vee \oplus V^\vee \oplus V^\vee, & \text{for~$g = 12$}.
\end{cases}
\end{align}
We prove injectivity of~$\rH^0(\sigma_0(1))$ by a case-by-case analysis.

\ref{it:rh0-s0-g9}
Let~$g = 9$.
The formula for the map~$\rH^0(\sigma_0(1))$ is obvious and its injectivity 
can be easily deduced from non-degeneracy of~$\sigma_0$.
For instance, it follows from~$\SP_6$-equivariance of the morphism, 
because~$V^\vee$ is an irreducible representation of~$\SP_6$.

\ref{it:rh0-s0-g10}
Let~$g = 10$.
The formula for~$\rH^0(\sigma_0(1))$ is obvious and its injectivity can be easily deduced from non-degeneracy of~$\sigma_0$.
For instance, it follows from~$\rG_2$-equivariance of the morphism, because~$V$ is an irreducible representation of~$\rG_2$.

\ref{it:rh0-s0-g12}
Let~$g = 12$.
The formula  for~$\rH^0(\sigma_0(1))$ is analogous to part~\ref{it:rh0-s0-g9} 
and its injectivity is proved in Proposition~\ref{prop:v22}.

It remains to note that for all~$g$ the sequence~\eqref{eq:h0-sequence} is exact in the middle by Lemma~\ref{lem:zl-span},
on the right by Lemma~\ref{lem:rm-9-10}, and on the left by the above analysis,
so a straightforward computation gives the required formula for~$h^0(\cO_{\rM_g}(1))$.
The equality~$\rM_g = \Gr(r,V) \cap \P^{n_g+g-2}$ follows from Lemma~\ref{lem:zl-span},
because~$\cE_0^\vee(1)$ is globally generated.
\end{proof}

\subsection{Containment in the Mukai subvarieties}
\label{subsec:ContainmentMukai}

The goal of this section is to show that the Gushel embedding~$\gamma_S \colon S \to \Gr(r,r+s)$ 
of a prime K3 surface~$S$ of genus~\mbox{$g \in \{9,10,12\}$} factors through a Mukai subvariety~$\rM_g$.
Our proof relies on the following cohomology vanishing.

\begin{lemma} 
\label{lem:g10H1vanishing}
Let~$(S,H)$ be a prime $K3$ surface of genus~$g \ge 2$ 
and let~$\cF$ be a stable vector bundle on~$S$ with Mukai vector~$\rv(\cF) = (r_{\cF},-H,s_{\cF})$, where~$r_{\cF} \ge 2$.
Assume also
\begin{equation} \label{ineq:s+g}
s_{\cF} + g > 
\begin{cases}
\tfrac43g - 1, & \text{if~$r_{\cF} = 2$},\\
\tfrac{r_{\cF}+1}{2(r_{\cF}-1)}g, & \text{if~$r_{\cF} \ge 3$}.
\end{cases}
\end{equation}
Then~$\rH^1(S, \cF(H)) = \rH^2(S, \cF(H)) = 0$.
In particular, for~$r_{\cF} \ge 3$ this holds for any positive~$s_{\cF}$.
\end{lemma}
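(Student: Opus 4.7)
The plan is to prove the two vanishings separately. The $H^2$ vanishing is a short consequence of $H$-stability: by Serre duality on the K3 surface, $H^2(S, \cF(H)) \cong H^0(S, \cF^\vee(-H))^\vee$. Since dualizing and twisting preserve $H$-stability, $\cF^\vee(-H)$ is $H$-stable with slope $-(r_{\cF}-1)(2g-2)/r_{\cF}$, which is strictly negative because $r_{\cF} \ge 2$. Combined with $\Pic(S) = \ZZ \cdot H$, this rules out any nonzero morphism $\cO_S \to \cF^\vee(-H)$, so $H^0(\cF^\vee(-H)) = 0$.

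The $H^1$ vanishing is the substantive part, and I would argue by contradiction. A nonzero element of $H^1(\cF(H)) \cong \Ext^1(\cF, \cO_S(-H))^\vee$ (again via Serre duality) gives a nontrivial extension
\[
0 \to \cO_S(-H) \to \cE \to \cF \to 0
\]
with $\rv(\cE) = (r_{\cF} + 1, -2H, s_{\cF} + g)$ and slope $\mu(\cE) = -2(2g-2)/(r_{\cF}+1)$. The core claim is that $\cE$ is $H$-stable, to be verified by a subsheaf analysis: rank-$1$ subsheaves contained in $\cO_S(-H)$ have slope at most $-(2g-2) < \mu(\cE)$; a subsheaf isomorphic to $\cF$ is excluded because $\Hom(\cF, \cE) = 0$ (the vanishing $\Hom(\cF, \cO_S(-H)) = 0$ follows from stability of $\cF$, and the connecting map $\Hom(\cF, \cF) \cong \kk \to \Ext^1(\cF, \cO_S(-H))$ sends $\id_{\cF}$ to the nonzero extension class); and any higher-rank subsheaf $\cG$, after projecting to $\cF$, is forced by $\Pic(S) = \ZZ \cdot H$ and stability of $\cF$ into a numerically forbidden range of $(\rank(\cG), c_1(\cG))$. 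Once $\cE$ is known to be $H$-stable it is simple, whence $\upchi(\cE, \cE) \le 2$ by Serre duality, which via \eqref{eq:chi} translates to
\[
(r_{\cF} + 1)(s_{\cF} + g) \le 4g - 3.
\]

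This directly contradicts \eqref{ineq:s+g}: for $r_{\cF} = 2$ the hypothesis reads $3(s_{\cF}+g) > 4g - 3$, exactly violating the bound; for $r_{\cF} \ge 3$ the hypothesis combined with the inequality $(r_{\cF}+1)^2 g \ge 2(r_{\cF}-1)(4g-3)$, which rearranges to the manifestly true $(r_{\cF}-3)^2 g \ge -6(r_{\cF}-1)$, again yields $(r_{\cF}+1)(s_{\cF}+g) > 4g-3$. The main obstacle will be confirming the $H$-stability of $\cE$, particularly for $r_{\cF} \ge 4$, where intermediate-rank destabilizing subsheaves require careful case analysis leveraging $\Pic(S) = \ZZ \cdot H$ and the stability of $\cF$; the sharp form of hypothesis \eqref{ineq:s+g} for $r_{\cF} = 2$ reflects precisely that this is the numerically tightest regime.
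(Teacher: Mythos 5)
Your overall skeleton is the same as the paper's: Serre duality for the $\rH^2$-vanishing, and for $\rH^1$ the non-split extension $0 \to \cO_S(-H) \to \cE \to \cF \to 0$ with $\rv(\cE) = (r_{\cF}+1,-2H,s_{\cF}+g)$, played off against the Euler-characteristic bound $\upchi(\cE,\cE) \le 2$; your numerical manipulations (including the $(r_{\cF}-3)^2$ identity) are correct. The genuine gap is the core claim that $\cE$ is $H$-stable "by subsheaf analysis leveraging $\Pic(S) = \ZZ\cdot H$ and stability of $\cF$." For $r_{\cF}\ge 3$ this cannot work as stated: a torsion-free subsheaf $\cG \subset \cE$ with $\rc_1(\cG) = -H$ and $\rank(\cG) = r_1$ has $\upmu(\cG) = -(2g-2)/r_1 \ge \upmu(\cE) = -2(2g-2)/(r_{\cF}+1)$ as soon as $r_1 \ge (r_{\cF}+1)/2$, so it destabilizes (or at least destroys stability when the slopes are equal, e.g.\ $r_{\cF}=3$, $r_1=2$); yet nothing in "$\Pic(S)=\ZZ\cdot H$ plus stability of $\cF$" forbids such a $\cG$, since any subsheaf of $\cF$ of rank $r_1 < r_{\cF}$ with $\rc_1 = -H$ automatically has slope smaller than $\upmu(\cF)$. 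So these intermediate-rank subsheaves are not in a "numerically forbidden range," and your sketch supplies no mechanism to exclude them. (A smaller omission: at rank $r_{\cF}$ you only exclude $\cG \cong \cF$ via $\Hom(\cF,\cE)=0$, but a proper full-rank subsheaf of $\cF$ with the same $\rc_1$ also destabilizes; here one can save the argument by noting $\Ext^1(T,\cO_S(-H)) = 0$ for the finite-length cokernel $T$, so the inclusion cannot lift to $\cE$ — this, together with your rank-1 analysis, does make your plan work in the case $r_{\cF}=2$.)

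The missing ingredient is exactly what the paper uses: if stability fails, then (since every subsheaf of $\cE$ has negative slope and the first Chern classes of the Jordan--H\"older factors sum to $-2H$) there are precisely two factors $\cF_1,\cF_2$ with $\rv(\cF_i)=(r_i,-H,s_i)$; the case $r_2=1$ is excluded by non-splitness and stability of $\cF$, so $r_1,r_2\ge 2$; and then stability of each factor on a K3 with $\Pic = \ZZ\cdot H$ gives $\upchi(\cF_i,\cF_i)\le 2$, i.e.\ $r_i s_i \le g$, whence by convexity $s_{\cF}+g = s_1+s_2 \le g(\tfrac1{r_1}+\tfrac1{r_2}) \le \tfrac{r_{\cF}+1}{2(r_{\cF}-1)}g$, contradicting \eqref{ineq:s+g}. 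In other words, for $r_{\cF}\ge 3$ the hypothesis \eqref{ineq:s+g} must enter the stability verification itself through this Mukai-vector bound on the factors; once you add that step, your argument is the paper's proof rearranged (the paper runs it in the opposite order: \eqref{ineq:s+g} forces $\upchi(\cE,\cE)>2$, hence $\cE$ is not simple, and the factor analysis then yields the contradiction). As written, the proposal does not close this case.
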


\begin{proof}
The vanishing~$h^2(\cF(H)) = 0$ follows immediately from Serre duality and stability of~$\cF$.
Assume~$h^1(\cF(H)) \ne 0$.
Then by Serre duality there is a non-split extension
\begin{equation}
\label{eq:tcf}
0 \to \cO_S(-H) \to \tcF \to \cF \to 0.
\end{equation}
We have~$\rv(\tcF) = (r_{\cF}+1,-2H,s_{\cF}+g)$ and Riemann--Roch implies that
\begin{equation*}
\upchi(\tcF,\tcF) = 2(r_{\cF}+1)(s_{\cF}+g) - 4(2g - 2) > 2,
\end{equation*}
where the inequality is a consequence of~\eqref{ineq:s+g}: 
for~$r_{\cF} = 2$ this is obvious, and for~$r_{\cF} \ge 3$ this follows from~$(r_{\cF}+1)^2 \ge 8(r_{\cF}-1)$. 
Therefore, $\tcF$ cannot be stable.

By stability of~$\cF$, every subsheaf of~$\tcF$ has negative slope, 
and thus the same holds for all Jordan--H\"older (JH) factors  
in a refinement of the Harder--Narasimhan filtration of~$\tcF$ with respect to slope stability. 
But the first Chern classes of the JH factors have to sum up to~$\rc_1(\tcF) = -2H$, 
hence there can only be two JH factors. 
In other words, there is a short exact sequence
\begin{equation*}
0 \to \cF_1 \to \tcF \to \cF_2 \to 0
\end{equation*}
with~$\cF_i$ torsion-free and slope-stable, $\rv(\cF_i) = (r_i, -H, s_i)$ 
and~$-\frac 1{r_1} \ge -\frac 2{r_{\cF}+1} \ge -\frac 1{r_2}$.

In particular, we have~$r_1 \ge 2$. 
If~$r_2 = 1$, then either the composition~$\cO_{S}(-H) \hookrightarrow \tcF \twoheadrightarrow \cF_2$ is injective, 
therefore an isomorphism (because~$\rc_1(\cF_2) = -H$ and~$\cF_2$ is torsion-free), 
in contradiction to~\eqref{eq:tcf} being non-split; 
or it is zero, in which case~$\cF_1/\cO_S(-H)$ is a subsheaf of~$\cF$ of slope zero, 
in contradiction to stability of~$\cF$. Therefore, $r_2 \ge 2$.

This immediately implies~$r_{\cF} = r_1 + r_2 -1 \ge 3$. 
By stability of~$\cF_i$ we have~$r_i s_i \le g$. 
This leads to a contradiction to the assumption~\eqref{ineq:s+g} because
\begin{equation*}
s_{\cF} + g = s_1 + s_2 \le 
g \cdot \left(\tfrac1{r_1} + \tfrac1{r_2}\right) \le
g \cdot \left(\tfrac1{r_{\cF}-1} + \tfrac12\right) = 
\tfrac{r_{\cF}+1}{2(r_{\cF}-1)}g
\end{equation*}
where the second inequality follows from convexity of the function~\mbox{$\frac 1x + \frac 1{r_{\cF}+1 - x}$}
since~$r_1 + r_2 = r_{\cF} + 1$ and~$r_i \ge 2$.
\end{proof}

Recall that the Gushel morphism~$\gamma_S \colon S \to \Gr(r,r+s)$ was defined in~\eqref{def:gushel}.
It is characterized by the property~$\gamma_S^*\cU \cong \cU_S$, 
where~$\cU$ is the tautological bundle on~$\Gr(r,r+s)$ and~$\cU_S$ is the Mukai bundle~$\cU_S$ on~$S$.
If~$S$ is prime, we proved in Lemma~\ref{lem:gamma-x-s} that~$\gamma_S$ is a closed embedding.

\begin{proposition}
\label{prop:sigma}
Let~$S$ be a prime $K3$ surface of genus~$g  \in \{9,10,12\}$
and let~$(r,s)$ be as in~\eqref{eq:rs}.
Then the Gushel embedding~$\gamma_S$ 
factors through a unique Mukai subvariety~$\rM_g \subset \Gr(r, r+s)$.
\end{proposition}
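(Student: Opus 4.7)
The strategy, following the setup of Section~\ref{subsec:MukaiVarieties8-12}, is to exhibit for each $g \in \{9, 10, 12\}$ a nonzero global section $\sigma_0 \in \rH^0(\Gr(r, r+s), \cE_0)$ (or, for $g = 12$, a $3$-dimensional subspace of such sections) that vanishes on $\gamma_S(S)$ and satisfies the nondegeneracy condition of Definition~\ref{def:nondegenerate}. By Lemma~\ref{lem:rm-9-10}, the zero locus $Z(\sigma_0)$ will then be a Mukai subvariety $\rM_g \subset \Gr(r, r+s)$ containing $\gamma_S(S)$; uniqueness will come from a dimension count.

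\textbf{Finding $\sigma_0$.} Using $\gamma_S^*\cU \cong \cU_S$ and $\gamma_S^*\cO(1) \cong \cO_S(H)$, I compute the pullback of $\cE_0$: for $g \in \{9, 12\}$, since $\cU_S$ has rank $3$ and $\det \cU_S^\vee \cong \cO_S(H)$, one has $\wedge^2\cU_S^\vee \cong \cU_S(H)$; for $g = 10$ the pullback is $\cU_S^\perp(H)$. In every case this bundle is of the form $\cF(H)$ with $\cF \in \{\cU_S, \cU_S^\perp\}$ a Mukai bundle of rank at least $3$ (Proposition~\ref{prop:muk-bundles}), so Lemma~\ref{lem:g10H1vanishing} gives $\rH^{\ge 1}(S, \cF(H)) = 0$, and Riemann--Roch yields $h^0(S, \gamma_S^*\cE_0) = 14$, $34$, $54$ for $g = 9, 10, 12$ respectively. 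Comparing with $h^0(\Gr(r, r+s), \cE_0) = 15$, $35$, $63$, the restriction map
\begin{equation*}
\rH^0(\Gr(r, r+s), \cE_0) \longrightarrow \rH^0(S, \gamma_S^*\cE_0)
\end{equation*}
has a nontrivial kernel in each case, providing a candidate section $\sigma_0$ vanishing on $\gamma_S(S)$ (for $g = 12$, a $3$-dimensional subspace $K \subset \wedge^2 V_7^\vee$ of $2$-forms vanishing on $\gamma_S(S)$, regarded as defining a $9$-dimensional kernel $K^{\oplus 3}$).

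\textbf{Nondegeneracy.} This is the main obstacle. I would argue by contradiction: if $\sigma_0$ is degenerate, then $\gamma_S(S) \subset Z(\sigma_0)$ must lie in a Schubert divisor of $\Gr(r, r+s)$, contradicting Lemma~\ref{lem:no-schubert}. For $g = 9$, a $2$-form $\sigma_0 \in \wedge^2 V_6^\vee$ of rank $\le 4$ has radical $K = \ker(\sigma_0 \colon V_6 \to V_6^\vee) \subset V_6$ of dimension at least $2$; since the induced symplectic form on $V_6/K$ has rank $\le 4$ and a $4$-dimensional symplectic space admits no isotropic $3$-plane, every $\sigma_0$-isotropic $3$-space $U$ must satisfy $\dim(U \cap K) \ge 1$, placing $\gamma_S(S) \subset \Sigma_1(V_3)$ for any $V_3 \supset K$. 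For $g = 12$ the same argument applies to any $2$-form in $K$ of rank strictly less than $6$. For $g = 10$, the $\GL(V_7)$-orbits on $\wedge^3 V_7^\vee$ are finite in number; any $\sigma_0$ outside the open orbit admits a distinguished flag (for instance a vector $v_0 \in V_7$ with $\iota_{v_0}\sigma_0 = 0$, or a decomposition $\sigma_0 = f \wedge \omega$ with $f \in V_7^\vee$, $\omega \in \wedge^2 V_7^\vee$), and a case-by-case analysis shows that $Z(\sigma_0)$ is contained in a Schubert divisor of $\Gr(2, V_7)$.

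\textbf{Uniqueness.} The locus of degenerate sections is a proper subvariety of the ambient space of sections --- a Pfaffian hypersurface for $g \in \{9, 12\}$, and the complement of the open $\GL(V_7)$-orbit for $g = 10$. Since every nonzero section in the kernel is nondegenerate by the previous step, the kernel cannot contain a projective line meeting this proper subvariety, which forces the kernel to have dimension exactly~$1$ for $g \in \{9, 10\}$ (giving $\sigma_0$ unique up to scalar) and forces the single-component kernel to have dimension exactly~$3$ for $g = 12$ (giving the net $K$ canonically). In all cases $\rM_g = Z(\sigma_0)$ (respectively $\rM_g = \bigcap_{\sigma \in K} Z(\sigma)$) is the unique Mukai subvariety of $\Gr(r, r+s)$ containing $\gamma_S(S)$.
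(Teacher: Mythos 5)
Your overall strategy coincides with the paper's: produce sections of $\cE_0$ vanishing on $\gamma_S(S)$ by the same cohomological dimension count (via Lemma~\ref{lem:g10H1vanishing} and Riemann--Roch), rule out degenerate sections by showing their zero loci lie in Schubert divisors, contradicting Lemma~\ref{lem:no-schubert}, and obtain uniqueness by showing that larger linear families of sections must contain degenerate members. For $g\in\{9,12\}$ your degeneracy argument via the radical (every $\sigma$-isotropic $3$-space must meet the radical of a form of rank $\le 4$) is correct and a pleasant shortcut: the paper instead wedges the form with a suitable linear function to obtain a decomposable $3$-form and invokes Lemma~\ref{lem:xi0}. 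For $g=10$ you assert, but do not carry out, the case-by-case inspection of the non-open $\GL(V_7)$-orbits; the paper does exactly this finite check (exhibiting, for each orbit representative, a vector $v$ with $\sigma_0(v,-,-)$ nonzero and decomposable), so this part of your write-up is a sketch of the right statement rather than a proof.

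The genuine problem is the uniqueness step for $g=12$. You describe the degenerate locus as ``a Pfaffian hypersurface for $g\in\{9,12\}$'' and argue with lines. For $g=9$ (forms on $V_6$) this is fine, and for $g=10$ it works because the complement of the open orbit in $\P(\wedge^3V_7^\vee)$ is in fact a hypersurface (cut out by the degree-$7$ relative invariant) --- a fact you should state, since ``proper subvariety'' alone does not guarantee that every line meets it. But for $g=12$ the forms live on the $7$-dimensional space $V_7$, where there is no Pfaffian and every $2$-form has nontrivial kernel; the relevant locus, forms of rank $\le 4$, has codimension $3$ in $\P(\wedge^2V_7^\vee)=\P^{20}$, so a line (or a plane) inside $\P(K)$ need not meet it, and your mechanism does not force $\dim K=3$. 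What is needed --- and what the paper uses --- is that any $4$-dimensional subspace of $\wedge^2V_7^\vee$ contains a nonzero form of rank $\le 4$: a $\P^3$ and a closed $17$-dimensional subvariety of $\P^{20}$ must intersect. With that replacement the bound $\dim K\le 3$ follows, so $\dim K=3$, the net is canonical, and the Mukai subvariety through $\gamma_S(S)$ is unique.
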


\begin{proof}
Recall from Lemma~\ref{lem:rm-9-10} that each Mukai subvariety~$\rM_g \subset \Gr(r,r+s) = \Gr(r,V)$
is the zero locus of a nondegenerate (in the sense of Definition~\ref{def:nondegenerate}) global section~$\sigma_0$
of the vector bundle~$\cE_0$ defined in~\eqref{eq:def-ce-zero}. 
So, we have to check that there is a unique 
(in case of $g = 12$, unique up to the obvious $\GL_3$-action) section~$\sigma_0$ of~$\cE_0$ whose pullback to~$S$ vanishes, 
and that~$\sigma_0$ is nondegenerate.

Assume~$g = 9$ or $g = 12$, so that~$(r,s) = (3,g/3)$.
Then~$\wedge^2\cU_S^\vee \cong \cU_S(H)$ and 
\begin{equation*}
\rH^1(S, \cU_S(H)) = \rH^2(S, \cU_S(H)) = 0
\end{equation*}
by Lemma~\ref{lem:g10H1vanishing}.
On the other hand, \eqref{eq:muk-vec} implies that~$\rv(\cU_S(H)) = (3, 2H, 4g/3 - 1)$, 
and a combination of~\eqref{eq:chi} with the above vanishings proves that
\begin{equation*}
h^0(\wedge^2\cU_S^\vee) = h^0(\cU_S(H)) = \upchi(\cU_S(H)) = 4g/3 + 2.
\end{equation*}

For~$g = 9$, we obtain~$h^0(\wedge^2\cU_S^\vee) = 14$, whereas $\rH^0(\Gr(3,V), \wedge^2\cU^\vee) = \wedge^2V^\vee$ has dimension~$15$; therefore a nontrivial global section~$\sigma_0$ of~$\cE_0 = \wedge^2\cU^\vee$ vanishes on~$\gamma_S(S)$.

If~$\sigma_0$ is degenerate, it can be written as~$\sigma_0 = f_1 \wedge f_2$ or~$\sigma_0 = f_1 \wedge f_2 + f_3 \wedge f_4$,
where~$f_i$ are appropriate linearly independent elements of~$V^\vee$.
In either case, there is~$f_3 \in V^\vee$ 
such that~$\sigma_0 \wedge f_3 = f_1 \wedge f_2 \wedge f_3\neq 0$ is nonzero and decomposable.
By Lemma~\ref{lem:xi0}\ref{it:rh0-s0-g9} the zero locus of~$\sigma_0$ 
is contained in the Schubert divisor associated to $f_1 \wedge f_2 \wedge f_3$, 
in contradiction to Lemma~\ref{lem:no-schubert}.

For~$g = 12$, we obtain~$h^0(\wedge^2\cU_S^\vee) = 18$, whereas $\rH^0(\Gr(3,V), \wedge^2\cU^\vee) = \wedge^2V^\vee$ has dimension~$21$; therefore three linearly independent global sections~$\sigma_{0,1},\sigma_{0,2},\sigma_{0,3}$ of~$\wedge^2\cU^\vee$
vanish on~$\gamma_S(S)$.

If~$\sigma_0 = (\sigma_{0,1},\sigma_{0,2},\sigma_{0,3})$ is degenerate (in the sense of Definition~\ref{def:nondegenerate})
then a non-zero linear combination of~$\sigma_{0,i}$ has rank~2 or~4.
Then the above computation shows that its wedge product with an appropriate linear function on~$V$
is a decomposable 3-form, hence by Lemma~\ref{lem:xi0}\ref{it:rh0-s0-g12} 
the zero locus of~$\sigma_0$ is contained in a Schubert divisor, 
in contradiction to Lemma~\ref{lem:no-schubert}.

Finally, for $g= 10$, Lemma~\ref{lem:g10H1vanishing} implies $h^0(\cU_S^\perp(H)) = \upchi(\cU_S^\perp(H)) = 34$.
On the other hand, the space~$\rH^0(\Gr(2,V), \cU^\perp(1)) = \wedge^3V^\vee$ has dimension~$35$; 
therefore a nontrivial section~$\sigma_0$ of~$\cU^\perp(1)$ vanishes on~$\gamma_S(S)$.

If~$\sigma_0$ is degenerate, i.e., not in the open $\GL(V)$-orbit, 
then there is~$v \in V$ such that~$\sigma_0(v) \in \wedge^2V^\vee$ is non-zero and decomposable.
Indeed, this can be checked by an inspection of representatives of all other~$\GL(V)$-orbits of 3-forms 
that are listed in~\cite[Section~3]{KW}: one can take~$v$ to be the second base vector for orbits~$1$, $2$, $3$, $5$, $6$, and~$7$, 
and the third base vector for the orbits~$4$ and~$8$.
Therefore, by Lemma~\ref{lem:xi0}\ref{it:rh0-s0-g10} the zero locus of any~$\sigma_0$ not from the open orbit 
is contained in the Schubert divisor associated to $\sigma_0(v)$, 
in contradiction to Lemma~\ref{lem:no-schubert}.

To see that~$\sigma_0$ is unique for~$g \in \{9,10\}$ it is enough to note 
that any nontrivial pencil of sections of~$\cE_0$ contains a nonzero degenerate section.
Similarly, the linear span of~$\sigma_{0,i}$ is unique for~$g = 12$
because any 4-dimensional subspace in~$\wedge^2V^\vee$ contains a form of rank~$\le 4$.
\end{proof}

\subsection{K3 surfaces of genus 6, 8, 9, 10, or 12}
\label{subsec:K3g=6-10}

We are ready to start proving Theorem~\ref{thm:prime-k3}.
The case~$g = 6$ is easy.
Recall that~$(r,s) = (2,3)$, so the target of the Gushel morphism is~$\Gr(2,5)$.

\begin{proposition}\label{prop:s-g6}
If~$g = 6$ then~$\gamma_S(S) = Y \cap Q$ is a complete intersection, 
where~$Y = \Gr(2,5) \cap \P^6$ is a smooth quintic del Pezzo threefold and~$Q$ is a quadric.
\end{proposition}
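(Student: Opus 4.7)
The plan is to first show that $Y \coloneqq \Gr(2,5) \cap \P^6$ is a smooth quintic del Pezzo threefold, and then to identify $\gamma_S(S)$ as a quadric section of $Y$.

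The first step is to establish that $Y$ has the expected dimension $3$ and is smooth. Suppose some $y \in Y$ satisfies $\dim \rT_{y,Y} \ge 4$; then, because $(r,s) = (2,3)$ with $r,s \ge 2$ and $g = rs = 6$, Lemma~\ref{lem:yes-schubert} would force $Y$ to lie in a Schubert divisor of $\Gr(2,5)$. Since $\gamma_S(S) \subset Y$, this would contradict Lemma~\ref{lem:no-schubert}. Consequently every tangent space to $Y$ has dimension at most $3$, so every component of $Y$ has dimension at most $3$; combined with the expected dimension $\dim \Gr(2,5) - 3 = 3$, this forces $Y$ to be pure of dimension $3$ and smooth. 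Connectedness follows from Proposition~\ref{lem:zl-deg} applied to $\Gr(2,5)$ with $\cE = \cO(1)^{\oplus 3}$: the required vanishings $\rH^i(\Gr(2,5), \cO(-j)^{\oplus \binom 3 j}) = 0$ for $j \ge 1$ and $i \in \{j-1,j\}$ are immediate from Kodaira vanishing on $\Gr(2,5)$. A smooth connected linear section of $\Gr(2,5) \subset \P^9$ of dimension $3$ has degree $\deg\Gr(2,5) = 5$ and is the classical quintic del Pezzo threefold, so $\Pic(Y) = \ZZ \cdot H|_Y$ with $H|_Y^3 = 5$, and $Y$ is projectively normal as a transverse linear section of the homogeneous (hence arithmetically Cohen--Macaulay) variety $\Gr(2,5)$.

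The second step is to identify $S$ inside $Y$. Since $\gamma_S$ is a closed embedding and $Y$ is smooth of dimension $3$, the surface $S$ is a Cartier divisor on $Y$, so $[S] = d \cdot H|_Y$ for some integer $d$. Intersecting with $(H|_Y)^2$ gives $10 = (H|_S)^2 = d \cdot H|_Y^3 = 5d$, hence $d = 2$ and $S \in |2 H|_Y|$. Projective normality of $Y$ makes the restriction $\rH^0(\P^6, \cO(2)) \twoheadrightarrow \rH^0(Y, \cO_Y(2))$ surjective, so the defining section of $S$ in $Y$ lifts to a nonzero quadric $Q \in \rH^0(\P^6, \cO(2))$. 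By construction $Q|_Y$ cuts out $S$ scheme-theoretically, hence $\gamma_S(S) = Y \cap Q$, which is then a dimensionally transverse complete intersection.

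The main obstacle is the first step: combining the tangent-space bound of Lemma~\ref{lem:yes-schubert} with the Schubert-divisor exclusion of Lemma~\ref{lem:no-schubert} to rule out any singular or excess-dimensional behaviour of $Y$. Once $Y$ is known to be the smooth quintic del Pezzo threefold, its Picard rank and projective normality reduce the quadric identification to a short computation.
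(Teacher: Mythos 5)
Your proof is correct, and its second half (once $Y$ is known to be a smooth quintic del Pezzo threefold, compute $[S]=2H$ from $(H|_S)^2=10$ and $H^3_Y=5$, then lift the defining section via projective normality to get $S=Y\cap Q$) is exactly the paper's argument, made slightly more explicit. Where you diverge is in establishing that $Y=\Gr(2,5)\cap\P^6$ is a smooth connected threefold: the paper notes via Lemma~\ref{lem:no-schubert} that every hyperplane through $S$ corresponds to a $2$-form of rank~$4$ (non-Schubert hyperplanes of $\Gr(2,5)$ are precisely those given by rank-$4$ forms) and then invokes an external reference ([DK, Proposition~2.24]) to conclude that such a codimension-$3$ linear section is the smooth quintic del Pezzo threefold; you instead bound tangent spaces by combining Lemma~\ref{lem:yes-schubert} with Lemma~\ref{lem:no-schubert}, and get connectedness from Proposition~\ref{lem:zl-deg} applied to $\cO(1)^{\oplus 3}$. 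This is precisely the strategy the paper uses for $g\in\{8,9,10,12\}$ in Proposition~\ref{prop:m-cap-p}, so your route is uniform with the other genera and self-contained within the paper's lemmas, at the cost of the (easy) cohomology-vanishing check for connectedness; the paper's route is shorter for $g=6$ but leans on the cited characterization of smooth linear sections of $\Gr(2,5)$. Both arguments still rely on the standard fact that the quintic del Pezzo threefold has Picard group $\ZZ\cdot H$, which you, like the paper, quote without proof.
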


\begin{proof}
Since~$g = 6$ and the Pl\"ucker space is~$\P(\wedge^2V_5) = \P^9$, 
Lemma~\ref{lem:gamma-x-s} implies that~$S$ is contained in the intersection of~$\Gr(2,5)$ 
with a linear subspace~$\P^6 \subset \P^9$ of codimension~$3$.
Moreover, Lemma~\ref{lem:no-schubert} shows that every hyperplane in~$\P(\wedge^2V_5)$ through~$S$ 
corresponds to a 2-form of rank~4, hence (see~\cite[Proposition~2.24]{DK}) the intersection
\begin{equation*}
Y \coloneqq \Gr(2,5) \cap \P^6
\end{equation*}
is a smooth quintic del Pezzo threefold, and~$S \subset Y$ is a divisor of degree~$2g - 2 = 10$.
Since~$\Pic(Y)$ is generated by the hyperplane class, it follows that~$S = Y \cap Q$ is an intersection of~$Y$ with a quadric~$Q \subset \P^6$.
\end{proof}

The argument for~$g \in \{8,9,10,12\}$ is more complicated.
To treat all these cases unifromly,
it is convenient to consider the sum of the bundle~$\cE_0$ with a few copies of~$\cO(1)$ and define
\begin{equation}
\label{eq:def-ce}
\cE \coloneqq \cO_{\Gr(r,V)}(1)^{\oplus (n_g - 2)} \oplus \cE_0 \cong
\begin{cases}
\cO_{\Gr(2,V)}(1)^{\oplus 6}, & \text{for~$g = 8$},\\
\cO_{\Gr(3,V)}(1)^{\oplus 4} \oplus \wedge^2\cU^\vee, & \text{for~$g = 9$},\\
\cO_{\Gr(2,V)}(1)^{\oplus 3} \oplus \cU^\perp(1), & \text{for~$g = 10$},\\
\cO_{\Gr(3,V)}(1)^{\hphantom{\oplus 4}} \oplus \wedge^2\cU^\vee \oplus \wedge^2\cU^\vee \oplus \wedge^2\cU^\vee, & \text{for~$g = 12$},
\end{cases}
\end{equation}
where~$n_g = \dim(\rM_g)$ is listed in~\eqref{eq:ng}.
We will need the following elementary observation:
\begin{equation}
\label{eq:det-ce}
\rank(\cE) = g - 2,
\qquad 
\rc_1(\cE) = r + s,
\qquad 
\rc_{g-2}(\cE)\cdot c_1(\cO(1))^2 = 2g - 2.
\end{equation} 
Indeed, the first two equalities are obvious, and the third is checked in Lemma~\ref{lem:ctop}.

We can now extend the result of Lemma~\ref{lem:xi0} from Mukai varieties to K3 surfaces.

\begin{corollary}
\label{cor:s-mukai}
Let~$S$ be a prime $K3$ surface of genus~$g  \in \{8,9,10,12\}$
and let~$(r,s)$ be as in~\eqref{eq:rs}.
There is a global section~$\sigma = (\lambda,\sigma_0) \in \rH^0(\Gr(r,V), \cE)$ vanishing on~$S$
and such that the zero locus of~$\sigma_0 \in \rH^0(\Gr(r,V), \cE_0)$ is a Mukai subvariety~$\rM_g \subset \Gr(r,V)$ 
and the sequence
\begin{equation}
\label{eq:xi}
0 \to 
\rH^0(\Gr(r,V), \cE^\vee(1)) \xrightarrow{\ \rH^0(\sigma(1))\ } 
\rH^0(\Gr(r,V), \cO(1)) \xrightarrow{\qquad}
\rH^0(S, \cO_S(H)) \to 
0
\end{equation}
is exact.
In particular, $S \subset \rM_g \cap \P^g$.
\end{corollary}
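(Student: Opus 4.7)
The plan is to assemble $\sigma = (\lambda, \sigma_0)$ in two stages. First, I would fix the $\sigma_0$-component so that its zero locus is a Mukai subvariety $\rM_g$ containing $\gamma_S(S)$; then I would choose $\lambda$ to cut out the linear span $\P^g$ of $\gamma_S(S)$ inside the ambient linear space of $\rM_g$.

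For the first stage, when $g \in \{9,10,12\}$, Proposition~\ref{prop:sigma} directly provides a nondegenerate section $\sigma_0 \in \rH^0(\Gr(r,V), \cE_0)$ whose pullback to $S$ vanishes and whose zero locus is a Mukai subvariety $\rM_g \subset \Gr(r,V)$. When $g = 8$, the definition~\eqref{eq:def-ce} gives $\cE = \cO(1)^{\oplus 6}$, so $\cE_0$ is the zero bundle and one simply takes $\rM_8 = \Gr(2,V_6)$, which tautologically contains $\gamma_S(S)$. In all four cases, $\gamma_S(S) \subset \rM_g$.

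For the second stage, I would use Lemma~\ref{lem:xi0}, which identifies the image of
\[
\rH^0(\sigma_0(1)) \colon \rH^0(\Gr(r,V), \cE_0^\vee(1)) \hookrightarrow \rH^0(\Gr(r,V), \cO(1)) = \wedge^r V^\vee
\]
with the subspace $I(\rM_g)$ of linear forms vanishing on $\rM_g$. Since $\gamma_S(S) \subset \rM_g$, one has $I(\rM_g) \subset I(S)$, where $I(S) \subset \wedge^r V^\vee$ is the subspace of linear forms vanishing on $\gamma_S(S)$. The surjectivity (established inside the proof of Lemma~\ref{lem:gamma-x-s}) of the restriction $\wedge^r V^\vee \twoheadrightarrow \rH^0(S, \cO_S(H))$ gives $\dim I(S) = \binom{r+s}{r} - (g+1)$, and a direct dimension count against~\eqref{eq:h0-ce0-vee} and~\eqref{eq:ng} yields $\dim I(S) - \dim I(\rM_g) = n_g - 2$. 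I would therefore pick $\lambda_1, \dots, \lambda_{n_g-2} \in I(S)$ whose images form a basis of the quotient $I(S)/I(\rM_g)$, and set $\sigma = (\lambda_1, \dots, \lambda_{n_g-2}, \sigma_0)$.

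With this choice, $\rH^0(\sigma(1))$ decomposes as the sum of the map $\kk^{n_g-2} \to \wedge^r V^\vee$ sending the standard basis to the $\lambda_i$ and the map of Lemma~\ref{lem:xi0}: by construction its image equals $I(S)$ and the total map is injective. Combined with the surjectivity of restriction to $S$, this yields the exact sequence~\eqref{eq:xi}. Since $\sigma$ vanishes on $\gamma_S(S)$ and $\cE^\vee(1)$ is globally generated (obvious for the $\cO$-summands and recorded in~\eqref{eq:ce0-vee} for $\cE_0^\vee(1)$), Lemma~\ref{lem:zl-span} identifies the scheme-theoretic zero locus of $\sigma$ with $\Gr(r,V) \cap \P^g = \rM_g \cap \P^g$, whence the inclusion $S \subset \rM_g \cap \P^g$. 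I do not foresee a genuine obstacle: the heavy lifting is done by Proposition~\ref{prop:sigma} and Lemma~\ref{lem:xi0}, and what remains is the dimension count above together with the elementary linear algebra of the quotient $I(S)/I(\rM_g)$.
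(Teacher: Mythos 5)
Your proposal is correct and follows essentially the same route as the paper: fix $\sigma_0$ via Proposition~\ref{prop:sigma}, then use the dimension count coming from Lemma~\ref{lem:xi0} (i.e.\ the exact sequence~\eqref{eq:h0-sequence}) together with the surjectivity of $\wedge^rV^\vee \to \rH^0(S,\cO_S(H))$ from Lemma~\ref{lem:gamma-x-s} to produce the $n_g-2$ extra linear forms $\lambda_i$ and deduce exactness of~\eqref{eq:xi}. The only cosmetic difference is that you choose the $\lambda_i$ directly in $\wedge^rV^\vee$ as lifts of a basis of $I(S)/I(\rM_g)$, whereas the paper picks sections of $\cO_{\rM_g}(1)$ vanishing on $S$ and lifts them to $\Gr(r,V)$ via Lemma~\ref{lem:rm-9-10}; these are identified by~\eqref{eq:h0-sequence}, and your closing appeal to Lemma~\ref{lem:zl-span} for $S\subset\rM_g\cap\P^g$ is a fine (slightly more detailed) version of what the paper calls obvious.
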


\begin{proof}
We have~$h^0(\cO_{\rM_g}(1)) = n_g + g - 1$ by Lemma~\ref{lem:xi0} and~$h^0(\cO_S(H)) = g + 1$ by Riemann--Roch.
Therefore, there are linearly independent global sections
\begin{equation*}
\lambda_1,\dots,\lambda_{n_g-2} \in \rH^0(\rM_g, \cO_{\rM_g}(1))
\end{equation*}
that vanish on~$S$.
Lifting them to global sections of~$\cO_{\Gr(r,V)}(1)$ (for~$g \ne 8$ this is possible by Lemma~\ref{lem:rm-9-10}), 
we obtain the required extension~\mbox{$\sigma \coloneqq (\lambda,\sigma_0)$} of~$\sigma_0$.
The exact sequence~\eqref{eq:xi} follows for~$g \ne 8$ from the exact sequence~\eqref{eq:h0-sequence}, 
Lemma~\ref{lem:gamma-x-s}, and the construction of~$\lambda = (\lambda_1,\dots,\lambda_{n_g-2})$.
The inclusion~$S \subset \rM_g \cap \P^g$ is obvious from the vanishing of~$\sigma$ on~$S$ and its definition.
\end{proof}

\begin{proposition}
\label{prop:m-cap-p}
Let~$(S,H)$ be a prime $K3$ surface of genus~$g \in \{8,9,10,12\}$
and let~$\sigma$ be the global section of~$\cE$ constructed in Corollary~\textup{\ref{cor:s-mukai}}.
Then~$\gamma_S(S)$ is equal to the zero locus of~$\sigma$.
Moreover, $\gamma_S(S) = \rM_g \cap \P^g$ is a transverse intersection,
where~$\rM_g \subset \Gr(r,r+s)$ is the Mukai subvariety constructed in Proposition~\textup{\ref{prop:sigma}}.
\end{proposition}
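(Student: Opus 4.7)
By Corollary~\ref{cor:s-mukai} we have $\gamma_S(S) \subset Y := Z(\sigma) = \rM_g \cap \P^g$, so the task is to prove the reverse inclusion and dimensional transversality. The plan is to establish a dichotomy: $Y$ has pure dimension, either $2$ (and then we conclude by a degree count) or $3$ (which we rule out via Lazarsfeld's theorem). First I would show that $Y$ is connected by applying Proposition~\ref{lem:zl-deg} to the globally generated bundle $\cE$ of \eqref{eq:def-ce} on $\Gr(r,V)$; the cohomology vanishings~\eqref{eq:vanishing-connectedness} required for~$\cE$ reduce to Borel--Weil--Bott computations for each of its explicit summands. Combining Lemma~\ref{lem:yes-schubert} and Lemma~\ref{lem:no-schubert}, any point $y \in Y$ with $\dim T_{y,Y} \ge 4$ would force $\gamma_S(S) \subset Y$ to be contained in a Schubert divisor, which is forbidden; hence $\dim T_{y,Y} \le 3$ throughout~$Y$.

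Every irreducible component of $Y$ has dimension $\ge 2$ (the expected codimension of $\sigma$ is $g-2$ in the $g$-dimensional ambient $\Gr(r,V)$) and $\le 3$ (by the tangent bound). If $Y$ had a $2$-dimensional component $C_2$ meeting a $3$-dimensional component $C_3$, then at a generic point $y$ of $C_2 \cap C_3$ both components are smooth with $\dim T_y(C_2 \cap C_3) \le 1$, so
\begin{equation*}
\dim T_{y,Y} \;\ge\; \dim(T_y C_2 + T_y C_3) \;\ge\; 2 + 3 - 1 \;=\; 4,
\end{equation*}
contradicting the tangent-space bound; connectedness from the previous step then forces $Y$ to have pure dimension, either $2$ or $3$. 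In the pure $2$-dimensional case, $\sigma$ cuts out $Y$ locally by a regular sequence of length $g-2$, so $Y$ is a local complete intersection (hence Cohen--Macaulay) and $\rM_g \cap \P^g$ is dimensionally transverse. Its degree in $\P(\wedge^r V)$ equals $c_{g-2}(\cE) \cdot c_1(\cO(1))^2 = 2g-2$ by~\eqref{eq:det-ce}, matching $\deg \gamma_S(S) = H^2 = 2g-2$; since $\gamma_S(S)$ is a reduced irreducible subscheme of the connected Cohen--Macaulay $Y$ of the same dimension and degree, we get $Y = \gamma_S(S)$.

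It remains to exclude the pure $3$-dimensional case, where the tangent bound forces $Y$ to be smooth and $\gamma_S(S) \subsetneq Y$ is a proper closed subset. By Saint-Donat's theorem (invoked in Lemma~\ref{lem:gamma-x-s}) the homogeneous ideal of $\gamma_S(S) \subset \P^g$ is generated in degree~$2$, so there exists a quadric $Q \subset \P^g$ containing $\gamma_S(S)$ but not~$Y$. Then $Y \cap Q$ is a pure $2$-dimensional divisor in $Y$ containing $\gamma_S(S)$, and the goal is to show that $\deg(Y \cap Q) = 2 \deg(Y) = 2g-2 = \deg(\gamma_S(S))$, which forces $\gamma_S(S) = Y \cap Q$; Corollary~\ref{cor:y-cap-q} then contradicts Lazarsfeld's theorem~\cite[Corollary~1.4]{Laz} on Brill--Noether genericity of hyperplane sections of the prime $K3$ surface~$S$. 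The main obstacle is the degree identity $\deg Y = g-1$: $Y$ appears as a $3$-dimensional component of $Z(\sigma)$ precisely where the section fails to have the expected rank, so one must control the scheme structure of~$Y$, either through a Porteous-type degeneracy computation or by identifying $Y$ directly as a Fano $3$-fold linear section of $\rM_g$ of one dimension above the expected.
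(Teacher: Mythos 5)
Your first half tracks the paper closely and is essentially sound: connectedness of~$Y = Z(\sigma) = \rM_g \cap \P^g$ via Proposition~\ref{lem:zl-deg} together with the Borel--Bott--Weil vanishings (this is exactly Corollary~\ref{cor:bbw}), the bound~$\dim \rT_{y,Y} \le 3$ from Lemmas~\ref{lem:no-schubert} and~\ref{lem:yes-schubert}, the resulting dichotomy, and the degree count~$\rc_{g-2}(\cE)\cdot \rc_1(\cO(1))^2 = 2g-2$ in the two-dimensional case. One local flaw: your inequality~$\dim(\rT_yC_2 + \rT_yC_3) \ge 2+3-1 = 4$ is not valid, since~$\dim(\rT_yC_2 \cap \rT_yC_3) \le 1$ does not follow from~$\dim(C_2\cap C_3)\le 1$ (indeed, under the bound~$\dim\rT_{y,Y}\le 3$ one has~$\rT_yC_2 \subset \rT_yY = \rT_yC_3$, so the sum is only~$3$-dimensional). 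The correct deduction, which is what the paper uses, is that~$\dim\rT_{y,Y}\le 3$ forces any component of dimension~$3$ to be smooth at~$y$ with~$\rT_yC_3 = \rT_{y,Y}$, so~$Y$ has embedding dimension~$3$ at~$y$ and is locally contained in a smooth threefold germ, which the component~$C_3$ must fill out; hence no second component can pass through~$y$. This is fixable, but as written the step is wrong.

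The genuine gap is the threefold case, which is the heart of the proposition and which you have not proved. Your plan requires the identity~$\deg Y = g-1$, and you explicitly concede you do not know how to establish it; without it, choosing a quadric~$Q \supset \gamma_S(S)$ with~$Q \not\supset Y$ only gives~$\gamma_S(S) \subset Y \cap Q$ with~$\deg(Y\cap Q) = 2\deg Y$, and no contradiction follows. There is no a priori reason for a non-dimensionally-transverse linear section~$\rM_g\cap\P^g$ of dimension~$3$ to have degree~$g-1$; in the paper this equality is a \emph{consequence} of proving~$S = Y \cap Q$, not an input. The paper's actual argument is quite different: since~$Y$ is smooth of dimension~$3$, the codifferential~$\rd\sigma\colon \gamma_S^*(\cE^\vee)\to \cN^\vee_{S/\Gr(r,V)}$ has corank~$1$ along~$S$, with kernel and cokernel both isomorphic to~$\cO_S(-tH)$, and stability plus the fact that~$S$ is cut out by quadrics pins down~$t\in\{1,2\}$. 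The case~$t=1$ is excluded by Lemma~\ref{lem:mukai-conormal} (a hyperplane tangent to the Grassmannian along~$S$ would give a nonzero section of~$\gamma_S^*(\cN^\vee_{\Gr(r,V)/\P(\wedge^rV)}(1))$), a possibility your scheme does not even see; and in the case~$t=2$ a specific quadric is produced from the cokernel of~$\rd\sigma$ via Nakayama, so that~$S$ is a connected component of~$Y\cap Q$, and connectedness of~$Y$ plus ampleness of~$Q$ give~$S = Y\cap Q$, after which Corollary~\ref{cor:y-cap-q} and Lazarsfeld's theorem yield the contradiction. Your suggested alternatives (a Porteous-type computation, or identifying~$Y$ as a Fano threefold section of~$\rM_g$) remain unsubstantiated, so the exclusion of the threefold case is missing.
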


\begin{proof}
Let~$Y \subset \Gr(r,V)$ denote the zero locus of~$\sigma$.
Since~$\cE^\vee(1)$ is globally generated, 
it follows from Lemma~\ref{lem:zl-span} and exact sequence~\eqref{eq:xi} that
\begin{equation*}
Y = \Gr(r,V) \cap \P(\Ker(\rH^0(\sigma(1))^\vee)) = \Gr(r,V) \cap \P^{g}
\end{equation*}
Furthermore, by construction of~$\sigma$ and Lemma~\ref{lem:rm-9-10}, we have~$Y = \rM_g \cap \P^g$,
and since~$S \subset Y$ by Corollary~\ref{cor:s-mukai}, while~$\rM_g, \P^g \subset \P^{n_g + g - 2}$ with
\begin{equation*}
\dim(\P^{n_g + g - 2}) - \dim(\P^g) = 
(n_g + g - 2) - g = 
n_g - 2 = 
\dim(\rM_g) - \dim(S), 
\end{equation*}
it remains to show that~$S = Y$.

On the one hand, since~$S \subset Y$, it follows from Lemma~\ref{lem:no-schubert} that~$Y$ 
is not contained in a Schubert divisor of~$\Gr(r,V)$, hence Lemma~\ref{lem:yes-schubert} implies 
\begin{equation*}
\dim(\rT_{y,Y}) \le 3
\end{equation*}
for any point~$y \in Y$.
Therefore, the union~$Y'$ of components of~$Y$ of dimension greater than~$2$ is a smooth threefold 
and the other components of~$Y$ do not intersect~$Y'$.
On the other hand, a combination of Corollary~\ref{cor:bbw} and Proposition~\ref{lem:zl-deg} shows that~$Y$ is connected. 
Thus, either
\begin{aenumerate}
\item 
\label{it:dim2}
$\dim(Y) \le 2$, or
\item 
\label{it:dim3}
$Y$ is a smooth connected threefold.
\end{aenumerate}
We consider these cases separately.

In case~\ref{it:dim2},
since~$\rank(\cE) = g - 2$ (by~\eqref{eq:det-ce}), while~$\dim(\Gr(r,V)) = r \cdot s = g$,  
the zero locus~\mbox{$Y \subset \Gr(r,V)$} is a Cohen--Macaulay surface of degree~$2g - 2$ (again by~\eqref{eq:det-ce}).
As it contains the surface~$S$, also of degree~$2g - 2$, we conclude that~$S = Y = \Gr(2,V) \cap \P^g = \rM_g \cap \P^g$, as required.

So, it remains to show that the case~\ref{it:dim3}, where~$Y$ is a smooth threefold, is impossible.
In this case the codifferential morphism~$\rd\sigma \colon \gamma_S^*(\cE^\vee) \to \cN^\vee_{S/\Gr(r,V)}$
everywhere has corank~$1$, hence its kernel and cokernel are line bundles, 
and since~$\rc_1(\gamma_S^*\cE) = (r + s)H = \rc_1(\cN_{S/\Gr(r,V)})$ by~\eqref{eq:det-ce},
it follows that there is an integer~$t \in \ZZ$ such that
\begin{equation*}
\Ker(\rd\sigma) \cong \cO_S(-tH) \cong \Coker(\rd\sigma).
\end{equation*}
Since~$\gamma_S^*(\cE^\vee)$ is the direct sum of semistable sheaves of negative slope 
(by~\eqref{eq:def-ce} and Proposition~\ref{prop:muk-bundles}), we have~$t \ge 1$.
On the other hand, since~$\Pic(S) = \ZZ \cdot H$ and~$\g(S,H) \ge 5$, 
the surface~$S \subset \P^g$ is an intersection of quadrics (this follows immediately from~\cite[Theorem~7.2]{SD}), 
the sheaf~$\cN^\vee_{S/\Gr(r,V)}(2H)$ is globally generated, hence~$t \le 2$.
Thus, we have~\mbox{$t \in \{1,2\}$}, and it remains to show that both cases are impossible.

First, assume~$t = 1$, i.e., $\Ker(\rd\sigma) \cong \cO_S(-H) \subset \gamma_S^*(\cE^\vee)$.
Note that
\begin{equation*}
\rH^0(S, \gamma_S^*(\cE^\vee)(H)) = \rH^0(\Gr(r,V), \cE^\vee(1)).
\end{equation*}
Indeed, it is enough to check the equality for the direct summands~$\cE_0$ and~$\cO(1)$ of~$\cE$; 
for~$\cO(1)$ the equality is obvious
and for~$\cE_0$ it follows from the description~\eqref{eq:ce0-vee}, 
computation~\eqref{eq:h0-ce0-vee} of the right side, and Proposition~\ref{prop:muk-bundles} computing the left side.
Thus, the embedding~$\cO_S(-H) \hookrightarrow \gamma_S^*(\cE^\vee)$ 
corresponds to a global section of the bundle~$\cE^\vee(1)$ on~$\Gr(r,V)$
and by~\eqref{eq:xi} it corresponds to a hyperplane in~$\P(\wedge^rV)$ containing~$S$;
in particular it gives a nonzero global section of the bundle~$\cN^\vee_{S / \P(\wedge^rV)}(H)$.
Now, consider the natural exact sequence
\begin{equation*}
0 \to
\gamma_S^*(\cN_{\Gr(r,V)/\P(\wedge^rV)}^\vee(1)) \to
\cN_{S/\P(\wedge^rV)}^\vee(H) \to
\cN_{S/\Gr(r,V)}^\vee(H) \to
0.
\end{equation*}
Since the composition~$\cO_S(-H) = \Ker(\rd\sigma) \hookrightarrow \gamma_S^*(\cE^\vee) \xrightarrow{\ \rd\sigma\ } \cN^\vee_{S/\Gr(r,V)}$ vanishes,
the corresponding hyperplane is tangent to~$\Gr(r,V)$ along~$S$, 
hence the image of the corresponding section of~$\cN_{S/\P(\wedge^rV)}^\vee(H)$ in~$\rH^0(S,\cN_{S/\Gr(r,V)}^\vee(H))$ vanishes,
hence this section comes from a nonzero global section of the bundle~$\gamma_S^*(\cN_{\Gr(r,V)/\P(\wedge^rV)}^\vee(1))$,
in contradiction to Lemma~\ref{lem:mukai-conormal}.

Now assume~$t = 2$, i.e., $\Coker(\rd\sigma) \cong \cO_S(-2H)$.
Consider the composition of epimorphisms
\begin{equation*}
\cO_S(-2H)^{\oplus N} \twoheadrightarrow \cN^\vee_{S/\Gr(r,V)} \twoheadrightarrow \cO_S(-2H),
\end{equation*}
where the first arrow is given by the space of quadrics cutting out~$S$ in~$\Gr(r,V)$, 
and the second is the cokernel of~$\rd\sigma$.
Choose a summand in the source that maps isomorphically onto the target 
and let~$Q \subset \P(\wedge^rV)$ be the corresponding quadric.
By Nakayama lemma the morphism
\begin{equation*}
\cE^\vee \oplus \cO_{\Gr(r,V)}(-2) \xrightarrow{\ (\sigma,Q)\ } I_S
\end{equation*}
is surjective along~$S$, hence~$Y \cap Q$ contains~$S$ as a connected component.
Since~$Q$ is ample and~$Y$ is connected, we see that~$S = Y \cap Q$.
But then Corollary~\ref{cor:y-cap-q} implies that a general hyperplane section of~$S$ is not Brill--Noether general, in contradiction to the assumption~$\Pic(S) = \ZZ \cdot H$ and Lazarsfeld's result \cite[Corollary 1.4]{Laz}.
This completes the proof of the proposition.
\end{proof}

We combine the above results to prove Theorem~\ref{thm:prime-k3} for~$g \ne 7$.

\begin{proof}[Proof of Theorem~\textup{\ref{thm:prime-k3}} for~$g \in \{6,8,9,10,12\}$]
For~$g \in \{8,9,10,12\}$ an embedding~$S \hookrightarrow \rM_g$ 
is constructed in~\eqref{def:gushel} and Proposition~\ref{prop:sigma}
and an isomorphism of~$S$ with a linear section of~$\rM_g$ is proved in Proposition~\ref{prop:m-cap-p}.
The uniqueness follows from the uniqueness of the Gushel morphism proved in Lemma~\ref{lem:all-embeddings} 
and the uniqueness of the section~$\sigma_0$ proved in Proposition~\ref{prop:sigma}.

For~$g = 6$ we apply Proposition~\ref{prop:s-g6} and use the uniqueness of the Gushel morphism.
\end{proof}

%%%%%%%%%%%%%%%%%%%%%%%%%

\section{Prime K3 surfaces of genus 7}
\label{sec:genus-7}

In this section we treat the case of prime K3 surfaces~$S$ of genus~7.
As we explained in the introduction, in this case the approach of~\cite{BKM} 
does not allow us to construct a Mukai vector bundle, so we use a different approach.
In Section~\ref{ss:lp-g7} we construct and study a pair of Lazarsfeld bundles,
in Section~\ref{ss:mb-g7} we construct the Mukai bundle as an extension of these 
and check that it induces a morphism of~$S$ into the corresponding Mukai variety~$\rM_7$.
Finally, in Section~\ref{ss:ls-g7} we prove that the morphism~$S \to \rM_7$ 
identifies~$S$ with~$\rM_7 \cap \P^7$.

\subsection{Lazarsfeld pair}
\label{ss:lp-g7}

Let~$(S,H)$ be a prime K3 surface of genus~$\g(S,H) = 7$. 
Let~$C \in |H|$ be a general curve; then~$C$ is Brill--Noether--Petri general by~\cite[Theorem]{Laz}.
Therefore, since~\mbox{$\g(C) = 7 > 2 \cdot 3$}, there exists a line bundle~$\xi$ on~$C$ such that
\begin{equation}
\label{eq:cl5}
\deg(\xi) = 5,
\qquad 
h^0(\xi) = 2,
\qquad 
h^1(\xi) = 3.
\end{equation}
On the other hand, since~$\g(C)$ is less than~$2 \cdot 4$ and~$3 \cdot 3$, 
the curve~$C$ has no line bundles of degree~$4$ with~$h^0 = 2$, nor line bundles of degree~$6$ with~\mbox{$h^0 = 3$}, 
hence~$\xi$ and its adjoint line bundle
\begin{equation}
\label{eq:cl7}
\eta \coloneqq \xi^{-1}(K_C)
\end{equation}
are both globally generated.
Note that~$\deg(\eta) = 7$, $h^0(\eta) = 3$, and~$h^1(\eta) = 2$, by Serre duality.

Denoting the embedding~$C \hookrightarrow S$ by~$j$, we see that 
the Lazarsfeld bundles~$\cR_2$ and~$\cR_3$ defined by the following exact sequences
\begin{align}
\label{eq:bls-l5}
0 \to \cR_2 \xrightarrow{\quad} \rH^0(C,\xi) \otimes \cO_S \xrightarrow{\ \ev\ } j_*\xi \to 0,
\\ 
\label{eq:bls-l7}
0 \to \cR_3 \xrightarrow{\quad} \rH^0(C,\eta) \otimes \cO_S \xrightarrow{\ \ev\ } j_*\eta \to 0,
\end{align}
have Mukai vectors~$\rv(\cR_2) = (2,-H,3)$ and~$\rv(\cR_3) = (3,-H,2)$.

\begin{lemma}
\label{lem:laz-cl57}
Let~$S$ be a prime $K3$ surface of genus~$7$, let~$C \subset S$ 
be a smooth Brill--Noether--Petri general curve of genus~$7$ on~$S$, 
let~$\xi$ be a line bundle on~$C$ satisfying~\eqref{eq:cl5}, 
and let~$\eta$ be its adjoint line bundle as in~\eqref{eq:cl7}.
Then the Lazarsfeld bundles~$\cR_2$ and~$\cR_3$ are stable and there are exact sequences
\begin{align}
\label{eq:R2-sub}
0 \to \cR_2 \to \mathmakebox[0pt][l]{V_5}\hphantom{V_5^\vee} \otimes \cO_S \to \cR_3^\vee \to 0,
\\ \label{eq:R3-sub}
0 \to \cR_3 \to V_5^\vee \otimes \cO_S \to \cR_2^\vee \to 0,
\end{align}
where~$V_5 \coloneqq \rH^0(S, \cR_3^\vee) \cong \rH^0(S, \cR_2^\vee)^\vee$, 
while~$\rH^{>0}(S, \cR_3^\vee) = \rH^{>0}(S, \cR_2^\vee) = 0$.
Moreover, we have
\begin{equation}
\label{eq:ext-cr-cr}
h^0(\cR_i^\vee \otimes \cR_i) = 1,
\quad 
h^1(\cR_i^\vee \otimes \cR_i) = 2,
\quad 
h^2(\cR_i^\vee \otimes \cR_i) = 1,
\qquad\text{for~$i = 2,3$.}
\end{equation} 
\end{lemma}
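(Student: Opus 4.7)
The plan is to establish in sequence the stability of~$\cR_2$ and~$\cR_3$, the global generation and cohomology vanishing for their duals, the exact sequences~\eqref{eq:R2-sub}--\eqref{eq:R3-sub} with the~$V_5$-identification, and finally the endomorphism cohomology~\eqref{eq:ext-cr-cr}. For stability, I take the long exact sequence of~\eqref{eq:bls-l5}: the induced map~$\rH^0(C,\xi) \otimes \rH^0(\cO_S) \to \rH^0(j_*\xi)$ is tautologically an isomorphism, forcing~$\rH^0(S, \cR_2) = 0$, and the same reasoning applied to~\eqref{eq:bls-l7} yields~$\rH^0(S, \cR_3) = 0$. Since~$\Pic(S) = \ZZ \cdot H$, any destabilizing subsheaf~$\cF \subset \cR_i$ must satisfy~$\rc_1(\cF) \geq 0$; being a torsion-free subsheaf of a trivial bundle,~\cite[Lemma~3.7(a)]{BKM} forces~$\cF \cong \cO_S^{\oplus m}$, contradicting~$\rH^0(\cR_i) = 0$. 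Dualizing the defining sequences and identifying~$\cExt^1(j_*\xi, \cO_S) \cong j_*(\xi^\vee \otimes \cO_C(H)) = j_*\eta$ (using~$K_C \cong \cO_C(H)$ on a K3 and~$\xi \otimes \eta = K_C$ by~\eqref{eq:cl7}) gives~$0 \to \rH^0(C,\xi)^\vee \otimes \cO_S \to \cR_2^\vee \to j_*\eta \to 0$ and its analogue for~$\cR_3^\vee$. Since~$\xi, \eta$ are globally generated on~$C$ and~$\rH^1(\cO_S) = 0$, this shows~$\cR_i^\vee$ is globally generated with~$h^0(\cR_i^\vee) = 5$; the vanishings~$\rH^{>0}(\cR_i^\vee) = 0$ then follow from Serre duality (giving~$h^2(\cR_i^\vee) = h^0(\cR_i)^\vee = 0$) together with~$\upchi(\cR_i^\vee) = 5$.

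To construct the exact sequences, I will form the fiber product
\begin{equation*}
\cP \coloneqq \cR_2^\vee \times_{j_*\eta} (\rH^0(C,\eta) \otimes \cO_S)
\end{equation*}
of the surjection~$\cR_2^\vee \twoheadrightarrow j_*\eta$ from the previous step and the defining surjection~$\rH^0(C,\eta) \otimes \cO_S \twoheadrightarrow j_*\eta$ from~\eqref{eq:bls-l7}. The two projections yield short exact sequences
\begin{equation*}
0 \to \cR_3 \to \cP \to \cR_2^\vee \to 0
\qquad\text{and}\qquad
0 \to \rH^0(C,\xi)^\vee \otimes \cO_S \to \cP \to \rH^0(C,\eta) \otimes \cO_S \to 0,
\end{equation*}
whose kernels are read off from the respective defining sequences. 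The second extension lies in~$\Ext^1(\cO_S^{\oplus 3}, \cO_S^{\oplus 2}) = \rH^1(\cO_S)^{\oplus 6} = 0$, hence splits, giving~$\cP \cong \cO_S^{\oplus 5}$. Taking~$\rH^0$ of the first sequence and using~$\rH^0(\cR_3) = 0$ produces an injection~$\kk^{\oplus 5} \hookrightarrow \rH^0(\cR_2^\vee)$ between equal-dimensional spaces, hence an isomorphism under which~$\cP \to \cR_2^\vee$ becomes the evaluation morphism, yielding~\eqref{eq:R3-sub} with~$V_5^\vee = \rH^0(\cR_2^\vee)$. Dualizing produces~\eqref{eq:R2-sub}, and taking~$\rH^0$ of that sequence (using~$\rH^0(\cR_2) = \rH^1(\cR_2) = 0$) gives~$V_5 = \rH^0(\cR_3^\vee) \cong \rH^0(\cR_2^\vee)^\vee$. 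The endomorphism cohomology~\eqref{eq:ext-cr-cr} is then immediate: stability gives~$h^0(\cR_i^\vee \otimes \cR_i) = 1$, Serre duality gives~$h^2(\cR_i^\vee \otimes \cR_i) = 1$, and~$\upchi(\cR_i, \cR_i) = 2 r_i s_i - H^2 = 12 - 12 = 0$ forces~$h^1(\cR_i^\vee \otimes \cR_i) = 2$.

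The hard part will be the fiber product construction. It is routine to compute that the kernel of the evaluation~$\rH^0(\cR_2^\vee) \otimes \cO_S \to \cR_2^\vee$ has Mukai vector~$(3,-H,2) = \rv(\cR_3)$, but one cannot conclude an isomorphism with~$\cR_3$ by uniqueness: since~$\upchi(v,v) = 2 \cdot 3 \cdot 2 - H^2 = 0$, the moduli space of stable bundles with this Mukai vector is two-dimensional. The fiber product trick bypasses this entirely by exhibiting~$\cR_3$ \emph{directly} as the kernel, via the common quotient~$j_*\eta$ shared by~$\cR_2^\vee$ and the trivial bundle~$\rH^0(C,\eta) \otimes \cO_S$.
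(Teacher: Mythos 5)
Your proof is correct, and on most points it follows the same route as the paper: the paper likewise reads off~$h^{\le 1}(\cR_i) = 0$ (and hence the cohomology of the duals, via Serre duality and the dualized defining sequences) from~\eqref{eq:bls-l5}--\eqref{eq:bls-l7}, proves stability by exactly your argument --- the slopes~$-1/2$ and~$-1/3$ are the maximal negative slopes for the relevant ranks since~$\Pic(S) = \ZZ \cdot H$, so a destabilizing subsheaf would be trivial by~\cite[Lemma~3.7(a)]{BKM}, contradicting~$\rH^0(\cR_i) = 0$ --- and obtains~\eqref{eq:ext-cr-cr} from simplicity, Serre duality, and~$\upchi(\cR_i,\cR_i) = 0$. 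The one place where your argument genuinely diverges is the construction of~\eqref{eq:R2-sub}--\eqref{eq:R3-sub}: the paper disposes of this step by declaring it ``analogous to the proof of~\cite[Lemma~3.3]{BKM}'', whereas you give a self-contained construction via the fiber product~$\cP = \cR_2^\vee \times_{j_*\eta} (\rH^0(C,\eta) \otimes \cO_S)$, noting that the extension~$0 \to \rH^0(C,\xi)^\vee \otimes \cO_S \to \cP \to \rH^0(C,\eta) \otimes \cO_S \to 0$ splits because~$\rH^1(\cO_S) = 0$, so that~$\cP \cong \cO_S^{\oplus 5}$ and the other projection exhibits~$\cR_3$ directly as the kernel of the evaluation map onto~$\cR_2^\vee$; dualizing and taking global sections then gives the identification~$V_5 = \rH^0(S,\cR_3^\vee) \cong \rH^0(S,\cR_2^\vee)^\vee$. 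This is correct, and your closing remark pinpoints exactly why the tempting shortcut (``the kernel of evaluation has Mukai vector~$(3,-H,2)$, hence equals~$\cR_3$ by uniqueness'') is unavailable: here~$\upchi(v,v) = 0$, so these bundles move in a two-dimensional moduli space, which is precisely the phenomenon recorded in Remark~\ref{rem:bl57-hi}. In short, your write-up supplies in elementary form the content the paper outsources to~\cite{BKM}, at the cost of a slightly longer argument; nothing in it needs repair.
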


\begin{proof}
It is immediate from~\eqref{eq:cl5} and exact sequences~\eqref{eq:bls-l5} and~\eqref{eq:bls-l7} that
\begin{equation*}
h^i(\cR_2) = h^i(\cR_3) = 0
\qquad\text{for~$i \le 1$}
\qquad\text{and}\qquad 
h^2(\cR_2) = h^2(\cR_3) = 5.
\end{equation*}
Using Serre duality, we compute the cohomology of~$\cR_2^\vee$ and~$\cR_3^\vee$.
The proof of~\eqref{eq:R2-sub} and~\eqref{eq:R3-sub} is analogous to the proof of~\cite[Lemma~3.3]{BKM}.

Furthermore, since~$\Pic(S) = \ZZ \cdot H$, the slopes~$\upmu(\cR_2) = -1/2$ and~$\upmu(\cR_3) = -1/3$ 
are the maximal negative slopes for sheaves of rank at most~2 and~3, respectively.
Therefore, the argument of Proposition~\ref{prop:muk-bundles} proves stability of~$\cR_2$ and~$\cR_3$.

Finally, for~$i = 2,3$ we have~$h^0(\cR_i^\vee \otimes \cR_i) = 1$ by stability, 
hence~$h^2(\cR_i^\vee \otimes \cR_i) = 1$ by Serre duality,
and since~$\upchi(\cR_i,\cR_i) = 0$ by~\eqref{eq:chi}, we conclude that~$h^1(\cR_i^\vee \otimes \cR_i) = 2$.
\end{proof}

\begin{remark}
\label{rem:bl57-hi}
In contrast to the case of Mukai bundles, the bundles~$\cR_2$ and~$\cR_3$ 
(and consequently the morphism $\beta_S$ defined below)
depend on the choice of the curve~$C$ and the line bundles~$\xi$ and~$\eta$.
In fact, the construction of Lemma~\ref{lem:laz-cl57} 
produces a 2-dimensional moduli space of vector bundles, which is isomorphic to another K3 surface of genus~7, a Fourier--Mukai partner of~$S$.
\end{remark}

\begin{proposition}
\label{prop:g7-gr25}
Let~$S$ be a prime $K3$ surface of genus~$7$.
For any curve~$C$ and line bundles~$\xi$ and~$\eta$ on it as in Lemma~\textup{\ref{lem:laz-cl57}}
there is a closed embedding~$\beta_S \colon S \hookrightarrow \Gr(2,V_5)$ 
such that~$\cR_2 \cong \beta_S^*(\cU)$ and~$\cR_3 \cong \beta_S^*(\cU^\perp)$.
\end{proposition}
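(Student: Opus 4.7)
The proof follows the template of Lemma~\ref{lem:gamma-x-s}, with the pair $(\cR_2, \cR_3)$ playing the role of a Mukai bundle.

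First, sequence~\eqref{eq:R2-sub} realises $\cR_2$ as a rank-$2$ subbundle of $V_5 \otimes \cO_S$---the quotient~$\cR_3^\vee$ is locally free---so the universal property of the Grassmannian yields $\beta_S \colon S \to \Gr(2, V_5)$ with $\beta_S^*\cU \cong \cR_2$. Pulling back the dual tautological sequence and comparing with~\eqref{eq:R3-sub} automatically gives $\beta_S^*\cU^\perp \cong \cR_3$. To show that $\beta_S$ is a closed embedding I would study the composition with the Pl\"ucker embedding: since $\rc_1(\cR_2^\vee) = H$, this composition is given by the line bundle $\cO_S(H)$ together with the linear map
\begin{equation*}
\phi \colon \wedge^2 V_5^\vee \longrightarrow \rH^0(S, H)
\end{equation*}
induced by the surjection $\wedge^2 V_5^\vee \otimes \cO_S \twoheadrightarrow \wedge^2\cR_2^\vee \cong \cO_S(H)$ coming from the second exterior power of~\eqref{eq:R3-sub}. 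Once~$\phi$ is known to be surjective, the Pl\"ucker composition factors as $S \xrightarrow{|H|} \P^7 \hookrightarrow \P(\wedge^2 V_5)$; the first arrow is a closed embedding because $H$ is very ample for $g = 7 \ge 3$ by~\cite[Theorem~3.1]{SD}, and hence so is $\beta_S$.

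To establish surjectivity of~$\phi$, let $F_1$ denote the kernel of the above surjection. The Koszul filtration of $\wedge^2(V_5^\vee \otimes \cO_S)$ induced by~\eqref{eq:R3-sub} yields
\begin{equation*}
0 \to \wedge^2 \cR_3 \to F_1 \to \cR_3 \otimes \cR_2^\vee \to 0.
\end{equation*}
Using $\wedge^2 \cR_3 \cong \cR_3^\vee(-H)$, Serre duality on the K3, and Lemma~\ref{lem:g10H1vanishing} applied to $\cR_3$ (whose Mukai vector $(3,-H,2)$ satisfies the hypothesis $2 + 7 > 7$) one obtains $\rH^{\le 1}(\wedge^2 \cR_3) = 0$. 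Combining the long exact sequences of the displayed extension and of $0 \to F_1 \to \wedge^2 V_5^\vee \otimes \cO_S \to \cO_S(H) \to 0$ then yields $\rH^0(F_1) \cong \Hom(\cR_2,\cR_3)$ and $\dim \rH^1(F_1) = \dim \Hom(\cR_2,\cR_3) - 2$, so surjectivity of~$\phi$ becomes equivalent to the equality $\dim \Hom(\cR_2, \cR_3) = 2$.

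The main obstacle is establishing this precise count. Stability of $\cR_2$ and~$\cR_3$ combined with the arithmetic observation that no integer multiple of~$H$ has slope strictly between $-\tfrac12$ and $-\tfrac13$ immediately rules out nonzero maps $\cR_3 \to \cR_2$ (in either rank-$1$ or rank-$2$ image case, the stability bounds from source and target contradict each other), so $\Ext^2(\cR_2,\cR_3) = 0$ by Serre duality on~$S$; together with Riemann--Roch $\upchi(\cR_2, \cR_3) = 1$, this reduces the desired count to $\dim \Ext^1(\cR_2, \cR_3) = 1$, which I would verify by exhibiting a one-parameter family of non-trivial extensions $0 \to \cR_2 \to \cR_3 \to \cI_Z \to 0$ with length-$2$ subschemes $Z \subset S$, constructed directly from the Lazarsfeld presentations~\eqref{eq:bls-l5}--\eqref{eq:bls-l7} and the adjunction $\xi \otimes \eta \cong \omega_C$. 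In contrast with the Mukai-bundle setting of Lemma~\ref{lem:gamma-x-s}, the non-uniqueness of the bundles $\cR_2, \cR_3$ (Remark~\ref{rem:bl57-hi}) precludes a ``varying curve'' argument, so this count must be obtained intrinsically from the cohomology of~$S$.
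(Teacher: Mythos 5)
Your reduction is sound up to the last step: the existence of~$\beta_S$ from~\eqref{eq:R2-sub}, the identification~$\beta_S^*(\cU^\perp) \cong \cR_3$, the reduction of the closed-embedding claim to surjectivity of~$\phi \colon \wedge^2V_5^\vee \to \rH^0(S,\cO_S(H))$ plus very ampleness of~$H$, and the cohomological bookkeeping (using~$\wedge^2\cR_3 \cong \cR_3^\vee(-H)$ and Lemma~\ref{lem:g10H1vanishing}) showing that surjectivity of~$\phi$ is equivalent to~$\dim\Hom(\cR_2,\cR_3) = 2$, equivalently (via~$\upchi(\cR_2,\cR_3) = 1$ and~$\Ext^2(\cR_2,\cR_3) = 0$) to~$\dim\Ext^1(\cR_2,\cR_3) = 1$, are all correct. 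The genuine gap is that you never prove this count, and the method you propose cannot prove it. Since~$\dim\Ker(\phi) \ge 10 - 8 = 2$ automatically, what is needed is the \emph{upper} bound~$\dim\Hom(\cR_2,\cR_3) \le 2$; exhibiting a one-parameter family of sheaf maps only produces elements, i.e.\ a \emph{lower} bound. Moreover the sequences you display, $0 \to \cR_2 \to \cR_3 \to \cI_Z \to 0$, are elements of~$\Hom(\cR_2,\cR_3)$ (their extension classes live in~$\Ext^1(\cI_Z,\cR_2)$), not classes in~$\Ext^1(\cR_2,\cR_3)$, so they say nothing about the group you need to bound. You are right that Lemma~\ref{lem:coh-tps} cannot be invoked here --- its proof of the count~$h^1(\cR_2\otimes\cR_3^\vee) = 1$ uses Proposition~\ref{prop:g7-gr25} --- but the intrinsic substitute you sketch is not a proof.

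The paper closes exactly this step by a different, geometric argument that sidesteps the precise cohomology count. By the argument of Lemma~\ref{lem:no-schubert} applied to the stable bundle~$\cR_3^\vee \cong \beta_S^*(V_5/\cU)$, every nonzero skew form in~$\Ker(\phi)$ has rank~$4$ (rank~$2$ would put~$\beta_S(S)$ in a Schubert divisor). Hence if~$\phi$ were not surjective, the Pl\"ucker image of~$S$ would lie in~$Y \coloneqq \Gr(2,V_5) \cap \P^6$ cut out by rank-$4$ forms, which is a \emph{smooth} quintic del Pezzo threefold by~\cite[Proposition~2.24]{DK}. Since~$H$ is ample, $S \to Y$ is finite onto a surface whose degree divides~$H^2 = 12$, while~$\Pic(Y)$ is generated by the Pl\"ucker class, so every surface in~$Y$ has degree divisible by~$5$ --- a contradiction. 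If you prefer to keep your cohomological route, you would have to prove the upper bound~$\dim\Hom(\cR_2,\cR_3) \le 2$ directly (for instance by analysing all injections~$\cR_2 \hookrightarrow \cR_3$, every nonzero map being injective by stability, and their quotients of Mukai vector~$(1,0,-1)$); as it stands, this key step is missing.
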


\begin{proof}
The existence of the morphism~$\beta_S$ follows from~\eqref{eq:R2-sub} and~\eqref{eq:R3-sub},
so we only need to show it is a closed embedding.
For this we check that the morphism~$\wedge^2V_5^\vee \otimes \cO_S \to \wedge^2\cR_2^\vee \cong \cO_S(H)$
induced by the second arrow in~\eqref{eq:R3-sub}
is surjective on global sections.
As~$\cR_3^\vee \cong \beta_S^*(V_5/\cU)$ is stable, 
the argument of Lemma~\ref{lem:no-schubert} shows that 
any skew-form in the kernel of~$\wedge^2V_5^\vee \to \rH^0(S, \cO_S(H))$ has rank~4, 
so if the morphism is not surjective then the composition
\begin{equation*}
S \xrightarrow{\ \beta_S\ } \Gr(2,V_5) \hookrightarrow \P(\wedge^2V_5) = \P^9
\end{equation*}
factors through~$\P^6 \subset \P^9$ such that~$Y \coloneqq \Gr(2,V_5) \cap \P^6$ is a smooth quintic del Pezzo threefold.
Since~$H$ is ample, the morphism~$S \to Y$ must be finite onto a surface in~$Y$
whose degree divides~\mbox{$\deg(S) = 12$}.
But~$\Pic(Y)$ is generated by the restriction of the Pl\"ucker class, 
hence the degree of any surface in~$Y$ is divisible by~5;
this contradiction shows the required surjectivity.
Since~$H$ is very ample by~\cite[Theorem~3.1 and Theorem~5.2]{SD}, 
it follows that the composition of the above arrows is a closed embedding, hence so is~$\beta_S$.
\end{proof}

\begin{remark}
We do not need this, but it is not hard to prove that~$\beta_S(S) \subset \Gr(2,V_5)$ 
is the zero locus of a regular global section of the vector bundle~$\cO(1) \oplus \cO(1) \oplus \cU^\vee(1)$.
\end{remark}

We will need the following results about the cohomology of various natural bundles on $S$:

\begin{lemma}
\label{lem:coh-tps}
The following table contains the dimensions of cohomology of some bundles on~$S$:
\begin{equation*}
\begin{array}{|c|c|c|c|c|c|c|}
\hline
{} & 
\wedge^2\cR_2 &
\wedge^2\cR_3 &
\cR_2 \otimes \cR_3^\vee & 
\cR_2 \otimes \cR_3 &
\Sym^2\cR_2 &
\Sym^2\cR_3 
\\
\hline 
h^2 & 
8 & 11 & 
2 & 26 & 
16 & 15
\\
h^1 & 
0 & 0 & 
1 & 1 & 
0 & 2
\\
h^0 & 
0 & 0 & 
0 & 0 & 
0 & 0
\\
\hline 
\end{array}
\end{equation*}
Moreover, the natural morphism
\begin{equation}
\label{eq:h1h1-h2}
\rH^1(S, \cR_2 \otimes \cR_3) \otimes \rH^1(S, \cR_2 \otimes \cR_3^\vee) \to
\rH^2(S, \cR_2 \otimes \cR_2) 
\end{equation}
is injective and its image is contained in the symmetric part~$\rH^2(S, \Sym^2\cR_2)$ of~$\rH^2(S, \cR_2 \otimes \cR_2)$.
\end{lemma}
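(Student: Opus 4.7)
My plan is to obtain each entry of the table via a cascade of long exact sequences derived from tensoring~\eqref{eq:R2-sub},~\eqref{eq:R3-sub}, and their duals with $\cR_2, \cR_3, \cR_2^\vee, \cR_3^\vee$. The cohomological inputs will be $h^*(\cR_i) = (0,0,5)$ and $h^*(\cR_i^\vee) = (5,0,0)$ from Lemma~\ref{lem:laz-cl57}, the endomorphism cohomology $h^*(\End\cR_i) = (1,2,1)$ from~\eqref{eq:ext-cr-cr}, and stability, which kills $\Hom$-spaces between slope-incompatible stable bundles. Lemma~\ref{lem:g10H1vanishing} applied to $\cR_3$ provides additional vanishings for certain twists.

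For the $\wedge^2\cR_i$ columns, I use the identifications $\wedge^2\cR_2 \cong \cO_S(-H)$ and $\wedge^2\cR_3 \cong \cR_3^\vee(-H)$: the former yields $(0,0,8)$ by Serre duality and Riemann--Roch, and the latter, Serre-dualized to $h^*(\cR_3(H))$, equals $(11,0,0)$ by Lemma~\ref{lem:g10H1vanishing} (the condition $s+g = 9 > \tfrac{r+1}{2(r-1)}g = 7$ is satisfied) combined with Riemann--Roch. For the remaining columns, the key observation is that tensoring~\eqref{eq:R3-sub} with $\cR_3^\vee$ gives a short exact sequence whose long exact sequence in cohomology has $h^*(\End\cR_3)$ as its only non-trivial input, yielding $h^*(\cR_2^\vee\otimes\cR_3^\vee) = (26,1,0)$ and hence $h^*(\cR_2\otimes\cR_3) = (0,1,26)$ by Serre duality. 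Tensoring~\eqref{eq:R3-sub} with $\cR_2^\vee$ similarly pins down $h^*(\cR_2\otimes\cR_3^\vee) = (0,1,2)$ once one supplies $h^*(\Sym^2\cR_2^\vee)$ (the Serre dual of the $\Sym^2\cR_2$ column). The columns $\Sym^2\cR_i$ are obtained by tensoring~\eqref{eq:R2-sub} with $\cR_2$ and~\eqref{eq:R3-sub} with $\cR_3$, respectively, to compute $h^*(\cR_i^{\otimes 2})$, and subtracting the already-known $h^*(\wedge^2\cR_i)$. The four computations close the loop simultaneously.

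For the pairing, I identify the generators as $\alpha \in \rH^1(\cR_2\otimes\cR_3) = \Ext^1(\cR_2^\vee, \cR_3)$ and $\beta \in \rH^1(\cR_2\otimes\cR_3^\vee) = \Ext^1(\cR_3, \cR_2)$ via the connecting maps of the sequences above, where both arise as images of the identity endomorphisms in $\rH^0(\End\cR_i)$. Under these identifications, the cup-product-then-trace map equals the Yoneda composition $\beta\circ\alpha \in \Ext^2(\cR_2^\vee, \cR_2) = \rH^2(\cR_2^{\otimes 2})$. Non-vanishing follows from the observation that a commutative lift of $\alpha$ extending the short exact sequence classified by $\beta$ would contradict the $\Ext^1$-dimension counts established above. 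Containment in $\rH^2(\Sym^2\cR_2)$ is the Serre-dual assertion that $\text{tr}(\phi \circ \beta\circ\alpha) = 0$ for every $\phi \in \rH^0(\wedge^2\cR_2^\vee) \cong \rH^0(\cO_S(H))$; this follows by combining graded commutativity of cup product in bidegree $(1,1)$ with invariance of the trace $\cR_3\otimes\cR_3^\vee \to \cO_S$ under the switch.

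The principal obstacle will be the symmetry assertion for the pairing: the underlying mechanism (super-commutativity of cup product) is standard, but correctly tracking all sign conventions to land the image in $\rH^2(\Sym^2\cR_2)$ rather than in $\rH^2(\wedge^2\cR_2)$ requires careful bookkeeping of the interplay between the graded sign, the switch on $\cR_2^{\otimes 2}$, and the contraction that defines the pairing.
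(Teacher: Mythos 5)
There is a genuine gap, and it is located exactly where your scheme relies on the claim that ``the four computations close the loop simultaneously.'' The two sequences you propose to play off against each other --- \eqref{eq:R2-sub} tensored with $\cR_2$ and \eqref{eq:R3-sub} tensored with $\cR_2^\vee$ --- are Serre-dual to one another, so they carry identical information and the loop does not close. Concretely, from \eqref{eq:R2-sub}$\otimes\cR_2$, stability (which gives $\Hom(\cR_3,\cR_2)=0$) and Riemann--Roch you do get $h^\bullet(\cR_2\otimes\cR_2)=(0,0,24)$, hence the $\Sym^2\cR_2$ column, and your computation of the $\cR_2\otimes\cR_3$ column via \eqref{eq:R3-sub}$\otimes\cR_3^\vee$ is fine; but the $\cR_2\otimes\cR_3^\vee$ column is \emph{not} determined by these inputs. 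The only constraints available to you are $h^0(\cR_2\otimes\cR_3^\vee)=\Hom(\cR_3,\cR_2)=0$ and $\upchi=1$, which leave $(h^1,h^2)=(t,t+1)$ undetermined (e.g.\ $(0,1)$ is consistent with everything you list); equivalently, nothing in your toolkit shows $\hom(\cR_2,\cR_3)=2$ rather than $1$ or $3$. The paper pins this down with an extra geometric input you never invoke: the surjectivity of $\wedge^2V_5^\vee\to\rH^0(S,\cO_S(H))$ from Proposition~\ref{prop:g7-gr25}, fed through the bundle $K=\Ker(\wedge^2V_5^\vee\otimes\cO_S\to\cO_S(H))$ and the extension $0\to\wedge^2\cR_3\to K\to\cR_3\otimes\cR_2^\vee\to0$, which yields $h^0(\cR_3\otimes\cR_2^\vee)=2$. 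Since your $\Sym^2\cR_3$ column is computed from the $\cR_2\otimes\cR_3^\vee$ column, it collapses with it.

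The second half of your argument also has a hole. Your injectivity step (``a commutative lift of $\alpha$ \dots\ would contradict the $\Ext^1$-dimension counts'') is not a proof that the Yoneda composite is nonzero; the clean route is to note that $\rH^1(S,\cR_2\otimes\cR_3)$ is generated by the class of \eqref{eq:R2-sub}, so the map \eqref{eq:h1h1-h2} is the connecting homomorphism of \eqref{eq:R2-sub}$\otimes\cR_2$, injective because $\rH^1(S,V_5\otimes\cR_2)=0$. More seriously, the symmetry statement cannot be extracted from graded commutativity alone: commutativity relates the contraction of $\alpha\cup\beta$ to the swap of the contraction of $\beta\cup\alpha$, and these are a priori different elements of $\rH^2(S,\cR_2\otimes\cR_2)$, so no sign bookkeeping will decide between $\Sym^2$ and $\wedge^2$. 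The paper's argument instead identifies the image with the kernel of $\rH^2(S,\cR_2\otimes\cR_2)\to V_5\otimes V_5$, a $1$-dimensional transposition-invariant subspace, and excludes the skew alternative because $\rH^2(S,\wedge^2\cR_2)\to\wedge^2V_5$ is injective --- which is again Serre dual to the surjectivity statement of Proposition~\ref{prop:g7-gr25}. So both the missing table entries and the symmetry claim hinge on that one ingredient, which your proposal omits.
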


\begin{proof}
Since $\wedge^2\cR_2 \cong \cO_S(-H)$ the first column is immediate from Serre duality. Similarly, the second column follows from~$\wedge^2\cR_3 \cong \cR_3^\vee(-H)$,
Serre duality, Lemma~\ref{lem:g10H1vanishing}, and Riemann--Roch. 

Using the surjectivity of the morphism~$\wedge^2V_5^\vee \to \rH^0(S, \wedge^2\cR_2^\vee) = \rH^0(S, \cO_S(H))$
proved in Proposition~\ref{prop:g7-gr25}, 
we see that the vector bundle~$\Ker(\wedge^2V_5^\vee \otimes \cO_S \to \wedge^2\cR_2^\vee)$
has~$h^0 = 2$, $h^1 = 0$, and~$h^2 = 10$. 
On the other hand, ~\eqref{eq:R3-sub} implies an exact sequence
\begin{equation*}
0 \to \wedge^2\cR_3 \to \Ker\left(\wedge^2V_5^\vee \otimes \cO_S \to \wedge^2\cR_2^\vee\right) \to \cR_3 \otimes \cR_2^\vee \to 0.
\end{equation*}
Taking into account that~$\rH^2(S, \cR_3 \otimes \cR_2^\vee) = \Hom(\cR_3, \cR_2)^\vee = 0$
by Serre duality and stability of~$\cR_3$ and~$\cR_2$, 
we compute the cohomology of~$\cR_3 \otimes \cR_2^\vee$, hence the third column of the table.

To compute the fourth column we tensor~\eqref{eq:R2-sub} with~$\cR_3$ and use~\eqref{eq:ext-cr-cr}.
Note that this computation also shows that the extension class of~\eqref{eq:R2-sub}
generates the space
\begin{equation*}
\Ext^1(\cR_3^\vee, \cR_2) = \rH^1(S, \cR_3 \otimes \cR_2).
\end{equation*}

Next, tensoring~\eqref{eq:R2-sub} with~$\cR_2$ and using the cohomology of~$\cR_2 \otimes \cR_3^\vee$
computed above, we compute the cohomology of~$\cR_2 \otimes \cR_2$,
hence also the cohomology of~$\Sym^2\cR_2$.

Furthermore, since the extension class of~\eqref{eq:R2-sub} generates~$\rH^1(S, \cR_3 \otimes \cR_2)$,
it follows that the morphism~\eqref{eq:h1h1-h2} coincides with the connecting morphism 
in the tensor product of this sequence with~$\cR_2$,
and since~$\rH^1(S, V_5 \otimes \cR_2) = 0$, the connecting morphism is injective
and its image is the kernel of the natural morphism
\begin{equation*}
\rH^2(S, \cR_2 \otimes \cR_2) \to V_5 \otimes \rH^2(S, \cR_2) = V_5 \otimes V_5.
\end{equation*}
Since this kernel is 1-dimensional and invariant under the action of transposition, it is symmetric or skew-symmetric.
But the argument of Proposition~\ref{prop:g7-gr25} combined with Serre duality shows that
the skew-symmetric part of this morphism is injective, hence the kernel is symmetric.

Finally, tensoring~\eqref{eq:R2-sub} with~$\cR_3^\vee$, 
we compute the cohomology of~$\cR_3^\vee \otimes \cR_3^\vee$; 
using Serre duality and the second column we obtain the last column of the table.
\end{proof}

\subsection{Mukai bundle}
\label{ss:mb-g7}

We know from Lemma~\ref{lem:coh-tps} that~$\Ext^1(\cR_3,\cR_2) \cong \rH^1(S, \cR_2 \otimes \cR_3^\vee) \cong \kk$.
We now consider the vector bundle~$\cU_S$ on~$S$ defined by the unique non-split exact sequence
\begin{equation}
\label{eq:g7-cw}
0 \to \cR_2 \to \cU_S \to \cR_3 \to 0.
\end{equation}
Then~$\rv(\cU_S) = (5,-2H,5)$.
Note that~$\cU_S$ has the Mukai vector of a Mukai bundle of type~$(5,5)$ 
with respect to the double polarization~$2H$ of~$S$,
see Definition~\ref{def:mb}.

\begin{lemma}\label{lem:coh-w2cw}
We have~$h^0(S, \wedge^2\cU_S) = 0$, $h^1(S, \wedge^2\cU_S) = 1$, $h^2(S, \wedge^2\cU_S) = 45$. 
\end{lemma}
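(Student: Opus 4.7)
The plan is to apply~$\wedge^2$ to the defining extension~\eqref{eq:g7-cw}, obtaining a two-step filtration~$\wedge^2\cR_2 \subset F \subset \wedge^2\cU_S$ with~$F \coloneqq \Ima(\cR_2\otimes\cU_S \to \wedge^2\cU_S)$, hence two short exact sequences
\begin{equation*}
0 \to \wedge^2\cR_2 \to F \to \cR_2\otimes\cR_3 \to 0,
\qquad
0 \to F \to \wedge^2\cU_S \to \wedge^2\cR_3 \to 0,
\end{equation*}
from which I would extract the cohomology of~$\wedge^2\cU_S$ via Lemma~\ref{lem:coh-tps}. The vanishings~$h^0(\wedge^2\cR_2) = h^0(\cR_2\otimes\cR_3) = h^0(\wedge^2\cR_3) = 0$ in the table give~$h^0(\wedge^2\cU_S) = 0$ at once; the same table gives~$\upchi(\wedge^2\cU_S) = 8 + 25 + 11 = 44$, so it suffices to pin down~$h^1$ (or~$h^2$).

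The heart of the argument is to show that the connecting morphism
\begin{equation*}
\delta \colon \rH^1(S, \cR_2\otimes\cR_3) \to \rH^2(S, \wedge^2\cR_2)
\end{equation*}
of the first sequence vanishes. I would first identify its defining extension class in~$\Ext^1(\cR_2\otimes\cR_3, \wedge^2\cR_2)$ as the image of the class~$\epsilon \in \Ext^1(\cR_3, \cR_2) \cong \rH^1(S, \cR_2\otimes\cR_3^\vee)$ of~\eqref{eq:g7-cw} under the composition
\begin{equation*}
\Ext^1(\cR_3,\cR_2) \xrightarrow{\ \id_{\cR_2}\otimes -\ } \Ext^1(\cR_2\otimes\cR_3, \cR_2\otimes\cR_2) \twoheadrightarrow \Ext^1(\cR_2\otimes\cR_3, \wedge^2\cR_2),
\end{equation*}
which is a direct pushout computation using~$\cR_2 \otimes \cR_2 \subset \cR_2 \otimes \cU_S$ and the antisymmetrization~$\cR_2 \otimes \cU_S \to \wedge^2\cU_S$. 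Consequently, $\delta$ factors as cup product with~$\epsilon$ followed by the projection to the skew component:
\begin{equation*}
\rH^1(S, \cR_2\otimes\cR_3) \xrightarrow{\ \cup\,\epsilon\ } \rH^2(S, \cR_2\otimes\cR_2) \twoheadrightarrow \rH^2(S, \wedge^2\cR_2).
\end{equation*}
Now~$\epsilon$ generates the $1$-dimensional space~$\rH^1(S, \cR_2\otimes\cR_3^\vee)$ by Lemma~\ref{lem:coh-tps}, and the last assertion of that lemma says precisely that the cup-product image lies in the symmetric summand~$\rH^2(S, \Sym^2\cR_2)$. Its projection to~$\rH^2(S, \wedge^2\cR_2)$ therefore vanishes, giving~$\delta = 0$.

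Given~$\delta = 0$, the long exact sequence of the first short exact sequence yields~$h^1(F) = h^1(\cR_2\otimes\cR_3) = 1$ and~$h^2(F) = h^2(\wedge^2\cR_2) + h^2(\cR_2\otimes\cR_3) = 8 + 26 = 34$. Combining this with~$h^0(\wedge^2\cR_3) = h^1(\wedge^2\cR_3) = 0$ and~$h^2(\wedge^2\cR_3) = 11$ in the long exact sequence of the second short exact sequence yields~$h^1(\wedge^2\cU_S) = h^1(F) = 1$ and~$h^2(\wedge^2\cU_S) = h^2(F) + 11 = 45$, as required. The main obstacle is the identification of the extension class of~$F$ with the pushout of~$\id_{\cR_2} \otimes \epsilon$: this is a formal consequence of the naturality of the exterior power construction, but some care is needed to track signs coming from the antisymmetrization.
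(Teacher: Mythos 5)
Your proposal is correct and follows essentially the same route as the paper: the paper's one-line proof invokes the filtration of~$\wedge^2\cU_S$ with factors~$\wedge^2\cR_2$, $\cR_2\otimes\cR_3$, $\wedge^2\cR_3$ and Lemma~\ref{lem:coh-tps}, and the only nontrivial point—the vanishing of the connecting map~$\rH^1(\cR_2\otimes\cR_3)\to\rH^2(\wedge^2\cR_2)$—is exactly what the last assertion of Lemma~\ref{lem:coh-tps} about the image of~\eqref{eq:h1h1-h2} lying in~$\rH^2(S,\Sym^2\cR_2)$ is there for. You have simply made explicit the identification of the extension class of the subsheaf~$F$ as the pushout of~$\id_{\cR_2}\otimes\epsilon$, which the paper leaves implicit, and your numerical bookkeeping is correct.
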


\begin{proof}
Consider the filtration of~$\wedge^2\cU_S$ with factors~$\wedge^2\cR_2$, $\cR_2 \otimes \cR_3$, and~$\wedge^2\cR_3$.
The lemma follows easily from the standard spectral sequence and Lemma~\ref{lem:coh-tps}.
\end{proof}

In the next theorem we show that~$\cU_S$ is a Mukai bundle of type~$(5,5)$ and induces a morphism from~$S$
to~$\rM_7 = \OGr_+(5,10)$.
Here we denote by~$\OGr(5,10) \subset \Gr(5,10)$ the subvariety parameterizing subspaces 
that are isotropic for a fixed nondegenerate symmetric bilinear form;
it is smooth of dimension~10 and has two (mutually isomorphic) connected components 
that we denote by~$\OGr_+(5,10)$ and~$\OGr_-(5,10)$.
The restriction of the Pl\"ucker line bundle to either of these components is a square,
and its square root is the ample generator of the Picard group of~$\OGr_\pm(5,10)$, 
denoted by~$\cO_{\OGr_+(5,10)}(1)$ and called the {\sf spinor line bundle}.
It induces an embedding
\begin{equation*}
\OGr_+(5,10) \hookrightarrow \P^{15}
\end{equation*}
into the projectivization of the spinor representation of~$\Spin(V_{10})$, called {\sf the spinor embedding}.
The degree of~$\OGr_+(5,10)$ in the spinor embedding is~12, see~\cite{K18} for more information.

\begin{theorem}
\label{thm:g7-emb}
Let~$(S,H)$ be a prime $K3$ surface of genus~$g = 7$.
The vector bundle~$\cU_S$ defined in~\eqref{eq:g7-cw} is stable; 
in particular, it is the Mukai bundle on~$(S,2H)$ of type~$(5,5)$.
The dual bundle~$\cU_S^\vee$ is globally generated with~$h^0(\cU_S^\vee) = 10$ and~$h^1(\cU_S^\vee) = h^2(\cU_S^\vee) = 0$; it defines a closed embedding
\begin{equation*}
\gamma_S \colon S \to \OGr_+(5,10) \subset \Gr(5,10)
\end{equation*}
such that~$\cU_S$ is the pullback of the tautological subbundle 
and~$\cO_S(H) \cong \gamma_S^*(\cO_{\OGr_+(5,10)}(1))$.
\end{theorem}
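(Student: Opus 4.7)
The plan is to verify stability of $\cU_S$, then compute the cohomology of $\cU_S^\vee$ to obtain a Gushel morphism into $\Gr(5,V_{10})$, and finally upgrade this to an embedding into the spinor variety $\OGr_+(5,V_{10})$.

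\textbf{Stability.} For a saturated proper subsheaf $\cF \subset \cU_S$, I would set $\cF' \coloneqq \cF \cap \cR_2 \subseteq \cR_2$ and $\cF'' \coloneqq \cF/\cF' \hookrightarrow \cR_3$, then carry out a case analysis on $(\rank \cF', \rank \cF'') \in \{0,1,2\} \times \{0,1,2,3\}$. Since $\Pic(S) = \ZZ \cdot H$, first Chern classes are integer multiples of $H$, so stability of $\cR_2$ and $\cR_3$ (Lemma~\ref{lem:laz-cl57}) forces $\deg_H \cF' \le -12$ whenever $\cF' \neq 0$, and $\deg_H \cF'' \le -12$ whenever $\cF'' \subsetneq \cR_3$. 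A direct check then gives $\upmu_H(\cF) < \upmu_H(\cU_S) = -24/5$ in every case; the subcase $\cF'' = \cR_3$ with $\cF' \subsetneq \cR_2$ would split the non-split extension~\eqref{eq:g7-cw} and is excluded. Since $\rv(\cU_S) = (5, -2H, 5)$ matches Definition~\ref{def:mb} for $(S, 2H)$ and $(r,s) = (5,5)$, the bundle $\cU_S$ is the Mukai bundle on the double polarization.

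\textbf{Cohomology and Gushel morphism.} Dualizing~\eqref{eq:g7-cw} gives $0 \to \cR_3^\vee \to \cU_S^\vee \to \cR_2^\vee \to 0$; the long exact sequence together with Lemma~\ref{lem:laz-cl57} immediately yields $h^0(\cU_S^\vee) = 10$ and $h^{\ge 1}(\cU_S^\vee) = 0$. Global generation follows from that of $\cR_2^\vee, \cR_3^\vee$ (see~\eqref{eq:R2-sub}--\eqref{eq:R3-sub}) together with the surjectivity $\rH^0(\cU_S^\vee) \twoheadrightarrow \rH^0(\cR_2^\vee)$. Setting $V_{10} \coloneqq \rH^0(\cU_S^\vee)^\vee$, the Gushel morphism $\gamma_S \colon S \to \Gr(5,V_{10})$ satisfies $\gamma_S^*\cU \cong \cU_S$ and $\gamma_S^*\cO_{\Gr(5,V_{10})}(1) \cong \det \cU_S^\vee \cong \cO_S(2H)$. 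The coevaluation maps $\cR_2 \hookrightarrow V_5 \otimes \cO_S$ and $\cR_3 \hookrightarrow V_5^\vee \otimes \cO_S$ from~\eqref{eq:R2-sub}--\eqref{eq:R3-sub} fit into a commutative diagram with the coevaluation of $\cU_S$, yielding a canonical filtration $0 \to V_5 \to V_{10} \to V_5^\vee \to 0$ compatible with~\eqref{eq:g7-cw}.

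\textbf{Factoring through $\OGr_+(5, V_{10})$.} The main step is to produce a non-degenerate $q \in \Sym^2 V_{10}^\vee$ in the kernel of the restriction $\Sym^2 V_{10}^\vee \to \rH^0(\Sym^2 \cU_S^\vee)$; then $\gamma_S$ factors through $\OGr(5,V_{10})$ for $q$ and, by connectedness of $S$, through one component, denoted $\rM_7 = \OGr_+(5,V_{10})$. A Mukai-vector computation via Lemma~\ref{lem:coh-tps} applied to the filtration of $\Sym^2 \cU_S^\vee$ with graded pieces $\Sym^2 \cR_3^\vee$, $\cR_2^\vee \otimes \cR_3^\vee$, $\Sym^2 \cR_2^\vee$ gives $\upchi(\Sym^2 \cU_S^\vee) = 54$, while the analogous filtration of $\Sym^2 \cU_S$ has all graded pieces with $h^0 = 0$, so $h^2(\Sym^2 \cU_S^\vee) = h^0(\Sym^2 \cU_S) = 0$ by Serre duality. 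Hence $h^0(\Sym^2 \cU_S^\vee) = 54 + h^1(\Sym^2 \cU_S^\vee)$, and since $\dim \Sym^2 V_{10}^\vee = 55$, the kernel is non-trivial as soon as $h^1(\Sym^2 \cU_S^\vee) = 0$. This vanishing is the main technical obstacle: it should follow from identifying the two successive connecting homomorphisms in the spectral sequence of the filtration with cup-products by the extension class of~\eqref{eq:g7-cw}, and invoking the non-vanishing encoded in~\eqref{eq:h1h1-h2} of Lemma~\ref{lem:coh-tps} to deduce joint surjectivity onto the $\rH^1$ of the lower graded pieces. Non-degeneracy of the generating $q$ then follows from the observation that the diagram above realizes $V_{10}/\cU_S$ as an extension of $\cR_2^\vee$ by $\cR_3^\vee$ classified by the same one-dimensional $\Ext^1(\cR_2^\vee, \cR_3^\vee)$ as the dual sequence for $\cU_S^\vee$; hence, both extensions being non-split, $V_{10}/\cU_S \cong \cU_S^\vee$, and this isomorphism supplies the non-degenerate pairing $\cU_S \otimes (V_{10}/\cU_S) \to \cO_S$ underlying $q$.

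\textbf{Closed embedding and spinor line bundle.} Since the restriction of $\cO_{\Gr(5,V_{10})}(1)$ to $\OGr_+(5, V_{10})$ is the square of the spinor line bundle and $\gamma_S^*\cO_{\Gr}(1) \cong \cO_S(2H)$, the unique square root available in $\Pic(S) = \ZZ \cdot H$ yields $\gamma_S^*\cO_{\OGr_+}(1) \cong \cO_S(H)$. That $\gamma_S$ is a closed embedding follows along the lines of Lemma~\ref{lem:gamma-x-s}: the composition with the spinor embedding $\OGr_+(5, V_{10}) \hookrightarrow \P^{15}$ is given by a subsystem of the complete linear system $|H|$, and the very-ampleness of $H$ on $S$ from \cite[Theorems~3.1 and~5.2]{SD} allows one to conclude.
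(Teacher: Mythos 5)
Your overall strategy—produce a symmetric form directly by a dimension count on $\Sym^2V_{10}^\vee$ rather than, as the paper does, constructing the self-dual sequence $0 \to \cU_S \to V_{10}\otimes\cO_S \to \cU_S^\vee \to 0$ and then excluding the skew-symmetric case—is legitimate in outline, but the step you yourself flag as the main obstacle is a genuine gap, and the tools you cite do not close it. You need $h^1(S,\Sym^2\cU_S^\vee)=0$ (equivalently $h^1(\Sym^2\cU_S)=0$). In the spectral sequence of the filtration of $\Sym^2\cU_S$ with graded pieces $\Sym^2\cR_2$, $\cR_2\otimes\cR_3$, $\Sym^2\cR_3$, the statement \eqref{eq:h1h1-h2} of Lemma~\ref{lem:coh-tps} only gives injectivity of the connecting map out of the one-dimensional space $\rH^1(\cR_2\otimes\cR_3)$; it says nothing about the connecting map out of the two-dimensional $\rH^1(\Sym^2\cR_3)$, whose injectivity into $\rH^2$ of the lower step is exactly what is needed and is not supplied by any lemma you invoke. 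The paper obtains this vanishing only \emph{a posteriori}: it first builds the kernel bundle $\cU_S'$ of the evaluation $V_{10}\otimes\cO_S\to\cU_S^\vee$, shows it is a non-split extension of $\cR_3$ by $\cR_2$ (the non-splitness requires an argument—dualizing would split \eqref{eq:g7-cw}), identifies $\cU_S'\cong\cU_S$ by uniqueness of the extension, and from the resulting sequence \eqref{eq:cw-cwd} computes $h^1(\cU_S\otimes\cU_S)=1$, which together with $h^1(\wedge^2\cU_S)=1$ (Lemma~\ref{lem:coh-w2cw}) gives $h^1(\Sym^2\cU_S)=0$. In other words, the known proof of your key vanishing passes through precisely the structure you are trying to avoid constructing first; your non-degeneracy paragraph has the same circularity, since you assert without proof that the extension presenting $V_{10}/\cU_S$ is non-split, and you do not explain why the particular $q$ produced by the dimension count is the pairing "underlying" the isomorphism $V_{10}/\cU_S\cong\cU_S^\vee$ (this can be repaired—any nonzero symmetric $q$ with $\cU_S$ isotropic induces a nonzero map $V_{10}/\cU_S\to\cU_S^\vee$, which is an isomorphism by stability, and then the radical of $q$ lies in $\bigcap_s\cU_{S,s}=0$—but none of this is in your text).

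Two further steps would fail as written. In the stability argument, the claim that a direct check gives $\upmu_H(\cF)<\upmu_H(\cU_S)$ in every case is false for a saturated rank-$3$ subsheaf $\cF$ with $\cF\cap\cR_2=0$ whose image in $\cR_3$ has full rank and finite colength: there $\rc_1(\cF)=-H$, so $\upmu_H(\cF)=-\tfrac13 H^2>-\tfrac25 H^2$, and this case is not excluded by your remark about $\cF''=\cR_3$ splitting \eqref{eq:g7-cw}; one must also use non-splitness here (e.g., $\Ext^1(T,\cR_2)=0$ for $T$ of finite length), whereas the paper's argument via slopes of Harder--Narasimhan factors lying in $(-\tfrac12,-\tfrac13)$, an interval containing no rational with denominator less than $5$, is cleaner. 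Finally, for the closed-embedding step, "given by a subsystem of the very ample system $|H|$" is not a valid inference: a proper linear subsystem of a very ample system need not define an embedding, and here the half-spinor space has dimension $16$ while $h^0(\cO_S(H))=8$, so you would need surjectivity of the restriction map (an analogue of the surjectivity of $\phi$ proved inside Lemma~\ref{lem:gamma-x-s}); the paper instead factors $\gamma_S$ through the correspondence $Z=\{[U_5]\in\OGr_+(5,V_{10})\mid\dim(U_5\cap V_5)=2\}$ mapping to $\Gr(2,V_5)$ and deduces the embedding from the fact that $\beta_S$ is a closed embedding (Proposition~\ref{prop:g7-gr25}).
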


\begin{proof}
Since~$\cR_2$ and~$\cR_3$ are stable of slope $-1/2$ and~$-1/3$, and the extension~\eqref{eq:g7-cw} is non-split, each factor of the Harder--Narasimhan filtration of~$\cU_S$ has slope in the interval~$(-1/2,-1/3)$.
But this interval has no rational numbers with denominator less than~5, hence~$\cU_S$ is stable.

Now, dualizing~\eqref{eq:g7-cw} and using Lemma~\ref{lem:laz-cl57}, we obtain a commutative diagram
\begin{equation}
\label{eq:cw-diagram}
\vcenter{\xymatrix{
0 \ar[r] &
V_5 \otimes \cO_S \ar[r] \ar[d]^\ev &
\rH^0(S, \cU_S^\vee) \otimes \cO_S \ar[r] \ar[d]^\ev &
V_5^\vee \otimes \cO_S \ar[r] \ar[d]^\ev &
0
\\
0 \ar[r] &
\cR_3^\vee \ar[r] &
\cU_S^\vee \ar[r] &
\cR_2^\vee \ar[r] &
0.
}}
\end{equation}
It also follows that~$h^0(\cU_S^\vee) = 10$, $h^{1}(\cU_S^\vee) = h^{2}(\cU_S^\vee) = 0$, and that~$\cU_S^\vee$ is globally generated.
Further, if~$\cU_S'$ denotes the kernel of the middle vertical arrow,
using~\eqref{eq:R2-sub} and~\eqref{eq:R3-sub} we obtain an exact sequence
\begin{equation*}
0 \to \cR_2 \to \cU_S' \to \cR_3 \to 0.
\end{equation*}
If it splits, dualizing the above construction we deduce that~\eqref{eq:g7-cw} also splits, contradicting our assumption.
Thus, $\cU_S'$ is a non-split extension of the same form as~$\cU_S$, and since such an extension 
is unique by Lemma~\ref{lem:coh-tps}, there is an isomorphism
\begin{equation*}
\cU_S' \cong \cU_S,
\end{equation*}
unique up to constant (because~$\cU_S$ is stable), and there is an exact sequence
\begin{equation}
\label{eq:cw-cwd}
0 \to \cU_S \xrightarrow{\ \varphi\ } V_{10} \otimes \cO_S \xrightarrow{\ \ev\ } \cU_S^\vee \to 0,
\end{equation}
where~$V_{10} \coloneqq \rH^0(S, \cU_S^\vee)$ and~$\varphi$ is unique up to constant.
Dualizing this sequence and using the stability of~$\cU_S$, we obtain a unique isomorphism~$q \colon V_{10} \xrightarrow{\ \sim\ } V_{10}^\vee$ such that the following diagram 
\begin{equation*}
\xymatrix@C=3em{
0 \ar[r] & 
\cU_S \ar[r]^-{\ev^\vee} \ar[d]_{c\id} & 
V_{10}^\vee \otimes \cO_S \ar[r]^-{\varphi^\vee} \ar[d]^q & 
\cU_S^\vee \ar[r] \ar[d]^{\id} &
0
\\
0 \ar[r] & 
\cU_S \ar[r]^-{\varphi} & 
V_{10} \otimes \cO_S \ar[r]^-{\ev} & 
\cU_S^\vee \ar[r] &
0
}
\end{equation*}
commutes for some non-zero scalar~$c$.
The uniqueness of~$q$ then implies that~$q^\vee = c q$, hence~\mbox{$c = \pm1$}; 
in other words, $q$ is a non-degenerate symmetric or skew-symmetric form.

Consider the morphism~$\gamma_S \colon S \to \Gr(5,V_{10})$ given by the bundle~$\cU_S$.
It follows from the diagram that~$\ev \circ\, q \circ \ev^\vee = 0$, 
which means that~$\gamma_S$ factors through the locus of $q$-isotropic subspaces.
We show below that~$q$ is symmetric, hence~$\gamma_S$ factors through~$\OGr(5,V_{10})$.

Assume to the contrary that~$q$ is skew-symmetric.
Then the morphism 
\begin{equation*}
\wedge^2V_{10} \to \rH^0(S, \wedge^2\cU_S^\vee)
\end{equation*}
induced by the second arrow in~\eqref{eq:cw-cwd} contains~$q$ in the kernel.
To arrive at a contradiction, we show that this kernel space is zero.
For this we consider the morphism~$\psi \colon \wedge^2V_{10} \otimes \cO_S \to \wedge^2\cU_S^\vee$ induced by the above map.
On the one hand, $\psi$ is surjective by~\eqref{eq:cw-cwd}, so it is enough to check that~$\rH^0(S, \Ker(\psi)) = 0$.
On the other hand, \eqref{eq:cw-cwd} implies that there is an exact sequence
\begin{equation*}
0 \to \wedge^2\cU_S \to \Ker(\psi) \to \cU_S \otimes \cU_S^\vee \to 0.
\end{equation*}
Since~$h^0(\wedge^2\cU_S) = 0$ by Lemma~\ref{lem:coh-w2cw} and~$h^0(\cU_S \otimes \cU_S^\vee) = 1$ by stability of~$\cU_S$, it is enough to check that the connecting morphism~$\rH^0(S, \cU_S \otimes \cU_S^\vee) \to \rH^1(S, \wedge^2\cU_S)$ of this exact sequence is non-trivial. 
Clearly, this morphism factors as the composition of two maps
\begin{equation}
\label{eq:composition}
\rH^0(S, \cU_S \otimes \cU_S^\vee) \to 
\rH^1(S, \cU_S \otimes \cU_S) \to
\rH^1(S, \wedge^2\cU_S)
\end{equation}
where the first is the connecting morphism of sequence~\eqref{eq:cw-cwd} tensored with~$\cU_S$ and the second is induced by the projection~$\cU_S \otimes \cU_S \to \wedge^2\cU_S$.
We check that both maps in~\eqref{eq:composition} are isomorphisms.

Indeed, tensoring~\eqref{eq:cw-cwd} by~$\cU_S$ we obtain an exact sequence
\begin{equation*}
0 \to 
\cU_S \otimes \cU_S \xrightarrow{\ \varphi\ } 
V_{10} \otimes \cU_S \xrightarrow{\ \ev\ } 
\cU_S \otimes \cU_S^\vee \to 
0,
\end{equation*}
and since~$h^0(\cU_S) = h^1(\cU_S) = 0$ (by the vanishing~$h^{2}(\cU_S^\vee) = h^{1}(\cU_S^\vee) = 0$ proved above and Serre duality), 
it follows that the first arrow in~\eqref{eq:composition} is an isomorphism, hence we have~$h^1(\cU_S \otimes \cU_S) = 1$.
Now, combining this with Lemma~\ref{lem:coh-w2cw} we conclude that~$h^1(\Sym^2\cU_S) = 0$, and therefore the second arrow in~\eqref{eq:composition} is also an isomorphism. 

We see that the composition~\eqref{eq:composition} is an isomorphism, hence~$\rH^0(S, \Ker(\psi)) = 0$, hence~$q = 0$, which is absurd.
Thus, $q$ must be symmetric, and we see that the morphism~\mbox{$\gamma_S \colon S \to \Gr(5,V_{10})$} factors through~$\OGr(5,V_{10})$.
Since~$S$ is connected, $\gamma_S$ factors through a connected component of~$\OGr(5,V_{10})$, 
and we may and will assume that this component is~$\OGr_+(5,V_{10})$.

Next, we note that~\eqref{eq:cw-diagram} implies that there is a commutative diagram
\begin{equation*}
\xymatrix@C=3em{
& S \ar@{_{(}->}[dl]_{\beta_S} \ar[d] \ar[dr]^{\gamma_S}
\\
\Gr(2,V_5) & 
Z \ar[l] \ar@{^{(}->}[r] & 
\OGr_+(5,V_{10}),
}
\end{equation*}
where~$Z = \{ [U_5] \in \OGr_+(5,V_{10}) \mid \dim(U_5 \cap V_5) = 2 \}$, 
the lower right arrow is the natural embedding, the lower left arrow is defined by~$U_5 \mapsto U_5 \cap V_5$, 
and~$\beta_S$ is the map defined in Proposition~\ref{prop:g7-gr25}.
Since~$\beta_S$ is a closed embedding, the same is true for the vertical arrow, 
and hence~$\gamma_S$ is a closed embedding as well.

Finally, since the pullback along~$\gamma_S$ of the Pl\"ucker class of~$\Gr(5,V_{10})$ is~$\rc_1(\cU_S^\vee)$, 
which is equal to~$2H$ by~\eqref{eq:g7-cw}, it follows that~$\cO_S(H) \cong \gamma_S^*(\cO_{\OGr_+(5,10)}(1))$.
\end{proof}

In what follows we call~$\gamma_S$ {\sf the Gushel map} of~$S$.

\subsection{K3 surfaces of genus~7}
\label{ss:ls-g7}

In this section we show that~$\gamma_S(S) \subset \OGr_+(5,V_{10})$ is a transverse linear section 
and identify the Mukai bundle~$\cU_S$ with a twisted normal bundle of~$S$ in~$\P^7$.%

We need to recall some extra facts about the geometry of~$\OGr_+(5,V_{10})$; 
for a more detailed treatment we refer to~\cite{K18}.
First, recall that for any pair of isotropic subspaces~$U_5,U'_5 \subset V_{10}$ 
the dimension~$\dim(U_5 \cap U'_5)$ is odd or even if~$[U_5]$ and~$[U'_5]$ 
belong to the same or to different connected components of~$\OGr(5,V_{10})$.
Moreover, if~$[V_5] \in \OGr_-(5,V_{10})$ then
\begin{align}
\label{eq:schubert1-ogr}
\Sigma_1^{\OGr}(V_5) &\coloneqq \{ [U_5] \in \OGr_+(5,V_{10}) \mid \dim(U_5 \cap V_5) \ge 2 \}
\intertext{is an irreducible (Schubert) divisor with}
\label{eq:sing-schubert1-ogr}
\Sing(\Sigma_1^{\OGr}(V_5)) &\,= \{ [U_5] \in \OGr_+(5,V_{10}) \mid \dim(U_5 \cap V_5) = 4 \}.
\end{align}
Finally, if~$V_3 \subset V_{10}$ is a 3-dimensional isotropic subspace then the Schubert variety
\begin{equation*}
%\label{eq:schubert21-ogr}
\Sigma_{2,1}^{\OGr}(V_3) \coloneqq \{ [U_5] \in \OGr_+(5,V_{10}) \mid \dim(U_5 \cap V_3) \ge 1 \}
\end{equation*}
is an irreducible subvariety in~$\OGr_+(5,V_{10})$ of codimension~2.

The following result plays the role of Lemma~\ref{lem:no-schubert} (and the argument is analogous).

\begin{lemma}\label{lem:no-schubert-g7}
Let~$S$ be a prime $K3$ surface of genus~$7$.
If~$\gamma_S \colon S \hookrightarrow \OGr_+(5,V_{10})$ is the Gushel map then~$\gamma_S(S)$ 
is not contained in any Schubert variety~$\Sigma_{2,1}^{\OGr}(V_3)$.
\end{lemma}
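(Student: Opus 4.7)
The plan is to argue by contradiction. Assume $\gamma_S(S) \subset \Sigma_{2,1}^{\OGr}(V_3)$ for some isotropic subspace~$V_3 \subset V_{10}$. Using the short exact sequence~\eqref{eq:cw-cwd}, which identifies the quotient $(V_{10}\otimes\cO_S)/\cU_S$ with $\cU_S^\vee$, this containment translates into the statement that the morphism
\[
\varphi \colon V_3 \otimes \cO_S \to \cU_S^\vee
\]
induced by the inclusion $V_3 \hookrightarrow V_{10} = \rH^0(S,\cU_S^\vee)$ has rank at most~$2$ at every point of~$S$. Set $\cF \coloneqq \Ima(\varphi) \subset \cU_S^\vee$, a torsion-free subsheaf of rank $r \le 2$ globally generated by the image of~$V_3$. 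Since the composition $V_3 \to \rH^0(S,\cF) \to \rH^0(S,\cU_S^\vee)$ is the inclusion $V_3 \hookrightarrow V_{10}$, we get an injection $V_3 \hookrightarrow \rH^0(S,\cF)$, and hence $h^0(\cF) \ge 3$. The task is to derive a contradiction using the stability of~$\cU_S^\vee$ (which follows from stability of $\cU_S$ in Theorem~\ref{thm:g7-emb}), together with $\Pic(S) = \ZZ \cdot H$ and $H^2 = 12$, which give $\upmu_H(\cU_S^\vee) = 24/5$.

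In the rank-one case, the saturation of $\cF$ in $\cU_S^\vee$ is a line bundle $\cO_S(kH)$ for some integer $k$. Stability forces $12k < 24/5$, hence $k \le 0$, and then $h^0(\cO_S(kH)) \le 1$, contradicting $h^0(\cF) \ge 3$.

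The rank-two case is the main obstacle. Let $\cF' \subset \cU_S^\vee$ be the saturation of $\cF$; on the smooth surface~$S$ this is a rank-$2$ vector bundle with $h^0(\cF') \ge h^0(\cF) \ge 3$, and stability of $\cU_S^\vee$ forces $\rc_1(\cF') = kH$ with $k \le 0$. The difficulty is that for $k = 0$ the bundle $\cF'$ could sit in a non-split extension, so one cannot conclude $\cF' \cong \cO_S^{\oplus 2}$ directly; instead, the plan is to extract a line subbundle. Pick any nonzero section $s \in \rH^0(S,\cF')$; it yields an embedding $\cO_S \hookrightarrow \cF'$ whose saturation is a line bundle $\cO_S(mH) \subset \cF'$ with $m \ge 0$. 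Stability of $\cU_S^\vee$ applied to this subbundle forces $m = 0$, so $\cO_S \subset \cF'$ is already saturated, and the quotient is a torsion-free rank-one sheaf with $\rc_1 = kH$, necessarily of the form $\cI_Z(kH)$ for some codimension-two subscheme $Z \subset S$. Since $k \le 0$, we have $h^0(\cI_Z(kH)) \le h^0(\cO_S(kH)) \le 1$, so the cohomology sequence of $0 \to \cO_S \to \cF' \to \cI_Z(kH) \to 0$ gives $h^0(\cF') \le 2$, the desired contradiction.
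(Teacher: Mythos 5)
Your proof is correct and follows the same overall strategy as the paper's: translate the containment in $\Sigma_{2,1}^{\OGr}(V_3)$ into the everywhere-degeneracy of $\varphi \colon V_3 \otimes \cO_S \to \cU_S^\vee$, set $\cF = \Ima(\varphi)$, and play the stability of $\cU_S^\vee$ (from Theorem~\ref{thm:g7-emb}) against the injectivity of $V_3 \hookrightarrow \rH^0(S,\cU_S^\vee)$. The only divergence is the endgame: after excluding $\rc_1(\cF) \ge H$ by the slope inequality, the paper concludes that the globally generated torsion-free sheaf $\cF$ with $\rc_1(\cF) \le 0$ is trivial, $\cF \cong \cO_S^{\oplus m}$ with $m \le 2$, by citing \cite[Lemma~3.7(b)]{BKM}, which contradicts $h^0(\cF) \ge 3$ at once. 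You instead pass to the saturation $\cF' \subset \cU_S^\vee$ and bound $h^0(\cF') \le 2$ directly: stability forces $\rc_1(\cF') = kH$ with $k \le 0$, and forces any nonzero section of $\cF'$ to span a saturated copy of $\cO_S$, so the sequence $0 \to \cO_S \to \cF' \to \cI_Z(kH) \to 0$ gives $h^0(\cF') \le 1 + h^0(\cI_Z(kH)) \le 2$. This is a valid, self-contained substitute for the appeal to the BKM lemma (the facts used implicitly, that a saturated subsheaf of a locally free sheaf on a smooth surface is locally free and that the torsion-free rank-one quotient has the form $\cI_Z(kH)$, are standard), at the cost of a slightly longer case analysis. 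The only micro-omission is the degenerate case $\rank(\cF) = 0$, which is immediate since $\cU_S^\vee$ is torsion-free and $V_3 \to \rH^0(S,\cU_S^\vee)$ is injective.
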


\begin{proof}
If~$\gamma_S(S)$ is contained in~$\Sigma_{2,1}^{\OGr}(V_3)$ then the morphism~$\phi \colon V_3 \otimes \cO_S \to \cU_S^\vee$ is everywhere degenerate.
In other words, if~$\cF \coloneqq \Ima(\phi)$ is its image, then~$\rank(\cF) \le 2$.
Now, if~$\rc_1(\cF) \ge H$, then
\begin{equation*}
\upmu(\cF) \ge \frac12 > \frac25 = \upmu(\cU_S^\vee),
\end{equation*}
in contradiction to the stability of~$\cU_S^\vee$ proved in Theorem~\textup{\ref{thm:g7-emb}}.
Thus, $\rc_1(\cF) \le 0$, and since~$\cF$ is torsion free, it follows from~\cite[Lemma~3.7(b)]{BKM} that~$\cF \cong \cO_S^{\oplus m}$ with~$m \le 2$, in contradiction to the injectivity of the morphism
\begin{equation*}
\rH^0(S, V_3 \otimes \cO_S) = V_3 \hookrightarrow V_{10} = \rH^0(S, \cU_S^\vee).
\end{equation*}
This contradiction proves the lemma.
\end{proof}

And the following result plays the role of Lemma~\ref{lem:yes-schubert} (but now the argument is different).

\begin{lemma}\label{lem:yes-schubert-g7}
Let~$Y = \OGr_+(5,V_{10}) \cap \P^7$.
If~$[U_5] \in Y$ is a point such that~$\dim(\rT_{[U_5],Y}) \ge 4$ then~$Y$ 
is contained in a Schubert cycle~$\Sigma_{2,1}^{\OGr}(V_3) \subset \OGr_+(5,V_{10})$ 
for some isotropic subspace~$V_3 \subset V$.
\end{lemma}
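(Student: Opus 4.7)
The plan is to convert the tangent-space hypothesis into the existence of a pencil of Schubert divisors $\Sigma_1^{\OGr}(V_5^v)$ containing $Y$, extract from this pencil a 3-dimensional isotropic subspace $V_3 \subset U_5$, and then show that the common intersection of such a pencil equals $\Sigma_{2,1}^{\OGr}(V_3)$. I will use the spinor embedding $\OGr_+(5,V_{10}) \subset \P(\SS_+) = \P^{15}$, let $K \subset \SS_-$ be the 8-dimensional annihilator of the subspace $\P^7$, let $L \subset \SS_+$ be the 11-dimensional affine tangent cone to $\OGr_+$ at (any lift of) $[U_5]$, and let $\mathrm{ann}(L) \subset \SS_-$ be its 5-dimensional annihilator. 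A direct dimension count with affine cones gives
\begin{equation*}
\dim \rT_{[U_5],Y} \,=\, 2 + \dim(K \cap \mathrm{ann}(L)),
\end{equation*}
so the hypothesis produces a 2-dimensional subspace $K_0 \subset K \cap \mathrm{ann}(L)$.

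The key input is the classical identification of $\P(\mathrm{ann}(L))$ with a maximal linear subspace of $\OGr_-(5,V_{10})$: for each 4-dimensional (hence isotropic) hyperplane $W_4 \subset U_5$ there is a unique $V_5(W_4) \in \OGr_-$ with $V_5(W_4) \cap U_5 = W_4$, namely the other isotropic line in the hyperbolic plane $W_4^\perp/W_4$. By~\eqref{eq:sing-schubert1-ogr} the Schubert divisor $\Sigma_1^{\OGr}(V_5(W_4))$ is singular at $[U_5]$, so the defining pure spinor lies in $\mathrm{ann}(L)$; this gives an injective map $\P(U_5^\vee) \to \P(\mathrm{ann}(L))$, which is surjective by dimension, so $\P(\mathrm{ann}(L)) \subset \OGr_-$. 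The line $\P(K_0) \subset \P(\mathrm{ann}(L))$ is then a line in the spinor variety, corresponding to a 3-dimensional isotropic subspace $V_3 \subset V_{10}$ such that every $V_5(v)$ on this line contains $V_3$; moreover, for any distinct $v,v'$, we have $V_3 = V_5(v) \cap V_5(v') = W_4(v) \cap W_4(v') \subset U_5$. Each Schubert divisor $\Sigma_1^{\OGr}(V_5(v))$ contains $Y$, since its defining spinor lies in $K_0 \subset K$.

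It remains to check $\bigcap_{v \in \P(K_0)} \Sigma_1^{\OGr}(V_5(v)) \subset \Sigma_{2,1}^{\OGr}(V_3)$. Take $[U_5']$ in the left-hand side, assume $U_5' \cap V_3 = 0$, and derive a contradiction. Pass to the 4-dimensional nondegenerate quadratic space $V_3^\perp/V_3$: the images $\bar L_v = V_5(v)/V_3$ form one ruling of isotropic 2-planes, whose union is the full isotropic cone. Setting $A = (U_5' \cap V_3^\perp + V_3)/V_3$, the condition $\dim(U_5' \cap V_5(v)) \ge 2$ (valid because $[U_5'] \in \Sigma_1^{\OGr}(V_5(v))$) forces $A \supset \bar L_v$ for every $v$, so $A$ contains the isotropic cone, and as this cone linearly spans $V_3^\perp/V_3$, we get $A = V_3^\perp/V_3$ and hence $\dim(U_5' \cap V_3^\perp) = 4$. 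But using $U_5' = (U_5')^\perp$, the subspace $U_5' \cap V_3^\perp$ is the annihilator in $U_5'$ of the image of $V_3 \to V_{10}/U_5' \cong (U_5')^\vee$, and this image has dimension $3 - \dim(U_5' \cap V_3) = 3$, forcing $\dim(U_5' \cap V_3^\perp) = 5 - 3 = 2$, a contradiction.

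The main obstacle I expect is the middle step: pinning down the classical identification $\P(\mathrm{ann}(L)) = \P(U_5^\vee) \subset \OGr_-$, which encodes the fact that the two components of the spinor variety are swept out by each other's maximal linear subspaces. Once this is in place, the tangent-space hypothesis yields a pencil of pure spinors, and the remaining argument is elementary linear algebra in the 4-dimensional quadratic space $V_3^\perp/V_3$.
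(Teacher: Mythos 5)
Your argument is correct, and its first half coincides with the paper's: the identity $\dim\rT_{[U_5],Y}=2+\dim(K\cap\mathrm{ann}(L))$ is just a reformulation of the paper's observation that $\rT_{[U_5],Y}$ is the kernel of a linear map $\kk^{10}\to\kk^{8}$, and in both proofs the hypothesis yields (at least) two independent hyperplanes through $\P^7$ tangent to $\OGr_+(5,V_{10})$ at $[U_5]$, hence Schubert divisors containing $Y$, singular at $[U_5]$, whose defining $V_5$'s meet $U_5$ in hyperplanes and meet each other in a $3$-space $V_3\subset U_5$. (Where the paper quotes \cite[Corollary~4.2]{K18} to see that tangent hyperplane sections are Schubert divisors, you re-derive the needed special case from~\eqref{eq:sing-schubert1-ogr} via the classical identification of $\P(\mathrm{ann}(L))$ with the maximal linear subspace $\P(U_5^\vee)\subset\OGr_-$; that step is sound, granting the standard fact that $\Sigma_1^{\OGr}(V_5')$ is the hyperplane section cut by the pure spinor of $V_5'$.) The genuine difference is the closing step: the paper establishes the scheme-theoretic equality~\eqref{eq:schubert-inclusion} by combining the easy inclusion~$\supseteq$ (parity of intersection dimensions) with the fact that $\CH^2$ of the spinor tenfold is generated by $H^2$ and with Cohen--Macaulayness of the complete intersection, whereas you prove the inclusion~$\subseteq$ set-theoretically by direct linear algebra in $V_3^\perp/V_3$, which is more elementary and bypasses Schubert calculus entirely. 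Two remarks: first, you do not need the full pencil --- two distinct planes $\bar L_v,\bar L_{v'}$ of the same ruling already meet in $0$ and hence span $V_3^\perp/V_3$, so your final argument works verbatim with just the two divisors of the paper's setup; second, your route gives only the set-theoretic containment $Y_{\mathrm{red}}\subset\Sigma_{2,1}^{\OGr}(V_3)$ rather than the paper's scheme-theoretic equality, but this is all that is used in Proposition~\ref{prop:s-g7}, where the containment is only needed for the reduced surface $S\subset Y$ in order to contradict Lemma~\ref{lem:no-schubert-g7}, so nothing is lost in the application.
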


\begin{proof}
Since~$\OGr_+(5,V_{10}) \subset \P^{15}$ is a smooth variety of dimension~10
and~$Y$ is an intersection of~$\OGr_+(5,V_{10})$ with a linear subspace of codimension~8, 
the tangent space~$\rT_{[U_5],Y}$ is the kernel of a linear map~$\kk^{10} = \rT_{[U_5],\OGr_+(5,V_{10})} \to \kk^8$,
hence the assumption~$\dim(\rT_{[U_5],Y}) \ge 4$ 
implies that~$Y$ is contained in two hyperplane sections of~$\OGr_+(5,V_{10})$ singular at~$[U_5]$. 
But any singular hyperplane section of~$\OGr_+(5,V_{10})$ is a Schubert divisor~\eqref{eq:schubert1-ogr} 
(see, e.g., \cite[Corollary~4.2]{K18}) and its singular locus is described by~\eqref{eq:sing-schubert1-ogr}.
Therefore, there are distinct isotropic subspace~$V'_5, V''_5 \subset V_{10}$ 
from the other component~$\OGr_-(5,V_{10})$ of the isotropic Grassmannian)
such that
\begin{equation*}
Y \subset \Sigma_1^{\OGr}(V'_5) \cap \Sigma_1^{\OGr}(V''_5)
\qquad\text{and}\qquad
\dim(V'_5 \cap U_5) = \dim(V''_5 \cap U_5) = 4.
\end{equation*}
Let $V_3 \coloneqq V'_5 \cap V''_5$; then the second property implies that~$\dim V_3 = 3$.
We will show that
\begin{equation}
\label{eq:schubert-inclusion}
\Sigma_1^{\OGr}(V'_5) \cap \Sigma_1^{\OGr}(V''_5) = \Sigma_{2,1}^{\OGr}(V_3);
\end{equation}
obviously, this will complete the proof of the lemma.

To prove~\eqref{eq:schubert-inclusion}, we first note that the right side is contained in the left.
Indeed, if~$[W_5] \in \Sigma_{2,1}^{\OGr}(V_3)$
then~$\dim(V'_5 \cap W_5)$ and~$\dim(V''_5 \cap W_5)$ are positive, and since they must be even, 
they are greater or equal than~2, hence~$[W_5]$ is in the left side. Moreover, the right-hand side is reduced by definition; hence this set-theoretic inclusion is also a scheme-theoretic embedding.\
On the other hand, by Schubert calculus the second Chow group of~$\OGr_+(5,V_{10})$ 
is generated by the square of the hyperplane class
(indeed, Schubert cells in~$\rG/\rP$ of codimension~2 are parameterized by the vertices of the Dynkin diagram of~$\rG$
adjacent to the vertex corresponding to~$\rP$, and in the case where~$\rG = \SO_{10}$ and~$\rP = \rP_5$
there is only one adjacent vertex),
hence the class of the right side in the second Chow group must be a multiple of the class of the left side.
This implies that the left side is irreducible and generically reduced 
and the embedding of the right side is an isomorphism at the general point.
Finally, the left side is a complete intersection, hence it is Cohen--Macaulay, 
hence the embedding is an isomorphism.
\end{proof}

The last few things we need to know about the spinor variety~$\OGr_+(5,V_{10})$ 
is that it is an intersection of~$10$ quadrics in~$\P^{15}$, its degree is~$12$ (as we already mentioned), and
\begin{equation}
\label{eq:cn-ogr}
\cN_{\OGr_+(5,V_{10})/\P^{15}} \cong \cU(2),
\end{equation}
where~$\cU$ is the tautological bundle of~$\OGr_+(5,V_{10})$,
see, e.g., \cite[Corollaries~4.3 and~4.6]{K18}.

\begin{proposition}\label{prop:s-g7}
If~$S$ is a prime $K3$ surface of genus~$7$ then
\begin{equation*}
\gamma_S(S) = \OGr_+(5,V_{10}) \cap \P^7
\end{equation*}
is a transverse intersection.
\end{proposition}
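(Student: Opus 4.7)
The plan is to mimic the structure of Proposition~\ref{prop:m-cap-p}, with the Grassmannian replaced by~$M \coloneqq \OGr_+(5,V_{10})$ in its spinor embedding~$M \subset \P^{15}$. Set~$Y \coloneqq M \cap \P^7$, where~$\P^7 = \langle \gamma_S(S)\rangle$ is the linear span. Since~$\gamma_S^*\cO_M(1) = \cO_S(H)$ and~$H^2 = 2g-2 = 12 = \deg(M)$, the surface~$\gamma_S(S) \subset Y$ has the degree expected of a transverse linear section; the goal is to upgrade the containment~$\gamma_S(S) \subset Y$ to an equality by showing that~$Y$ is a Cohen--Macaulay surface of degree~$12$.

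First I would establish a dichotomy. Lemma~\ref{lem:no-schubert-g7} ensures that~$S$, hence~$Y$, is not contained in any Schubert cycle~$\Sigma_{2,1}^{\OGr}(V_3)$, and then Lemma~\ref{lem:yes-schubert-g7} gives~$\dim \rT_{y,Y} \le 3$ for every~$y \in Y$. To get connectedness I would apply Proposition~\ref{lem:zl-deg} with~$\cE \coloneqq \cO_M(1)^{\oplus 8}$ and~$\sigma$ the $8$-tuple of linear forms defining~$\P^7 \subset \P^{15}$: since~$K_M = -8H$, Kodaira vanishing gives~$\rH^i(M, \wedge^j \cE^\vee) = \binom{8}{j}\rH^i(\cO_M(-j)) = 0$ for~$1 \le j \le 8$ and~$i \in \{j-1, j\}$. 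Combining the tangent bound with connectedness --- any intersection of a $3$-dimensional component with another component of~$Y$ would have tangent dimension~$\ge 4$ --- shows that~$Y$ is either a pure surface or a smooth connected threefold.

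In the surface case, $Y$ has the expected codimension in~$M$, hence is Cohen--Macaulay of degree~$12 = \deg S$, so~$S = Y$. To rule out the threefold case I would analyze the codifferential~$\rd\sigma \colon \gamma_S^*\cE^\vee = \cO_S(-H)^{\oplus 8} \to \cN^\vee_{S/M}$; since both sides have rank~$8$ and share determinant~$\cO_S(-8H)$ (using~$\rc_1(\cN_{S/M}) = -K_M\vert_S = 8H$), the map has corank~$1$ with kernel and cokernel both isomorphic to a line bundle~$\cO_S(-tH)$. The bound~$t \ge 1$ follows from the embedding into~$\cO_S(-H)^{\oplus 8}$, and~$t \le 2$ follows from global generation of~$\cN^\vee_{S/M}(2H)$, itself a consequence of~$S \subset \P^7$ being cut out by quadrics~\cite[Theorem~7.2]{SD}, the~$8$ linear forms defining~$\P^7 \subset \P^{15}$, and the fact that~$\cN^\vee_{S/M}$ is a quotient of~$\cN^\vee_{S/\P^{15}}$.

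The hard part will be ruling out both values~$t \in \{1,2\}$, which I would do following Proposition~\ref{prop:m-cap-p} case~(b). For~$t = 1$, the embedding~$\cO_S(-H) \hookrightarrow \cO_S(-H)^{\oplus 8}$ twists to a section of~$\cE^\vee(1) = \cO_M^{\oplus 8}$, corresponding to a hyperplane in~$\P^{15}$ through~$S$ that is tangent to~$M$ along~$S$, and therefore produces a nonzero global section of~$\gamma_S^*\cN^\vee_{M/\P^{15}}(1) \cong \cU_S^\vee(-H)$ (using~\eqref{eq:cn-ogr}). Here Lemma~\ref{lem:mukai-conormal} does not apply since~$\tfrac1{5}+\tfrac1{5} < \tfrac12$; instead, I would argue directly that~$\cU_S^\vee(-H)$ is stable (as~$\cU_S^\vee$ is, by Theorem~\ref{thm:g7-emb}) of slope~$-\tfrac{3}{5} < 0$, hence~$\rH^0 = 0$, a contradiction. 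For~$t = 2$, the epimorphism~$\cN^\vee_{S/M} \twoheadrightarrow \cO_S(-2H)$ lifts, via the global generation above, to a quadric~$Q \subset \P^{15}$ such that the natural map~$\cE^\vee \oplus \cO_M(-2) \xrightarrow{(\sigma, Q)} I_{S,M}$ is surjective in a neighbourhood of~$S$; by Nakayama, $S$ is a connected component of~$Y \cap Q$, and since~$Q\vert_Y$ is an ample Cartier divisor on the smooth connected threefold~$Y$, Lefschetz connectedness forces~$S = Y \cap Q$. This triggers Corollary~\ref{cor:y-cap-q} and contradicts Brill--Noether generality of hyperplane sections of~$S$ (Lazarsfeld~\cite{Laz}), completing the argument.
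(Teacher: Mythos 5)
Your proposal follows the same route as the paper: the paper's own proof is precisely "run the argument of Proposition~\ref{prop:m-cap-p} with Lemmas~\ref{lem:no-schubert-g7} and~\ref{lem:yes-schubert-g7} in place of Lemmas~\ref{lem:no-schubert} and~\ref{lem:yes-schubert}, conclude in the surface case by the degree count $\deg(\OGr_+(5,V_{10}))=12=\deg(S)$, and in the threefold case replace Lemma~\ref{lem:mukai-conormal} by the vanishing $\rH^0(S,\cU_S^\vee(-H))=0$ coming from~\eqref{eq:cn-ogr} and stability of~$\cU_S$". You carry out exactly these substitutions, and the details you supply are correct: the connectedness input for Proposition~\ref{lem:zl-deg} via Kodaira vanishing for $\cE=\cO_M(1)^{\oplus 8}$ (which the paper leaves implicit), the corank-one analysis of $\rd\sigma$ with $t\in\{1,2\}$, the identification of the $t=1$ obstruction space with $\rH^0(S,\cU_S^\vee(-H))$ of slope $-3/5$, and the $t=2$ reduction to Corollary~\ref{cor:y-cap-q} and Lazarsfeld's theorem.

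The one step of the paper's proof you skip is its first one: checking that $\gamma_S(S)\subset\OGr_+(5,V_{10})\cap\P^7$ with $\P^7$ a \emph{seven-dimensional} linear span, which the paper does "by the argument of the second part of Lemma~\ref{lem:gamma-x-s}". You simply declare $\P^7=\langle\gamma_S(S)\rangle$. Since $h^0(S,\cO_S(H))=g+1=8$, the span has dimension at most $7$, but equality is not automatic: it amounts to surjectivity of the restriction map from the $16$-dimensional space of spinor-linear forms onto $\rH^0(S,\cO_S(H))$, and Theorem~\ref{thm:g7-emb} alone does not give this (a projection of the $|H|$-embedded surface from a point off its secant variety would also be a closed embedding, so closed embedding does not imply linear normality of the image in $\P^{15}$). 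Your argument does go through verbatim for \emph{any} codimension-$8$ linear subspace containing $\gamma_S(S)$, and in the surface case still yields $\gamma_S(S)=\OGr_+(5,V_{10})\cap\P^7$; but to obtain the statement as intended — with $\P^7$ the span, i.e., the $|H|$-embedding, which is what Corollary~\ref{cor:cw-cn} subsequently uses — you should add the surjectivity verification (e.g., adapting the argument of Lemma~\ref{lem:gamma-x-s} via the surjection $\wedge^2V_5^\vee\to\rH^0(S,\cO_S(H))$ from Proposition~\ref{prop:g7-gr25} and the restriction of the half-spinor space to $\GL(V_5)$).
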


\begin{proof}
The argument is analogous to that of Proposition~\ref{prop:m-cap-p}.
First, we check that
\begin{equation*}
S \subset Y \coloneqq \OGr_+(5,V_{10}) \cap \P^7
\end{equation*}
by using the argument of the second part of Lemma~\ref{lem:gamma-x-s}. 
Next, using Lemmas~\ref{lem:no-schubert-g7} and~\ref{lem:yes-schubert-g7} 
(instead of Lemmas~\ref{lem:no-schubert} and~\ref{lem:yes-schubert}) 
we show that either~$Y$ is a Cohen--Macaulay surface or a smooth connected threefold.
In the first case, it follows that~$Y = S$, because in this case
\begin{equation*}
\deg(Y) = \deg(\OGr_+(5,V_{10})) = 12 = \deg(S),
\end{equation*}
and in the second case we use the argument of Proposition~\ref{prop:m-cap-p}
taking into account that
\begin{equation*}
\rH^0(S, \gamma_S^*(\cN_{\OGr_+(5,V_{10})/\P^{15}}^\vee(1))) \cong
\rH^0(S, \gamma_S^*(\cU^\vee(-1))) \cong
\rH^0(S,\cU_S^\vee(-H)) = 0,
\end{equation*}
where the first is~\eqref{eq:cn-ogr}, 
the second follows from the definition of~$\gamma_S$,
and the third is implied by the stability of the Mukai bundle~$\cU_S$.
\end{proof}

As we will see, the following alternative description of the Mukai bundle~$\cU_S$ is very useful.

\begin{corollary}
\label{cor:cw-cn}
If~$(S,H)$ is a prime $K3$ surface of genus~$7$ 
and~$\cU_S$ is the Mukai bundle of type~$(5,5)$ with respect to the polarization~$2H$
constructed in Theorem~\textup{\ref{thm:g7-emb}}, 
there are isomorphisms
\begin{equation*}
\cU_S \cong \cN_{S/\P^7}(-2H),
\qquad\text{and}\qquad
\rH^0(S, \cU_S^\vee) \cong \rH^0(\P^7, I_S(2)) \cong \kk^{10},
\end{equation*}
where~$S \hookrightarrow \P^7$ is the embedding given by the polarization~$|H|$.
\end{corollary}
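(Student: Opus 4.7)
The first isomorphism is the functoriality of the normal bundle under transverse intersection. By Proposition~\ref{prop:s-g7} the surface $S$ is a transverse intersection of $\OGr_+(5,V_{10})$ with $\P^7 \subset \P^{15}$, so $\cN_{S/\P^7} \cong \cN_{\OGr_+(5,V_{10})/\P^{15}}\vert_S$; combining this with~\eqref{eq:cn-ogr} and the identifications $\gamma_S^*\cU \cong \cU_S$ and $\gamma_S^*\cO(1) \cong \cO_S(H)$ from Theorem~\ref{thm:g7-emb} yields $\cN_{S/\P^7} \cong \cU_S(2H)$, which rearranges to the first claim.

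For the second isomorphism, my plan is to show that both sides are ten-dimensional and to exhibit an injective natural comparison map. The left-hand side has dimension~$10$ by Theorem~\ref{thm:g7-emb}. To compute $h^0(\P^7, I_S(2))$ I will use the restriction sequence
\begin{equation*}
0 \to I_S(2) \to \cO_{\P^7}(2) \to \cO_S(2H) \to 0,
\end{equation*}
for which $h^0(\cO_{\P^7}(2)) = 36$ and, by Riemann--Roch on the K3 surface together with Kodaira vanishing, $h^0(S, \cO_S(2H)) = 26$ with higher cohomology vanishing. The remaining input $\rH^1(\P^7, I_S(2)) = 0$ I will deduce from $S \subset \P^7$ being arithmetically Cohen--Macaulay, which is inherited from the well-known ACM property of the spinor variety $\OGr_+(5,V_{10}) \subset \P^{15}$ as a rational homogeneous space in its minimal equivariant embedding, via the standard fact that ACM is preserved under transverse linear sections.

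The comparison map arises by twisting the conormal sequence $0 \to I_S^2 \to I_S \to \cN_{S/\P^7}^\vee \to 0$ by $\cO_{\P^7}(2)$ and using the first part to rewrite $\cN_{S/\P^7}^\vee(2H) \cong \cU_S^\vee$; on global sections this produces
\begin{equation*}
\rH^0(\P^7, I_S(2)) \to \rH^0(S, \cU_S^\vee)
\end{equation*}
with kernel $\rH^0(\P^7, I_S^2(2))$. Any element of this kernel is a quadratic form on $\P^7$ vanishing to order two along $S$, hence having singular locus containing $S$; but the singular locus of a quadric in $\P^7$ is a linear subspace, and $S$ spans $\P^7$ (again from the ACM property, since the restriction $\rH^0(\P^7, \cO(1)) \to \rH^0(S, \cO_S(H))$ is then surjective, and both sides are eight-dimensional), forcing the quadric to be zero. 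Injectivity plus the dimension count then yields the isomorphism. The main obstacle is establishing the ACM property of $S \subset \P^7$; once that is in hand, the rest reduces to standard K3 cohomology and the singular-quadric argument.
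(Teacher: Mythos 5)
Your proof of the first isomorphism is exactly the paper's: transversality from Proposition~\ref{prop:s-g7}, the identification~$\cN_{S/\P^7} \cong \cN_{\OGr_+(5,V_{10})/\P^{15}}\vert_S$, and~\eqref{eq:cn-ogr} together with~$\gamma_S^*\cU \cong \cU_S$. For the second isomorphism, however, you take a genuinely different and essentially correct route. The paper argues via the chain~$\rH^0(\P^7, I_S(2)) \cong \rH^0(\P^{15}, I_{\OGr_+(5,V_{10})}(2)) \cong V_{10} \cong \rH^0(\OGr_+(5,V_{10}),\cU^\vee) \cong \rH^0(S,\cU_S^\vee)$, importing from~\cite{DK} and~\cite{K18} the facts that quadrics through~$S$ are exactly restrictions of the ten quadrics through the spinor variety; this buys a canonical identification with~$V_{10}$ that the paper reuses later (e.g.\ in Lemma~\ref{lem:s-x} and in the uniqueness argument for~$g=7$). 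You instead count dimensions on both sides ($h^0(\cU_S^\vee)=10$ from Theorem~\ref{thm:g7-emb}; $h^0(I_S(2)) = 36 - 26 = 10$ from Riemann--Roch on~$S$ and projective normality of~$S\subset\P^7$, which indeed follows either from the ACM property of the spinor variety cut by a regular sequence of linear forms, as in Lemmas~\ref{lem:mg-acm} and~\ref{lem:reg-sequence-CM}, or directly from Saint-Donat), and then exhibit the natural map~$\rH^0(\P^7,I_S(2)) \to \rH^0(S,\cN^\vee_{S/\P^7}(2H)) \cong \rH^0(S,\cU_S^\vee)$ from the twisted conormal sequence, whose kernel~$\rH^0(I_S^2(2))$ vanishes because a nonzero quadric has a proper linear subspace as singular locus while~$S$ is linearly nondegenerate (automatic, since the embedding is by the complete system~$|H|$ with~$h^0(\cO_S(H))=8$). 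Injectivity plus equal dimensions gives the isomorphism. Your argument is more self-contained, avoiding the external references on quadrics through~$\OGr_+(5,10)$, at the cost of not directly recording that every quadric through~$S$ comes from a quadric through the spinor variety, which is the form of the statement the paper leans on later; both approaches are valid proofs of the corollary as stated.
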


\begin{proof}
Since~$S = \OGr_+(5,V_{10}) \cap \P^7$ is a transverse intersection, it follows that
\begin{equation*}
\cN_{S/\P^7} \cong \cN_{\OGr_+(5,V_{10})/\P^{15}}\vert_S \cong \cU(2)\vert_S \cong \cU_S(2H),
\end{equation*}
where in the second isomorphism we used~\eqref{eq:cn-ogr}
and the last holds by definition of~$\gamma_S$.

Similarly, we have isomorphisms
\begin{equation*}
\rH^0(\P^7, I_S(2)) \cong
\rH^0(\P^{15}, I_{\OGr_+(5,V_{10})}(2)) \cong
V_{10} \cong
\rH^0(\OGr_+(5,V_{10}), \cU^\vee) \cong
\rH^0(S, \cU_S^\vee),
\end{equation*}
where the first isomorphism is proved in~\cite[Lemma~A.5]{DK},
the second is proved in~\cite[Corollary~4.3]{K18}
(thus, $V_{10}$ coincides with the 10-dimensional space of quadrics in~$\P^{15}$ passing through~$\OGr_+(5,V_{10})$),
the third follows from~\cite[Corollary~4.4]{K18},
and the last follows from the argument of Theorem~\ref{thm:g7-emb}.
In conclusion, we obtain the second isomorphism of the lemma.
\end{proof}

We can now finish proving Theorem~\ref{thm:main} 
(recall that the cases~$g \in \{6,8,9,10,12\}$ are established in Section~\ref{subsec:K3g=6-10}).

\begin{proof}[Proof of Theorem~\textup{\ref{thm:prime-k3}} for~$g = 7$]
The Gushel embedding~$\gamma_S \colon S \hookrightarrow \rM_7 = \OGr_+(5,V_{10})$ 
is constructed in Theorem~\ref{thm:g7-emb}
and an isomorphism of~$S$ with a linear section of~$\rM_7$ is proved in Proposition~\ref{prop:s-g7}.
It remains to prove the uniqueness.

So, let~$f \colon S \hookrightarrow \OGr_+(5,V_{10})$ be a closed embedding 
such that~$S = \OGr_+(5,V_{10}) \cap \P^7$ is a transverse intersection.
If~$\cU$ is the tautological bundle of~$\OGr_+(5,V_{10})$ then
\begin{equation*}
f^*(\cU^\vee) \cong 
f^*(\cN^\vee_{\OGr_+(5,V_{10})/\P^{15}}(2)) \cong
\cN^\vee_{S/\P^7}(2H) \cong
\cU_S^\vee,
\end{equation*}
where the first is~\eqref{eq:cn-ogr}, 
the second follows from the transversality of intersection,
and the third is Corollary~\ref{cor:cw-cn}.
Moreover, the argument of Corollary~\ref{cor:cw-cn} also shows that this isomorphism
induces an isomorphism~$\rH^0(\OGr_+(5,V_{10}), \cU^\vee) \xrightiso{} \rH^0(S, \cU_S^\vee)$
such that its dual map gives an automorphisms~$\alpha \in \Aut(\OGr_+(5,V_{10}))$ such that~$f = \alpha \circ \gamma_S$, 
as required.
\end{proof}

%%%%%%%%%%%%%%%%%%%%%%%%%

\section{Fano varieties}\label{sec:fano}

In this section we deduce Theorem~\ref{thm:main} from Theorem~\ref{thm:prime-k3}.
In Section~\ref{ss:hyperplanes-cones} we show that Fano varieties as in Theorem~\ref{thm:main}
contain prime K3 surfaces as transverse linear sections;
in Section~\ref{ss:extension} we explain the extension argument used in the proof of Theorem~\ref{thm:main};
in Section~\ref{ss:fano-3} we prove the theorem for threefolds, where
the argument relies on the main result of~\cite{BKM};
and in Section~\ref{ss:fano-all} we explain how the Mukai bundle and the Gushel map extend to higher dimensions, concluding the proof of Theorem~\ref{thm:main}.

\subsection{Hyperplanes and positivity properties}
\label{ss:hyperplanes-cones}

Our reduction of Theorem~\ref{thm:main} to Theorem~\ref{thm:prime-k3} 
is based on the following result, which is well known to experts.
Recall that any factorial variety is Gorenstein.

\begin{lemma}
\label{lem:hva}
Let~$X$ be a Fano variety of dimension~$n \ge 3$ 
with at most factorial terminal singularities such that~\eqref{eq:prime-fano} holds
\textup(in particular, let~$g = \g(X)$ be the genus of~$X$\textup).
%\begin{aenumerate}
\begin{enumerate}[label={\textup{(\alph*)}}, wide]
\item 
\label{item:Hpositive}	
If~\mbox{$g \ge 3$} then~$|H|$ is base point free,
if~\mbox{$g \ge 4$} then~$|H|$ is very ample and induces a projectively normal embedding~$X \hookrightarrow \P^{n + g - 2}$, and 
if~\mbox{$g \ge 5$} then~$X \subset \P^{n + g - 2}$ is an intersection of quadrics.
\item 
\label{item:gendivisor} 
If~$g \ge 3$ and~$n \ge 4$ a general divisor~$X' \subset X$ in~$|H|$ 
has at most factorial terminal singularities and satisfies~\eqref{eq:prime-fano}.
\item 
\label{item:genK3} 
If~$g  \ge 4$ a very general intersection~$S = X \cap \P^g \subset X$ of~$n - 2$ divisors in~$|H|$ 
is a smooth prime $K3$ surface of genus~$g$.
%\end{aenumerate}
\end{enumerate}
\end{lemma}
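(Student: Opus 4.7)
The proof strategy is induction on~$n$: the threefold base case $n = 3$ is handled by classical theorems for prime Fano threefolds with Gorenstein terminal singularities, and the higher-dimensional cases are deduced via Bertini together with Kawamata-Viehweg vanishing.

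For part~\ref{item:Hpositive}, Kawamata-Viehweg gives $\rH^{>0}(X,\cO_X(mH)) = 0$ for $m \ge 0$, and Riemann-Roch then yields $h^0(H) = n + g - 1$. Base-point-freeness when $n = 3$ is the classical Shokurov-Reid theorem for prime Fano threefolds with Gorenstein terminal singularities. For $n \ge 4$ I would combine it with part~\ref{item:gendivisor}, proved inductively: a general $X' \in |H|$ satisfies \eqref{eq:prime-fano} in dimension $n-1$, so base-point-freeness on~$X$ follows from that on~$X'$ together with the surjectivity of $\rH^0(X, \cO_X(H)) \to \rH^0(X', \cO_{X'}(H))$, which is forced by the vanishing $\rH^1(X, \cO_X) = 0$. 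Very-ampleness and projective normality for $g \ge 4$ proceed along analogous lines, using the successive vanishings $\rH^1(X, \cO_X(mH)) = 0$ to lift each property from~$X'$ to~$X$, reducing eventually to Iskovskikh's classical results in the threefold base case. The claim that $X \subset \P^{n + g - 2}$ is an intersection of quadrics for $g \ge 5$ is equivalent to $2$-regularity in the sense of Castelnuovo-Mumford, and reduces similarly via the same restriction-and-lift scheme.

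For part~\ref{item:gendivisor}, Bertini applied on $X_\sm$ guarantees that a general $X' \in |H|$ is smooth at every smooth point of $X$. At the finitely many singular points of $X$ (which are isolated since $X$ is terminal of dimension $\ge 3$), preservation of terminality for a general member of a very ample linear system is a standard consequence of Reid's analysis of general sections through isolated terminal singularities. Factoriality of $X'$ follows from a Grothendieck-Lefschetz result of Ravindra-Srinivas, which requires $\dim X' = n - 1 \ge 3$, thus forcing the hypothesis $n \ge 4$. The Lefschetz hyperplane theorem combined with $\rH^1(X, \cO_X(-H)) = 0$ gives $\Pic(X') = \ZZ \cdot H|_{X'}$, and adjunction yields $-K_{X'} = (n - 3) H|_{X'}$ and $(H|_{X'})^{n-1} = 2g - 2$.

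For part~\ref{item:genK3}, when $n = 3$ a general $S = X \cap \P^g$ is a hyperplane section; Bertini plus avoidance of $\Sing(X)$ by a general hyperplane gives smoothness, adjunction gives $K_S \sim 0$, and $\rH^1(S, \cO_S) = 0$ follows from Kodaira vanishing, identifying $S$ as a K3 surface with $(H|_S)^2 = 2g - 2$; primeness $\Pic(S) = \ZZ \cdot H|_S$ holds for a very general hyperplane by a Noether-Lefschetz argument adapted to factorial Fano threefolds. When $n \ge 4$, iterate part~\ref{item:gendivisor} exactly $n - 2$ times to produce a nested chain of factorial terminal Fano varieties ending in a surface $S$; factoriality together with terminality in dimension $2$ forces smoothness, and the same K3 identification together with Noether-Lefschetz for very general complete intersections applies. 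The principal obstacles I anticipate are, first, the invocation of the Shokurov-Reid base-point-freeness theorem at $n = 3$ in part~\ref{item:Hpositive}, without which the whole induction collapses; and, second, in part~\ref{item:gendivisor}, the simultaneous preservation of factoriality and terminality under a general hyperplane section, which requires combining the Ravindra-Srinivas theorem with the standard analysis of general members through isolated terminal points.
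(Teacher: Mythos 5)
Your overall architecture (induction on~$n$, classical results at~$n=3$, lifting positivity via vanishing and graded-ring arguments, Ravindra--Srinivas for factoriality and for primeness of the very general K3) matches the paper's proof, but there is a genuine gap at the crux of part~\ref{item:gendivisor}: the terminality of a general~$X' \in |H|$. Your argument rests on ``Bertini on~$X_\sm$'' plus ``Reid's analysis of general sections through isolated terminal singularities'', and both halves fail as stated. First, terminal singularities in dimension~$n \ge 4$ are only known to have singular locus of codimension~$\ge 3$, so~$\Sing(X)$ need not be a finite set once~$n \ge 5$ (and can be a curve already for~$n=4$); an analysis of sections through isolated points does not apply. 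Second, and more seriously, Bertini only guarantees that a general member of~$|H|$ is smooth away from~$\Bs|H| \cup \Sing(X)$, and at this stage of your induction you do not yet know~$\Bs|H| = \varnothing$ in dimension~$n$: in your scheme base-point-freeness in dimension~$n$ is deduced \emph{from} part~\ref{item:gendivisor} in dimension~$n$, so using any Bertini-type statement to prove~\ref{item:gendivisor} is circular. The paper breaks this circle by proving~\ref{item:gendivisor} independently of~\ref{item:Hpositive}, quoting Mella's theorem on good divisors on Mukai varieties (together with Ravindra--Srinivas for factoriality and Picard rank one); Mella's result is exactly the nontrivial input that controls the general fundamental divisor without assuming~$|H|$ is base point free, and it cannot be replaced by a soft Bertini argument.

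Two smaller points. The assertion that ``$X$ is an intersection of quadrics is equivalent to~$2$-regularity'' is false as stated (2-regularity implies quadratic generation of the ideal, not conversely); the correct lifting device, used in the paper, is the section-ring argument: surjectivity of the restriction of section rings via Kawamata--Viehweg, with kernel generated in degree one, so that generation in degree one and relations in degree two pass between~$X$ and~$X'$. In part~\ref{item:genK3} you cannot iterate~\ref{item:gendivisor} ``$n-2$ times down to a surface'': the statement only applies for~$n \ge 4$, so the iteration stops at a threefold, and the last step (a very general hyperplane section of that threefold being a smooth \emph{prime} K3) is where the Noether--Lefschetz input of Ravindra--Srinivas is actually invoked, after checking its hypothesis that~$\omega_X(1) \cong \cO_X$ is globally generated; your sketch gestures at this but conflates the general-divisor chain with the very-general section needed for primeness.
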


\begin{proof}
We first note that claim~\ref{item:gendivisor} is a combination of~\cite[Theorem~2.5 and Remark~2.6]{Mella}, 
which shows that~$X'$ is terminal, and~\cite[Theorem~1]{RS06}, which implies~$X'$ is factorial of Picard rank one. 
Consequently, we can prove claim~\ref{item:Hpositive} by induction. 

The base of induction for claim~\ref{item:Hpositive}, $n=3$, is already established in the literature, 
see~\cite[Section~4]{KP23} and references therein, 
in particular~\cite[Proposition~4.1]{KP23} and~\cite[Theorem~1.1]{Jahnke-Radloff} for the base point freeness for~$g \ge 3$, 
\cite[Proposition~4.4]{KP23} and~\cite[Theorem~4.2]{Prokhorov:rationality} for the very ampleness, 
and its proof for the projective normality, 
and~\cite[Proposition~4.8]{KP23} and~\cite[Theorem~4.5]{Prokhorov:rationality} for the intersection by quadric property. 
(These statements are based on reduction to K3 surfaces and the corresponding results by Saint-Donat in~\cite{SD}, 
with a similar proof as the induction step below.)

We now proceed by induction for~$n \ge 4$, and let~$X' \subset X$ be a divisor satisfying~\ref{item:gendivisor}. 
It is a standard argument to show that the positivity properties of~$H$ asserted in~\ref{item:Hpositive} 
follow from the corresponding properties for~$H|_{X'}$. 
Indeed, for the base point free property this is obvious, 
and for the other two we consider the restriction morphism
\begin{equation*} 
%\label{eq:section-ring-restriction}
\bigoplus_{m \ge 0} \rH^0(X, \cO_{X}(mH)) \to 
\bigoplus_{m \ge 0} \rH^0(X', \cO_{X'}(mH\vert_{X'}))
\end{equation*}
between the section rings. 
It is surjective by Kawamata--Viehweg vanishing theorem, and the kernel is generated by the equation of~$X'$, an element of degree~$1$.

The section ring of an ample divisor is generated in degree one if and only if 
the divisor is very ample and induces a projectively normal embedding.
In this case, the variety is an intersection of quadrics if and only if the section ring has relations generated in degree two. 
Both properties hold for a non-negatively graded ring if and only if they hold for its quotient by a degree~$1$ element.
	
Finally, claim~\ref{item:genK3} follows from~\cite[Theorem~1]{RS09}.
Indeed, by claim~\ref{item:gendivisor} we may assume that~\mbox{$n=3$}, 
and by claim~\ref{item:Hpositive} we know that~$X \subset \P^{g+1}$ in this case. 
Thus the pushforward to~$\P^{g + 1}$ of the canonical bundle is ~$\cO_X(K_X) \cong \cO_X(-H)$, 
and after twist by~$\cO_{\P^{g+1}}(1)$ it becomes trivial, hence globally generated, as required in~\cite[Theorem~1]{RS09}.
\end{proof}

\begin{remark}
Lemma~\ref{lem:hva}\ref{item:genK3} does not hold for~$g = 3$.
Indeed, if~$X \to Q^3$ is a smooth double covering of a smooth 3-dimensional quadric 
branched at the intersection of the quadric and a quartic, 
every smooth hyperplane section of~$X$ is a double covering of~$\P^1 \times \P^1$, 
hence its Picard group has rank at least~2.
Thus, the assumption~$g \ge 4$ is sharp.
\end{remark}

One consequence of this lemma is the genus bound in higher dimensions.

\begin{corollary}
\label{cor:genus-bound}
If~$X$ is a Fano variety of dimension~$n \ge 3$ and genus~$g \ge 6$ 
with at most factorial terminal singularities such that~\eqref{eq:prime-fano} holds
then~$g \in \{6,7,8,9,10,12\}$.
\end{corollary}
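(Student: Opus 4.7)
The plan is to proceed by induction on the dimension $n$, using Lemma~\ref{lem:hva}\ref{item:gendivisor} to descend one dimension at a time until we reach the threefold case, where the result is already known.

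For the base case $n = 3$, I would invoke the statement just recalled in the paragraph following Theorem~\ref{thm:main}: any Fano threefold with terminal Gorenstein singularities is smoothable by Namikawa's theorem \cite{Namikawa}, and factorial terminal varieties are automatically Gorenstein. Therefore a Fano threefold $X$ satisfying \eqref{eq:prime-fano} with $g \ge 6$ deforms to a smooth prime Fano threefold of the same genus, and the Iskovskikh genus bound \cite[Theorem~4.6.7]{Fano-book} forces $g \in \{6,7,8,9,10,12\}$.

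For the inductive step, assume $n \ge 4$ and the statement holds in dimension $n-1$. Let $X' \in |H|$ be a general divisor. By Lemma~\ref{lem:hva}\ref{item:gendivisor}, $X'$ has at most factorial terminal singularities and again satisfies \eqref{eq:prime-fano}. Explicitly, the adjunction formula gives
\begin{equation*}
-K_{X'} = (-K_X - H)|_{X'} = \bigl((n-2)H - H\bigr)|_{X'} = \bigl((n-1) - 2\bigr) H|_{X'},
\end{equation*}
and $(H|_{X'})^{n-1} = H^n = 2g - 2$, so $X'$ is a Fano variety of dimension $n - 1 \ge 3$ with the same genus $g$. By the induction hypothesis, $g \in \{6,7,8,9,10,12\}$, completing the proof.

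I do not anticipate any serious obstacle here: all the real work has already been done in Lemma~\ref{lem:hva}\ref{item:gendivisor} (preservation of factoriality, terminality, and Picard rank one under a general hyperplane section, which rests on \cite{Mella} and \cite{RS06}) and in the classical threefold classification. The only thing worth double-checking is that the genus, the index (encoded in the coefficient $n-2$), and the condition $\Pic = \ZZ \cdot H$ all behave correctly under restriction to $X' \in |H|$, which is immediate from adjunction and Lemma~\ref{lem:hva}\ref{item:gendivisor}.
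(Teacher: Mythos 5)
Your proposal is correct and essentially the paper's own argument: the paper likewise reduces (via repeated application of Lemma~\ref{lem:hva}, i.e.\ your induction) to a factorial terminal prime Fano threefold linear section, then applies Namikawa's smoothing and the Iskovskikh genus bound. The only point you gloss over slightly is that the genus is preserved under the smoothing, for which the paper cites \cite[Proposition~2.5]{KP23}; otherwise the two proofs coincide.
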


\begin{proof}
By Lemma~\ref{lem:hva} a general intersection~$X \cap \P^{g + 1}$ is a prime Fano threefold of genus~$g$
with factorial terminal (hence Gorenstein) singularities.
By~\cite[Theorem~11]{Namikawa} it has a smoothing, 
the general fiber of which is a smooth prime Fano threefold, which by~\cite[Proposition~2.5]{KP23} has the same genus.
Therefore, $g \in \{6,7,8,9,10,12\}$ by the classification of Iskovskikh, see~\cite[Theorem~4.6.7]{Fano-book}.
\end{proof}

The following simple observation gives us a control of singularities for some cones
(recall the notation~$\Cone_{\P(K)}(Y)$ defined in the introduction).

\begin{lemma}
\label{lem:cone-singularity}
Let~$Y_0 \subset \P^N$ be a normal Gorenstein projective variety with~$\omega_{Y_0} \cong \cO_{Y_0}(-m_0)$.
Let~$Y = \Cone_{\P(K)}(Y_0) \subset \P^{N + k + 1}$ be an iterated cone over~$Y_0$ and let~$Q \subset \P^{N + k + 1}$ be a quadric.
%\begin{aenumerate}
\begin{enumerate}[label={\textup{(\alph*)}}, wide]
\item
\label{it:cone}
$Y$ has terminal singularities if and only if~$Y_0$ has terminal singularities and~$m_0 \ge 2$.
\item 
\label{it:cone-quadric-general}
If~$Q$ is general then~$Y \cap Q$ has terminal singularities if and only if~$Y$ has.
\item 
\label{it:cone-quadric-m1}
If~$\P(K) \subset Q$ and~$Q$ is general with this property then~$Y \cap Q$ has terminal singularities
if and only if~$Y_0$ has terminal singularities and~$m_0 \ge 3$.
\item 
\label{it:cone-quadric-m2}
If~$Q$ has multiplicity~$2$ along~$\P(K)$ and~$Q$ is general with this property then~$Y \cap Q$ has terminal singularities
if and only if~$Y_0$ has terminal singularities and~$m_0 \ge 4$.
%\end{aenumerate}
\end{enumerate}
\end{lemma}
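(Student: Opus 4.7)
The plan is to construct a single explicit partial resolution $\pi \colon \tilde Y \to Y$ by blowing up the vertex $\P(K)$, and to deduce all four parts from one discrepancy calculation. Setting $L = \cO_{Y_0}(1)$ and viewing $K$ as a trivial vector bundle of rank $k+1$ on $Y_0$, I would identify $\tilde Y$ with the projective bundle $\P_{Y_0}(L^{-1} \otimes K \oplus \cO_{Y_0})$. Under this identification the exceptional divisor~$E$ is the natural subvariety $\P_{Y_0}(L^{-1} \otimes K) \cong \P(K) \times Y_0$, lying in the linear system $|\cO_{\tilde Y}(1)|$ and contracted to $\P(K)$ by the first projection. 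Combining the relative canonical formula $\omega_{\tilde Y/Y_0} = \cO_{\tilde Y}(-(k+2)) \otimes p^*\det(L^{-1}\otimes K \oplus \cO_{Y_0})$ with adjunction along $E$, a short computation yields
\begin{equation*}
K_{\tilde Y} = \pi^* K_Y + (m_0-1)\,E.
\end{equation*}
Since $\tilde Y \to Y_0$ is smooth with rational fibers, $\tilde Y$ has terminal singularities exactly when $Y_0$ does; near $Y_0 \subset Y$ the variety~$Y$ is locally a product of $Y_0$ with a smooth scheme, so the same is true of $Y$ away from $\P(K)$. Combined with positivity of the discrepancy of~$E$, this gives part~(a).

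For (b), (c), (d) I would apply the same resolution to $Y \cap Q$. Writing $e \in \{0,1,2\}$ for the multiplicity of~$Q$ along $\P(K)$ (with $e=0$ in (b), $e=1$ in (c), $e=2$ in (d)), the strict transform satisfies $\tilde Q = \pi^* Q - eE$ on~$\tilde Y$. Adjunction together with the formula from~(a) then gives
\begin{equation*}
K_{\tilde Y \cap \tilde Q} = \pi^* K_{Y \cap Q} + (m_0 - 1 - e)\, E|_{\tilde Y \cap \tilde Q}.
\end{equation*}
A Bertini-type argument will show that for general $Q$ within each constraint, $\tilde Q$ is smooth away from $\pi^{-1}(\Sing(Y_0))$ and meets $E$ transversally, so $\tilde Y \cap \tilde Q \to Y \cap Q$ is a partial resolution whose unique new exceptional divisor has discrepancy $m_0 - 1 - e$. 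Requiring this to be positive yields the thresholds $m_0 \ge 2+e$ in (b), (c), (d).

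The main technical point will be the Bertini-type step in parts (c) and (d), where $Q$ is constrained to contain $\P(K)$ with a prescribed multiplicity. I would handle this by working inside the linear subsystem of quadrics in $\P^{N+k+1}$ with the required vanishing order along $\P(K)$, and verifying that after subtracting the fixed component $eE$ the induced linear system on $\tilde Y$ is base-point-free away from $\pi^{-1}(\Sing(Y_0))$ and restricts non-degenerately to~$E$. The remaining singularities of $Y \cap Q$ come only from $\Sing(Y_0)$ and are handled by the same local product decomposition used in~(a).
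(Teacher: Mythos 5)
Your proposal follows essentially the same route as the paper's proof: blow up the vertex $\P(K)$, identify the blowup with a projective bundle over $Y_0$ (your twist convention differs only by $\cO_{Y_0}(1)$), compute that the exceptional divisor $E$ has discrepancy $m_0-1$, and in parts (b)--(d) use the strict transform $\pi^*Q - eE$ and adjunction to get the discrepancy $m_0-1-e$, yielding the thresholds $m_0 \ge 2+e$. The computation and the Bertini-type generality step you flag are exactly what the paper does (it cites Bertini for (b) and uses the classes $2H-E$, $2H-2E$ for (c), (d)), so the argument is correct as proposed.
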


\begin{proof}
\ref{it:cone}
Let~$\tY \to Y$ be the blowup of the vertex~$\P(K) \subset Y$ of the cone 
and let~$E \subset \tY$ be the exceptional divisor of the blowup, 
Then
\begin{equation*}
\tY \cong \P_{Y_0}\Big((K \otimes \cO_{Y_0}) \oplus \cO_{Y_0}(-1)\Big)
\qquad\text{and}\qquad 
E = \P_{Y_0}(K \otimes \cO_{Y_0}) \cong \P(K) \times Y_0.
\end{equation*}
The morphism~$\tY \to Y_0$ is smooth, hence~$\tY$ has terminal singularties if and only if~$Y_0$ does,
and hence~$Y$ is terminal if and only if~$Y_0$ is terminal
and the discrepancy of~$E$ is positive.

Furthermore, if~$H$ denotes the pullback to~$\tY$ of the hyperplane class of~$Y$ 
then the pullback of the hyperplane class of~$Y_0$ is equal to~$H - E$, 
hence the projective bundle formula gives
\begin{equation*}
K_\tY = 
-m_0(H - E) + (H-E) - (k+1)H =
-(m_0 + k)H + (m_0 - 1)E.
\end{equation*}
This proves that~$Y$ is Gorenstein and the discrepancy of~$E$ is~$m_0 - 1$;
in particular, $Y$ is terminal if and only if~$Y_0$ is terminal and~$m_0 - 1 \ge 1$.

\ref{it:cone-quadric-general}
This follows from~\ref{it:cone} by Bertini's Theorem (see~\cite[Lemma~5.17]{KM}).

\ref{it:cone-quadric-m1}
Again, $Y \cap Q$ is terminal away from~$\P(K)$ by Bertini's theorem.
On the other hand, the strict transform of~$Y \cap Q$ in~$\tY$ is a divisor of class~$2H - E$, hence its canonical class is
\begin{equation*}
-(m_0 + k)H + (m_0 - 1)E + (2H - E) = 
-(m_0 + k - 2)H + (m_0 - 2)E,
\end{equation*}
and we conclude that~$Y \cap Q$ is terminal if and only if~$m_0 - 2 \ge 1$.

\ref{it:cone-quadric-m2}
Now the class of the strict transform of~$Y \cap Q$ in~$\tY$ is~$2H - 2E$, hence its canonical class is
\begin{equation*}
-(m_0 + k)H + (m_0 - 1)E + (2H - 2E) = 
-(m_0 + k - 2)H + (m_0 - 3)E,
\end{equation*}
and the same argument as before shows that the terminality criterion is~$m_0 - 3 \ge 1$.
\end{proof}

\subsection{Cones and linear sections} 
\label{ss:extension}

In this section we make a simple observation which underlies our extension argument. 
We start with some preparations.

Recall that a projective variety~$Y \subset \P^N$ is called {\sf arithmetically Cohen--Macaulay} ({\sf ACM})
if its homogeneous coordinate ring (the quotient of the coordinate ring of~$\P^N$ by the homogeneous ideal of~$Y$) is Cohen--Macaulay.
Note that if~$Y$ is ACM, it is a fortiori Cohen--Macaulay.

\begin{lemma}
\label{lem:acm-cone}
If~$Y \subset \P^N$ is an ACM scheme, the cone~$\Cone(Y) \subset \P^{N+1}$ is ACM.
\end{lemma}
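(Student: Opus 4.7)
The plan is a direct computation at the level of homogeneous coordinate rings. Let $S = \kk[x_0,\dots,x_N]$ and write $I_Y \subset S$ for the (saturated) homogeneous ideal of $Y \subset \P^N$, so that the homogeneous coordinate ring of $Y$ is $R \coloneqq S/I_Y$. Adjoining a new coordinate $t = x_{N+1}$, the coordinate ring of $\P^{N+1}$ is $S' = S[t]$, and I will first identify the homogeneous ideal of $\Cone(Y) \subset \P^{N+1}$ as the extension~$I_Y \cdot S'$. This is essentially by definition of the cone: set-theoretically $\Cone(Y)$ is cut out by the pullbacks of the equations of $Y$ under the projection~$\P^{N+1} \dashrightarrow \P^N$ away from the vertex, so $I_{\Cone(Y)} \supset I_Y \cdot S'$; conversely $I_Y \cdot S'$ is already saturated (since $I_Y$ is, and $t$ is a new variable), so the two ideals coincide.

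Consequently, the homogeneous coordinate ring of $\Cone(Y)$ is
\begin{equation*}
R' \;=\; S'/I_Y S' \;\cong\; R[t].
\end{equation*}
The second step is to invoke the standard fact from commutative algebra that a Noetherian graded ring $R$ is Cohen--Macaulay if and only if the polynomial ring $R[t]$ is Cohen--Macaulay (this is immediate from the behavior of depth and Krull dimension under polynomial extension: adjoining an indeterminate increases both dimension and depth by one, see e.g.\ Bruns--Herzog). Since $R$ is Cohen--Macaulay by hypothesis, $R' \cong R[t]$ is Cohen--Macaulay, i.e.\ $\Cone(Y)$ is ACM.

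There is no real obstacle here; the only subtlety worth being careful about is verifying that the homogeneous ideal of the cone equals the extension of $I_Y$ to $S[t]$ (including the saturation check), after which the result reduces to the polynomial-extension lemma for Cohen--Macaulay rings. I will state the identification of ideals explicitly, cite the polynomial-extension fact, and conclude in one line.
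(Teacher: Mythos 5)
Your proof is correct and is essentially the paper's own argument: the paper likewise observes that the homogeneous coordinate ring of $\Cone(Y)$ is the polynomial ring $R[t]$ over the coordinate ring $R$ of $Y$ and that Cohen--Macaulayness is preserved under adjoining a variable. Your additional check that the homogeneous ideal of the cone equals the extension $I_Y\cdot S[t]$ (using saturation) just makes explicit a step the paper leaves implicit.
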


\begin{proof}
The coordinate ring of the cone~$\Cone(Y)$ is the polynomial algebra over the coordinate ring of~$Y$,
hence the former is Cohen--Macaulay if the latter is.
\end{proof}

\begin{lemma}
\label{lem:mg-acm}
The Mukai varieties~$\rM_g \subset \P^{N_g}$ for~$g \in \{6,7,8,9,10,12\}$ are ACM.
\end{lemma}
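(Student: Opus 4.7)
The plan is to reduce the ACM property for $\rM_g \subset \P^{N_g}$ to the standard pair of conditions: projective normality, i.e.\ surjectivity of the restriction $\rH^0(\P^{N_g}, \cO(k)) \twoheadrightarrow \rH^0(\rM_g, \cO_{\rM_g}(k))$ for every $k \ge 0$, together with vanishing of intermediate cohomology $\rH^i(\rM_g, \cO_{\rM_g}(k)) = 0$ for $0 < i < n_g$ and every $k \in \ZZ$. The intermediate vanishing I would establish uniformly: each $\rM_g$ is a Gorenstein Fano variety of dimension $n_g \ge 3$ with $\rH^{>0}(\rM_g, \cO_{\rM_g}) = 0$ (standard in the homogeneous cases $g \ne 12$, and clear for the Fano threefold $\rM_{12}$ since it is Fano over a field of characteristic zero), so Kodaira vanishing gives $\rH^{>0}(\cO(k)) = 0$ for sufficiently non-negative twists and Serre duality combined with Kodaira gives the vanishing for sufficiently negative twists; since the Fano index of each $\rM_g$ is at least~$1$, the two ranges together cover every integer~$k$.

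For projective normality, the homogeneous cases $g \in \{6,7,8,9,10\}$ are classical. Each $\rM_g = G/P$ is embedded by the ample generator of $\Pic(G/P)$ coming from a fundamental representation of $G$, and such embeddings of rational homogeneous varieties are projectively normal---either by quoting the Ramanan--Ramanathan theorem, or by observing that the homogeneous coordinate ring is the Cartan product ring, which is generated in degree one by the Borel--Weil description of $\rH^0(\cO(1))$. For $g = 12$, I would use the description in Lemma~\ref{lem:rm-9-10} of $\rM_{12} \subset \Gr(3, V_7)$ as the dimensionally transverse zero locus of a regular section of the rank-$9$ bundle $\cE_0 = (\wedge^2 \cU^\vee)^{\oplus 3}$, and exploit the Koszul resolution
\begin{equation*}
0 \to \wedge^9 \cE_0^\vee \to \cdots \to \cE_0^\vee \to \cO_{\Gr(3, V_7)} \to \cO_{\rM_{12}} \to 0
\end{equation*}
twisted by $\cO(k)$: the hypercohomology spectral sequence reduces projective normality of $\rM_{12}$ to a finite list of Borel--Weil--Bott vanishings of the form $\rH^p(\Gr(3, V_7), \wedge^q \cE_0^\vee(k)) = 0$, each of which is a concrete check since $\wedge^q \cE_0^\vee$ decomposes into a small number of homogeneous summands.

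The main obstacle is the $g = 12$ case: the Borel--Weil--Bott bookkeeping on the Koszul complex, though purely computational, is the only step that genuinely requires work. The homogeneous cases $g \in \{6,\ldots,10\}$ and the intermediate cohomology vanishing for all $g$ are essentially formal, once one has the basic structural information recorded in Lemma~\ref{lem:rm-9-10}.
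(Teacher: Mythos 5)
Your overall strategy (ACM $\Leftrightarrow$ projective normality plus vanishing of $\rH^i(\rM_g,\cO_{\rM_g}(k))$ for $0<i<n_g$ and all $k$) is sound, and for $g=12$ your Koszul-plus-Borel--Bott--Weil computation is exactly the paper's route (Proposition~\ref{prop:hi-we}, Proposition~\ref{prop:v22}, Corollary~\ref{cor:m12-acm}); for the homogeneous cases $g\in\{6,\dots,10\}$ the paper simply cites the theorem that any $\rG/\rP$ in a homogeneous embedding is ACM \cite{Rama}, whereas you reprove it via projective normality and Kodaira/Serre duality, which is fine since those $\rM_g$ are smooth.

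The genuine gap is your treatment of the intermediate cohomology (and of $\rH^{>0}(\cO)=0$) for $g=12$. You invoke Kodaira vanishing because $\rM_{12}$ ``is Fano over a field of characteristic zero,'' but within this paper $\rM_{12}$ is only known to be a \emph{local complete intersection} Fano threefold: its smoothness is not established (the authors explicitly state they could not recover Mukai's smoothness criterion), and Kodaira/Kawamata--Viehweg vanishing does not hold for arbitrary singular Gorenstein Fano varieties --- one needs at least klt (or rational/Du Bois) singularities, which are not known here. So the step ``Kodaira gives $\rH^{>0}(\cO(k))=0$ for $k\ge 1-m$'' is unjustified for $\rM_{12}$ as written. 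The fix is already contained in your own toolkit and is what the paper does: the twisted Koszul resolution of $\cO_{\rM_{12}}$ on $\Gr(3,V_7)$ together with the Borel--Bott--Weil vanishings yields $\rH^i(\rM_{12},\cO(t))=0$ for $i>0$ and $t\ge 0$ (this is Proposition~\ref{prop:v22}), and since $\rM_{12}$ is lci, hence Cohen--Macaulay, with $\omega_{\rM_{12}}\cong\cO_{\rM_{12}}(-1)$, Serre duality converts this into the vanishing of $\rH^1$ and $\rH^2$ for all negative twists as well. With that replacement your argument is complete.
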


\begin{proof}
By~\cite[Theorem~5]{Rama} any homogeneous variety~$\rG/\rP$ is ACM;
this proves the result in all cases except for~$\rM_{12}$.
In the latter case the ACM property is proved in Corollary~\ref{cor:m12-acm}.
\end{proof}

We will also use the following standard fact about linear sections of projective varieties and regular sequences in Cohen--Macaulay rings, 
see, e.g., \cite[Lemma~02JN]{stacks-project}.

\begin{lemma} 
\label{lem:reg-sequence-CM} 
Let~$Y \subset \P(W)$ be an ACM variety, let~$W' \subset W$ be a vector subspace of codimension~\mbox{$k \le \dim(Y)$}, 
and let~$x_1, \dots, x_k \in W^\vee$ be an independent set of linear equations for~$W'$.
Then~$Y \cap \P(W')$ is a dimensionally transverse intersection if and only if~$(x_1, \dots, x_k)$ 
is a regular sequence in the homogeneous coordinate ring of~$Y$.
\end{lemma}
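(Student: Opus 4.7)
The plan is to translate the statement into a standard commutative-algebra fact about Cohen--Macaulay graded rings. Let $R$ denote the homogeneous coordinate ring of $Y \subset \P(W)$, so that $R$ is Cohen--Macaulay of Krull dimension $\dim(Y) + 1$ by the ACM hypothesis, and set $\fa = (x_1, \dots, x_k) \subset R$.

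First, I would observe that since $\P(W') \subset \P(W)$ is defined scheme-theoretically by the linear forms $x_1, \dots, x_k \in W^\vee$, the scheme-theoretic intersection $Y \cap \P(W')$ has homogeneous coordinate ring $R/\fa$; its irreducible components correspond to the minimal primes of $\fa$ in $R$, and their dimensions are determined by the heights of these primes. By Krull's principal ideal theorem every minimal prime of $\fa$ has height at most $k$ in $R$, so every component of $Y \cap \P(W')$ has dimension at least $\dim(Y) - k$. Consequently, dimensional transversality (every component of the expected dimension $\dim(Y) - k$) is equivalent to every minimal prime of $\fa$ having height exactly $k$, which, since $\mathrm{ht}(\fa)$ is the minimum of these heights, is equivalent to $\mathrm{ht}(\fa) = k$.

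It then remains to invoke the classical fact that in a Cohen--Macaulay ring, a sequence $x_1, \dots, x_k$ generating a proper ideal is regular if and only if that ideal has height $k$ (see, for instance, Matsumura, \emph{Commutative Ring Theory}, Theorem~17.4, applied after localizing $R$ at its irrelevant maximal ideal, which is $*$-maximal). Combining this criterion with the previous step yields the desired equivalence. The argument is essentially standard: one direction, that a regular sequence of length $k$ generates an ideal of height $k$, holds in any Noetherian ring, while the converse is where the Cohen--Macaulay hypothesis enters decisively, through the unmixedness theorem. The only mild subtleties---that the scheme-theoretic intersection is indeed $\Proj(R/\fa)$, and that the graded-to-local passage is harmless since $R$ is a finitely generated graded $\kk$-algebra---are both routine, and I do not anticipate any real obstacle in the write-up.
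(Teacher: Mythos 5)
Your argument is correct. The paper gives no proof of this lemma at all---it simply cites the Stacks Project lemma (tag 02JN), i.e.\ the criterion that in a Noetherian local Cohen--Macaulay ring a sequence $x_1,\dots,x_k$ is regular if and only if the quotient has dimension $\dim R - k$---and your write-up is exactly the standard reduction to that fact (pass to the affine cone, note that dimensional transversality amounts to $\operatorname{ht}(x_1,\dots,x_k)=k$ via Krull's height theorem and equidimensionality of the Cohen--Macaulay coordinate ring, then localize at the irrelevant ideal and apply the Cohen--Macaulay height/regular-sequence criterion), so it matches the intended argument.
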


We are now ready to prove the main result.

\begin{proposition} 
\label{prop:s-x}
Let~$Y \subset \P(W)$ be an ACM variety 
and let~$W_2 \hookrightarrow W_1 \twoheadrightarrow W_0 \hookrightarrow W$ be a chain of linear maps 
such that the composition~$W_2 \to W$ is injective 
and~$Y_2 \coloneqq Y \cap \P(W_2)$ is nonempty and dimensionally transverse.
If~$X \subset \P(W_1)$ is a Cohen--Macaulay subvariety such that
\begin{renumerate}
\item 
\label{it:x-cap-p2}
$X \cap \P(W_2) = Y_2$ is a dimensionally transverse intersection in~$\P(W_1)$ and
\item 
\label{it:x-in-y}
the image of~$X$ under the rational map~$\P(W_1) \dashrightarrow \P(W)$ is contained in~$Y$.
\end{renumerate}
Then~$Y_0 \coloneqq Y \cap \P(W_0)$ is a dimensionally transverse intersection and~$X = \Cone_{\P(\Ker(W_1 \to W_0))}(Y_0)$.
\end{proposition}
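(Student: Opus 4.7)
The strategy is to realise the candidate $C \coloneqq \Cone_{\P(K)}(Y_0)$, where $K \coloneqq \Ker(W_1 \twoheadrightarrow W_0)$, as a linear section of the extended cone $\widetilde Y \coloneqq \Cone_{\P(K)}(Y) \subset \P(W \oplus K)$, via the natural inclusion $W_1 \cong W_0 \oplus K \hookrightarrow W \oplus K$ (choosing the splitting so that $W_2 \hookrightarrow W_0 \subset W_1$). Hypothesis~(ii) then translates into the set-theoretic inclusion $X \subset C$: the rational map $\P(W_1) \dashrightarrow \P(W)$ factors through $\P(W_0)$, so the image of $X \setminus \P(K)$ lies in $Y \cap \P(W_0) = Y_0$, and $X \subset C$ follows by closure.

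For the geometric setup I would first deduce the dimensional transversality of $Y_0$ from Lemma~\ref{lem:reg-sequence-CM}: extending a basis $x_1, \dots, x_m$ of $W_0^\perp \subset W^\vee$ to a basis of $W_2^\perp$, the hypothesis on $Y_2$ gives a regular sequence on the Cohen--Macaulay ring $R_Y$, whose initial segment $x_1, \dots, x_m$ remains regular by the standard Krull argument in Cohen--Macaulay graded rings. By Lemma~\ref{lem:acm-cone} the cone $\widetilde Y$ is ACM with $R_{\widetilde Y} = R_Y \otimes \Sym^\bullet K^\vee$; the $x_i$ remain regular on it, and a direct calculation identifies the scheme-theoretic linear section $\widetilde Y \cap \P(W_1)$ with $C$. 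Lemma~\ref{lem:reg-sequence-CM} therefore shows $C$ is dimensionally transverse in $\widetilde Y$ and hence itself ACM. A short dimension count using hypothesis~(i) yields $\dim X = \dim C = \dim Y + \dim W_1 - \dim W$.

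The main step will be to upgrade $X \subset C$ to scheme-theoretic equality. Let $\ell_1, \dots, \ell_s \in W_1^\vee$ cut out $W_2 \subset W_1$; a direct calculation gives $C \cap \P(W_2) = Y_2$ as schemes. At every $p \in Y_2$ both $\cO_{C,p}$ and $\cO_{X,p}$ are Cohen--Macaulay of dimension $\dim C$, and $(\ell_i)$ is a regular sequence in each by the Cohen--Macaulay plus proper intersection criterion. A Koszul/Tor computation---using the regularity of $(\ell_i)$ on $\cO_{X,p}$---will then force the kernel $\cI_p \coloneqq \Ker(\cO_{C,p} \twoheadrightarrow \cO_{X,p})$ to satisfy $\cI_p = (\ell_i)\cI_p$, so that Nakayama gives $\cI_p = 0$, i.e.\ $X = C$ in a neighbourhood of every point of $Y_2$. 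To globalise this, I would use that $(\ell_i)$ is also a regular sequence on $R_{Y_0}$, so no $\ell_i$ lies in any minimal prime of $R_{Y_0}$, and every irreducible component of $Y_0$---equivalently of $C$---meets $Y_2$. Consequently the kernel sheaf $\cI \coloneqq \Ker(\cO_C \twoheadrightarrow \cO_X)$ vanishes at the generic point of every component of $C$; since $C$ is Cohen--Macaulay, every associated prime of $\cI \subset \cO_C$ is a minimal prime of $\cO_C$, of which $\cI$ has none, forcing $\cI = 0$.

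The hardest part will be this final globalisation. With only the Cohen--Macaulay (not ACM) hypothesis on $X$, one cannot argue directly with regular sequences in the graded coordinate ring $R_X$; the proof is therefore forced to proceed through a local Nakayama argument at each point of $Y_2$ and then to propagate equality to all of $C$ via an associated-prime analysis. Both steps rely crucially on the ACM property of $C$ (produced by the extended cone construction) and on the nontrivial fact that $Y_2$ meets every irreducible component of $C$.
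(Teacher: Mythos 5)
Your proposal is correct, and its first half coincides with the paper's argument: the dimensional transversality of~$Y_0$ via Lemma~\ref{lem:reg-sequence-CM} (regular sequences and initial segments), the passage to the cone~$\Cone_{\P(K)}(Y)$, which is ACM by Lemma~\ref{lem:acm-cone}, the identification~$\Cone_{\P(K)}(Y) \cap \P(W_1) = \Cone_{\P(K)}(Y_0) \eqqcolon C$ as a dimensionally transverse (hence Cohen--Macaulay) linear section, and the scheme-theoretic inclusion~$X \subset C$ extracted from hypothesis~\ref{it:x-in-y}. Where you genuinely diverge is the final identification~$X = C$. The paper finishes with a global numerical comparison: $C$ is Cohen--Macaulay of dimension~$\dim(Y_2) + \dim(W_1/W_2)$ and degree~$\deg(Y_2)$, and hypothesis~\ref{it:x-cap-p2} together with Cohen--Macaulayness of~$X$ forces~$X$ to have the same dimension and degree, so the inclusion is an equality. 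You instead argue locally: at each point of~$Y_2$ the linear equations~$(\ell_i)$ of~$\P(W_2)$ form a regular sequence on both~$\cO_{C,p}$ and~$\cO_{X,p}$, the vanishing of~$\Tor_1$ plus the scheme-theoretic equality~$C \cap \P(W_2) = X \cap \P(W_2) = Y_2$ gives~$\cI_p = (\ell_i)\cI_p$, Nakayama kills~$\cI_p$, and then the equality propagates to all of~$C$ because every component of~$C$ meets~$Y_2$ and~$C$, being Cohen--Macaulay, has no embedded primes. This is a valid alternative: it trades the paper's short degree bookkeeping for a more robust local-algebra argument that makes explicit exactly where the scheme-theoretic hypothesis~\ref{it:x-cap-p2} enters. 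Two small repairs: the fact that every component of~$C$ meets~$Y_2$ is best justified not by ``no~$\ell_i$ lies in a minimal prime'' (which only handles one hyperplane at a time) but by the projective dimension theorem, since each component of~$C$ has dimension~$\dim(Y_2) + \dim(W_1/W_2) \ge \codim_{\P(W_1)}\P(W_2)$; and the upgrade of the set-theoretic inclusion~$X \subset C$ to a scheme-theoretic one uses that~$X$ is reduced and has no component inside~$\P(K)$ (the latter follows from~$X \cap \P(W_2) = Y_2 \ne \varnothing$ for~$X$ irreducible) --- the same points are implicit in the paper's appeal to condition~\ref{it:x-in-y}.
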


\begin{proof}
Consider the chain of embeddings~$W_2 \hookrightarrow W_0 \hookrightarrow W$.
Let~$x_1, \dots, x_k$ be an independent set of linear equations of~$W_0  \subset W$, 
and extend it to an independent set~$x_1, \dots, x_m$ of linear equations of~$W_2 \subset W$. 
Applying Lemma~\ref{lem:reg-sequence-CM} to the intersections~$Y \cap \P(W_2)$ and~$Y \cap \P(W_0)$ inside~$\P(W)$, 
we first see that~$(x_1, \dots, x_m)$ is a regular sequence in the homogeneous coordinate ring of~$Y$, 
hence so is the subsequence~$(x_1, \dots, x_k)$, 
and therefore~$Y_0 \coloneqq Y \cap \P(W_0)$ is dimensionally transverse.
 
To prove the second claim, we denote~$K \coloneqq \Ker(W_1 \to W_0)$ 
and rewrite the composition of linear maps~$W_1 \twoheadrightarrow W_0 \hookrightarrow W$ 
as~$W_1 \hookrightarrow \tW \twoheadrightarrow W$ 
in such a way that~$\Ker(\tW \to W) = K$,
replace~$Y \subset \P(W)$ by~$\Cone_{\P(K)}(Y) \subset \P(\tW)$, 
which is also ACM by Lemma~\ref{lem:acm-cone}
and consider the chain of maps~$W_2 \hookrightarrow W_1 \hookrightarrow \tW$.
Since~$W_2 \to W$ is injective, we have~$W_2 \cap K = 0$, hence
\begin{equation*}
\Cone_{\P(K)}(Y) \cap \P(W_2) = Y \cap \P(W_2) = Y_2
\end{equation*}
is a dimensionally transverse intersection.
Therefore, the argument of the first paragraph shows that~$\Cone_{\P(K)}(Y) \cap \P(W_1)$ 
is a dimensionally transverse intersection, 
hence it is Cohen--Macaulay of dimension~$\dim(Y_2) + \dim(W_1/W_2)$ and degree~$\deg(Y_2)$.
Since~$X$ is Cohen--Macaulay and~$X \cap \P(W_2)$ is dimensionally transverse in~$\P(W_1)$ by~\ref{it:x-cap-p2}, 
it follows that~$X$ has the same dimension and degree.
Finally, we have~\mbox{$X \subset \Cone_{\P(K)}(Y) \cap \P(W_1)$} by condition~\ref{it:x-in-y},
hence
\begin{equation*}
X = \Cone_{\P(K)}(Y) \cap \P(W_1),
\end{equation*}
and since~$K \subset W_1$, we have~$\Cone_{\P(K)}(Y) \cap \P(W_1) = \Cone_{\P(K)}(Y \cap \P(W_0)) = \Cone_{\P(K)}(Y_0)$.
\end{proof}

\subsection{Fano threefolds}
\label{ss:fano-3}

In this section we prove Theorem~\ref{thm:main} for Fano threefolds.

So, let~$X$ be a Fano threefold of genus~$g \ge 6$ satisfying assumptions of Theorem~\ref{thm:main}.
Let~$S \subset X$ be a very general anticanonical divisor, which is a prime K3 surface of genus~$g$ by Lemma~\ref{lem:hva}
and let~$\gamma_S \colon S \hookrightarrow \Gr(r, r + s)$ be its Gushel embedding constructed in Lemma~\ref{lem:gamma-x-s}.
Recall that~$\gamma_S$ factors through a unique Mukai subvariety~$\rM_g \subset \Gr(r, r+s)$ 
by Proposition~\ref{prop:sigma} and Theorem~\ref{thm:g7-emb}.
We first show that the morphism~$\gamma_S$ extends to a rational map from~$X$.

We denote by~$\cO_{\rM_g}(1)$ the restriction of the Pl\"ucker line bundle from~$\Gr(r,r + s)$ for~$g \ne 7$
and the spinor line bundle of~$\rM_7 = \OGr_+(5,10)$, respectively.

\begin{lemma}
\label{lem:s-x}
Let~$X$ be a Fano threefold with at most factorial terminal singularities 
such that~\eqref{eq:prime-fano} holds with~$g \in \{6,7,8,9,10,12\}$,
so that~$X \subset \P^{g+1}$ and a very general hyperplane section~\mbox{$S \subset X$}
is a smooth prime $K3$ surface of genus~$g$. 
Let~$X_\sm \subset X$ be the smooth locus of~$X$, so that~$S \subset X_\sm$.

There is a stable maximal Cohen--Macaulay sheaf~$\cU_X$ on~$X$ extending the Mukai bundle~$\cU_S$ on~$S$
and inducing a regular morphism~$\gamma_{X_\sm} \colon X_\sm \to \rM_g$
extending the Gushel map~$\gamma_S$ of~$S$, i.e.,
\begin{equation*}
\cU_X\vert_S \cong \cU_S
\qquad\text{and}\qquad
\gamma_{X_\sm}\vert_S = \gamma_S.
\end{equation*}
In particular, 
$\gamma_{X_\sm}^*(\cO_{\rM_g}(1)) \cong \cO_X(H)\vert_{X_\sm}$.
\end{lemma}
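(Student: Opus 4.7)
The plan is to treat the construction of $\cU_X$ case-by-case depending on whether $g=7$ or not, and then verify factorization through $\rM_g$ by a cohomological injectivity argument. For $g\in\{6,8,9,10,12\}$ the plan is to apply \cite[Theorem~5.3]{BKM} (the extension theorem for stable vector bundles on K3 hyperplane sections of Fano threefolds) directly to~$\cU_S$: this produces a stable maximal Cohen--Macaulay sheaf~$\cU_X$ on~$X$ with $\cU_X|_S\cong\cU_S$, locally free on $X_\sm$, such that $\cU_X^\vee$ is globally generated on $X_\sm$ by $V^\vee=\rH^0(X,\cU_X^\vee)\cong\rH^0(S,\cU_S^\vee)$. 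For $g=7$, where BKM does not apply to the double polarization, the plan is instead to define
\[
\cU_X \coloneqq \cN_{X/\P^{8}}(-2H)
\]
using Corollary~\ref{cor:cw-cn}; transversality of the intersection $S=X\cap \P^{7}$ gives $\cN_{X/\P^8}|_S\cong \cN_{S/\P^7}$, hence $\cU_X|_S\cong \cU_S$, and the $10$-dimensional space of quadrics through $X$ (computed as in \cite[Corollary~4.3]{K18} combined with the projective normality from Lemma~\ref{lem:hva}\ref{item:Hpositive}) evaluates to a globally generating map $V_{10}\otimes\cO_{X_\sm}\to\cU_X^\vee|_{X_\sm}$.

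The universal property of Grassmannians applied to the quotient $V\otimes\cO_{X_\sm}\twoheadrightarrow(V/\cU_X)|_{X_\sm}$ produces a morphism $\gamma_{X_\sm}\colon X_\sm\to\Gr(r,V)$ with $\gamma_{X_\sm}^*\cU\cong\cU_X|_{X_\sm}$ and $\gamma_{X_\sm}|_S=\gamma_S$; taking determinants gives $\gamma_{X_\sm}^*\cO_{\Gr}(1)\cong\cO_X(H)|_{X_\sm}$ (and for $g=7$ the squaring of the spinor line bundle together with torsion-freeness of $\Pic(X)$ yields the asserted identification with $\cO_{X_\sm}(H)$). To verify that $\gamma_{X_\sm}$ factors through $\rM_g$: for $g\in\{6,8\}$ this is trivial since $\rM_g=\Gr(r,r+s)$, while for $g\in\{9,10,12\}$ Proposition~\ref{prop:sigma} exhibits $\rM_g$ as the zero locus of a nondegenerate section $\sigma_0\in\rH^0(\Gr(r,V),\cE_0)$ whose pullback to $S$ vanishes. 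It suffices to show that
\[
\rH^0(X_\sm,\gamma_{X_\sm}^*\cE_0)\longrightarrow\rH^0(S,\gamma_S^*\cE_0)
\]
is injective, equivalently that $\rH^0(X_\sm,\gamma_{X_\sm}^*\cE_0(-H))=0$. By \eqref{eq:def-ce-zero} this reduces to $\rH^0(X_\sm,\cU_X|_{X_\sm})=0$ for $g\in\{9,12\}$ and $\rH^0(X_\sm,\cU_X^\perp)=0$ for $g=10$. The first follows from stability of $\cU_X$ (any global section would produce $\cO_X\hookrightarrow\cU_X$ with slope $0>-1/r=\upmu(\cU_X)$), and the second from the definition $\cU_X^\perp=\ker(V^\vee\otimes\cO_{X_\sm}\to\cU_X^\vee|_{X_\sm})$ together with the fact that $V^\vee\to \rH^0(X_\sm,\cU_X^\vee|_{X_\sm})$ is an isomorphism. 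For $g=7$ the factorization is automatic from the construction: the spinor variety is cut out by $V_{10}$, and $\gamma_{X_\sm}(X_\sm)$ lies in its zero locus by definition.

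The main obstacle is ensuring that all global-section statements made about the MCM sheaf $\cU_X$ on $X_\sm$ genuinely reflect global sections on $X$; this is needed to deploy the slope-stability arguments (which are really statements about subsheaves on $X$). The standard way around this is the Hartogs/depth argument: since $\cU_X$ is MCM and the singular locus of $X$ is of codimension at least two (in fact, isolated for a terminal threefold), restriction induces isomorphisms $\rH^0(X,\cU_X)\xrightarrow{\sim}\rH^0(X_\sm,\cU_X|_{X_\sm})$ and similarly for $\cU_X^\vee$, so slope-stability applies verbatim. The secondary obstacle is in the $g=7$ case, where one must verify that $\cN_{X/\P^{8}}(-2H)$ is indeed maximal Cohen--Macaulay on $X$—this follows from $X$ being a Cohen--Macaulay subscheme of the smooth variety $\P^8$ cut out by quadrics and the resulting identification of $\cU_X^\vee$ with the evaluation cokernel of the quadric equations through $X$—together with checking $\dim\rH^0(\P^8,I_X(2))=10$, which reduces via $\rH^1(\P^8,I_X(1))=0$ (Lemma~\ref{lem:hva}) to the known $\dim\rH^0(\P^7,I_S(2))=10$.
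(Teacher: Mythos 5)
Your construction of $\cU_X$ and your factorization argument for $g\in\{9,10,12\}$ follow the paper's proof: for $g\ne 7$ you invoke \cite[Theorem~5.3]{BKM} exactly as the paper does, and your reduction to $\rH^0(X_\sm,\gamma_{X_\sm}^*\cE_0(-H))=0$ is the paper's argument (your treatment of $g=10$ via left-exactness of $0\to\cU_{X_\sm}^\perp\to V^\vee\otimes\cO\to\cU_{X_\sm}^\vee$ and injectivity of $V^\vee\to\rH^0(X_\sm,\cU_{X_\sm}^\vee)$ is a harmless small variation of the paper's appeal to stability of $\cU_{X_\sm}^\perp$). The genuine gap is in the case $g=7$, where you assert that the factorization of $\gamma_{X_\sm}$ through $\rM_7=\OGr_+(5,V_{10})$ is ``automatic from the construction''. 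It is not. The morphism you build lands in $\Gr(5,V_{10})$, and factoring through the orthogonal Grassmannian means that for every point $x\in X_\sm$ the fibre of $\cU_X$ at $x$, viewed inside $V_{10}$, is isotropic for the quadratic form $q$ on $V_{10}$. That form is not intrinsic to $X\subset\P^8$: it exists only through the K3 surface, via $V_{10}\cong\rH^0(S,\cU_S^\vee)$ and the symmetric self-duality of~\eqref{eq:cw-cwd} established in Theorem~\ref{thm:g7-emb}. Your justification conflates two different spaces of quadrics: the ten quadrics cutting out the spinor variety live on $\P^{15}$, while at this stage your $V_{10}=\rH^0(\P^8,I_X(2))$ is a space of quadrics on $\P^8$; identifying the two ambient projective spaces is precisely what the extension argument is ultimately meant to prove, so using it here is circular. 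Concretely, $q$ defines a global section of $\Sym^2\cU_{X_\sm}^\vee$ whose vanishing is the isotropy condition; you know it vanishes on $S$, and you must show it vanishes on all of $X_\sm$. The paper does this by proving $\rH^0(X_\sm,\Sym^2\cU_{X_\sm}^\vee\otimes\cO_{X_\sm}(-H))=0$: stability of $\cU_S$ forces stability of $\cU_{X_\sm}$, hence semistability of $\Sym^2\cU_{X_\sm}^\vee(-H)$, whose slope $4/5-1=-1/5$ is negative. Without this (or an equivalent) injectivity statement for $\rH^0(X_\sm,\Sym^2\cU_{X_\sm}^\vee)\to\rH^0(S,\Sym^2\cU_S^\vee)$, isotropy at points of $X_\sm\setminus S$ is unproven and the morphism to $\rM_7$ is simply not constructed.

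A secondary, fixable inaccuracy: your reason that $\cN_{X/\P^8}(-2H)$ is maximal Cohen--Macaulay on all of $X$ (that $X$ is a Cohen--Macaulay subscheme of a smooth variety cut out by quadrics, and $\cU_X^\vee$ is an evaluation cokernel) does not suffice as stated --- a cokernel of a map of locally free sheaves need not be MCM, and being globally cut out by quadrics does not give local freeness of $I_X/I_X^2$. What does make it work is that factorial terminal Gorenstein threefold singularities are compound Du Val, hence hypersurface, singularities, so $X$ is a local complete intersection and the conormal sheaf is locally free; alternatively one can work only on $X_\sm$, as the paper does, and take the reflexive (MCM) extension across the finitely many singular points.
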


\begin{proof}
The fact that~$(S,H\vert_S)$ is a prime K3 surface of genus~$g$ is proved in Lemma~\ref{lem:hva}\ref{item:genK3}.

First, consider the case~$g \ne 7$.
We proved in~\cite[Theorem~5.3]{BKM} that the Mukai bundle~$\cU_S$ of~$S$ 
extends to a stable maximal Cohen--Macaulay (hence locally free on the smooth locus) sheaf~$\cU_X$ on~$X$ 
whose dual sheaf~$\cU_X^\vee$ is globally generated with~$\rH^0(X, \cU_X^\vee) = \rH^0(S, \cU_S^\vee)$. 
Therefore, the Gushel morphism~$\gamma_S \colon S \to \Gr(r,r+s)$ 
extends to the smooth locus~$X_\sm \subset X$ as a regular morphism~$\gamma_{X_\sm} \colon X_\sm \to \Gr(r,r+s)$.
Moreover, 
\begin{equation*}
\gamma_{X_\sm}^*(\cO_{\Gr(r,r+s)}(1))\vert_S \cong 
\gamma_S^*(\cO_{\Gr(r,r+s)}(1)) \cong 
\cO_S(H\vert_S) \cong
\cO_X(H)\vert_S,
\end{equation*}
where the first isomorphism holds because~$\gamma_{X_\sm}$ extends~$\gamma_S$,
the second holds by definition of the Gushel morphism~$\gamma_S$, and the third is obvious.
Therefore, $\gamma_{X_\sm}^*(\cO_{\Gr(r,r+s)}(1)) \cong \cO_X(H)\vert_{X_\sm}$ 
(because~$\Pic(X) = \ZZ \cdot H$ and~$X \setminus X_\sm$ has codimension greater than~1 in~$X$).

So, for~$g \ne 7$, it only remains to check that~$\gamma_{X_\sm}(X_\sm) \subset \rM_g$, and we may assume~$g \in \{9,10,12\}$.
Since~$\rM_g \subset \Gr(r,r+s)$ is defined as the zero locus of a global section
of the vector bundle~$\cE_0$ on~$\Gr(r,r+s)$ (see~\eqref{eq:def-ce-zero}) 
that vanishes on~$S$ (see Lemma~\ref{lem:rm-9-10}),
it is enough to check that the same section vanishes on~$X_\sm$.
Thus, it is enough to check that the restriction morphism
\begin{equation*}
\rH^0(X_\sm, \gamma_{X_\sm}^*\cE_0) \to \rH^0(S, \gamma_{X_\sm}^*\cE_0\vert_S)
\end{equation*}
is injective, or equivalently that~$\rH^0(X_\sm, \gamma_{X_\sm}^*(\cE_0(-1))) = 0$.
Now, by~\eqref{eq:ce0-vee} and the definition of the morphism~$\gamma_{X_\sm}$, we have
\begin{itemize}
\item 
if~$g = \hphantom{0}9$ 
then~$\gamma_{X_\sm}^*(\cE_0(-1)) \cong \cU_{X_\sm}$,
\item 
if~$g = 10$ 
then~$\gamma_{X_\sm}^*(\cE_0(-1)) \cong \cU_{X_\sm}^\perp$, and
\item 
if~$g = 12$ 
then~$\gamma_{X_\sm}^*(\cE_0(-1)) \cong \cU_{X_\sm}^{\oplus 3}$.
\end{itemize}
Note that~$\cU_{X_\sm}$ and~$\cU_{X_\sm}^\perp$ are stable of negative slope (because their restrictions to~$S$ are),
hence they have no global sections on~$X_\sm$, and we conclude that~$\gamma_{X_\sm}(X_\sm) \subset \rM_g$.

Now, consider the case~$g = 7$. Then~$X \subset \P^8$ is an intersection of quadrics (by Lemma~\ref{lem:hva}),
\mbox{$S = X \cap \P^7$}, hence~$\cN_{S/\P^7} \cong \cN_{X_\sm/\P^8}\vert_S$.
Therefore, if we define the bundle~$\cW_{X_\sm}$ on~$X_\sm$ as
\begin{equation*}
\cU_{X_\sm} \coloneqq \cN_{X_\sm/\P^8}(-2),
\end{equation*}
then~$\cU_{X_\sm}^\vee$ is globally generated 
and Corollary~\ref{cor:cw-cn} and the argument of~\cite[Lemma~A.5]{DK} 
imply that~$\cU_{X_\sm}\vert_S \cong \cU_S$
and~$\rH^0(X_\sm, \cU_{X_\sm}^\vee) = \rH^0(S, \cU_S^\vee)$.
It follows that the Gushel morphism~\mbox{$\gamma_S \colon S \hookrightarrow \OGr_+(5,10) \to \Gr(5,10)$}
extends to a regular morphism~$\gamma_{X_\sm} \colon X_\sm \to \Gr(5,10)$.
As before, to prove the inclusion~$\gamma_{X_\sm}(X_\sm) \subset \OGr_+(5,10)$
it is enough to check that the morphism
\begin{equation*}
\rH^0(X_\sm, \Sym^2\cU_{X_\sm}^\vee) \to \rH^0(S, \Sym^2\cU_S^\vee) 
\end{equation*}
is injective,
i.e., that~$\rH^0(X_\sm, \Sym^2\cU_{X_\sm}^\vee \otimes \cO_{X_\sm}(-H)) = 0$.
For this just note that stability of~$\cU_S$ (see Theorem~\ref{thm:g7-emb}) implies stability of~$\cU_{X_\sm}$,
hence also semistability of~\mbox{$\Sym^2\cU_{X_\sm}^\vee \otimes \cO_{X_\sm}(-H)$},
and since the slope of this sheaf is equal to~$4/5 - 1 = -1/5$, its space of global sections vanishes. 

The isomorphism~$\gamma_{X_\sm}^*(\cO_{\OGr_+(5,10)}(1)) \cong \cO_X(H)\vert_{X_\sm}$ is verified as in the previous case.
\end{proof}

We can now prove Theorem~\ref{thm:main} for Fano threefolds.
Recall that~$\rM_6 = \Gr(2,5) \subset \P^9$.
Recall also the definition of a Mukai bundle on a Fano threefold (\cite[Definition~5.1]{BKM}).

\begin{corollary}
\label{cor:threefolds}
If~$X$ is a prime Fano threefold with at most factorial terminal singularities 
over an algebraically closed field of characteristic zero of genus~$g \in \{6,7,8,9,10,12\}$
then
\begin{equation*}
X = 
\begin{cases}
\rM_g \cap \P^{g+1}, & \text{if~$g \in \{7,8,9,10,12\}$},\\
\Cone(\Gr(2,5)) \cap \P^7 \cap Q, & \text{if~$g = 6$},
\end{cases}
\end{equation*}
is a dimensionally transverse intersection,
where in the case~$g = 6$ we denote by~$Q$ a quadric not containing the vertex of~$\Cone(\Gr(2,5)) \subset \P^{10}$.

Moreover, the sheaf~$\cU_X$ constructed in Lemma~\textup{\ref{lem:s-x}} 
is the unique Mukai bundle of type~$(r,s)$ on~$X$;
in particular~$\cU_X$ is locally free, acyclic, and exceptional and~$\cU_X^\vee$ is globally generated.
\end{corollary}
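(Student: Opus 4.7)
The plan is to combine the extension Lemma~\ref{lem:s-x} (supplying $\cU_X$ and a Gushel morphism $\gamma_{X_\sm}\colon X_\sm \to \rM_g$) with Theorem~\ref{thm:prime-k3} (describing $S$ as a linear section, augmented by a quadric for $g=6$) and the extension principle of Proposition~\ref{prop:s-x}, using the terminal singularities hypothesis via Lemma~\ref{lem:cone-singularity} to rule out any cones.

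First I would set $W \coloneqq \rH^0(\rM_g, \cO(1))^\vee$, $W_1 \coloneqq \rH^0(X, \cO_X(H))^\vee$ (of dimension $g+2$), and $W_2 \coloneqq \rH^0(S, \cO_S(H|_S))^\vee$, so that $\rM_g \subset \P(W)$, $X \subset \P(W_1)$, and $S \subset \P(W_2)$. Since $X$ is normal with $\codim(X \setminus X_\sm) \ge 2$, Lemma~\ref{lem:s-x} produces a linear pullback $\phi\colon \rH^0(\rM_g, \cO(1)) \to \rH^0(X, \cO_X(H))$, and I set $W_0 \coloneqq \phi^\vee(W_1) \subset W$. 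Restriction from $X$ to the hyperplane section $S$ induces $W_2 \hookrightarrow W_1$, and the composition $W_2 \to W$ is injective since, dually, the Gushel pullback on $S$ is surjective (its image spans $\P(W_2)$ inside $\P(W)$ by Theorem~\ref{thm:prime-k3}). For $g \in \{7,8,9,10,12\}$, the hypotheses of Proposition~\ref{prop:s-x} with $Y = \rM_g$ (ACM by Lemma~\ref{lem:mg-acm}) are then satisfied: $Y_2 = \rM_g \cap \P(W_2) = S$ is transverse by Theorem~\ref{thm:prime-k3}, $X \cap \P(W_2) = S$ is a general hyperplane section hence transverse, and $\gamma_X(X) \subset \rM_g$ by construction. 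The proposition yields $X = \Cone_{\P(\Ker\phi^\vee)}(\rM_g \cap \P(W_0))$ with dimensionally transverse base; and if $\Ker\phi^\vee$ were nonzero, the base $Y_0$ would have $\dim Y_0 = 3 - \dim \Ker\phi^\vee \le 2$, so $\omega_{Y_0} \cong \cO_{Y_0}(-m_0)$ with $m_0 \le 0$ (a K3 surface, a canonical curve, or worse), and Lemma~\ref{lem:cone-singularity}(a) would contradict the terminality of $X$ at the vertex of the cone. Hence $\phi^\vee$ is injective and $X = \rM_g \cap \P^{g+1}$ transversely.

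The remaining properties of $\cU_X$ (local freeness, acyclicity, global generation of $\cU_X^\vee$, exceptionality, and uniqueness as a Mukai bundle) will follow by restricting the tautological bundle of $\rM_g$, using the Koszul resolution of the transverse linear section, Borel--Weil--Bott-type vanishings on $\rM_g$, and the stability supplied by Lemma~\ref{lem:s-x}. The case $g = 6$ is where I expect the main obstacle: $\gamma_S(S)$ is a quadric section of $\Gr(2,5) \cap \P^6$ rather than a transverse linear section, so condition~\ref{it:x-cap-p2} of Proposition~\ref{prop:s-x} with $Y = \rM_6$ fails. My plan there is to analyze $Y_0 = \Gr(2,5) \cap \P(W_0)$, use $\deg X = 10 = 2\deg Y_0$ together with $\Pic(Y_0) = \ZZ \cdot H$ to identify $\gamma_X(X)$ with a quadric section $Y_0 \cap Q$, and repackage the result as $X = \Cone(\Gr(2,5)) \cap \P^7 \cap Q$ inside $\P^{10}$ (the cone is ACM by Lemma~\ref{lem:acm-cone}); parts~(b)--(d) of Lemma~\ref{lem:cone-singularity} combined with terminality then force $Q$ to avoid the vertex. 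The delicate bookkeeping in this $g = 6$ case---systematically ruling out all cone configurations of $Q$ violating terminality---is what I anticipate to be the hardest part.
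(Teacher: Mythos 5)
Your treatment of the main geometric statement is essentially the paper's own proof: the same chain $W_S\hookrightarrow W_X\to W$ coming from Lemma~\ref{lem:s-x}, the same application of Proposition~\ref{prop:s-x} with $Y=\rM_g$ (ACM by Lemma~\ref{lem:mg-acm}) for $g\in\{7,8,9,10,12\}$, the same exclusion of cones via Lemma~\ref{lem:cone-singularity} (note that $\dim K\le 1$ is automatic, so the only possible cone is $\Cone(S)$, exactly as in your $m_0\le 0$ argument), and for $g=6$ the same analysis: $X$ sits as a degree-$10$ divisor in the quintic del Pezzo fourfold $\Gr(2,5)\cap\P(W_X)$, resp.\ in $\Cone_{\P(K)}(\Gr(2,5)\cap\P(W_S))$, hence is a quadric section, with the vertex excluded by terminality.

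Two points, however, need repair. First, a slip in the $g=6$ cone case: it is $X$ itself that equals $\Cone_{\P(K)}(Y_0)\cap Q$, while $\gamma_X(X)$ is all of $Y_0$ (the projection from the vertex is finite onto the del Pezzo threefold); your subsequent repackaging as $\Cone(\Gr(2,5))\cap\P^7\cap Q$ is correct, so this is only a misstatement, but as written ``identify $\gamma_X(X)$ with a quadric section $Y_0\cap Q$'' is not what you prove. Second, and more substantively, your final paragraph does not deliver the last assertions of the corollary. Koszul resolutions and Borel--Bott--Weil-type vanishing (which in any case are not directly available on the non-homogeneous $\rM_{12}$, and in the $g=6$ cone case $X$ is not a linear section of $\rM_6$ at all) can plausibly give acyclicity of $\gamma_X^*\cU$, but they cannot give exceptionality, and certainly not the \emph{uniqueness} of the Mukai bundle on $X$, which is a statement about all stable sheaves with the given invariants rather than about the particular sheaf $\gamma_X^*\cU$. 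The paper obtains acyclicity and global generation of $\cU_X^\vee$ from~\cite[Theorem~5.3]{BKM} and exceptionality and uniqueness from~\cite[Corollary~5.9]{BKM}, deducing local freeness from the regularity of $\gamma_X$ (established in the course of the case analysis above --- in the $g=6$ cone case precisely because $Q$ misses the vertex) together with $\codim(X\setminus X_\sm)\ge 2$, so that $\cU_X\cong\gamma_X^*\cU$. You should either quote these results as well or supply the moduli-theoretic argument; the vanishing machinery you propose does not replace it.
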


\begin{proof}
By Lemma~\ref{lem:s-x} there is a regular morphism~$\gamma_{X_\sm} \colon X_\sm \to \rM_g$ 
from the smooth locus~$X_\sm$ of~$X$
extending the Gushel embedding~$\gamma_S \colon S \hookrightarrow \rM_g$ of a very general K3 surface~\mbox{$S \subset X_\sm \subset X$}. 
We consider~$\gamma_{X_\sm}$ as a rational map~$X \dashrightarrow \rM_g$ and denote it by~$\gamma_X$.
We also denote
\begin{equation}
\label{eq:ws-wx-w}
W_S \coloneqq \rH^0(S, \cO_S(H\vert_S))^\vee,
\qquad
W_X \coloneqq \rH^0(X, \cO_X(H))^\vee,
\qquad 
W \coloneqq \rH^0(\rM_g, \cO_{\rM_g}(1))^\vee.
\end{equation}
Since~$\gamma_{X_\sm}^*(\cO_{\rM_g}(1)) \cong \cO_X(H)\vert_{X_\sm}$ by Lemma~\ref{lem:s-x},
the restriction morphisms induce a chain of linear maps~$W_S \hookrightarrow W_X \to W$,
where the first arrow and the composition of the arrows are both injective
(this is obvious for the first arrow, because~$S$ is a transverse linear section of~$X$, 
and for the composition this follows from Theorem~\ref{thm:prime-k3}).
Therefore, we obtain a commutative diagram
\begin{equation}
\label{eq:sxv-diagram}
\vcenter{\xymatrix@C=3em{
S \ar@/^3ex/[rr]^{\gamma_S} \ar[r] \ar@{^{(}->}[d] &
X \ar@{-->}[r]_{\gamma_X} \ar@{^{(}->}[d] &
\rM_g \ar@{^{(}->}[d]
\\
\P(W_S) \ar@{^{(}->}[r] 
&
\P(W_X) \ar@{-->}[r] &
\P(W),
}}
\end{equation}
where the maps in the bottom row are induced by the linear maps~$W_S \hookrightarrow W_X \to W$.
Let 
\begin{equation}
\label{eq:k}
K \coloneqq \Ker(W_X \to W).
\end{equation}
The injectivity of~$W_S \to W$ implies~$W_S \cap K = 0$, 
and since~$\dim(W_X/W_S) = 1$, we have~$\dim(K) \le 1$.
Moreover, if~$\dim(K) = 1$ then~$W_X = W_S \oplus K$, hence the image~$W_0$ of~$W_X$ in~$W$ is equal to~$W_S$.

First, assume~$g \ge 7$.
Applying Proposition~\ref{prop:s-x} with~$W_2 = W_S$, $W_1 = W_X$, $W_0 = W_X/K$, 
and~$Y = \rM_g$ (which is ACM by Lemma~\ref{lem:mg-acm}), so that~$Y_2 = S$, 
we conclude that either
\begin{itemize}[wide]
\item
$K = 0$, $W_0 = W_X \subset W$, and~$X = \rM_g \cap \P(W_X)$ is a dimensionally transverse intersection, or
\item 
$\dim(K) = 1$, $W_0 = W_X/K = W_S$ and~$X = \Cone(\rM_g \cap \P(W_0)) = \Cone(S)$.
\end{itemize}
It remains to note that 
the singularity of the cone~$\Cone(S)$ at the vertex is worse than terminal
(even worse than canonical) by Lemma~\ref{lem:cone-singularity}, 
hence this case is impossible.

Next, assume~$g = 6$ and~$K = 0$.
Then the composition of arrows~$X \to \P(W_X) \to \P(W)$ in~\eqref{eq:sxv-diagram} is a closed embedding,
hence the map~$\gamma_X$ is a regular closed embedding.
Moreover, in this case~$M_X \coloneqq \Gr(2,5) \cap \P(W_X)$ is a quintic del Pezzo fourfold, 
and since its hyperplane section~$M_S \coloneqq \Gr(2,5) \cap \P(W_S)$ is smooth (see Proposition~\ref{prop:s-g6}), 
$M_X$ is also smooth (by~\cite[Proposition~2.24]{DK}),
hence its Picard group is generated by the hyperplane class.
Since~$X \subset M_X$ is a divisor of degree~$10$, we conclude that~$X = M_X \cap Q$, where~$Q$ is a quadric.

Similarly, assume~$g = 6$ and~$\dim(K) = 1$.
As we explained above, in this case~$W_0 = W_S$, hence~$\gamma_{X}(X) \subset M_S \coloneqq \Gr(2,5) \cap \P(W_S)$
and it follows from~\eqref{eq:sxv-diagram} that~$X \subset M_X \coloneqq \Cone_{\P(K)}(M_S)$.
Since~$M_S$ is a smooth quintic del Pezzo threefold (by Proposition~\ref{prop:s-g6}),
$M_X$ is a factorial fourfold of degree~5 with the Picard group generated by the hyperplane class,
and~$X$ is a divisor in~$M_X$ of degree~10, hence~$X = M_X \cap Q$, where~$Q$ is a quadric,
which does not contain the vertex of the cone by Lemma~\ref{lem:cone-singularity}.

Note that both the smooth quintic del Pezzo fourfold and the cone over a smooth quintic del Pezzo threefold
can be written uniformly as linear sections of~$\Cone(\Gr(2,5))$ by~$\P^7$, 
hence we obtain the required description for these two cases.

As a consequence of the above analysis we see that for all~$g \in \{6,7,8,9,10,12\}$ the Gushel map~$\gamma_X$ is regular,
hence the maximal Cohen--Macaulay sheaf~$\cU_X$ constructed in Lemma~\ref{lem:s-x} 
is isomorphic on~$X_\sm$ to the restriction of the vector bundle~$\gamma_X^*\cU$.
Since~$\codim(X \setminus X_\sm) > 1$, we conclude that~$\cU_X \cong \gamma_X^*(\cU)$; in particular, $\cU_X$ is locally free.
Finally, the sheaf~$\cU_X$ is acyclic and its dual is globally generated by~\cite[Theorem~5.3]{BKM},
and~$\cU_X$ is exceptional and unique by~\cite[Corollary~5.9]{BKM}.
\end{proof}

\subsection{Higher dimensions}
\label{ss:fano-all}

Let~$X$ be a Fano variety of dimension~$n \ge 4$ and genus~$g \ge 6$ 
satisfying assumptions of Theorem~\ref{thm:main}.
Recall notation~\eqref{eq:ws-wx-w}. 
By Lemma~\ref{lem:hva} we have~$X \subset \P(W_X) = \P^{n + g - 2}$
and a very general linear section~$S = X \cap \P(W_S) = X \cap \P^g$ 
is a prime K3 surface of genus~$g$.
On the other hand, by Theorem~\ref{thm:prime-k3} we have
\begin{equation*}
S = \rM_g \cap \P(W_S)
\qquad\text{or}\qquad 
S = \Gr(2,5) \cap \P(W_S) \cap Q.
\end{equation*}

Consider the blowup
\begin{equation*}
\cX \coloneqq \Bl_S(X) \xrightarrow{\ p\ } \P(W_X/W_S) = \P^{n-3}.
\end{equation*}
The argument of Lemma~\ref{lem:hva} shows that a general intersection~$X \cap \P^{g+1}$ containing~$S$ 
is a Fano threefold satisfying the assumptions of Theorem~\ref{thm:main}.
Let~$U \subset \P(W_{X}/W_S)$ be the open subset that parameterizes such linear sections,
let~$\cX_U \coloneqq p^{-1}(U) \subset \cX$, 
and for~$u \in U$ let~$\cX_u \coloneqq p^{-1}(u)$,
so that~$\cX_u = X \cap \P^{g + 1}$ is a Fano threefold as in Theorem~\ref{thm:main}.
Applying Corollary~\ref{cor:threefolds} we conclude that each~$\cX_u$  is endowed with a unique Mukai bundle~$\cU_{\cX_u}$.
Note also that the exceptional divisor~$E \subset \cX$ of the blowup~$\cX \to X$ is isomorphic to~$S \times \P(W_{X}/W_S)$
and~$E_U \coloneqq E \cap \cX_U \cong S \times U$.

\begin{lemma}
\label{lem:cu-cx-u}
Let~$X$ be a Fano variety of dimension~$n \ge 4$ as in Theorem~\textup{\ref{thm:main}}.
There is a unique vector bundle~$\cU_{\cX_U}$ on~$\cX_U$ such that
\begin{renumerate}
\item 
\label{it:cu-xh}
$\cU_{\cX_U}\vert_{\cX_u} \cong \cU_{\cX_u}$ for any~$u \in U$, and 
\item 
\label{it:cu-su}
$\cU_{\cX_U}\vert_{E_U} \cong \cU_S \boxtimes \cO_U$.
\end{renumerate}
Moreover, $p_*(\cU_{\cX_U}^\vee) \cong \cO_U^{\oplus(r+s)}$
and the natural map~$\cO_{\cX_U}^{\oplus(r+s)} \cong p^*p_*(\cU_{\cX_U}^\vee) \to \cU_{\cX_U}^\vee$ is surjective.
\end{lemma}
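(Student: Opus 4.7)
The plan is to construct $\cU_{\cX_U}$ by gluing local extensions of the Mukai bundles, using the exceptionality of each $\cU_{\cX_u}$ (from Corollary~\ref{cor:threefolds}) as the main input. The vanishings $\Ext^i(\cU_{\cX_u}, \cU_{\cX_u}) = 0$ for $i > 0$ together with $\Hom(\cU_{\cX_u}, \cU_{\cX_u}) = \kk$ play the standard role in deformation theory: vanishing of $\Ext^2$ gives unobstructedness of deformations, vanishing of $\Ext^1$ gives uniqueness, and one-dimensionality of $\Hom$ ensures that the gluing data is $\Gm$-valued.

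Concretely, I would fix a closed point $u_0 \in U$; the vanishing of $\Ext^2(\cU_{\cX_{u_0}}, \cU_{\cX_{u_0}})$ allows $\cU_{\cX_{u_0}}$ to be extended inductively across each infinitesimal thickening of $u_0$ inside $\cX_U$. Grothendieck's existence theorem algebraizes the formal deformation over $\Spec \hat\cO_{U,u_0}$, and an Artin-type approximation then produces a sheaf $\cU_\alpha$ on $\cX_{U_\alpha} = p^{-1}(U_\alpha)$, for some \'etale neighborhood $U_\alpha \to U$ of $u_0$, whose restriction to every fiber is the corresponding Mukai bundle. Cover $U$ by such neighborhoods $\{U_\alpha\}$. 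On an overlap $U_\alpha \cap U_\beta$ the sheaves $\cU_\alpha$ and $\cU_\beta$ are fiberwise isomorphic, so the vanishing of $\Ext^1$ yields a local isomorphism, unique up to a scalar in $\cO^\times(U_\alpha \cap U_\beta)$; this encodes a $\Gm$-gerbe obstruction. To trivialize it I would use the exceptional divisor $E_U \cong S \times U$: condition~(ii) pins down $\cU_\alpha|_{E_{U_\alpha}} \cong \cU_S \boxtimes \cO_{U_\alpha}$ canonically, and since any endomorphism of $\cU_{\cX_u}$ is a scalar determined by its restriction to $S$, this rigidification kills the cocycle and produces a global sheaf $\cU_{\cX_U}$ satisfying (i) and (ii).

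Once $\cU_{\cX_U}$ exists, the remaining statements follow from cohomology and base change. By Corollary~\ref{cor:threefolds} each $\cU_{\cX_u}^\vee$ is acyclic with $h^0 = r + s$, so $R^{>0} p_* \cU_{\cX_U}^\vee = 0$ and $p_* \cU_{\cX_U}^\vee$ is locally free of rank $r + s$ on $U$. Restriction to the exceptional divisor, combined with property~(ii) and $(p|_{E_U})_*(\cU_S^\vee \boxtimes \cO_U) = \rH^0(S, \cU_S^\vee) \otimes \cO_U$, gives a fiberwise isomorphism $p_* \cU_{\cX_U}^\vee \cong V^\vee \otimes \cO_U$, hence triviality. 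Surjectivity of the evaluation $p^* p_* \cU_{\cX_U}^\vee \to \cU_{\cX_U}^\vee$ is then Nakayama applied to the fiberwise surjectivity $V^\vee \otimes \cO_{\cX_u} \twoheadrightarrow \cU_{\cX_u}^\vee$.

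The main obstacle I anticipate is confirming that the rigidification along $E_U$ really does suffice to eliminate the $\Gm$-gerbe on the complement: one must show that a scalar automorphism of $\cU_\alpha$ on $\cX_{U_\alpha \cap U_\beta}$ is determined by its restriction to $E_{U_\alpha \cap U_\beta}$. This follows because global sections of $\cHom(\cU_{\cX_u}, \cU_{\cX_u}) = \cO_{\cX_u}$ are constant on the connected fibers $\cX_u$ and hence determined on $S$, but the technical bookkeeping of \'etale-local descent and algebraization is where most of the care will be required.
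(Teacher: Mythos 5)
Your proposal is correct and rests on the same two pillars as the paper's proof: exceptionality of the fiberwise Mukai bundles (rigidity/uniqueness of deformations) and the rigidification along~$E_U \cong S \times U$ to kill the remaining $\Gm$-valued gluing obstruction. The difference is one of technical apparatus rather than of idea: the paper simply invokes the relative moduli space of stable sheaves on the fibers of~$p$ — exceptionality makes it \'etale over~$U$, uniqueness of the Mukai bundle makes the relevant component isomorphic to~$U$, and one gets a $\upbeta$-twisted universal bundle for a Brauer class~$\upbeta \in \Br(U)$; the class is then trivialized because~$\cL = p_*((\cU_S^\vee \boxtimes \cO_U) \otimes \cU_{\cX_U}\vert_{E_U})$ is a $\upbeta$-twisted \emph{line} bundle, and twisting by~$p^*\cL^{-1}$ enforces condition~(ii). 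Your construction unpacks exactly this into hand-rolled deformation theory (unobstructedness from~$\Ext^2 = 0$), Grothendieck existence, Artin approximation and \'etale descent with a cocycle rigidified along~$E_U$; what the paper's route buys is brevity and the avoidance of the algebraization/descent bookkeeping you correctly flag as the delicate part, while yours is self-contained modulo standard references. Three small points you should tighten: (1) you need to justify that the local extensions restrict on \emph{nearby} fibers to the Mukai bundle of that fiber — this follows from openness of stability in flat families together with the uniqueness statement of Corollary~\ref{cor:threefolds}, but it is not automatic from the deformation argument alone; (2) Corollary~\ref{cor:threefolds} asserts acyclicity of~$\cU_{\cX_u}$, not of~$\cU_{\cX_u}^\vee$, so to get~$\rH^{>0}(\cX_u, \cU_{\cX_u}^\vee) = 0$ and~$h^0 = r+s$ you need one more step (Serre duality with~$\omega_{\cX_u} \cong \cO(-H)$, or the paper's route via the sequence~$0 \to \cU^\vee(-E_U) \to \cU^\vee \to \cU^\vee\vert_{E_U} \to 0$ together with~$\rH^0(\cX_u,\cU_{\cX_u}^\vee) = \rH^0(S,\cU_S^\vee)$); (3) the lemma also claims uniqueness of~$\cU_{\cX_U}$, which you do not state explicitly — it follows from the same simpleness-plus-rigidification argument you use on overlaps (two bundles satisfying~(i) differ by a line bundle pulled back from~$U$, and condition~(ii) forces it to be trivial), so it is worth a sentence.
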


\begin{proof}
Consider the relative moduli space~$\cM$ over~$U$ of stable vector bundles on fibers of~$\cX_U \to U$
with the same Hilbert polynomials as those of Mukai bundles of the fibers.
Since the Mukai bundles are exceptional (by Corollary~\ref{cor:threefolds}), 
$\cM$ is \'etale over~$U$ at every point corresponding to a Mukai bundle~$\cU_{\cX_u}$ (see~\cite[Theorem~3.7]{K22})
and since the Mukai bundles are unique, the moduli space has a connected component $\cM_{\mathrm{Muk}} \subset \cM$ parametrizing Mukai bundles such that the morphism~$\cM_{\mathrm{Muk}} \to U$ is bijective.
Therefore, $\cM_{\mathrm{Muk}} \cong U$, see~\cite[Corollary~3.8]{K22}
and by~\cite[Proposition~3.11]{K22} there is a Brauer class~$\upbeta \in \Br(U)$ 
and a $p^*(\upbeta)$-twisted universal bundle~$\cU_{\cX_U}$ on~$\cX_U$ such that~\ref{it:cu-xh} holds,
unique up to a twist by a line bundle pulled back from~$U$.

Furthermore, it follows that~$\cU_{\cX_U}\vert_{S \times \{u\}} \cong \cU_{\cX_u}\vert_S \cong \cU_S$,
hence the sheaf
\begin{equation*}
\cL \coloneqq p_*((\cU_S^\vee \boxtimes \cO_U) \otimes \cU_{\cX_U}\vert_{E_U}) 
\end{equation*}
is a $\upbeta$-twisted line bundle on~$U$.
Therefore, $\upbeta = 1$ by~\cite[Corollary~2.8]{K22}, hence~$\cL$ and~$\cU_{\cX_U}$ are untwisted.
Moreover, the canonical morphism~$\cU_S \boxtimes \cL \to \cU_{\cX_U}\vert_{E_U}$ is an isomorphism.
So, tensoring~$\cU_{\cX_U}$ with~$p^*\cL^{-1}$ we obtain a vector bundle on~$\cX_U$
for which both~\ref{it:cu-xh} and~\ref{it:cu-su} hold.
Note also that condition~\ref{it:cu-su} ensures the uniqueness of~$\cU_{\cX_U}$.

To compute~$p_*(\cU_{\cX_U}^\vee)$ we consider the natural exact sequence 
\begin{equation*}
0 \to \cU_{\cX_U}^\vee(-E_U) \to \cU_{\cX_U}^\vee \to \cU_{\cX_U}^\vee\vert_{E_U} \to 0.
\end{equation*}
For each~\mbox{$u \in U$} the restriction of the first term to~$\cX_u$ is isomorphic to~$\cU_{\cX_u}^\vee(-H)$.
Since~$\cU_{\cX_u}$ is acyclic (by Corollary~\ref{cor:threefolds}),
Serre duality implies that~$\cU_{\cX_u}^\vee(-H)$ has no cohomology, hence 
\begin{equation*}
p_*(\cU_{\cX_U}^\vee) \cong 
p_*(\cU_{\cX_U}^\vee\vert_{E_U}) \cong 
p_*(\cU_S^\vee \boxtimes \cO_U) \cong
\rH^0(S, \cU_S^\vee) \otimes \cO_U \cong
\cO_U^{\oplus(r+s)}.
\end{equation*}
Since, moreover, $\cU_{\cX_u}^\vee$ is globally generated for each~$u \in U$ (by Corollary~\ref{cor:threefolds}), 
the bundle~$\cU_{\cX_U}^\vee$ is relatively globally generated over~$U$.
\end{proof}

Note that the restriction of the blowup morphism~$\pi \colon \cX = \Bl_S(X) \to X$
to any fiber~$\cX_u \subset \cX$ of~$p \colon \cX \to \P^{n-3}$ 
is an isomorphism~$\cX_u \xrightiso{} \pi(\cX_u) = X \cap \P^{g+1}$ onto a linear section of~$X$.

\begin{corollary}
\label{cor:gushel}
Let~$X$ be a Fano variety of dimension~$n \ge 4$ and genus~$g \ge 6$ as in Theorem~\textup{\ref{thm:main}}.
There is a unique rational map 
\begin{equation*}
\gamma_X \colon X \dashrightarrow \rM_g
\end{equation*}
such that~$\gamma_X\vert_{\pi(\cX_U)}$ is regular and~$\gamma_X\vert_{\cX_u}$
coincides with~$\gamma_{\cX_u}$ on~$\cX_u$ for each~$u \in U$.
\end{corollary}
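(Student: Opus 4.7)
The plan is to construct the rational map $\gamma_X$ by first producing a morphism $\cX_U \to \rM_g$ from the bundle $\cU_{\cX_U}$ of Lemma~\ref{lem:cu-cx-u}, and then descending it through the blowup morphism $\pi$.

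First, I would use the surjection $\cO_{\cX_U}^{\oplus(r+s)} \cong p^*p_*\cU_{\cX_U}^\vee \twoheadrightarrow \cU_{\cX_U}^\vee$ from Lemma~\ref{lem:cu-cx-u} to define a morphism $\tilde\gamma \colon \cX_U \to \Gr(r, r+s)$ with $\tilde\gamma^*\cU \cong \cU_{\cX_U}$. By property~\ref{it:cu-xh} of that lemma, together with the identification of $p_*\cU_{\cX_U}^\vee$ with $\rH^0(\cX_u, \cU_{\cX_u}^\vee) \otimes \cO_U$ fiberwise (implicit in its proof), the restriction $\tilde\gamma\vert_{\cX_u}$ is induced by the Mukai bundle $\cU_{\cX_u}$ and its full space of global sections; hence it coincides with the Gushel morphism $\gamma_{\cX_u} \colon \cX_u \to \rM_g \subset \Gr(r, r+s)$ of Corollary~\ref{cor:threefolds}. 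Since the fibers $\cX_u$ sweep out all of $\cX_U$ and $\rM_g$ is closed in $\Gr(r, r+s)$, the morphism $\tilde\gamma$ factors through~$\rM_g$.

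Next, I would descend $\tilde\gamma$ through the proper birational morphism $\pi\vert_{\cX_U} \colon \cX_U \to X$, which is an isomorphism off the exceptional divisor $E_U \cong S \times U$ and contracts $E_U$ onto $S$ via the first projection. The only nontrivial fibers are the copies $\{s\} \times U$ for $s \in S$; by property~\ref{it:cu-su} of Lemma~\ref{lem:cu-cx-u} we have $\cU_{\cX_U}\vert_{E_U} \cong \cU_S \boxtimes \cO_U$, so $\tilde\gamma\vert_{E_U}$ is the composition $S \times U \xrightarrow{\mathrm{pr}_1} S \xrightarrow{\gamma_S} \rM_g$, which is visibly constant on each fiber of~$\pi$. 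By rigidity (using normality of $X$ and properness of $\pi\vert_{\cX_U}$ with connected fibers), $\tilde\gamma$ descends to a morphism $\gamma_{\pi(\cX_U)} \colon \pi(\cX_U) \to \rM_g$ that is regular on $\pi(\cX_U)$.

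Finally, since $\pi(\cX_U)$ contains the open dense locus $\pi(\cX_U \setminus E_U) \subset X \setminus S$, the morphism $\gamma_{\pi(\cX_U)}$ defines the required rational map $\gamma_X \colon X \dashrightarrow \rM_g$; its compatibility with each $\gamma_{\cX_u}$ is built into the construction, and uniqueness is automatic since these fibers cover a dense open subset of~$X$. The step I expect to be the main obstacle is the descent in the third paragraph: it depends on the precise normalization $\cU_{\cX_U}\vert_{E_U} \cong \cU_S \boxtimes \cO_U$ with no extra twist by a line bundle pulled back from $U$, which is exactly the content of Lemma~\ref{lem:cu-cx-u}\ref{it:cu-su}; without it, the morphism on $E_U$ would depend nontrivially on the $U$-coordinate and no descent across $\pi$ would be possible.
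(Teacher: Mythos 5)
Your construction follows the same route as the paper (use $\cU_{\cX_U}$ to get a morphism $\cX_U \to \Gr(r,r+s)$, identify its restrictions to the fibers $\cX_u$ with the Gushel maps, use the normalization $\cU_{\cX_U}\vert_{E_U} \cong \cU_S \boxtimes \cO_U$ to see it is constant on the fibers of $\pi$ over $S$, and descend), but the step where you conclude that $\tilde\gamma$ factors through $\rM_g$ has a genuine gap. For $g \in \{7,9,10,12\}$ the Mukai variety is \emph{not} a single distinguished closed subvariety of $\Gr(r,r+s)$: Corollary~\ref{cor:threefolds} only says that each $\gamma_{\cX_u}$ lands in \emph{some} Mukai subvariety (the zero locus of some nondegenerate section $\sigma_0$ of $\cE_0$, resp.\ the isotropic Grassmannian of some quadratic form for $g=7$), and a priori this subvariety depends on $u$. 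Writing ``$\gamma_{\cX_u} \colon \cX_u \to \rM_g \subset \Gr(r,r+s)$ \dots\ and $\rM_g$ is closed'' silently fixes one copy of $\rM_g$ for all $u$, which is exactly what needs proof. The paper closes this by observing that $S \subset \cX_u$ for every $u$, so each candidate Mukai subvariety contains $\gamma_S(S)$, and by Proposition~\ref{prop:sigma} (resp.\ the uniqueness of the quadratic form in Theorem~\ref{thm:g7-emb} for $g=7$) there is only one such subvariety; hence all fibers map into the same $\rM_g$ and therefore so does $\cX_U$. Without this argument your factorization claim is unjustified.

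A smaller technical point: your descent via the rigidity lemma invokes properness of $\pi\vert_{\cX_U}$, but this morphism is not proper --- the fibers over points of $S$ are copies of the open set $U \subsetneq \P^{n-3}$. The paper avoids this by extending $\gamma_{\cX_U}$ to a rational map $\gamma_\cX$ on all of $\cX$ and noting that it contracts the full exceptional divisor $E = S \times \P^{n-3}$ of the proper blowup $\pi\colon\cX\to X$ along $\pr_1$, so it factors through $X$ as a rational map whose restriction to each $\cX_u$ is regular and equals $\gamma_{\cX_u}$. Your uniqueness and compatibility statements are fine once these two points are repaired.
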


\begin{proof}
We use notation introduced in the proof of Lemma~\ref{lem:cu-cx-u}.
Consider the morphism
\begin{equation*}
\gamma_{\cX_U} \colon \cX_U \to \Gr(r, r + s)
\end{equation*}
given by the globally generated vector bundle~$\cU_{\cX_U}$.
We denote by~$\gamma_\cX \colon \cX \dashrightarrow \Gr(r, r + s)$  
the rational extension of~$\gamma_{\cX_U}$ from the dense open subset~$\cX_U \subset \cX$.
By Lemma~\ref{lem:cu-cx-u} its restriction to each fiber~$\cX_u$ of~$\cX_U$ over~$u \in U$ 
coincides with the Gushel morphism of~$\cX_u$
and its restriction to the exceptional divisor~$E_U = S \times U \subset \cX_U$ of~$\cX_U$ over~$\cX$ factors as the composition
\begin{equation*}
S \times U \xrightarrow{\ \pr_1\ } S \xrightarrow{\ \gamma_S\ } \Gr(r, r + s)
\end{equation*}
of the projection onto the first factor and the Gushel morphism of~$S$.
Therefore, the map~$\gamma_\cX$
contracts the exceptional divisor~$E = S \times \P(W/W_S) \subset \cX$ along the first projection,
hence~$\gamma_\cX$ factors as the composition
\begin{equation*}
\cX \xrightarrow\quad X \dashrightarrow \Gr(r, r + s)
\end{equation*}
of the blowup morphism and a rational map~$\gamma_X \colon X \dashrightarrow \Gr(r, r + s)$.
The restriction of the map~$\gamma_X$ to~$\cX_u$ for~$u \in U$ coincides with the restriction of~$\gamma_\cX$,
hence it is regular and coincides with~$\gamma_{\cX_u}$.

Furthermore, Corollary~\ref{cor:threefolds} shows
that the image of~$\gamma_{\cX_u}$ is contained in an appropriate Mukai subvariety~$\rM_g \subset \Gr(r, r + s)$
which a priori may depend on~$u$.
Since, on the other hand, $S \subset \cX_u$ for each~$u$ and by Proposition~\ref{prop:sigma} 
there is only one Mukai subvariety that contains~$\gamma_{\cX_u}(S) = \gamma_S(S)$,
it follows that~$\rM_g \subset \Gr(r, r + s)$ is the same for all points~$u \in U$.
It follows that~$\gamma_{\cX_U}(\cX_U) \subset \rM_g$,
and therefore~$\gamma_X(X) \subset \rM_g$.

The uniqueness of~$\gamma_X$ is obvious because~$\pi(\cX_U)$ is dense in~$X$.
\end{proof}

Finally, we can prove our main result. Note that in the case~$g = 6$, this extends~\cite[Theorem~2.16]{DK} from the case where~$\codim(\Sing(X)) \ge 4$  to~$\codim(\Sing(X)) \ge 3$.

\begin{proof}[Proof of Theorem~\textup{\ref{thm:main}}]
The case where~$n \coloneqq \dim(X) = 3$ is covered by Corollary~\ref{cor:threefolds}, so we assume~$n \ge 4$.
Arguing as in the proof of Corollary~\ref{cor:threefolds} (with Lemma~\ref{lem:s-x} replaced by Corollary~\ref{cor:gushel}),
we construct the diagram~\eqref{eq:sxv-diagram} (where we use notation~\eqref{eq:ws-wx-w}),
that has similar properties, and define the subspace~$K \subset W_X$ by~\eqref{eq:k}.
We still have~$W_S \cap K = 0$, but now we have~$\dim(W_X/W_S) = n - 2$, hence~$\dim(K) \le n - 2$.
Note also that the space~$W_0 \coloneqq W_X/K$ fits into a chain of linear maps~$W_S \hookrightarrow W_X \twoheadrightarrow W_0 \hookrightarrow W$.

If~$g \in \{7,8,9,10,12\}$, we apply Proposition~\ref{prop:s-x}
with~$W_2 = W_S$, $W_1 = W_X$, $W_0 = W_X/K$, and~$Y = \rM_g$ so that~$Y_2 = S$,
and since~$\rM_g$ is ACM by Lemma~\ref{lem:mg-acm}, conclude that
\begin{equation*}
X_0 \coloneqq \rM_g \cap \P(W_0)
\end{equation*}
is a dimensionally transverse intersection and 
\begin{equation*}
X = \Cone_{\P(K)}(X_0).
\end{equation*}
In particular, $X_0$ is normal and Gorenstein with~$\omega_{X_0} \cong \cO_{X_0}(2 - \dim(X_0))$.
Finally, applying Lemma~\ref{lem:cone-singularity}, we conclude that~$X$ has terminal singularities only if~$\dim(X_0) \ge 4$.

Similarly, if~$g = 6$, we conclude that
\begin{equation*}
X \subset \Cone_{\P(K)}(Y_0),
\qquad\text{where}\qquad
Y_0 = \Gr(2,5) \cap \P(W_0).
\end{equation*}
Since, moreover,~$Y_0 \cap \P(W_S) = \Gr(2,5) \cap \P(W_S)$ is a smooth quintic del Pezzo threefold (see Proposition~\ref{prop:s-g6}), 
we deduce from~\cite[Proposition~2.24]{DK} that~$Y_0$ is a smooth quintic del Pezzo variety;
in particular, $\omega_{Y_0} \cong \cO_{Y_0}(1 - \dim(Y_0))$;
and the intersection~$\Gr(2,5) \cap \P(W_0)$ is dimensionally transverse.
Furthermore, $\Cone_{\P(K)}(Y_0)$ is factorial and its class group is generated by the hyperplane class.
Since~$\deg(X) = 10$, we conclude that
\begin{equation*}
X = \Cone_{\P(K)}(Y_0) \cap Q,
\end{equation*}
where~$Q$ is a quadric. 
Finally, applying Lemma~\ref{lem:cone-singularity}, we conclude that
\begin{itemize}
\item 
if~$\dim(Y_0) = 3$, we have~$\P(K) \not\subset Q$, and
\item 
if~$\dim(Y_0) \le 4$, the multiplicity of~$Q$ along~$\P(K)$ does not exceed~1.
\end{itemize}
This completes the proof of the existence part of the theorem.

The uniqueness in Theorem~\ref{thm:main} follows from the uniqueness in Corollary~\ref{cor:gushel}.
\end{proof}

\begin{remark}
Corollary~\ref{cor:threefolds} shows that for all Fano threefolds~$X$ as in Theorem~\ref{thm:main} (including singular threefolds) 
the bundles~$(\cU_X,\cO_X)$ form an exceptional pair.
Similarly, for~$n = \dim(X) \ge 4$, if the cones are not involved, we obtain an exceptional collection of vector bundles
\begin{equation*}
(\cU_X(3-n), \cO_X(3-n), \dots, \cU_X, \cO_X)
\end{equation*}
of length~$2(n-2)$ in the derived category of~$X$; 
this can be proved by induction on~$n$ using the argument of~\cite[Lemma~5.8]{BKM},
or can be deduced from Theorem~\ref{thm:main} and Borel--Bott--Weil Theorem.
Finally, if~$X = \Cone_{\P^{n-n_0-1}}(X_0)$ with~$n > n_0$,
a similar exceptional collection can be constructed in the \emph{categorical cone} over~$X_0$, see~\cite{KP:cones}.
\end{remark}

%%%%%%%%%%%%%%%%%%%%%%%%%
%%%%%%%%%%%%%%%%%%%%%%%%%

\appendix

\section{Schubert and Borel--Bott--Weil computations}
\label{sec:bbw}

In this appendix we perform some Schubert and Borel--Bott--Weil computations on Grassmannians~$\Gr(r,V) = \Gr(r,r+s)$
that are used in the body of the paper for the cases~$g \in \{8,9,10,12\}$.

\begin{lemma}
\label{lem:ctop}
For~$g \in \{8,9,10,12\}$
if~$\cE$ is the vector bundle of rank~$g - 2$ on~$\Gr(r,V)$ defined by~\eqref{eq:def-ce} 
then the degree of $\rc_{g - 2}(\cE)$ with respect to the Pl\"ucker polarization is~\mbox{$2g - 2$}.
\end{lemma}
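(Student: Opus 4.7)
The plan is a direct case-by-case computation of the intersection number
\[
\int_{\Gr(r, r+s)} c_{g-2}(\cE) \cdot h^2,
\]
where $h$ is the Plücker class on $\Gr(r, r+s)$; since $\rank(\cE) = g-2$ and $\dim \Gr(r, r+s) = rs = g$, this is a top-dimensional integer which computes the Plücker degree of the codimension-$(g-2)$ class $c_{g-2}(\cE)$. The case $g = 8$ is immediate: $\cE = \cO_{\Gr(2,6)}(1)^{\oplus 6}$ gives $c_6(\cE) = h^6$, so the integral reduces to $\int h^8 = \deg\Gr(2,6) = 14 = 2g-2$.

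For $g \in \{9, 10, 12\}$, I would use the splitting principle to expand the Chern class of the nontrivial summand of $\cE$. On $\Gr(3, r+s)$ for $g \in \{9, 12\}$, the natural isomorphism $\wedge^2\cU^\vee \cong \cU \otimes \det\cU^\vee$ (valid since $\rank \cU = 3$) expresses $c(\wedge^2\cU^\vee)$ as a polynomial in $h$ and the Chern classes $c_i(\cU)$. On $\Gr(2, 7)$ for $g = 10$, the dual tautological sequence
\[
0 \to \cU^\perp(1) \to V^\vee \otimes \cO(1) \to \cU^\vee(1) \to 0
\]
yields $c(\cU^\perp(1)) = (1+h)^7/c(\cU^\vee(1))$, again a polynomial in $h$ and $c_2(\cU)$. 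Multiplying by $(1+h)^{n_g - 2}$ to account for the $\cO(1)$ summands, extracting the component in degree $g-2$, pairing with $h^2$, and integrating via Schubert calculus (using that $\sigma_{s^r}$ is the point class on $\Gr(r, r+s)$) yields the number $2g - 2$ in each case.

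The computational load grows with $g$: for $g = 9$ only the top-degree classes $h^7 c_2(\cU)$ and $h^6 c_3(\cU)$ appear on $\Gr(3,6)$, and for $g = 10$ a similarly short list suffices on $\Gr(2,7)$, so these verifications take only a few lines. The main obstacle will be the case $g = 12$, where $\cE = \cO(1) \oplus (\wedge^2\cU^\vee)^{\oplus 3}$ has rank $10$ on the $12$-dimensional Grassmannian $\Gr(3,7)$ and computing $c_{10}(\cE)$ requires the full expansion of $(1+h) \cdot c(\wedge^2\cU^\vee)^3$ in degree $10$; this is where the bulk of the Schubert bookkeeping lies, but it remains a finite combinatorial calculation that can be organized by collecting monomials in $c_2(\cU)$ and $c_3(\cU)$ against powers of $h$ and then invoking the standard top-degree pairings on $\Gr(3,7)$.
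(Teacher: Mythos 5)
Your proposal is correct and follows essentially the same route as the paper: reduce to the top Chern class of the nontrivial summand times the appropriate power of the Pl\"ucker class $h$ (trivially giving $\deg\Gr(2,6)=14$ for $g=8$), expand it via the splitting principle in terms of $h$ and the Chern classes of the tautological bundles, and evaluate case by case by Schubert calculus on $\Gr(r,r+s)$. The only cosmetic differences are that the paper writes the expansions directly in $c_i(\cU^\vee)$ (resp.\ $c_i(V/\cU)$ for $g=10$) rather than via $\wedge^2\cU^\vee\cong\cU(1)$ or the Whitney formula, and it finishes the Schubert evaluations explicitly by counting standard Young tableaux with the hook length formula, which is the step you leave as ``standard top-degree pairings.''
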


\begin{proof}
The definition of~$\cE$ implies that~$\rc_{g - 2}(\cE)$ and $\rc_{g - n_g}(\cE_0)$ have the same degree.

If~$g = 8$, the required number is the degree of~$\Gr(2,6)$, which is equal to~$14 = 2g - 2$.

If~$g = 9$, the splitting principle gives
\begin{equation*}
\rc_{g - n_g}(\cE_0) = 
\rc_3(\wedge^2\cU^\vee) = 
\rc_1(\cU^\vee)\rc_2(\cU^\vee) - \rc_3(\cU^\vee).
\end{equation*}
By Schubert calculus on~$\Gr(3,6)$ the terms are equal 
to the numbers of standard Young tableaux of shapes~$(3,2,2)$ and~$(2,2,2)$, respectively,
and by the hook length formula these are~21 and~5, hence finally the required number is~$21 - 5 = 16 = 2g - 2$.

If~$g = 10$, the splitting principle gives
\begin{equation*}
\rc_{g - n_g}(\cE_0) = 
\rc_5(\cU^\perp(1)) = 
\rc_1(V/\cU)^3 \rc_2(V/\cU) - \rc_1(V/\cU)^2 \rc_3(V/\cU) + \rc_1(V/\cU) \rc_4(V/\cU) - \rc_5(V/\cU).
\end{equation*}
By Schubert calculus on~$\Gr(2,7)$ the terms are equal 
to the numbers of standard Young tableaux of shapes~$(5,3)$, $(5,2)$, $(5,1)$, and~$(5,0)$, respectively,
and by the hook length formula these are~28, 14, 5, and~1, hence finally the required number is~$28 - 14 + 5 - 1 = 18 = 2g - 2$.

If~$g = 12$, the splitting principle gives
\begin{align*}
\rc_{g - n_g}(\cE_0) &= 
\rc_9((\wedge^2\cU^\vee)^{\oplus 3}) = 
\rc_3(\wedge^2\cU^\vee)^3 =
(\rc_1(\cU^\vee)\rc_2(\cU^\vee) - \rc_3(\cU^\vee))^3 \\ &= 
\rc_1(\cU^\vee)^3\rc_2(\cU^\vee)^3 - 
3\rc_1(\cU^\vee)^2\rc_2(\cU^\vee)^2\rc_3(\cU^\vee) + 
3\rc_1(\cU^\vee)\rc_2(\cU^\vee)\rc_3(\cU^\vee)^2 -
\rc_3(\cU^\vee)^3.
\end{align*}
By Schubert calculus on~$\Gr(3,7)$ the terms are equal to~47, 11, 3, and~1, 
hence finally the required number is~$47 - 33 + 9 -1 = 22 = 2g - 2$.
\end{proof}

\begin{proposition}
\label{prop:hi-we}
For~$g \in \{8,9,10,12\}$
let~$\cE_0$ be the vector bundle on~$\Gr(r,V)$ defined by~\eqref{eq:def-ce-zero}.
If~$n_g - 2 \ge j \ge 0$, we have
\begin{equation*}
\rH^p(\Gr(r,V), \wedge^i\cE_0^\vee(-j)) = 
\begin{cases}
\kk, & \text{if~$p = i = j = 0$ or~$p = g$, $i = g - n_g$, $j = n_g - 2$},\\
0, & \text{otherwise}.
\end{cases}
\end{equation*}
Moreover, if~$j < 0$, the cohomology is nontrivial only if~$p = 0$ and~$i \cdot \upmu(\cE_0) \le |j|$.
\end{proposition}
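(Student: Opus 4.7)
\textbf{Proof plan for Proposition~\ref{prop:hi-we}.}

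The plan is to prove this by a direct case-by-case application of the Borel--Bott--Weil (BBW) theorem on each Grassmannian $\Gr(r,V) = \Gr(r, r+s)$ for $g \in \{8,9,10,12\}$. I would first isolate the ``top'' case by a Serre duality observation: comparing $\omega_{\rM_g} = \cO_{\rM_g}(-(n_g-2))$ (from Lemma~\ref{lem:rm-9-10}) with the adjunction formula $\omega_{\rM_g} = (\omega_{\Gr(r,V)} \otimes \det \cE_0)\vert_{\rM_g}$ forces $\det \cE_0 \cong \cO(r+s-n_g+2)$, so
\begin{equation*}
\wedge^{g-n_g}\cE_0^\vee \otimes \cO(-(n_g-2)) \cong \omega_{\Gr(r,V)},
\end{equation*}
and hence $\rH^g(\Gr(r,V), \wedge^{g-n_g}\cE_0^\vee(-(n_g-2))) = \kk$ with all other cohomology vanishing; this matches the second case of the statement exactly. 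One checks that $\det \cE_0$ has the claimed degree in each case by a direct Chern class computation (for $g=9,12$, $\det \wedge^2\cU^\vee = \cO(r-1)$; for $g=10$, $\det \cU^\perp(1) = \cO(s - 1 + s \cdot 1) \cdot \text{correction}$).

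For all other pairs $(i,j)$ with $0 \le j \le n_g-2$, I would decompose $\wedge^i \cE_0^\vee$ into a direct sum of Schur bundles $\Sigma^\alpha \cU^\vee \otimes \Sigma^\beta \cQ$. The relevant decompositions are short:
\begin{itemize}
\item[--] For $g=8$, $\cE_0 = 0$, so only $i=0$ occurs and the claim is immediate.
\item[--] For $g=9$ on $\Gr(3,6)$, using $\wedge^2 \cU \cong \cU^\vee \otimes \cO(-1)$ one gets $\wedge^i \cE_0^\vee \in \{\cO, \cU^\vee(-1), \wedge^2\cU^\vee(-2), \cO(-2)\}$ for $i=0,1,2,3$.
\item[--] For $g=10$ on $\Gr(2,7)$, $\wedge^i \cE_0^\vee = \wedge^i\cQ \otimes \cO(-i) = \Sigma^{(1^i,0^{5-i})}\cQ \otimes \cO(-i)$.
\item[--] For $g=12$ on $\Gr(3,7)$, the Cauchy formula gives $\wedge^i\bigl((\wedge^2\cU^\vee)^{\oplus 3}\bigr) = \bigoplus_{\lambda \vdash i,\ \ell(\lambda)\le 3} \Sigma^\lambda \kk^3 \otimes \Sigma^\lambda(\wedge^2\cU^\vee)$, where each $\Sigma^\lambda(\wedge^2\cU^\vee)$ is a specific Schur functor of $\cU^\vee$.
\end{itemize}
For each such summand, I would twist by $\cO(-j)$, translate the result into a Levi-weight $\lambda$ of $\GL_r \times \GL_s$ (using the standard dictionary $\Sigma^\alpha \cU^\vee \otimes \Sigma^\beta \cQ \leftrightarrow (\beta_1, \ldots, \beta_s, -\alpha_r, \ldots, -\alpha_1)$), form $\lambda + \rho$ with $\rho = (r+s-1, \ldots, 1, 0)$, and verify by inspection that in every remaining case $\lambda+\rho$ has a repeated entry (so BBW gives vanishing).

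For the moreover part ($j < 0$), a parallel BBW bookkeeping shows that whenever $\wedge^i \cE_0^\vee(-j)$ has any cohomology it is concentrated in degree zero; the slope inequality $i \cdot \upmu(\cE_0) \le |j|$ is then just the restatement that $\wedge^i \cE_0^\vee(-j)$ has non-negative slope with respect to the Pl\"ucker polarization, which is a necessary condition for global sections since every Schur summand $\Sigma^\alpha\cU^\vee \otimes \Sigma^\beta\cQ$ of $\wedge^i\cE_0^\vee$ is polystable.

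The main obstacle is the bookkeeping in the $g=12$ case, where the Cauchy decomposition of $\wedge^i((\wedge^2\cU^\vee)^{\oplus 3})$ produces several Schur summands indexed by partitions $\lambda \vdash i$ of length $\le 3$, each requiring its own BBW check; since the rank of $\cU$ is only~$3$ and the relevant Schur functors $\Sigma^\lambda(\wedge^2 \cU^\vee)$ stay inside a bounded range of weights, the verification nevertheless remains finite and purely mechanical once the correct weight conventions are fixed.
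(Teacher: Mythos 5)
Your plan is in substance the same as the paper's proof: both reduce $\wedge^i\cE_0^\vee(-j)$ to equivariant bundles on $\Gr(r,r+s)$ and apply Borel--Bott--Weil case by case for $g\in\{8,9,10,12\}$. The only organizational difference is that the paper does not isolate the top case by Serre duality nor inspect individual Schur summands: it lists the possible dominant weights $(\alpha_1,\dots,\alpha_r)$ (with $\sum\alpha_k=i$ and bounded entries), notes that consecutive entries of $\lambda+\rho$ in the $\cU$-block differ by a small bounded amount, and deduces that non-vanishing forces either $\alpha_{\min}\ge i+j$ (giving $p=0$ and, for $j<0$, exactly your slope inequality $i\cdot\upmu(\cE_0)\le|j|$) or $\alpha_{\max}\le i+j-(r+s)$ (which pins down the unique case $p=g$, $i=g-n_g$, $j=n_g-2$); this uniform inequality trick replaces your finite inspection and your appeal to semistability of homogeneous bundles, though both of those are also valid.

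Two caveats on your write-up. First, for $g=12$ you cannot obtain $\det\cE_0$ from $\omega_{\rM_{12}}\cong\cO_{\rM_{12}}(-(n_{12}-2))$ via Lemma~\ref{lem:rm-9-10}: that statement for $g=12$ is proved in Proposition~\ref{prop:v22}, which itself relies on the present proposition, so the adjunction route is circular there; your fallback direct computation $\rc_1(\cE_0)=r+s-n_g+2$ is the correct (and trivial) way. Second, the Cauchy formula needs the conjugate partition on one factor, $\wedge^i(\kk^3\otimes\wedge^2\cU^\vee)\cong\bigoplus_{\lambda\vdash i}\Sigma^\lambda\kk^3\otimes\Sigma^{\lambda'}(\wedge^2\cU^\vee)$, with both constraints $\ell(\lambda)\le 3$ and $\lambda_1\le 3$; in fact, since $\wedge^2\cU^\vee\cong\cU(1)$ on $\Gr(3,7)$, you can bypass the plethysm and multiplicities entirely, as the paper does, because only the set of highest weights that occur matters for the vanishing.
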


\begin{proof}
If~$g = 8$ then~$\cE_0 = 0$, so we are interested in~$\rH^p(\Gr(2,6), \cO(-j))$,
and the result is obvious.

If~$g = 9$ then~$\cE_0 \cong \wedge^2\cU^\vee$, hence~$\cE_0^\vee \cong \wedge^2\cU \cong \cU^\vee(-1)$.
Therefore, $\wedge^i\cE_0^\vee(-j) \cong \wedge^i\cU^\vee(-i-j)$ is an equivariant vector bundle 
and the corresponding $\GL_6$-weight is
\begin{equation*}
\lambda = (\alpha_1-i-j, \alpha_2-i-j, \alpha_3-i-j, 0, 0, 0),
\end{equation*}
where~$1 \ge \alpha_1 \ge \alpha_2 \ge \alpha_3 \ge 0$ and~$\alpha_1 + \alpha_2 + \alpha_3 = i$.
Consider the weight
\begin{equation*}
\lambda + \rho = (\alpha_1-i-j+6, \alpha_2-i-j+5, \alpha_3-i-j+4, 3, 2, 1).
\end{equation*}
By the Borel--Bott--Weil Theorem, the cohomology is non-trivial only if all entries of this weight are distinct.
Since the first three entries are decreasing and their pairwise differences satisfy
\begin{equation*}
(\alpha_k-i-j+(7-k)) - (\alpha_{k+1}-i-j+(7-k-1)) = \alpha_k - \alpha_{k+1} + 1 \le 2,
\end{equation*}
it follows that the cohomology is trivial unless
\begin{equation*}
\alpha_3-i-j+4 \ge 4
\qquad\text{or}\qquad 
\alpha_1-i-j+6 \le 0.
\end{equation*}

In the first case we obtain~$\alpha_3 \ge i + j$, and since on the other hand~$i \ge 3\alpha_3$, we obtain~$2i + 3j \le 0$.
Since~$i \ge 0$, it follows that~$j \le 0$ and~$\tfrac23i = i \cdot \upmu(\cE_0) \le |j|$; in this case~$p = 0$.

In the second case we obtain~$\alpha_1 \le i + j - 6$, and since~$i \le 3\alpha_1$, this gives~$2i + 3j \ge 18$.
As~$i \le \rank(\cE_0) = 3$ and~$j \le n_9 - 2 = 4$, the only possibility is~$i = 3$ and~$j = 4$;
in this case~$p = 9$.

If~$g = 10$ then~$\cE_0 \cong \cU^\perp(1)$, hence~$\cE_0^\vee \cong V/\cU(-1)$.
Therefore, $\wedge^i\cE_0^\vee(-j) \cong \wedge^i(V/\cU)(-i-j)$ is an equivariant vector bundle 
and the corresponding $\GL_7$-weight is
\begin{equation*}
\lambda = (0,0,i+j-\alpha_5, i+j-\alpha_4, i+j-\alpha_3, i+j-\alpha_2, i+j-\alpha_1),
\end{equation*}
where~$1 \ge \alpha_1 \ge \alpha_2 \ge \alpha_3 \ge \alpha_4 \ge \alpha_5 \ge 0$ 
and~$\alpha_1 + \alpha_2 + \alpha_3 + \alpha_4 + \alpha_5 = i$.
Consider the weight
\begin{equation*}
\lambda + \rho = (7,6,i+j-\alpha_5+5, i+j-\alpha_4+4, i+j-\alpha_3+3, i+j-\alpha_2+2, i+j-\alpha_1+1),
\end{equation*}
By the Borel--Bott--Weil Theorem, the cohomology is non-trivial only if all entries of this weight are distinct.
Since the last five entries are decreasing and their pairwise differences satisfy
\begin{equation*}
(i+j-\alpha_{k+1}+k+1) - (i+j-\alpha_k+k) = \alpha_k - \alpha_{k+1} + 1 \le 2,
\end{equation*}
it follows that the cohomology is trivial unless
\begin{equation*}
i+j-\alpha_5+5 \le 5
\qquad\text{or}\qquad 
i+j-\alpha_1+1 \ge 8.
\end{equation*}

In the first case we obtain~$\alpha_5 \ge i + j$, and since on the other hand~$i \ge 5\alpha_5$, we obtain~$4i + 5j \le 0$.
Since~$i \ge 0$, it follows that~$j \le 0$ and~$\tfrac45i = i \cdot \upmu(\cE_0) \le |j|$; in this case~$p = 0$.

In the second case we obtain~$\alpha_1 \le i + j - 7$, and since~$i \le 5\alpha_1$, this gives~$4i + 5j \ge 35$.
As~$i \le \rank(\cE_0) = 5$ and~$j \le n_{10} - 2 = 3$, the only possibility is~$i = 5$ and~$j = 3$;
in this case~$p = 10$.

If~$g = 12$ then~$\cE_0 \cong (\wedge^2\cU^\vee)^{\oplus 3}$, hence~$\cE_0^\vee \cong (\cU^\vee(-1))^{\oplus 3}$.
Therefore, $\wedge^i\cE_0^\vee(-j)$ is a direct sum of equivariant vector bundles 
and the corresponding $\GL_7$-weights are
\begin{equation*}
\lambda = (\alpha_1-i-j, \alpha_2-i-j, \alpha_3-i-j, 0, 0, 0,0),
\end{equation*}
where~$3 \ge \alpha_1 \ge \alpha_2 \ge \alpha_3 \ge 0$ and~$\alpha_1 + \alpha_2 + \alpha_3 = i$.
Consider the weight
\begin{equation*}
\lambda + \rho = (\alpha_1-i-j+7, \alpha_2-i-j+6, \alpha_3-i-j+5, 4, 3, 2, 1).
\end{equation*}
By the Borel--Bott--Weil Theorem, the cohomology is non-trivial only if all entries of this weight are distinct.
Since the first three entries are decreasing and their pairwise differences satisfy
\begin{equation*}
(\alpha_k-i-j+(8-k)) - (\alpha_{k+1}-i-j+(8-k-1)) = \alpha_k - \alpha_{k+1} + 1 \le 4,
\end{equation*}
it follows that the cohomology is trivial unless
\begin{equation*}
\alpha_3-i-j+5 \ge 5
\qquad\text{or}\qquad 
\alpha_1-i-j+7 \le 0.
\end{equation*}

In the first case we obtain~$\alpha_3 \ge i + j$, and since on the other hand~$i \ge 3\alpha_3$, we obtain~$2i + 3j \le 0$.
Since~$i \ge 0$, it follows that~$j \le 0$ and~$\tfrac23i = i \cdot \upmu(\cE_0) \le |j|$; in this case~$p = 0$.

In the second case we obtain~$\alpha_1 \le i + j - 7$, and since~$i \le 3\alpha_1$, this gives~$2i + 3j \ge 21$.
As~$i \le \rank(\cE_0) = 9$ and~$j \le n_{12} - 2 = 1$, the only possibility is~$i = 9$ and~$j = 1$;
in this case~$p = 12$.
\end{proof}

\begin{corollary}
\label{cor:bbw}
For~$g \in \{8,9,10,12\}$ we have~$\rH^p(\Gr(r,V), \wedge^{q}\cE^\vee) = 0$ for all~$p \le q$ and~$q \ge 1$.
\end{corollary}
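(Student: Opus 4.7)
The plan is to reduce the vanishing to Proposition~\ref{prop:hi-we} via the direct-sum decomposition of~$\cE$. Since~$\cE = \cO_{\Gr(r,V)}(1)^{\oplus(n_g-2)} \oplus \cE_0$, dualizing gives $\cE^\vee = \cO(-1)^{\oplus(n_g-2)} \oplus \cE_0^\vee$, and the standard identity for exterior powers of direct sums, combined with the fact that $\wedge^a(L^{\oplus m}) \cong L^{\otimes a \oplus \binom{m}{a}}$ for a line bundle~$L$, yields
\begin{equation*}
\wedge^q \cE^\vee \cong \bigoplus_{\substack{a+b=q\\ 0\le a \le n_g-2}} \binom{n_g-2}{a}\, \wedge^b\cE_0^\vee(-a),
\end{equation*}
where a summand with coefficient $\binom{n_g-2}{a}$ denotes that many copies.

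The next step is to take cohomology and apply Proposition~\ref{prop:hi-we} to each summand. Since $0 \le a \le n_g-2$ in every summand, the proposition tells us that $\rH^p(\Gr(r,V), \wedge^b\cE_0^\vee(-a)) = 0$ except in exactly two cases: either $(p,b,a) = (0,0,0)$, which contributes only to $q = 0$; or $(p,b,a) = (g,\, g-n_g,\, n_g-2)$, which contributes only to $q = (g-n_g) + (n_g-2) = g-2$ and only in cohomological degree $p = g$.

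Finally, I would observe that the hypothesis $q \ge 1$ rules out the first case, while the second case gives a potentially nonzero group only when $q = g-2$ and $p = g$, which violates the inequality $p \le q$ since $g > g-2$. This exhausts all possible contributions, so the desired vanishing follows. The entire argument is essentially bookkeeping once Proposition~\ref{prop:hi-we} is in hand; no step is genuinely hard, and no case-by-case Borel--Bott--Weil computation needs to be redone here. The only thing to be careful about is the range $0 \le a \le n_g - 2$ that appears automatically from the rank bound on the trivial-twist summand, which is exactly what matches the range of validity of Proposition~\ref{prop:hi-we}.
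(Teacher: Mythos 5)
Your argument is correct and follows essentially the same route as the paper: decompose $\wedge^q\cE^\vee$ using $\cE \cong \cO(1)^{\oplus(n_g-2)}\oplus\cE_0$, apply Proposition~\ref{prop:hi-we} to each summand $\wedge^b\cE_0^\vee(-a)$ (noting $0\le a\le n_g-2$ automatically), and observe that the only surviving cases are $q=0$ or $(p,q)=(g,g-2)$, both excluded by the hypotheses $q\ge 1$ and $p\le q$. This matches the paper's proof, so nothing further is needed.
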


\begin{proof}
Since~$\cE \cong \cO(1)^{\oplus (n_g - 2)} \oplus \cE_0$ by~\eqref{eq:def-ce}, it follows that
\begin{equation*}
\wedge^q\cE^\vee \cong \bigoplus_{i = 0}^q \wedge^i \cE_0^\vee \otimes \cO(i - q)^{\oplus \binom{n_g - 2}{q-i}}.
\end{equation*}
Clearly, $0 \le q - i \le n_g - 2$, hence Proposition~\ref{prop:hi-we} applies 
and shows that the cohomology of~$\wedge^i \cE_0^\vee(i - q)$ may be nontrivial 
only if~$i = q = 0$ or~$i = g - n_g$ and~$q - i = n_g - 2$.
The first case is obviously out of range~$q \ge 1$, 
while in the second the nontrivial cohomology sits in the degree~$p = g = q + 2$,
hence out of range~$p \le q$.
\end{proof}

%%%%%%%%%%%%%%%%%%%%%%%%%

\section{Mukai varieties of genus 12}
\label{sec:g-12}

In this section~$V_7$ is a vector space of dimension~$7$.
Consider a \emph{nondegenerate} net of skew-forms
\begin{equation*}
\sigma_0 = (\sigma_{0,1},\sigma_{0,2},\sigma_{0,3}) \in 
\wedge^2V_7^\vee \oplus \wedge^2V_7^\vee \oplus \wedge^2V_7^\vee 
\end{equation*}
where non-degeneracy means that any linear combination of~$\sigma_{0,i}$ has rank~6, see Definition~\ref{def:nondegenerate}.
We will often think of this net as an embedding~$S_3 \hookrightarrow \wedge^2V_7^\vee$ from a 3-dimensional space~$S_3$
such that the plane~$\P(S_3) \subset \P(\wedge^2V_7^\vee)$ is contained in the open~$\GL(V_7)$-orbit of skew-forms of rank~6.

\subsection{The triple Veronese surface}

With a nondegenerate net of skew-forms we associate a (triple) Veronese surface in~$\P(V_7)$.

\begin{lemma}
\label{lem:net}
Let~$S_3 \hookrightarrow \wedge^2V_7^\vee$ be a nondegenerate net of skew-forms.
The map 
\begin{equation}
\label{eq:kappa}
\kappa \colon \P(S_3) \to \P(V_7),
\qquad 
\sigma \mapsto \sigma \wedge \sigma \wedge \sigma \in \wedge^6V_7^\vee \cong V_7
\end{equation}
is a closed embedding with~$\kappa^*\cO_{\P(V_7)}(1) \cong \cO_{\P(S_3)}(3)$.
Moreover, there is a self-dual exact sequence
\begin{equation}
\label{eq:net-sequence}
0 \to \cO_{\P(S_3)}(-4) \to V_7 \otimes \cO_{\P(S_3)}(-1) \to V_7^\vee \otimes \cO_{\P(S_3)} \to \cO_{\P(S_3)}(3) \to 0,
\end{equation}
where the middle map is induced by the net and the other two maps are induced by~$\kappa$.
In particular, the surface~$\kappa(\P(S_3)) \subset \P(V_7)$ is not contained in a hyperplane.
\end{lemma}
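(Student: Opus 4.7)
The plan is to build the $4$-term exact sequence first, deduce $\kappa^*\cO_{\P(V_7)}(1) \cong \cO_{\P(S_3)}(3)$ and the non-degenerate image statement from its cohomology, and finally reduce the closed embedding claim to a Pfaffian dimension bound. To construct the sequence, I would consider the tautological skew form on $\P(S_3)$, coming from $\cO_{\P(S_3)}(-1) \hookrightarrow S_3 \otimes \cO \hookrightarrow \wedge^2 V_7^\vee \otimes \cO$ after twisting by $\cO(1)$; it defines a morphism $\mu \colon V_7 \otimes \cO_{\P(S_3)}(-1) \to V_7^\vee \otimes \cO_{\P(S_3)}$. Non-degeneracy of the net forces $\mu$ to have constant rank $6$, so $\Ker\mu$ and $\Coker\mu$ are line bundles; a direct Chern class computation modulo $H^3$ on $\P^2 = \P(S_3)$, matching coefficients in $c(\Ker\mu) \cdot c(V_7^\vee \otimes \cO) = c(V_7 \otimes \cO(-1)) \cdot c(\Coker\mu)$, forces $\Ker\mu \cong \cO(-4)$ and $\Coker\mu \cong \cO(3)$. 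Self-duality is immediate, since dualizing the sequence and twisting by $\cO(-1)$ reproduces it. To identify the left-hand inclusion with the Pfaffian kernel, I would note that the twist of $\cO(-4) \hookrightarrow V_7 \otimes \cO(-1)$ gives a nowhere-vanishing section of $V_7 \otimes \cO(3)$ spanning $\ker(\sigma)$ at each $[\sigma]$; the classical formula $v_k = (-1)^{k-1}\Pf(M_{\hat k,\hat k})$ for the kernel of a rank-$6$ skew $7{\times}7$ matrix $M = M(\sigma)$ identifies this section as a cubic in $\sigma$ valued in $V_7$, which under $\wedge^6 V_7^\vee \cong V_7$ is exactly $\sigma \mapsto \sigma\wedge\sigma\wedge\sigma$. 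By self-duality the right arrow is dual to the left, $\kappa$ factors through the quotient $V_7^\vee \otimes \cO \twoheadrightarrow \cO(3)$, and $\kappa^*\cO_{\P(V_7)}(1) \cong \cO_{\P(S_3)}(3)$ is immediate.

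Taking cohomology of the sequence on $\P^2$---splitting it into two short exact sequences and using $\rH^{\le 1}(V_7 \otimes \cO(-1)) = 0$, $\rH^{>0}(V_7^\vee \otimes \cO) = 0$, and $\rH^2(\cO(-4)) \cong S_3$ by Serre duality---collapses the long exact sequences to $0 \to V_7^\vee \to \Sym^3 S_3^\vee \to S_3 \to 0$. The injectivity $V_7^\vee \hookrightarrow \rH^0(\cO(3))$ says that the seven cubics defining $\kappa$ are linearly independent, hence $\kappa(\P(S_3))$ spans $\P(V_7)$.

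For the closed embedding property, I would work with the subspace $S_3^v := \{\sigma \in S_3 : v \in \ker(\sigma)\}$ for $v \in V_7 \setminus \{0\}$. Since $d\kappa_{[\sigma]}$ sends $\tau \bmod \sigma$ to $3\, \sigma \wedge \sigma \wedge \tau \bmod \sigma\wedge\sigma\wedge\sigma$, an adapted basis with $\sigma = e_1^* \wedge e_2^* + e_3^* \wedge e_4^* + e_5^* \wedge e_6^*$ and $v = e_7$, together with the decomposition $\tau = \tau_0 + \alpha \wedge e_7^*$ and the nondegeneracy of $\sigma$ on $\langle e_1,\dots,e_6\rangle$, shows that $\sigma\wedge\sigma\wedge\tau \in \langle \sigma\wedge\sigma\wedge\sigma\rangle$ precisely when $\alpha = 0$, i.e., when $v \in \ker(\tau)$. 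Hence $\Ker(d\kappa_{[\sigma]}) = S_3^v/\langle\sigma\rangle$; similarly $\kappa(\sigma_1) = \kappa(\sigma_2)$ forces $\sigma_1,\sigma_2 \in S_3^v$ for $v = \ker(\sigma_1) = \ker(\sigma_2)$. Thus both unramifiedness and injectivity of $\kappa$, and hence the closed embedding property, follow from the bound $\dim S_3^v \le 1$.

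The main obstacle is this bound, which I would prove by a Pfaffian argument. If $\dim S_3^v \ge 2$, non-degeneracy forces every nonzero $\sigma \in S_3^v$ to have rank exactly $6$ with $\ker(\sigma) = \langle v\rangle$, so $\sigma$ descends to a nondegenerate skew form $\bar\sigma$ on $W := V_7/\langle v\rangle$; the embedding $S_3^v \hookrightarrow \wedge^2 W^\vee$ therefore lies entirely in the nondegenerate locus. But the Pfaffian is a nonzero cubic polynomial on $\wedge^2 W^\vee$; its restriction to $S_3^v$ is either identically zero, which forces every nonzero $\sigma \in S_3^v$ to be degenerate on $W$ and contradicts non-degeneracy of the net, or defines a nontrivial cubic hypersurface in the positive-dimensional projective space $\P(S_3^v)$ which is non-empty and yields a degenerate $\sigma$ the same way. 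Either alternative gives a contradiction, so $\dim S_3^v \le 1$ and the closed embedding claim follows.
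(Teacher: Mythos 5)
Your proposal is correct and follows essentially the same route as the paper: the four-term sequence comes from the constant-rank-$6$ morphism $V_7\otimes\cO(-1)\to V_7^\vee\otimes\cO$ with its line-bundle kernel and cokernel pinned down numerically, the spanning statement comes from the cohomology of that sequence, and the closed-embedding claim reduces to the same Pfaffian-hypersurface contradiction (a $\ge 2$-dimensional space of forms annihilating a common vector $v$ would meet the cubic Pfaffian divisor in $\P(\wedge^2(V_7/\langle v\rangle)^\vee)$, violating the rank-$6$ condition). Your version is somewhat more explicit than the paper's --- you compute $d\kappa$ directly and identify the outer maps via sub-Pfaffians, where the paper invokes the Veronese-plus-linear-projection description and treats tangent directions as ``infinitely close points'' --- but these are refinements of the same argument, and all steps check out.
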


\begin{proof}
Since the net is nondegenerate, any~$0 \ne \sigma \in S_3 \subset \wedge^2V_7^\vee$ has rank~6, 
hence~$\sigma \wedge \sigma \wedge \sigma \ne 0$ generates~$\Ker(\sigma)$.
Therefore, the map~$\kappa$ is well-defined.
Moreover, by definition~$\kappa$ is induced by the composition
\begin{equation*}
%\label{eq:tilde-kappa}
\tilde\kappa \colon \Sym^3(S_3) \to \Sym^3(\wedge^2V_7^\vee) \to V_7,
\qquad 
\sigma_1 \otimes \sigma_2 \otimes \sigma_3 \mapsto \sigma_1 \wedge \sigma_2 \wedge \sigma_3,
\end{equation*}
hence~$\kappa$ is the composition of the isomorphism of~$\P(S_3) \cong \P^2$ onto the triple Veronese surface in~$\P(\Sym^3(S_3))$ and a linear projection.
Assume~$v \coloneqq \kappa(\sigma_1) = \kappa(\sigma_2)$ for~$\sigma_1,\sigma_2 \in \P(S_3)$ (possibly, infinitely close).
Then it follows that~$S_2 \coloneqq \langle \sigma_1, \sigma_2 \rangle \subset \wedge^2(v^\perp) \subset \wedge^2V_7^\vee$,
and therefore the pencil~$\P(S_2)$ intersects the hypersurface of degenerate forms in~$\P(\wedge^2v^\perp)$ 
in contradiction to~\eqref{eq:rank-6}.
This proves that the map~\eqref{eq:kappa} is a closed embedding.

Now consider the morphism of sheaves
\begin{equation*}
V_7 \otimes \cO_{\P(S_3)}(-1) \to V_7^\vee \otimes \cO_{\P(S_3)} 
\end{equation*}
on~$\P(S_3)$ induced by the net.
The assumption that the net is nondegenerate implies that this morphism has constant rank~6,
hence its kernel and cokernel are line bundles.
Moreover, the morphism is self-dual, hence the kernel is dual to the cokernel up to twist.
Using Riemann--Roch it is easy to deduce that the cokernel is~$\cO_{\P(S_3)}(3)$ and the kernel is~$\cO_{\P(S_3)}(-4)$,
which gives us the required self-dual sequence~\eqref{eq:net-sequence}.
Comparing this construction with the definition of the map~$\kappa$ above, 
we see that the first and last maps in the sequence are induced by~$\tilde\kappa$.

The hypercohomology spectral sequence of~\eqref{eq:net-sequence} takes the form of an exact sequence
\begin{equation*}
0 \to V_7^\vee \xrightarrow{\ \ \tilde\kappa^\vee\ } \Sym^3(S_3)^\vee \xrightarrow{\qquad} S_3 \to 0.
\end{equation*}
In particular, the map~$\tilde\kappa^\vee \colon V_7^\vee \to \Sym^3(S_3)^\vee$ is injective,
hence the surface~$\kappa(\P(S_3))$ is not contained in a hyperplane.
\end{proof}

\subsection{The zero locus of~$\sigma_0$}

Let~$X \subset \Gr(3,V_7)$ be the zero locus of~$\sigma_0$ 
considered as a global section of the bundle~$\cE_0 = \wedge^2\cU^\vee \oplus \wedge^2\cU^\vee \oplus \wedge^2\cU^\vee$.
Recall that~$V_4^\perp \subset V_7^\vee$ denotes the annihilator of a subspace~$V_4 \subset V_7$.

\begin{lemma}
\label{lem:no-gr34}
If the net~$\sigma_0$ is nondegenerate, $X$ contains no~$\Gr(3,V_4)$. 
\end{lemma}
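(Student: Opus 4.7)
The plan is to assume for contradiction that $\Gr(3,V_4) \subset X$ for some $4$-dimensional subspace $V_4 \subset V_7$, and deduce that the Veronese surface of Lemma~\ref{lem:net} lies inside $\P(V_4)$, contradicting the non-degeneracy statement at the end of that lemma.

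First I would translate the hypothesis $\Gr(3,V_4) \subset X$ into an isotropy statement for $V_4$ itself. By definition, $[U_3] \in X$ if and only if $\sigma_{0,i}\vert_{U_3} = 0$ for $i = 1,2,3$. Since every pair of vectors of $V_4$ is contained in some $3$-dimensional subspace of $V_4$, the vanishing on all such $U_3$ forces $\sigma_{0,i}\vert_{V_4} = 0$; by linearity in $\sigma$, every $\sigma \in S_3$ then satisfies $\sigma\vert_{V_4} = 0$, so $V_4$ is isotropic for every form in the net.

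Next I would use non-degeneracy to show $\ker(\sigma) \subset V_4$ for every $0 \ne \sigma \in S_3$. Indeed, by assumption each such $\sigma$ has rank $6$, so $\ker(\sigma)$ is a line $\ell \subset V_7$ and the induced form on $V_7/\ell \cong V_6$ is symplectic. If $\ell \not\subset V_4$, then $V_4$ injects into $V_6$ as a $4$-dimensional isotropic subspace of a $6$-dimensional symplectic space, which is impossible. Hence $\ell \subset V_4$. Under the identification of Lemma~\ref{lem:net}, the point $\kappa([\sigma]) \in \P(V_7)$ is precisely the line $[\ker(\sigma)]$, so we obtain
\begin{equation*}
\kappa(\P(S_3)) \subset \P(V_4) \subset \P(V_7).
\end{equation*}

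Finally, since $\dim(V_4) = 4 < 7 = \dim(V_7)$, the linear subspace $\P(V_4)$ is contained in a hyperplane of $\P(V_7)$; thus $\kappa(\P(S_3))$ is contained in a hyperplane, contradicting the last assertion of Lemma~\ref{lem:net}. The only non-routine ingredient is the isotropic dimension bound for a rank-$6$ skew form on a $7$-space, which is immediate from the symplectic reduction above; the rest is formal once Lemma~\ref{lem:net} is in hand.
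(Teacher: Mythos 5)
Your proof is correct, but it takes a genuinely different route from the paper's. The paper, after the same first reduction (total isotropy of~$V_4$, so~$S_3 \subset V_4^\perp \wedge V_7^\vee$), projects the net to~$V_4^\perp \otimes V_4^\vee$ and uses the fact that the rank~$\le 2$ locus in the~$\P^{11}$ of $3\times 4$ matrices has codimension~$2$, so the plane~$\P(S_3)$ must meet it; the resulting~$\sigma$ has a $2$-dimensional kernel inside~$V_4$, hence rank~$\le 5$, contradicting non-degeneracy directly. You instead show, via symplectic reduction modulo~$\ker(\sigma)$ and the bound~$\dim \le 3$ for isotropic subspaces of a $6$-dimensional symplectic space, that \emph{every} kernel line lies in~$V_4$, so~$\kappa(\P(S_3)) \subset \P(V_4)$, and then invoke the last assertion of Lemma~\ref{lem:net} (the Veronese surface is linearly non-degenerate). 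Your argument avoids the determinantal codimension count and is more elementary at that step, but it leans on the non-degeneracy statement of Lemma~\ref{lem:net}, whose proof goes through the self-dual complex~\eqref{eq:net-sequence}, whereas the paper's argument needs only the rank-$6$ condition itself; your route is in fact closer in spirit to the paper's proof of Lemma~\ref{lem:no-plane} (second case), which also concludes by forcing~$\kappa(\P(S_3))$ into a proper linear subspace. One small point worth making explicit if you write this up: the identification~$\kappa([\sigma]) = [\ker(\sigma)]$ is established inside the proof of Lemma~\ref{lem:net} (where~$\sigma\wedge\sigma\wedge\sigma$ is shown to generate~$\Ker(\sigma)$), so you are quoting that proof and not just the lemma's statement.
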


\begin{proof}
Assume~$\Gr(3,V_4) \subset X$.
This means that~$V_4$ is totally isotropic for~$S_3$, hence~$S_3 \subset V_4^\perp \wedge V_7^\vee$.
Consider the composition
\begin{equation*}
S_3 \hookrightarrow V_4^\perp \wedge V_7^\vee \twoheadrightarrow V_4^\perp \otimes V_4^\vee.
\end{equation*}
Since~$S_3 \cap \wedge^2V_4^\perp = 0$
this gives an embedding of~$\P(S_3) = \P^2$ into the projective space of 3-by-4 matrices.
Since the locus of matrices of rank~$\le 2$ has codimension~2, 
it follows that there is a point~$0 \ne \sigma \in S_3$ whose image is a matrix of rank~$\le 2$.
Then it is clear that the corresponding skew-form has rank~$\le 5$, 
in contradiction to non-degeneracy of the net.
\end{proof}

\begin{lemma}
\label{lem:no-plane}
If the net~$\sigma_0$ is nondegenerate, $X$ contains no planes.
\end{lemma}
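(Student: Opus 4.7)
The plan is to classify planes in $\Gr(3, V_7)$ and rule out each type, using isotropy of the $U_3$'s with respect to the net $S_3$ together with Lemma~\ref{lem:no-gr34} and Lemma~\ref{lem:net}. I would first recall that any $\pi \cong \P^2 \subset \Gr(3,V_7)$ is of one of two types: either
\begin{renumerate}
\item an \emph{$\alpha$-plane} $\pi = \{U_3 : V_2 \subset U_3 \subset V_5\}$ for some chain $V_2 \subset V_5 \subset V_7$ with $\dim V_i = i$; or
\item a \emph{$\beta$-plane} $\pi = \{U_3 : \langle v\rangle \subset U_3 \subset V_4\}$ for some $v \in V_4 \subset V_7$ with $\dim V_4 = 4$.
\end{renumerate}
This follows from the general analysis of $W \coloneqq \bigcap_{U_3 \in \pi} U_3$ and $W' \coloneqq \sum_{U_3 \in \pi} U_3$: one has $\pi \subset \Gr(3-\dim W, W'/W)$, and the only combinations $(\dim W, \dim W')$ for which this Grassmannian contains a $\P^2$ whose intrinsic span and intersection match are $(2,5)$ and $(1,4)$.

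For type~(ii), I would show that $V_4$ is totally isotropic for every $\sigma \in S_3$, which gives $\Gr(3, V_4) \subset X$ and contradicts Lemma~\ref{lem:no-gr34}. For any $u \in V_4$ one can choose $U_3 \in \pi$ with $u \in U_3$; isotropy yields $\sigma(v, u) = 0$ and hence $\sigma(v, V_4) = 0$. For $u, u' \in V_4$ with $\{v, u, u'\}$ linearly independent, the 3-space $\langle v, u, u'\rangle$ lies in $\pi$ and is isotropic, so $\sigma(u, u') = 0$; the remaining cases (when $v \in \langle u, u'\rangle$) reduce to $\sigma(v, V_4) = 0$ by linearity.

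For type~(i), I would fix a nonzero $v \in V_2$ and consider the linear map $\phi_v \colon S_3 \to V_7^\vee$ defined by $\phi_v(\sigma) \coloneqq \sigma(v, -)$. Every $U_3 \in \pi$ contains $v$, and the union of these $U_3$'s is $V_5$; hence isotropy forces $\sigma(v, V_5) = 0$ for all $\sigma \in S_3$, i.e., $V_5 \subset \phi_v(S_3)^\perp$, so $\dim \phi_v(S_3) \le 2$. On the other hand, $\ker \phi_v = \{\sigma \in S_3 : v \in \ker(\sigma)\}$ projects into the fiber $\kappa^{-1}([v]) \subset \P(S_3)$, which has at most one point since $\kappa$ is an embedding by Lemma~\ref{lem:net}; hence $\dim \ker \phi_v \le 1$ and so $\dim \phi_v(S_3) \ge 2$. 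Combining the two bounds, $\dim \phi_v(S_3) = 2$, which is equivalent to $v \in \kappa(\P(S_3))$. Since this holds for every nonzero $v \in V_2$, the line $\P(V_2)$ is contained in the triple Veronese surface $\kappa(\P(S_3))$.

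The main obstacle is then to rule out lines on $\kappa(\P(S_3))$. This is immediate from Lemma~\ref{lem:net}: since $\kappa$ is an embedding and $\kappa^*\cO_{\P(V_7)}(1) \cong \cO_{\P(S_3)}(3)$, the preimage of any line $\P(V_2) \subset \P(V_7)$ would be a curve $C \subset \P(S_3) = \P^2$ with $\deg(\cO_{\P^2}(3)\vert_C) = 1$, which admits no integer solution. This contradiction disposes of case~(i) and completes the proof.
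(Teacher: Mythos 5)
Your proof is correct. The case of planes of type $V_1 \subset U_3 \subset V_4$ coincides with the paper's argument: every $U_2 \subset V_4$ lies in some member of the plane, so $V_4$ is totally isotropic, giving $\Gr(3,V_4)\subset X$ and a contradiction with Lemma~\ref{lem:no-gr34}. For planes of type $V_2 \subset U_3 \subset V_5$ you share the paper's key first step --- isotropy of all members forces $\sigma(V_2,V_5)=0$ for every $\sigma$ in the net --- but the endgame is genuinely different. The paper deduces that $\sigma\vert_{V_5}$ has rank~$2$, that its $3$-dimensional kernel contains $\Ker(\sigma)$, and hence that $\kappa(\P(S_3))\subset\P(V_5)$, contradicting the last assertion of Lemma~\ref{lem:net} (the image is not contained in a hyperplane). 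You instead fix $0\ne v\in V_2$ and consider $\phi_v\colon S_3\to V_7^\vee$: since $\phi_v(S_3)\subset V_5^\perp$ has dimension at most $2<3$, some nonzero $\sigma$ has $v\in\Ker(\sigma)$, so $[v]\in\kappa(\P(S_3))$; letting $v$ vary shows the line $\P(V_2)$ lies on the image, which is impossible because $\kappa$ is an embedding with $\kappa^*\cO_{\P(V_7)}(1)\cong\cO_{\P(S_3)}(3)$. So both arguments contradict Lemma~\ref{lem:net}, but through different parts of it (linear nondegeneracy of the image versus the degree of the embedding); yours avoids the rank computation for $\sigma\vert_{V_5}$ at the cost of the (easy) observation that the triple Veronese image contains no lines. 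Two small remarks: your lower bound $\dim\phi_v(S_3)\ge 2$ via injectivity of $\kappa$ is not actually needed, since the upper bound alone forces $\Ker(\phi_v)\ne 0$, which is all you use; and your preliminary classification of planes in $\Gr(3,V_7)$ into the two types is the standard fact that the paper assumes implicitly.
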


\begin{proof}
Assume~$X$ contains the plane in~$\Gr(3,V_7)$ corresponding to a flag~$V_1 \subset V_4$.
Then any subspace~$U_3$ such that~$V_1 \subset U_3 \subset V_4$ is totally isotropic for the net.
Clearly, any~$U_2 \subset V_4$ is contained in at least one of these subspaces, 
hence~$V_4$  must be totally isotropic, hence all~$U_3$ must be totally isotropic.
But this contradicts Lemma~\ref{lem:no-gr34}.

Assume~$X$ contains the plane in~$\Gr(3,V_7)$ corresponding to a flag~$V_2 \subset V_5$.
Then the restriction of any skew form from the net to~$V_5$ contains~$V_2$ in the kernel.
Thus, this restriction has rank~2, and therefore it has a 3-dimensional kernel space.
Then it follows that the kernel of this skew-form on~$V_7$ is contained in this 3-dimensional subspace of~$V_5$,
and therefore~$\kappa(\P(S_3)) \subset \P(V_5)$, in contradiction to Lemma~\ref{lem:net}.
\end{proof}

For each vector~$0 \ne v \in V_7$ we denote by~$X_v$ the zero locus of the corresponding section of the bundle~$V_7/\cU$ on~$X$.
In other words, $X_v = \{[U_3] \in X \mid v \in U_3 \}$.

\begin{lemma}
\label{lem:v22-conics}
Assume the net~$\sigma_0$ is nondegenerate.
For any~$0 \ne \sigma \in S_3$ the subscheme~$X_{\kappa(\sigma)} \subset X$ is a conic.
Conversely, every conic on~$X$ is equal to~$X_{\kappa(\sigma)}$ for some~$0 \ne \sigma \in S_3$.
\end{lemma}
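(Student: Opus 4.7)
\emph{Proof plan.} For the forward direction, fix $0 \neq \sigma \in S_3$, set $v := \kappa(\sigma)$, and extend $\sigma_1 := \sigma$ to a basis $\sigma_1,\sigma_2,\sigma_3$ of $S_3$. I plan to analyze the linear forms $\phi_i := \sigma_i(v,-) \in V_7^\vee$. One has $\phi_1 = 0$ since $v \in \ker\sigma_1$, while $\phi_2,\phi_3$ are linearly independent: any relation $\alpha_2\phi_2+\alpha_3\phi_3 = 0$ would place $v$ in the kernel of $\alpha_2\sigma_2 + \alpha_3\sigma_3$, yielding $v = \kappa(\alpha_2\sigma_2+\alpha_3\sigma_3)$ and contradicting injectivity of $\kappa$ (Lemma~\ref{lem:net}). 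Setting $V_5 := \ker\phi_2 \cap \ker\phi_3$, a $5$-dimensional subspace containing $v$, a 3-plane $U_3 \ni v$ is net-isotropic iff $U_3 \subset V_5$ and the 2-plane $U_2 := U_3/\langle v\rangle$ in $W := V_5/\langle v\rangle$ is isotropic for the descended skew-forms $\bar\tau_i \in \wedge^2 W^\vee$ coming from $\sigma_i|_{V_5}$. The $\bar\tau_i$ are linearly independent in $\wedge^2 W^\vee$: a nontrivial relation $\sum\alpha_i\bar\tau_i = 0$ would render $V_5$ totally isotropic for the rank-$6$ form $\sum\alpha_i\sigma_i$, but the maximal isotropic subspace for such a form on $V_7$ has dimension $4$. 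Therefore $X_v = \Gr(2,W) \cap \P(T^\perp)$, where $T := \langle\bar\tau_1,\bar\tau_2,\bar\tau_3\rangle \subset \wedge^2 W^\vee$ and $\P(T^\perp) \cong \P^2$ lies in $\P(\wedge^2 W) = \P^5$. Since $\Gr(2,W)$ is the smooth $4$-dimensional Klein quadric, this intersection is either a conic or the entire $\P^2$; the latter would give $\P^2 \subset X$, contradicting Lemma~\ref{lem:no-plane}.

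For the converse, given a conic $C \subset X$ I consider $\cK := \cU|_C \subset V_7 \otimes \cO_C$, a rank-$3$ subsheaf with $\deg\det\cK = -2$. Assuming $C \cong \P^1$ (reducible and non-reduced conics being handled by an analogous analysis of torsion-free rank-$3$ subsheaves), the splitting of $\cK$ as a subbundle of a trivial bundle is either $(a)$ $\cO(-1)^{\oplus 2} \oplus \cO$ or $(b)$ $\cO(-2) \oplus \cO^{\oplus 2}$. In case $(b)$, $H^0(\cK)$ gives a $2$-plane $V_2 \subset V_7$ contained in every $U_3 \in C$; the induced morphism $C \to \P(V_7/V_2)$, of degree $2$ and generically injective, has image a smooth conic spanning a $\P^2 = \P(V_5/V_2)$ for some $5$-dimensional $V_5 \supset V_2$. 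Isotropy of each $U_3 = V_2 + \langle u\rangle$ on $C$, combined with linearity of $u \mapsto \sigma_i(V_2,u)$, forces $V_5 \subset V_2^{\perp_i}$ for all $i$; but then every $U_3$ with $V_2 \subset U_3 \subset V_5$ is net-isotropic, producing a plane $\P^2 \subset X$ and contradicting Lemma~\ref{lem:no-plane}.

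In case $(a)$, the $\cO$ summand of $\cK$ produces a vector $v \in V_7$ common to every $U_3 \in C$, so $C \subset X_v$. The goal is to show $v \in \kappa(\P(S_3))$; suppose not. Then by the same injectivity-of-$\kappa$ argument the three forms $\phi_i = \sigma_i(v,-)$ are linearly independent in $V_7^\vee$, so $V_v := \bigcap_i\ker\phi_i$ has dimension $4$. Mirroring the forward analysis, $X_v \subset \Gr(2,V_v/\langle v\rangle) = \P^2$ is cut out by three conditions from the descended forms $\bar\tau_i$ on the $3$-dimensional $V_v/\langle v\rangle$; each nonzero $\bar\tau_i$ has rank $2$ with $1$-dimensional kernel and cuts out a line $L_i \subset \P^2$. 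The projection $C \to \P^2$, $U_3 \mapsto U_3/\langle v\rangle$, is a degree-$2$ morphism which is birational onto its image—otherwise $\cK$ would acquire a second constant section, contradicting $h^0(\cK) = 1$ in splitting $(a)$—so the image is a smooth plane conic. Since a smooth conic lies in no line, each $L_i = \P^2$ and therefore every $\bar\tau_i$ vanishes, making $V_v$ a $4$-dimensional totally isotropic subspace for the entire net; then $\Gr(3,V_v) \cong \P^3 \subset X$, contradicting Lemma~\ref{lem:no-gr34}. Hence $v = \kappa(\sigma)$ for some $\sigma \in S_3$, and since $C \subset X_{\kappa(\sigma)}$ with both $1$-dimensional of degree $2$, they coincide. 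The main obstacle I anticipate is the uniform treatment of degenerate conics, where the clean splitting-type argument for $\cK$ must be replaced by tracking the constant part of a torsion-free rank-$3$ subsheaf; however, the geometric mechanism—invoking Lemmas~\ref{lem:no-gr34} and~\ref{lem:no-plane} to exclude forbidden linear subvarieties of $X$—should persist.
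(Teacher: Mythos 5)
Your argument follows essentially the same route as the paper. The forward direction is the paper's proof with the implicit details filled in: you identify $X_{\kappa(\sigma)}$ with $\Gr(2,V_5/\kk v)\cap\P^2$ inside $\P(\wedge^2(V_5/\kk v))$ (your verification that the descended forms $\bar\tau_i$ remain linearly independent is exactly what the paper's injectivity claim $S_3\hookrightarrow\wedge^2(V_5/\kk v)^\vee$ asserts), and you exclude the degenerate alternative by Lemma~\ref{lem:no-plane}. The converse also has the same skeleton: reduce to a common vector $v$ for all members of the conic, and if $v\notin\kappa(\P(S_3))$ show that the $4$-dimensional common orthogonal $V_v$ is forced to be totally isotropic, contradicting Lemma~\ref{lem:no-gr34}. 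The one genuine difference is how the common vector is obtained: the paper simply invokes the fact that any conic in $\Gr(3,V_7)$ lies in some $\Gr(2,V_7/\kk v)$, valid for all conics, whereas you derive it from the splitting type of $\cU\vert_C$ — which buys a self-contained argument but only for $C\cong\P^1$, so the reducible and non-reduced conics you defer are exactly the cases your version still owes (for a line pair the two axis $2$-planes meet inside the $3$-space at the node, and a double line is not scheme-theoretically contained in a line, so the mechanism does persist, but this should be said). Two simplifications are available: in your case $(a)$ the map $U_3\mapsto U_3/\langle v\rangle$ is automatically injective because every member contains $v$, so the birationality discussion via $h^0(\cK)=1$ is unnecessary; and in case $(b)$ any vector of the common $V_2$ already serves as $v$, so that case could be absorbed into case $(a)$ rather than excluded separately (though your contradiction via Lemma~\ref{lem:no-plane} is correct and in fact shows the splitting type $(0,0,-2)$ never occurs).
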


\begin{proof}
Set~$v \coloneqq \kappa(\sigma)$ and let~$V_5 \subset V_7$ be the orthogonal complement of~$v$ with respect to all skew-forms in the net 
(it is 5-dimensional, because~$v$ is the kernel of the unique skew-form in the net).
Obviously, if~$[U_3] \in X_v$ then~$v \in U_3 \subset V_5$.
Moreover, $v$ is contained in the kernel for the restriction of any skew-form from the net to~$V_5$;
therefore we have a net of skew-forms~$S_3 \hookrightarrow \wedge^2(V_5/\kk v)^\vee$ and 
\begin{equation*}
X_v = \Gr(2,V_5/\kk v) \cap \P(S_3^\perp).
\end{equation*}
Since~$X$ contains no planes by Lemma~\ref{lem:no-plane}, this is a conic.

Conversely, any conic on~$\Gr(3,V_7)$ is contained in~$\Gr(2,V_7/\kk v) \subset \Gr(3,V_7)$ for some~\mbox{$0 \ne v \in V_7$}.
It follows, therefore, that any conic on~$X$ is contained in some~$X_v$, 
so it remains to show that~\mbox{$v = \kappa(\sigma)$} for some~$0 \ne \sigma \in S_3$.
Indeed, otherwise the orthogonal complement of~$v$ with respect to all skew-forms in the net
is a 4-dimensional subspace~$V_4 \subset V_7$,
hence~$X_v \subset X \cap \Gr(3,V_4)$.
On the other hand, it is easy to see that the zero locus of one skew-form in~$\Gr(3,V_4) \cong \P^3$ 
is empty (if the restriction of the form to~$V_4$ has rank~4),
or is a line (if the restriction has rank~2),
or the whole~$\Gr(3,V_4)$ (if the restriction vanishes).
Thus, if the intersection~$X \cap \Gr(3,V_4)$ of such zero loci contains the conic~$X_v$, 
it is equal to~$\Gr(3,V_4)$, which contradicts Lemma~\ref{lem:no-gr34}.
\end{proof}

There is a simple yet useful consequence that we will need later.

\begin{lemma}
\label{lem:conics-cover}
If the net~$\sigma_0$ is nondegenerate, $X$ is covered by conics of the form~$X_{\kappa(\sigma)}$.
\end{lemma}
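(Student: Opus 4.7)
The plan is to show, for each $[U_3] \in X$, that there exists a nonzero $\sigma \in S_3$ whose rank-$6$ kernel line $L_\sigma \subset V_7$ is contained in $U_3$. Since $\kappa(\sigma)$ generates $L_\sigma$ by the definition~\eqref{eq:kappa}, this yields $\kappa(\sigma) \in U_3$, hence $[U_3] \in X_{\kappa(\sigma)}$, and Lemma~\ref{lem:v22-conics} ensures the latter is a conic; thus $X$ is covered by conics of the required form.

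To produce such a $\sigma$, I would consider the bilinear evaluation
\begin{equation*}
\rho \colon S_3 \otimes U_3 \to V_7^\vee, \qquad (\sigma, u) \mapsto \sigma(u, -).
\end{equation*}
By the defining condition of $X$, the subspace $U_3$ is totally isotropic for every form in $S_3$, so the image of $\rho$ lies in the $4$-dimensional annihilator $U_3^\perp \subset V_7^\vee$. Hence $\dim \ker \rho \geq 9 - 4 = 5$, and $\P(\ker \rho) \subset \P(S_3 \otimes U_3) = \P^8$ is a linear subspace of dimension at least $4$.

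The key step is to extract a \emph{decomposable} nonzero element from $\ker \rho$, namely some $\sigma \otimes u$ with both $\sigma$ and $u$ nonzero; this is precisely a point of the intersection $\P(\ker \rho) \cap \Sigma$, where $\Sigma \coloneqq \P(S_3) \times \P(U_3) \hookrightarrow \P^8$ is the Segre variety of dimension~$4$. Since $\dim \P(\ker \rho) + \dim \Sigma \geq 4 + 4 = 8 = \dim \P^8$ and $\P(\ker \rho)$ is a linear subspace, the intersection is nonempty by the standard hyperplane-section principle in projective space. Any intersection point produces $\sigma \in S_3$ and $u \in U_3$, both nonzero, with $\sigma(u, -) = 0$; hence $\kk \cdot u = L_\sigma \subset U_3$, as required.

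I do not foresee a genuine obstacle in this plan: the proof is essentially a dimension count that makes no further use of the nondegeneracy of the net beyond the rank-$6$ property needed to ensure that each $\sigma \in S_3$ has a well-defined $1$-dimensional kernel (and hence a well-defined $\kappa(\sigma)$). The only nonformal step --- the nonemptiness of $\P(\ker \rho) \cap \Sigma$ --- reduces to the fact that a linear subspace of $\P^N$ meets every closed subvariety of complementary or larger dimension, which follows by successive hyperplane sections.
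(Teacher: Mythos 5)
Your proof is correct and is essentially the paper's argument: both reduce the statement to finding, for a fixed isotropic $U_3$, a pair $(\sigma,u)$ with $0\ne\sigma\in S_3$, $0\ne u\in U_3$ and $\sigma(u,-)=0$, and both get it from the projective dimension theorem, using rank~$6$ only to identify $\ker\sigma$ with the line spanned by $\kappa(\sigma)$. The only difference is packaging: the paper intersects $\P(S_3)\cong\P^2$ with the codimension-$2$ locus of rank~$\le 2$ maps in $\P(U_3^\perp\otimes U_3^\vee)\cong\P^{11}$ (mirroring Lemma~\ref{lem:no-gr34}, which requires the small extra observation $S_3\cap\wedge^2U_3^\perp=0$), whereas you intersect the linear space $\P(\Ker\rho)$ with the Segre fourfold in $\P^8$, which sidesteps that observation.
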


\begin{proof}
We need to show that each point~$x \in X$ lies on a conic~$X_{\kappa(\sigma)}$ for some~$0 \ne \sigma \in S_3$.
In other words, if~$U_3 \subset V_7$ is a totally isotropic subspace then it contains the kernel space of some~$\sigma$.
For this just note that~$S_3 \subset U_3^\perp \wedge V_7^\vee$ and consider the composition
\begin{equation*}
S_3 \hookrightarrow U_3^\perp \wedge V_7^\vee \twoheadrightarrow U_3^\perp \otimes U_3^\vee.
\end{equation*}
Since~$S_3 \cap \wedge^2U_3^\perp = 0$
this gives an embedding of~$\P(S_3) = \P^2$ into the projective space of 4-by-3 matrices.
As in the proof of Lemma~\ref{lem:no-gr34} we find a point~$0 \ne \sigma \in S_3$ whose image is a matrix of rank~$\le 2$.
Then it is clear that the corresponding skew-form has kernel in~$U_3$, as required.
\end{proof}

\subsection{Invariants}

We compute some invariants of~$X$.

\begin{proposition}
\label{prop:v22}
Let~$X \subset \Gr(3,V)$ be the zero locus of a nondegenerate global section~$\sigma_0$.
Then~$X$ is a local complete intersection Fano threefold of degree~$22$ with~$\omega_X \cong \cO_{\Gr(3,V)}(-1)\vert_X$. 

Moreover, $X$ is projectively normal with~$\rH^{>0}(X, \cO_X(t)) = 0$ for~$t \ge 0$ and~$\rH^\bullet(X, \cO_X) = \kk$.

Finally, there is an exact sequence
\begin{equation}
\label{eq:v22-cox1}
0 \to \rH^0(\Gr(3,V), \cE_0^\vee(1)) \xrightarrow{\ \rH^0(\sigma_0(1))\ } 
\rH^0(\Gr(3,V), \cO_{\Gr(3,V)}(1)) \xrightarrow{\qquad} 
\rH^0(X, \cO_{X}(1)) \to 0
\end{equation}
where~$\cE_0 = (\wedge^2\cU_X^\vee)^{\oplus 3}$ is the vector bundle defined in~\eqref{eq:def-ce-zero}.
\end{proposition}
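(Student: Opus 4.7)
The plan is to deduce all statements from the Koszul resolution of $\cO_X$ as a sheaf on $\Gr(3,V_7)$, combined with the Borel--Bott--Weil computations recorded in Proposition~\ref{prop:hi-we}. The key preliminary step, which I would carry out first, is to verify that $X$ has the expected codimension $9 = \rank(\cE_0)$. Every irreducible component of $X = \rZ(\sigma_0)$ has dimension at least $12 - 9 = 3$. For the matching upper bound, I consider the incidence variety
\[
\cC \coloneqq \{(\sigma, [U_3]) \in \P(S_3) \times X : \kappa(\sigma) \in U_3\}.
\]
The first projection $\cC \to \P(S_3)$ has fibers equal to the conics $X_{\kappa(\sigma)}$ of Lemma~\ref{lem:v22-conics}, so $\dim \cC \le 2 + 1 = 3$, while the second projection $\cC \to X$ is surjective by Lemma~\ref{lem:conics-cover}. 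Hence $\dim X = 3$, the section $\sigma_0$ is regular, $X$ is a local complete intersection, and we have the Koszul resolution
\[
0 \to \wedge^9\cE_0^\vee \to \wedge^8\cE_0^\vee \to \cdots \to \cE_0^\vee \to \cO_{\Gr(3,V_7)} \to \cO_X \to 0.
\]

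The canonical class is then computed by adjunction: since $\omega_{\Gr(3,V_7)} \cong \cO(-7)$, and for any rank-$3$ bundle $\cF$ we have $\det(\wedge^2\cF) \cong (\det \cF)^{\otimes 2}$, the determinant of $\cE_0 = (\wedge^2\cU^\vee)^{\oplus 3}$ is $\cO(6)$, giving $\omega_X \cong \cO_X(-1)$; in particular, $X$ is Fano. The degree $\deg X = 22$ is precisely the content of Lemma~\ref{lem:ctop} applied to $g = 12$.

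For the remaining cohomological assertions I would twist the Koszul resolution by $\cO(t)$ and apply the hypercohomology spectral sequence
\[
E_1^{-i,p} = \rH^p(\Gr(3,V_7), \wedge^i\cE_0^\vee(t)) \Rightarrow \rH^{p-i}(X, \cO_X(t)),
\]
whose $E_1$-terms are controlled by Proposition~\ref{prop:hi-we}. For $t = 0$ the only nonzero $E_1$-term is $E_1^{0,0} = \kk$, so $\rH^\bullet(X, \cO_X) = \kk$. For $t \ge 1$, the last clause of Proposition~\ref{prop:hi-we} ensures that all $E_1$-terms with $p > 0$ vanish, so the spectral sequence collapses onto the row $p = 0$; this simultaneously yields $\rH^{>0}(X, \cO_X(t)) = 0$ and the surjectivity of $\rH^0(\Gr, \cO(t)) \twoheadrightarrow \rH^0(X, \cO_X(t))$, which together with the projective normality of $\Gr(3,V_7)$ gives the projective normality of $X$. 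Specializing to $t = 1$, the only nonzero terms are $E_1^{0,0}$ and $E_1^{-1,0}$, and convergence of $E_\infty^{-1,0}$ to $\rH^{-1}(X, \cO_X(1)) = 0$ forces the differential $\rH^0(\sigma_0(1))$ to be injective; identifying its cokernel with $\rH^0(X, \cO_X(1))$ yields the exact sequence~\eqref{eq:v22-cox1}. The main technical input is the dimension bound via the covering by conics, which rests on the structural lemmas about $\kappa$ and $X_{\kappa(\sigma)}$; everything else is a formal consequence of the Koszul resolution and the vanishings in Proposition~\ref{prop:hi-we}.
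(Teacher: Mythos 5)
Your proposal is correct and follows essentially the same route as the paper: dimension count via the covering by the conics $X_{\kappa(\sigma)}$ of Lemmas~\ref{lem:v22-conics} and~\ref{lem:conics-cover} to get the expected codimension, adjunction plus Lemma~\ref{lem:ctop} for the canonical class and degree, and then the Koszul resolution twisted by $\cO(t)$ together with the vanishings of Proposition~\ref{prop:hi-we} (in particular the slope condition $i\cdot\upmu(\cE_0)\le 1$ for $t=1$) for the cohomological statements and the sequence~\eqref{eq:v22-cox1}. The only differences are expository: you make explicit the incidence-variety bookkeeping and the determinant computation that the paper leaves implicit.
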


\begin{proof}
Since~$X \subset \Gr(3,V)$ is the zero locus of a global section 
of the vector bundle~$\cE_0$ of rank~9 on a smooth variety of dimension~12,
every component of~$X$ has dimension greater or equal than~$3$.
On the other hand, Lemmas~\ref{lem:v22-conics} and~\ref{lem:conics-cover}
show that~$X$ is covered by a 2-dimensional family of curves, hence~\mbox{$\dim(X) \le 3$}.
Therefore, $X$ is a local complete intersection threefold.

The formula for the canonical line bundle follows by adjunction, 
and it also follows that the degree of~$X$ is equal to the top Chern class of~$\cE_0$,
that was computed in Lemma~\ref{lem:ctop}.

To compute the cohomology of~$\cO_X$ we consider the Koszul complex
\begin{equation*}
0 \to \wedge^9\cE_0^\vee \to \wedge^8\cE_0^\vee \to \dots \to \wedge^2\cE_0^\vee \to \cE_0^\vee \to \cO_{\Gr(3,7)} \to \cO_X \to 0
\end{equation*}
and its hypercohomology spectral sequence.
By Proposition~\ref{prop:hi-we} the only nontrivial term in it is~$\rH^0(\Gr(3,7), \cO) = \kk$,
hence the sequence degenerates in the first page and gives~$\rH^\bullet(X, \cO_X) = \kk$.

Similarly, to prove projective normality of~$X$ we twist the Koszul complex by~$\cO(t)$ with~$t \ge 0$
and look at the hypercohomology spectral sequence.
By Proposition~\ref{prop:hi-we}, we see that all nontrivial terms are in the zeroth row, 
hence the restriction morphism
\begin{equation*}
\rH^0(\Gr(3,7), \cO_{\Gr(3,7)}(t)) \to \rH^0(X, \cO_{X}(t)) 
\end{equation*}
is surjective, hence~$X$ is indeed projectively normal.
The vanishing of~$\rH^{>0}(X, \cO_X(t))$ also follows.

Finally, in the case~$t = 1$ the condition~$i \cdot \upmu(\cE_0) \le 1$ in Proposition~\ref{prop:hi-we} implies~$i \le 1$,
hence the hypercohomology spectral sequence takes the form of the short exact sequence~\eqref{eq:v22-cox1}.
\end{proof}

\begin{corollary}
\label{cor:m12-acm}
The zero locus~$X \subset \Gr(3,V)$ of a nondegenerate global section~$\sigma_0$ is ACM.
\end{corollary}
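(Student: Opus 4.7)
The plan is to verify the standard cohomological characterization of arithmetic Cohen--Macaulay-ness for $X \subset \P^{13}$ (where $\P^{13}$ is the linear span of $X$ in its Pl\"ucker embedding): namely, that $X$ is Cohen--Macaulay, that the restriction maps $\rH^0(\P^{13}, \cO(t)) \to \rH^0(X, \cO_X(t))$ are surjective for every $t \geq 0$, and that $\rH^i(X, \cO_X(t)) = 0$ for $0 < i < \dim(X) = 3$ and every $t \in \ZZ$. The first condition is immediate, since Proposition~\ref{prop:v22} shows that $X$ is a local complete intersection in the smooth variety $\Gr(3,V)$.

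For the intermediate cohomology vanishings, Proposition~\ref{prop:v22} directly supplies $\rH^{>0}(X, \cO_X(t)) = 0$ for every $t \geq 0$; for $t \leq -1$ I would invoke Serre duality using the isomorphism $\omega_X \cong \cO_X(-1)$ from the same proposition, which gives $\rH^i(X, \cO_X(t)) \cong \rH^{3-i}(X, \cO_X(-1-t))^\vee$, and observe that $-1 - t \geq 0$ forces the right-hand side to vanish for $0 < 3 - i < 3$, i.e., for $0 < i < 3$.

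For the projective normality statement, I would factor the embedding through the Pl\"ucker embedding $X \subset \P^{13} \subset \P(\wedge^3V)$ (the equality of $\P^{13}$ with the linear span of $X$ follows from the computation $h^0(X, \cO_X(1)) = 14$ implicit in~\eqref{eq:v22-cox1}). Since $\Gr(3,V)$ is a homogeneous variety $\rG/\rP$, it is ACM in its Pl\"ucker embedding by~\cite{Rama}, so $\rH^0(\P(\wedge^3V), \cO(t)) \to \rH^0(\Gr(3,V), \cO(t))$ is surjective for every $t \geq 0$. On the other hand, Proposition~\ref{prop:v22} (proved via the Koszul resolution of $\cO_X$ on $\Gr(3,V)$ and the vanishings of Proposition~\ref{prop:hi-we}) provides the surjectivity of $\rH^0(\Gr(3,V), \cO(t)) \to \rH^0(X, \cO_X(t))$. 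Composing these two surjections yields a surjection $\rH^0(\P(\wedge^3V), \cO(t)) \to \rH^0(X, \cO_X(t))$ which, since $X \subset \P^{13}$, factors through the evident surjection $\rH^0(\P(\wedge^3V), \cO(t)) \twoheadrightarrow \rH^0(\P^{13}, \cO(t))$ and therefore forces $\rH^0(\P^{13}, \cO(t)) \to \rH^0(X, \cO_X(t))$ to be surjective as well.

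Every ingredient is in place, so I do not expect any serious obstacle: the proof reduces to a short assembly of Proposition~\ref{prop:v22}, Serre duality, and ACM of the ambient Grassmannian, followed by a one-line diagram chase to pass from $\P(\wedge^3V)$ down to the linear span $\P^{13}$.
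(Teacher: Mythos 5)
Your proposal is correct and follows essentially the same route as the paper: Cohen--Macaulayness from the local complete intersection property, surjectivity of restriction on $\rH^0$ (projective normality) and vanishing of intermediate cohomology for $t \ge 0$ from Proposition~\ref{prop:v22}, and Serre duality with $\omega_X \cong \cO_X(-1)$ for negative twists. The only difference is that you spell out steps the paper leaves implicit (the factorization through the ACM Grassmannian and the passage to the linear span $\P^{13}$), which does not change the argument.
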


\begin{proof}
It follows from Proposition~\ref{prop:v22} that $X$ is Cohen--Macaulay
(because it is a local complete intersection in the smooth variety~$\Gr(3,V_7)$)
and projectively normal. 
It also follows that the intermediate cohomology~$\rH^p(X, \cO_X(t))$ (i.e., for~$p \in \{1,2\}$) vanishes if~$t \ge 0$
and for~$t \le -1$ the similar vanishing follows by Serre duality.
All these properties imply the ACM property of~$X$.
\end{proof}

%%%%%%%%%%%%%%%%%%%%%%%%%

\bibliography{fano}
\bibliographystyle{alphaspecial}

\end{document}